\documentclass[a4paper,11pt]{article}
\usepackage{bm}
\usepackage{amsmath}
\usepackage{amsthm}
\usepackage{trfsigns}
\usepackage{wasysym}
\usepackage{amssymb}
\usepackage{amsfonts}
\usepackage{graphicx}
\usepackage[english]{babel}
\usepackage[utf8]{inputenc}
\usepackage[T1]{fontenc}
\usepackage{mathrsfs}
\usepackage{pdfsync}
\usepackage{enumerate}
\usepackage{version}
\usepackage{calc}
\usepackage{subfigure}

 \usepackage{pstricks,pst-math,pst-xkey}
 \usepackage{float}
 \usepackage{indentfirst}
\bibliographystyle{plain}

\newtheorem{thm}{Theorem}
\newtheorem{cor}{Corollary}
\newtheorem{lem}{Lemma}

\newtheorem{prop}{Proposition}

\theoremstyle{remark}
\newtheorem{remark}{Remark}



\setlength{\hoffset}{-1in}
\setlength{\voffset}{-1in}

\setlength{\topmargin}{1.5cm}
\setlength{\headheight}{1cm}
\setlength{\headsep}{0.5cm}
\setlength{\topskip}{0cm}

\setlength{\oddsidemargin}{2.5cm}
\setlength{\evensidemargin}{2cm}

\setlength{\footskip}{1cm}

\setlength{\textwidth}{16.5cm}
\setlength{\textheight}{24cm}

\def\a{\alpha}
\def\b{\beta}
\def\l{\lambda}
\def\ve{\varepsilon}
\def\v{\varepsilon}

\def\dl{\delta}
\def\g{\gamma}

\def\W{\Omega}
\def\O{\Omega}
\def\d{\partial}
\def\p{\partial}

\def\w{\omega}
\def\o{\omega}
\def\R{\mathbb{R}}
\def\C{\mathbb{C}}
\def\S{\mathbb{S}}
\def\Div{\text{\rm  div}}
\def\deg{ \text{\rm deg}}
\def\n{\nabla}
\def\dist{\text{\rm dist}}
\def\tr{\text{\rm tr}}
\def\f{\varphi}
\def\Z{\mathbb{Z}}

\def\weak{\rightharpoonup}

\def\N{\mathbb{N}}

\author{Micka\"{e}l~\textsc{Dos\,Santos}
\\Universit\'{e} de Lyon, Universit\'{e} Lyon 1 , Institut Camille Jordan CNRS UMR 5208\\
43, boulevard du 11 novembre 1918, F-69622 Villeurbanne, France
\\{\tt dossantos@math.univ-lyon1.fr}
\and Oleksandr~\textsc{Misiats}\\Department of Mathematics, The Pennsylvania State University\\
University Park PA 16802, USA
\\{\tt misiats@math.psu.edu}}
\title{Ginzburg-Landau model with small pinning domains}
\begin{document}
\maketitle

\begin{abstract}
We consider a Ginzburg-Landau type energy with a piecewise constant pinning term $a$ in the potential $(a^2 - |u|^2)^2$. The function $a$ is different from 1 only on finitely many disjoint domains, called the {\it pinning domains}. These pinning domains model small impurities in a homogeneous superconductor and shrink to single points in the limit $\v\to0$; here, $\v$ is the inverse of the Ginzburg-Landau parameter. We study the energy minimization in a smooth simply connected domain $\Omega \subset \mathbb{C}$ with Dirichlet boundary condition $g$ on $\d \O$, with topological degree
${\rm deg}_{\d \O} (g)  = d >0$.  Our main result is that, for small $\v$, minimizers have $d$ distinct zeros (vortices) which are inside the pinning domains and they have a degree equal to $1$. The question of finding the locations of the pinning domains with vortices is reduced to a discrete minimization problem for a finite-dimensional functional of renormalized energy. We also find the position of the vortices inside the pinning domains and show that, asymptotically, this position is determined by {\it local renormalized energy} which does not depend on the external boundary conditions.
\end{abstract}

\footnotetext[1]{AMS Subject Classification: 49K20, 35J66, 35J50.}

\tableofcontents
\section{Introduction and main results}

In this work we study the minimizers of the Ginzburg-Landau type functional
\begin{equation}\label{GL}
E_{\v, \dl} (u) = \frac{1}{2} \int_{\O}\left\{ |\nabla u|^2 + \frac{1}{2 \v^2} (a_\dl^2 - |u|^2)^2\right\},
\end{equation}
where $\W \subset \C$ is a bounded, smooth, simply connected domain, $\v$ is a positive parameter (the inverse of the Ginzburg-Landau parameter
$\kappa = 1/\v$), $\dl=\delta(\v) > 0$ is a geometric parameter and $u$ is a complex-valued map. In order to define the function $a_\dl$, we need to introduce the notion of a {\it pinning domain}. \\

Fix $M \in\N^*$ points $a_1, ..., a_M  \in \W$. Let $\w$ be an open subset
such that $\overline \w \subset B(0,1)$ and $0 \in \w$. For $1 \leq i \leq M$ and for all $\dl > 0$ denote $\w_\dl^i : = a_i + \dl \cdot\o$, \emph{i.e.} the set $\w$ scaled by $\dl$ and centered at $a_i$.
\\{\bf Definition.} The set $\w_\dl: = \cup_{i = 1}^M \w_\dl^i$ is called a {\it pinning domain}.

For example, if $\w = B(0,\frac{1}{2})$, then the pinning domain is $\w_\dl = \cup_{i = 1}^M B(a_i,\frac{\dl}{2})$.\\

We now define $a_\dl: \W \to \{b,1\}$, $b\in(0,1)$ as:
\begin{equation}\label{pin_term}
a_\dl(x) =
\begin{cases}
b& \text{if } x \in  \w_\dl \\
1& \text{if }  x \in \W \setminus \w_\dl
\end{cases}.
\end{equation}

The functionals of this type arise in models of superconductivity for composite superconductors. The experimental pictures suggest nearly 2D structure of
parallel vortex tubes (\cite{New}, Fig I.4). Therefore, the domain $\O$ can be viewed as a cross-section of a multifilamentary wire with a number of thin
superconducting filaments. Such multifilamentary wires are widely used in industry, including magnetic energy-storing devices, transformers and power
generators \cite{NATURE}, \cite{web}.

Another important practical issue in modeling superconductivity is to decrease the energy dissipation in superconductors.
Here, the dissipation occurs due to  currents associated with the motion of vortices (\cite{LinDu1}, \cite{BarSte}). This  dissipation as well the
thermomagnetic stability can be improved by {\it pinning}
(``fixing the positions'') of vortices. This, in turn, can be done
by introducing impurities or inclusions in the superconductor. In the functional (\ref{GL}) the set $\w_\dl$ models the set of
small impurities in a homogeneous superconductor. The size of the impurities in our model is characterized by the geometric parameter $\delta$ which
goes to zero together with the material parameter $\v$.
We assume henceforth that
\begin{equation}\tag{H}\label{7.MainHyp}
\frac{|\ln\delta(\v)|^3}{|\ln \v|}\to0.
\end{equation}
Essentially, this condition means that $\delta$ is much larger than $\v$ on the logarithmic scale.
For example, if $\v=2^{-j}$ and $\delta(\v)=2^{-k(j)}$, then \eqref{7.MainHyp} implies that $\displaystyle\frac{k(j)^3}{j}\to0$.

\noindent{\bf Notation. }In what follows:
\begin{enumerate}[$\bullet$]
\item We consider a sequence $\v_n\downarrow0$ and we write $\v$ instead of ${\v_n}$; the dependence of $\v$ on $n$ is implicit.
\item We simply write $\delta$ (instead of $\delta(\v)$); the dependence of $\delta$ on $\v$ is implicit.
\end{enumerate}
We study the minimization problem for the functional (\ref{GL}) in the class
\begin{equation}\label{class}
H^1_g:=\{u\in H^1(\O,\C)\,|\,\tr_{\p\O}u=g\},
\end{equation}
where $g\in C^\infty(\p\O,\S^1)$ is such that $\deg_{\p\O}(g)=d>0$. Recall that the degree (winding number) of $g$ is defined as
\[
\deg_{\p\O}(g):=\frac{1}{2\pi}\int_{\p\O}g\times\p_\tau g\,{\rm d}\tau.
\]
Here ``$\times$'' stands for the vectorial product in $\mathbb{C}$, \emph{i.e.} $z_1 \times z_2= {\rm Im}(\overline{z_1}z_2)$, $z_1,z_2\in\mathbb{C}$,
and $\p_\tau$ is the tangential derivative.
The degree is an integer, and the condition $\deg_{\p\O}(u) =d>0$, $u \in H^1(\O, \mathbb{C})$ implies that $u$ must have at least $d$ zeros (counting multiplicity)
inside $\O$. The properties of the topological degree can be found, \emph{e.g.}, in \cite{B1} or \cite{BeMi1}.\\

Minimization problems for Ginzburg-Landau type functionals have been extensively studied by a variety of authors. The pioneering work on modeling
Ginzburg-Landau vortices is the work of Bethuel, Brezis and H{\'e}lein \cite{BBH}. In this work the authors suggested to consider a simplified Ginzburg-Landau model (\ref{GL}) with $a \equiv 1$ in $\O$ (\emph{i.e.} without pinning term), in which the physical source of vortices, the external magnetic field, is modeled via a Dirichlet boundary condition with a positive degree on the boundary (\ref{class}). The analysis of full Ginzburg-Landau functional, with induced and applied magnetic fields, was later performed by Sandier and Serfaty in \cite{SS1}.\\

The functional (\ref{GL}) with non-constant $a(x)$ was proposed by Rubinstein in \cite{Rub} as a model of pinning vortices for Ginzburg-Landau minimizers. Shortly after, André and Shafrir \cite{ASPin} studied the asymptotics of minimizers for a smooth (say $C^1$) $a$. One of the first works to consider a discontinuous pinning term, which models a composite two-phase superconductor, was \cite{LM1}. In this work, a single inclusion  described by a pinning term independent of the parameter $\v$ was considered for a simplified Ginzburg-Landau functional with Dirichlet boundary condition $g$ on $\partial\O$. Namely the pinning term is
\begin{equation*}
a(x)=\begin{cases}1&\text{if }x\in\O\setminus\o\\b&\text{if }x\in\o\end{cases},
\end{equation*}
here $\o$  is a simply connected open set s.t. $\overline{\o}\subset\O$. The main objective of \cite{LM1} was to establish that the vortices are attracted (pinned) by the inclusion $\w$, and their location inside $\w$ can be obtained via minimization of certain finite-dimensional functional of renormalized energy.
Full Ginzburg-Landau model with discontinuous pinning term was later considered by Aydi and Kachmar \cite{AK1}. An $\v$-dependent but continuous pinning
term $a_\v(x)$ was studied by Aftalion, Sandier and Serfaty in \cite{ASS1}. The work \cite{AndBauPhi} studies the case of the smooth $a$ with finite number of isolated zeros, and in
\cite{AlaBro} the pinning term $a$ takes negative values in some regions of the domain $\W$. The other works related to Ginzburg-Landau functional with
pinning term include, \emph{e.g.},  \cite{LinDu1}, \cite{SigTin}.

In this work, we consider the minimization problem (\ref{GL})-(\ref{class}) with a discontinuous pinning term given by (\ref{pin_term}).
We prove that despite the fact that $a_\ve \to 1$ a.e. as $\ve \to 0$, \emph{i.e.} the pinning term disappears in the limit, the pinning domains $\w_\dl$ capture
the vortices of Ginzburg-Landau minimizers of (\ref{GL}) for small $\v$.

The main difficulty in the analysis of this problem stems in the fact that the \emph{a priori} Pohozhaev type estimate
$\|1-|v|^2\|^2_{L^2(\O)} \leq C \ve^2$ for the minimizer $v$ (on which the analysis in \cite{BBH} and \cite{LM1} is based) does not hold.
Therefore, we develop a different strategy of reducing the study of the minimizers of (\ref{GL}) to the analysis of $\S^1$-valued maps via the uniform estimates on the modulus of minimizers away from
the pinning domains (see Proposition \ref{P7.ConvOfBadDiscsTo0} below).

Following \cite{LM1}, let $U_{\v}$ be {\bf the} unique global minimizer of $E_\v$ in $H^1$ with $U_{\v} \equiv 1$ on $\d \O$. This $U_\ve$ satisfies $b \leq U_\v \leq 1$. For $v\in H^1_g$ we define
\[
F_\ve(v) = F_\ve(v, \O) := \frac{1}{2} \int_{\O} \left\{U_{\ve}^2 |\n v|^2 + \frac{1}{2 \ve^2} U_{\ve}^4(1-|v|^2)^2 \right\}\, {\rm d}x.
\]
Using the Substitution Lemma of \cite{LM1}, we have that for $v\in H^1_g$,
\begin{equation}\label{7.DecouplageEnergyLM}
E_\v(U_\v v)=E_\v(U_\v)+F_\v(v).
\end{equation}
From the decomposition \eqref{7.DecouplageEnergyLM}, we can reduce the minimization problem (\ref{GL})-(\ref{class}) to the minimization problem for $F_\ve$ in $H^1_g$, namely, the minimizer $v_\ve$ of $F_\ve$ in $H^1_g$ has the same vorticity structure as the original minimizer $u_\v$ of (\ref{GL})-(\ref{class}).

%
%
%
%

Depending on the relation between $M$  (number of inclusions), and $d$  (number of vortices), we distinguish two cases:\\
{\bf Case I:} $M \geq d$ (more inclusions than vortices), 
\\
{\bf Case II:} $M < d$ (more vortices than inclusions).

For example, we are going to show that for the minimizer $v_\ve$:
\begin{enumerate}[$\bullet$]
\item if $M=3$ and $d=2$ (Case I), we have two distinct inclusions containing exactly one zero each,
\item if $M=2$ and $d=3$ (Case II), we have one zero inside one inclusion and two distinct zeros inside the other inclusion.
\end{enumerate}
Generally speaking, outside a fixed neighborhood of $d'=\min\left\{d,M\right\}$ inclusions (centered at ${\bm a}=(a_{i_1},...,a_{i_{d'}})$), the minimizer $v_\v$ is almost an $\S^1$-valued map. Moreover, by minimality of ${v}_\v$, the selection of centers of inclusion containing its zeros and the distribution of degrees of $v_\v$ around the $a_i$'s are related to the minimization of the Bethuel-Brezis-Hélein renormalized energy $W_g$. In other words, we reduce the problem of finding vortices of the minimizers $v_\v$ to a two-step procedure. As the first step, we determine the inclusions with vortices, which is a discrete minimization problem for $W_g$ and is significantly simpler then the minimization of this renormalized energy functional over $\Omega^{d'}$. Secondly, we determine the locations of the zeros (vortices) locally inside each inclusion and show that their positions depend only on $b$, on the geometry of $\omega$ and on the relation between $d$ and $M$, but not on the external Dirichlet boundary condition $g$ (see Theorem \ref{T7.THM4} below).

Our main result in Case I is the following:
\begin{thm}\label{T7.THM1}
Assume that $M\geq d$. Let $v_\v$ be a minimizer of $F_\v$ in $H^1_g(\W)$. For any sequence $\v_n\downarrow0$, possibly after passing to a subsequence,
there are $d$ distinct points $\{a_{i_1}, ..., a_{i_d}\} \subset \{a_i, 1 \leq i \leq M \}$ and a
function $v^* \in H^1_{\rm loc}(\overline{\O} \setminus {\{a_{i_1}, ..., a_{i_d}\}}, \S^1)$
such that:
\begin{enumerate}
\item $v^*$ is a harmonic map, \emph{i.e.}
\begin{equation}\label{harmonic_map}
\begin{cases}
- \Delta v^* = v^* |\nabla v^*|^2 &\text{in } \O \setminus \{a_{i_1}, ..., a_{i_d}\}\\
v^* = g &\text{on } \d \O
\end{cases}.
\end{equation}
\item We have $v_{\v_n} \to v^*$ strongly in $H^1_{\rm loc}(\overline{\O} \setminus {\{a_{i_1}, ..., a_{i_d}\}})$ and $v_{\v_n} \to v^*$ in $C^\infty_{\rm loc}(\O \setminus {\{a_{1}, ..., a_{M} \}})$.\\
\item $v_{\v_n}$ has $d$ distinct vortices $x_1^n, ..., x_d^n$ such that $x_m^n$ is inside $\w_{\dl}^{i_m}$, $m = 1, ..., d$ and for small fixed $\rho$, $\deg_{\p B(x_i^n,\rho)}(v_{\v_n})=1$.
\item The following expansion holds
\begin{equation}\label{expansion}
F_{\ve}(v_\v) = \pi d b^2 |\ln \ve| + \pi (1-b^2) d |\ln \dl| + W_g((a_{i_1},1), ..., (a_{i_d},1))+ \tilde W   +o_\v(1).
\end{equation}
Here $\tilde W > 0$ is a local renormalized energy depending only on $d, b$ and $\o$. Moreover,  the $d$-subset $\{a_{i_1}, ..., a_{i_d}\}\subset\{a_1,...,a_M\}$ minimizes the Bethuel-Brezis-Hélein renormalized energy $W_g$ among the $d$-subsets of $\{a_1, ..., a_M\}$.
\end{enumerate}
\end{thm}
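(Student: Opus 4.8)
The plan is to follow the now-classical Bethuel--Brezis--H\'elein strategy of vortex analysis, adapted to the pinning geometry via the reduction \eqref{7.DecouplageEnergyLM}, and to exploit the uniform modulus estimates stated in Proposition \ref{P7.ConvOfBadDiscsTo0}. First I would obtain the basic upper bound $F_\v(v_\v)\le \pi d b^2|\ln\v|+\pi(1-b^2)d|\ln\dl|+O(1)$ by constructing an explicit competitor: take $d$ of the inclusions, place a standard degree-one vortex profile inside each $\w_\dl^i$ (on the scale $\v$ the core costs $\pi b^2|\ln\v|$ because $U_\v\approx b$ there, and on the annulus between scale $\dl$ and scale $1$ the $S^1$-valued part costs $\pi(1-b^2)|\ln\dl|$ since $U_\v\approx 1$ outside), and interpolate with a fixed finite-energy map away from the inclusions. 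This upper bound is the quantitative input that forces the vortices to sit inside the pinning domains.

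Next I would run the standard lower-bound machinery. Using the $\eta$-ellipticity / clearing-out lemma for $F_\v$ (which follows from the uniform bounds $b\le U_\v\le 1$ and standard Ginzburg-Landau estimates), cover $\{|v_\v|\le 1/2\}$ by a controlled number of ``bad discs'' of radius $O(\v)$, then merge them into finitely many disjoint discs of fixed small radius $\rho$; Proposition \ref{P7.ConvOfBadDiscsTo0} guarantees these bad discs shrink into the inclusions. A lower bound of the form $F_\v(v_\v,B(a_i,\rho))\ge \pi b^2 D_i|\ln\v|+\pi(1-b^2)|D_i|\,|\ln\dl|-C$ per inclusion, where $D_i$ is the total degree of $v_\v$ around $a_i$, combined with $\sum D_i=d$ and the upper bound above, shows that exactly $d$ inclusions carry a vortex and each carries total degree $+1$; moreover a single vortex per inclusion (no dipoles, no degree-$2$ concentration) is the energetically optimal local configuration, which I would extract by comparing with the local renormalized energy $\tilde W$ — here convexity/uniqueness of the optimal local profile gives $\tilde W>0$ and its dependence only on $d,b,\o$.

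Having localized the vortices, standard arguments give $v_{\v_n}\rightharpoonup v^*$ weakly in $H^1_{\rm loc}(\overline\O\setminus\{a_{i_1},\dots,a_{i_d}\})$, with $v^*$ an $S^1$-valued harmonic map satisfying \eqref{harmonic_map}; elliptic regularity away from the inclusions upgrades this to $C^\infty_{\rm loc}$ convergence, and the absence of energy loss in the neck regions (again from the matching of upper and lower bounds) upgrades weak to strong $H^1_{\rm loc}$ convergence. The canonical harmonic map with $d$ simple singularities at the chosen centers has energy $\pi d|\ln\rho|+W_g((a_{i_1},1),\dots,(a_{i_d},1))+O(\rho)$ as $\rho\to0$, which is exactly the $O(1)$ term in \eqref{expansion} after the splitting of scales $\v\ll\dl\ll1$; hypothesis \eqref{7.MainHyp} is what makes the three scales cleanly separate and keeps all error terms $o_\v(1)$. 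Finally, since the leading $|\ln\v|$ and $|\ln\dl|$ terms and $\tilde W$ are the same for every choice of $d$ inclusions, minimality of $v_\v$ forces the selected $d$-subset $\{a_{i_1},\dots,a_{i_d}\}$ to minimize $W_g$ among all $d$-subsets of $\{a_1,\dots,a_M\}$.

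The main obstacle is the scale separation: because the Pohozhaev bound $\|1-|v_\v|^2\|_{L^2}^2\le C\v^2$ fails here, the usual way of controlling the modulus and of obtaining clean lower bounds breaks down, and one must instead run the vortex-ball construction with the $\v$-scale core and the $\dl$-scale inclusion boundary treated separately, carefully tracking that the annular contributions $\pi(1-b^2)|\ln\dl|$ and the renormalized-energy $O(1)$ terms do not interfere — this is where hypothesis \eqref{7.MainHyp}, namely $|\ln\dl|^3/|\ln\v|\to0$, is essential, and where the bulk of the technical work (the uniform estimates away from $\w_\dl$ and the matched lower bound on each annulus) will lie.
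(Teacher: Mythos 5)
Your high-level skeleton (upper bound by an explicit competitor, $\eta$-ellipticity plus vortex balls to localize the bad set, convergence to an $\S^1$-valued harmonic map, renormalized-energy expansion, selection of the inclusions via $W_g$) is indeed the paper's skeleton, but two steps as you describe them would not go through. First, your per-inclusion lower bound $F_\v(v_\v,B(a_i,\rho))\ge \pi b^2 D_i|\ln\v|+\pi(1-b^2)|D_i|\,|\ln\dl|-C$ is linear in $|D_i|$ in both divergent terms, so after summing over inclusions (with $\sum D_i=d$) it gives the same leading order whether the degree $d$ sits in one inclusion or is spread as $d$ degree-one vortices over $d$ inclusions; it therefore cannot yield $d'=d$ and $d_k=1$. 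What breaks the degeneracy in the paper is the \emph{quadratic} term: Lemma \ref{L7.Lemma2} gives $\frac12\int_{\O\setminus\cup\overline{B(a_{i_k},\rho)}}|\n v_\v|^2\ge\pi\sum_k d_k^2|\ln\rho|-C$ (the $\S^1$-valued Dirichlet cost in the region where $U_\v\simeq1$) together with $F_\v(v_\v,B(a_{i_k},2\dl))\ge\pi b^2|d_k||\ln\xi|-C$, and the discrete minimization of $\sum_k\{d_k^2|\ln\dl|+b^2|d_k||\ln\xi|\}$ (Corollaries \ref{C7.cor1}--\ref{C7.cor2}) is what forces one unit of degree per selected inclusion; a configuration with $D_i\ge2$ is beaten by a divergent margin $\pi(D_i^2-D_i)|\ln\dl|$, not by an $O(1)$ comparison with $\tilde W$. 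For the same reason, invoking ``convexity/uniqueness of the optimal local profile'' for $\tilde W$ cannot give assertion 3: the exact count of zeros, their location strictly inside $\w_\dl^{i_m}$ (not merely near it) and the degree quantization require the blow-up analysis at scale $\dl$ of Sections \ref{S7.ProofTheorem3}--4, i.e. Propositions \ref{P7.ConvOfBadDiscsTo0}, \ref{P7.AsymptoticResultatNonZeroDeg} (each limiting point has degree exactly one and lies in $\o$) and \ref{P7.uniq0} (uniqueness of the zero near each limiting point, via the BBH/Theorem IX.1 machinery), none of which is replaced by your argument.

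Second, for assertion 4 you assert that ``the leading terms and $\tilde W$ are the same for every choice of $d$ inclusions'' and conclude minimality of the selected $d$-subset for $W_g$. The delicate point is that the $O(1)$ local contribution of $F_\v(v_\v,B(a_{i_k},\rho))$ depends a priori on the trace of $v_\v$ on $\p B(a_{i_k},\rho)$, which is unknown and coupled to the outer problem, while the comparator $J(\v,\rho)$ in Lemmas \ref{L7.Lemma4}--\ref{L7.Lemma5} has radial data. The paper resolves this with the decoupling of Theorem \ref{T7.THM4}: the local energy splits as $\tilde W_0(f_0)+\tilde W({\bm\a})$ with $\tilde W({\bm\a})$ independent of the outer data (Proposition \ref{P7.MinRenEner}) and $\tilde W_0(f_0)\ge0=\tilde W_0(z/|z|)$ by \eqref{7.RadialMinimizesFirst}, whence $F_\v(v_\v,B(a_{i_k},\rho))\ge J(\v,\rho)+o_\v(1)$. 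Without this decoupling and minimality-of-radial-data input, the matching of your upper and lower bounds at the $O(1)$ level, and hence both the expansion \eqref{expansion} and the $W_g$-minimality of $\{a_{i_1},\dots,a_{i_d}\}$, is not justified.
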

\begin{remark}
Here, $W_g$ denotes the renormalized energy given by Theorem I.7 in \cite{BBH} (with the degrees equal to $1$ and the boundary data $g$). Its definition is recalled in Section \ref{S7;MinBBHREnENErGy}.
\end{remark}
The main result in Case II is
\begin{thm}\label{T7.THM2}
Assume that $M<d$. Let $v_\v$ be a minimizer of $F_\v$ in $H^1_g(\W)$.  For any sequence $\v_n\downarrow0$, possibly after passing to a subsequence,
there is
$v^* \in H^1_{\rm loc}(\overline{\O} \setminus {\{a_1, ..., a_M\}}, \S^1)$ which satisfies \eqref{harmonic_map} in $ \O \setminus \{a_{1}, ..., a_{M}\}$, 
such that:
\begin{enumerate}
\item $v_{\v_n} \to v^*$ strongly in $H^1_{\rm loc}(\overline{\O} \setminus {\{a_{1}, ..., a_{M}\}})$ and $v_{\v_n} \to v^*$ in $C^\infty_{\rm loc}(\O \setminus {\{a_{1}, ..., a_{M}\}})$.\\
\item For $\rho>0$ small, $v_{\v_n}$ has exactly $d_i:=\deg_{\p B(a_i,\rho)}(v_{\v_n})$ zeros in $B(a_i,\rho)$. They are isolated, lie inside $\w_\dl^i$ and they have a degree equal to $1$.
\item \begin{equation}\label{constraint}
\left[\frac{d}{M}\right] \leq d_i \leq \left[\frac{d}{M}\right] + 1, \text{ where } \left[\frac{d}{M}\right] \text{ is the integer part of } \frac{d}{M}.
\end{equation}
Moreover, if $\frac{d}{M} = m_0 \in \mathbb{N}$, then $d_i \equiv m_0, \,1 \leq i \leq M$. Otherwise, the configuration $\{(a_1,d_1)$$,...,$$(a_M,d_M)\}$ minimizes the renormalized energy $W_g$ among the configurations \\
$\{(a_1,\tilde{d}_1),...,(a_M,\tilde{d}_M)\}$. Here $\{a_i\,|\, 1 \leq i \leq M\}$ are fixed and $\tilde d_i \in \Z$ are the subjects to the constraints (\ref{constraint}) and $\sum_{i=1}^M \tilde d_i = d$.
%
\item The following expansion holds when $\v\to0$
\begin{equation}\label{expansionII}
\inf_{H^1_g}F_{\ve} = \pi d b^2 |\ln \ve| + \pi (\sum_{i=1}^M d_i^2- d b^2)  |\ln \dl| + W_g\left(\{{\bm a,\bm d}\}\right) +  \tilde W  + o_\v(1).
\end{equation}
Here, $\{{\bm a},{\bm d}\}=\{(a_1,d_1),...,(a_M,d_M)\}$ is a configuration given by the previous assertion and $\tilde{W}$ is local renormalized energy which depends only on $\o,b,d$ and $M$.
\end{enumerate}
\end{thm}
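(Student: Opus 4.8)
The plan is to adapt the vortex-analysis machinery of Bethuel--Brezis--Hélein \cite{BBH} and of \cite{LM1} to the present setting, with the crucial substitute for the missing Pohozhaev bound provided by Proposition~\ref{P7.ConvOfBadDiscsTo0} (uniform control of $|v_\v|$ away from $\w_\dl$). First I would run a ``bad discs'' construction: cover $\{|v_\v|\le 1/2\}$ by a controlled number of discs of radius $\sim\v$, merge and separate them in the standard way, and use the upper bound $\inf_{H^1_g}F_\v\le \pi d b^2|\ln\v|+O(|\ln\dl|)$ (obtained by an explicit test map with $d$ degree-one vortices placed in the $\w_\dl^i$) together with the lower bound from $\S^1$-valued estimates to conclude that all bad discs concentrate, as $\v\to0$, near a finite set $\{a_{i_1},\dots\}\subset\{a_1,\dots,a_M\}$; since the cost of a vortex outside $\w_\dl$ scales like $\pi b^2|\ln\v|$ while inside $\w_\dl$ it scales like $\pi b^2|\ln\v|+\pi(1-b^2)|\ln\dl|-$ (and being outside any inclusion is strictly more expensive once one accounts for the $b^2$ versus $1$ weight), minimality forces every vortex into some $\w_\dl^i$. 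Passing to a subsequence, the total degree $d$ distributes as $d=\sum_{i=1}^M d_i$ with each $d_i=\deg_{\p B(a_i,\rho)}(v_\v)\ge 0$, and the limit $v^*$ exists in $H^1_{\rm loc}(\overline\O\setminus\{a_1,\dots,a_M\})$ and solves \eqref{harmonic_map} by the usual elliptic-regularity and weak-convergence argument, establishing assertions~(1) and the qualitative part of~(2).

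Next I would compute the precise energy expansion, which simultaneously yields~\eqref{expansionII} and the optimality statements in (2) and (3). The key is an \emph{energy-splitting} around each $a_i$: on an annulus $B(a_i,\rho)\setminus B(a_i,\dl')$ with $\dl\ll\dl'\ll\rho$, the $\S^1$-valued map $v_\v$ (where $|v_\v|$ is close to $1$) contributes $\pi d_i^2|\ln(\rho/\dl')|+O(1)$ with the weight $a_\dl\equiv 1$; on the transition annulus around $\p\w_\dl^i$ and inside $\w_\dl^i$ the weight drops to $b$, producing the modified cost $\pi d_i^2 b^2|\ln\dl|$ from the $\S^1$ part inside the inclusion, and an intrinsic contribution which, after rescaling $x\mapsto(x-a_i)/\dl$, converges to a $\dl$-independent \emph{local renormalized energy} $\tilde W$ depending only on $\o$, $b$, $d$, $M$ (this is where the $d_i$ vortices inside each $\w_\dl^i$ arrange themselves, by minimality, according to a limiting finite-dimensional problem on the fixed domain $\o$, giving the ``degree equal to $1$'' and isolatedness claims of (2)). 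Matching these local expansions with the outer BBH analysis on $\O\setminus\cup_i B(a_i,\rho)$ gives the global constant $W_g(\{\bm a,\bm d\})$, and collecting the $|\ln\v|$, $|\ln\dl|$ and $O(1)$ terms yields \eqref{expansionII} with the coefficient $\pi(\sum d_i^2 - db^2)$ in front of $|\ln\dl|$ (note $\sum d_i^2\ge db^2$ is not needed; what matters is that this coefficient is minimized, given $\sum d_i=d$, precisely when the $d_i$ are as equal as possible, which forces~\eqref{constraint}). Indeed, since $x\mapsto x^2$ is strictly convex, $\sum d_i^2$ subject to $\sum d_i=d$, $d_i\ge0$ integers is minimized exactly by the ``balanced'' configurations $d_i\in\{[d/M],[d/M]+1\}$; when $d/M=m_0\in\N$ the unique minimizer is $d_i\equiv m_0$, and otherwise the $|\ln\dl|$ term does not distinguish the remaining balanced configurations, so the next-order term $W_g$ selects among them — this gives assertion~(3).

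For the ordering of steps: (i) upper bound via explicit test configuration; (ii) bad-disc construction and lower bound via $\S^1$-estimates, forcing concentration in $\cup\w_\dl^i$; (iii) extraction of the limit $v^*$ and proof of \eqref{harmonic_map}; (iv) local rescaling near each $a_i$ and identification of $\tilde W$ together with the structure of vortices inside $\w_\dl^i$; (v) outer BBH expansion and matching to get \eqref{expansionII}; (vi) the convexity/optimality argument for \eqref{constraint} and the $W_g$-selection. The main obstacle I expect is step~(iv): controlling the interaction, on the scale $\dl$, between the $d_i$ vortices inside a single inclusion and the ``outer'' field they see through the jump of $a_\dl$ across $\p\w_\dl^i$ — i.e.\ proving that the rescaled energy genuinely $\Gamma$-converges (or at least converges for minimizers) to a well-defined $\tilde W$ that is independent of $\v$, $\dl$ and of $g$, and showing the vortices inside each $\w_\dl^i$ are simple and isolated. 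This requires sharp local upper and lower bounds uniform in the (logarithmically large, by \eqref{7.MainHyp}) separation of scales, and is precisely where hypothesis~\eqref{7.MainHyp} — $|\ln\dl|^3/|\ln\v|\to0$ — is used to make error terms negligible.
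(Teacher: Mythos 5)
Your proposal is correct and follows essentially the same route as the paper: matching upper bounds via explicit test maps, an $\eta$-ellipticity/bad-disc argument combined with the Sandier--Serfaty ball construction to force all vorticity into the inclusions, extraction of the limit $v^*$ solving \eqref{harmonic_map}, a rescaled model problem on $\omega$ yielding the local renormalized energy $\tilde W$ together with the degree-one, isolated zeros, an outer Bethuel--Brezis--H\'elein matching giving $W_g$, and the discrete convexity argument producing the balanced degrees \eqref{constraint}. Two small slips, both easily repaired within your scheme: the cost of a vortex outside $\omega_\delta$ is $\pi|\ln\varepsilon|$ (weight $1$), not $\pi b^2|\ln\varepsilon|$, and the nonnegativity $d_i\geq 0$ must first be extracted from the leading $|\ln\xi|$-order comparison of the upper and lower bounds (which forces $\sum_i|d_i|=d$) before running the convexity argument on $\sum_i d_i^2$ at order $|\ln\delta|$.
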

In both cases, we prove that the asymptotic location of the vortices inside a pinning domain depends only on $b$, $\o$ and on the number of zeros inside the inclusion (see Theorem \ref{T7.THM4}): this location is independent of the boundary data $g$ on $\p\O$.



\section{Main tools}\label{S7.Maintools}
In this section we establish:
\begin{enumerate}[$\bullet$]
\item Estimates for $U_\v$,
\item Upper bounds for the energy of minimizers in Case I and Case II, 
\item An $\eta$-ellipticity estimate for minimizers.
\end{enumerate}
\subsection{Properties of $U_\ve$}
\begin{prop} [Maximum principle for $U_\v$, \cite{LM1} Proposition 1]

The special solution $U_\ve$ satisfies $b \leq U_\ve \leq 1$ in $\Omega$.
\end{prop}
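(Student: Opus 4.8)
The plan is to prove this by a truncation argument combined with the uniqueness of $U_\v$ as a global minimizer. First I would reduce to the case of a real, nonnegative competitor: since the potential $(a_\dl^2-|u|^2)^2$ depends on $u$ only through $|u|$, and since $|\n|u||\le|\n u|$ a.e.\ for $u\in H^1(\O,\C)$, the map $|U_\v|$ is admissible (it still equals $1$ on $\p\O$) and satisfies $E_\v(|U_\v|)\le E_\v(U_\v)$. Hence $|U_\v|$ is also a global minimizer, and by uniqueness $U_\v=|U_\v|$; in particular $U_\v$ is real and $U_\v\ge0$ in $\O$.

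The core step is to introduce the ``median'' truncation
\[
u_*:=\min\bigl(\max(U_\v,b),1\bigr)\in H^1(\O),
\]
which is admissible because $b<1$ and $U_\v=1$ on $\p\O$, so $u_*\equiv1$ on $\p\O$. I claim $E_\v(u_*)\le E_\v(U_\v)$. For the Dirichlet term one has $\n u_*=\n U_\v\,\mathbf{1}_{\{b<U_\v<1\}}$ a.e., so $|\n u_*|\le|\n U_\v|$ pointwise. For the potential term, note that $a_\dl^2\in\{b^2,1\}\subset[b^2,1]$ everywhere, while (using $U_\v\ge0$) $u_*^2$ equals $\mathrm{median}(b^2,U_\v^2,1)$. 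A short three-case check ($U_\v\in[b,1]$, $U_\v>1$, and $0\le U_\v<b$) shows that in every case $u_*^2$ lies between $U_\v^2$ and $a_\dl^2$, hence $(a_\dl^2-u_*^2)^2\le(a_\dl^2-U_\v^2)^2$ pointwise. Summing the two contributions gives $E_\v(u_*)\le E_\v(U_\v)$.

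Finally, uniqueness of the global minimizer forces $u_*=U_\v$ a.e., which is exactly $b\le U_\v\le1$ a.e.\ in $\O$; elliptic regularity applied to the Euler--Lagrange equation $-\v^2\Delta U_\v=U_\v(a_\dl^2-U_\v^2)$ upgrades this to the pointwise inequality in $\O$. (An alternative that avoids invoking uniqueness is to test the Euler--Lagrange equation against $(U_\v-1)^+\in H^1_0(\O)$ — on $\{U_\v>1\}$ one has $U_\v(a_\dl^2-U_\v^2)(U_\v-1)\le0$ while $\int|\n U_\v|^2\ge0$, forcing that set to be null — and, using $U_\v\ge0$, against $(b-U_\v)^+\in H^1_0(\O)$ for the lower bound.) I do not expect a serious obstacle here; the only points requiring care are the a.e.\ gradient identity for truncations and the elementary but case-dependent pointwise bound on the potential. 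In particular the discontinuity of $a_\dl$ is harmless: all that is used is the inclusion $a_\dl^2\in[b^2,1]$, which is precisely the interval onto which $U_\v$ is truncated.
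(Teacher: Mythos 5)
Your proof is correct. Note that the paper does not actually prove this proposition: it is imported from \cite{LM1} (Proposition 1), where it is obtained by the same standard comparison/truncation reasoning you use, so your argument is essentially a self-contained version of the cited proof rather than a different route. One remark worth making: your two key steps (reduction to $|U_\v|$ and the median truncation) are closed by invoking uniqueness of the global minimizer. This is admissible inside the present paper, which takes uniqueness from \cite{LM1} as given, but it is slightly delicate if one worries that in \cite{LM1} uniqueness is established using positivity and the bounds themselves. The cleaner observation is that uniqueness is not needed at all for the bounds: your case check is actually strict off the interval $[b,1]$, namely $(a_\dl^2-U_\v^2)^2>(a_\dl^2-1)^2$ on $\{U_\v>1\}$ (since $U_\v^2-a_\dl^2>1-a_\dl^2\ge0$) and $(a_\dl^2-U_\v^2)^2>(a_\dl^2-b^2)^2$ on $\{0\le U_\v<b\}$, while the Dirichlet term does not increase; so if either set had positive measure one would get $E_\v(u_*)<E_\v(U_\v)$, contradicting minimality directly. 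With that strengthening (and the realness/positivity of $U_\v$ being part of its construction in \cite{LM1}), your truncation argument, the gradient identity for truncations, and the final upgrade from a.e.\ to pointwise bounds via regularity of solutions of the Euler--Lagrange equation are all sound.
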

\begin{prop}\label{P7.UepsCloseToaeps}
There are $C,c>0$ (independent of $\v$) s.t. for any $R >0$ we have
\begin{equation}\label{7.UepsCloseToaeps}
|a_\v-U_\v|\leq C{\rm e}^{-\frac{c R}{\v}}\text{ in }V_{R}:=\{x\in\O\,|\,\dist(x,\p\omega_\delta)\geq R\},
\end{equation}
\begin{equation}\label{P7.GradUepsCloseToaeps}
|\n U_\v|\leq \frac{C{\rm e}^{-\frac{ c R}{\v}}}{\v}\text{ in } V_{R}.
\end{equation}
\end{prop}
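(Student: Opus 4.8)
The plan is to use that $U_\v$ solves the Euler--Lagrange equation
\begin{equation*}
-\v^2\Delta U_\v=U_\v(a_\dl^2-U_\v^2)\quad\text{in }\O,\qquad U_\v=1\ \text{on }\p\O,
\end{equation*}
that $b\le U_\v\le1$ by the previous proposition, and — crucially — that $a_\dl$ is \emph{locally constant} away from $\p\o_\dl$: it equals $1$ on $\O^+:=\O\setminus\overline{\o_\dl}$ and equals $b$ on $\o_\dl$, so $U_\v$ is smooth inside each phase by elliptic regularity. Accordingly I would set $\sigma:=1-U_\v$ on $\O^+$ and $\tau:=U_\v-b$ on $\o_\dl$; both are nonnegative, and inserting $a_\dl\equiv1$ (resp.\ $a_\dl\equiv b$) into the equation and expanding the nonlinearity gives $\v^2\Delta\sigma=\sigma(1-\sigma)(2-\sigma)$ in $\O^+$ and $\v^2\Delta\tau=\tau(b+\tau)(2b+\tau)$ in $\o_\dl$. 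Since $\sigma,\tau\in[0,1-b]$ one has $(1-\sigma)(2-\sigma)\ge2b^2$ and $(b+\tau)(2b+\tau)\ge2b^2$, so both $\sigma$ and $\tau$ are nonnegative subsolutions of the operator $\Delta-\mu\v^{-2}$ with $\mu:=2b^2>0$; moreover they are $\le1-b$ on $\p\o_\dl$, and $\sigma=0$ on $\p\O$.

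For \eqref{7.UepsCloseToaeps} I would fix $x_0\in V_R$ and compare on the ball $B(x_0,R)$. Since $\dist(x_0,\p\o_\dl)\ge R$, an elementary argument (a segment from $x_0$ to a point of the other phase must cross $\p\o_\dl$) shows $B(x_0,R)$ meets only one phase: $B(x_0,R)\cap\overline{\o_\dl}=\emptyset$ if $x_0\in\O^+$, and $B(x_0,R)\subset\o_\dl\subset\O$ if $x_0\in\o_\dl$. On $B(x_0,R)\cap\O$ the relevant function $w\in\{\sigma,\tau\}$ satisfies $\Delta w-\mu\v^{-2}w\ge0$, with $w\le1-b$ on $\p B(x_0,R)$ and $w=0$ on any portion of $\p\O$. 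I would then compare with the radial barrier $g(x):=(1-b)\cosh\!\big(b|x-x_0|/\v\big)/\cosh\!\big(bR/\v\big)$: using $\sinh t\le t\cosh t$ one checks that in dimension $2$, $\Delta g-\mu\v^{-2}g\le0$, so $g$ is a supersolution, it dominates $w$ on the boundary of $B(x_0,R)\cap\O$, and the maximum principle for $\Delta-\mu\v^{-2}$ gives $w(x_0)\le g(x_0)=(1-b)/\cosh(bR/\v)\le 2{\rm e}^{-bR/\v}$. Since $|a_\v-U_\v|$ equals $\sigma$ on $\O^+$ and $\tau$ on $\o_\dl$, this is \eqref{7.UepsCloseToaeps} with $c=b$.

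For the gradient bound \eqref{P7.GradUepsCloseToaeps} I would feed the $C^0$ estimate into interior elliptic regularity after rescaling at scale $\v$. Every point of $B(x_0,R/2)$ lies in $V_{R/2}$, so there $w\le C{\rm e}^{-cR/(2\v)}$ and $\v^2|\Delta w|=|w(1-w)(2-w)|$ (or its analogue for $\tau$) $\le C{\rm e}^{-cR/(2\v)}$. Setting $\widetilde w(y):=w(x_0+\v y)$ on $B(0,R/(2\v))$, one gets $\|\widetilde w\|_{L^\infty}+\|\Delta\widetilde w\|_{L^\infty}\le C{\rm e}^{-cR/(2\v)}$, so for $R\ge4\v$ the interior $W^{2,p}$ estimate (hence $C^1$, by Sobolev embedding in dimension $2$) on $B(0,1)$ yields $|\n\widetilde w(0)|\le C{\rm e}^{-cR/(2\v)}$, and undoing the rescaling $|\n U_\v(x_0)|=\v^{-1}|\n\widetilde w(0)|\le C\v^{-1}{\rm e}^{-cR/(2\v)}$; relabelling $c$ gives the claim. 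The range $R<4\v$ is covered by the standard scaling bound $|\n U_\v|\le C/\v$ (rescale the full equation by $\v$; its right-hand side is then bounded uniformly in $\v$), and points where $B(x_0,R/2)$ reaches $\p\O$ are handled by the same rescaling together with boundary Schauder estimates, using that $U_\v\equiv1$ on the smooth set $\p\O$.

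I expect the only genuinely delicate point to be the construction of a true \emph{supersolution} of $\Delta-\mu\v^{-2}$ in two dimensions: unlike on the line, ${\rm e}^{-\g r}$ fails because of the first-order term $\tfrac1r\p_r$, which is why the comparison must be localized on balls with a $\cosh$-type (equivalently, modified-Bessel $I_0$) barrier rather than with one built from $\dist(\cdot,\p\o_\dl)$, whose Laplacian has no sign. All constants stay $\v$-independent because $\mu=2b^2$ and the elliptic estimates have universal constants; the rest — the phase geometry of $B(x_0,R)$, the bookkeeping near $\p\O$ and for small $R$ — is routine and in the spirit of the corresponding estimate in \cite{LM1}.
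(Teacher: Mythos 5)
Your argument is correct and follows essentially the same route as the paper: a maximum-principle comparison on balls $B(x_0,R)$ with an explicit supersolution barrier yields the exponential $C^0$ estimate in each phase (using that $1-U_\v$ vanishes on $\p\O$, resp.\ $U_\v-b\ge0$ in $\o_\dl$), and the gradient bound then follows from elliptic estimates applied at scale $\v$ to the equation, whose right-hand side is exponentially small by the $C^0$ bound. The differences are only technical packaging: the paper uses the barrier ${\rm e}^{\gamma(|x|^2-R^2)}$ with $t=b(1+b)$ (following Lemma 2 of \cite{BBH1}) where you use a $\cosh$ barrier with $\mu=2b^2$, and for the gradient near $\p\O$ it invokes Lemmas A.1--A.2 of \cite{BBH1} together with a Green's function representation where you use rescaled interior/boundary $W^{2,p}$--Schauder estimates.
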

The proof of the Proposition \ref{P7.UepsCloseToaeps} is presented in the Appendix \ref{S.AppendixEstimateSpecialSol}.
\subsection{Upper Bounds}
\begin{prop}\label{P7.UpperboundAuxPb}Let $\displaystyle\xi=\frac{\v}{\delta}$.
\begin{enumerate}\item Upper bound in Case I: $M\geq d$

 There is a constant $C$ depending only on $g,\o$ and $\O$ s.t. we have
\begin{equation}\label{7.UpperboundAuxPb}
\inf_{H^1_{g}(\O)} F_\v (\cdot, \O) \leq \pi d b^2|\ln \xi|+\pi d |\ln\delta|+C.
\end{equation}
\item  Upper bound in Case II: $M<d$

There is a constant $C$ depending only on $g,\o$ and $\O$ s.t. for all $d_1,...,d_M\in\N$ s.t. $\sum d_i=d$ we have
\begin{equation}\label{7.UpperboundAuxPbCaseII}
\inf_{H^1_{g}(\O)} F_\v (\cdot, \O) \leq \pi d b^2|\ln \xi|+\pi \sum_i d_i^2 |\ln\delta|+C.
\end{equation}
\end{enumerate}
\end{prop}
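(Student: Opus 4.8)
The plan is to construct an explicit competitor $v\in H^1_g(\O)$ and bound $F_\v(v)$ from above; a single construction covers both cases. First I would fix a degree distribution $(d_1,\dots,d_M)$ with $d_i\in\N$ and $\sum_i d_i=d$ (in Case~I one takes $d_i\in\{0,1\}$, so exactly $d$ inclusions carry a unit degree). Choose $r_0\in(0,1)$ with $\overline{B(0,r_0)}\subset\o$, so that $B(a_i,r_0\dl)\subset\o_\dl^i$. Since $\dist(\overline{B(0,r_0)},\p\o)>0$ and the $a_i$'s are distinct interior points, for $\dl$ small the set $\bigcup_i B(a_i,r_0\dl)$ lies at distance $\ge c_0\dl$ from $\p\o_\dl$ for some $c_0>0$, so Proposition~\ref{P7.UepsCloseToaeps} applied with $R=c_0\dl$ gives $U_\v\le b+C{\rm e}^{-c/\xi}=b+o(1)$ on that set. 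Here $\xi\to0$: by \eqref{7.MainHyp}, $|\ln\dl|=o(|\ln\v|)$, whence $|\ln\xi|=|\ln\v|-|\ln\dl|\to\infty$.

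Next I would build $v$. Let $v^*$ be the canonical harmonic map of \cite{BBH} with singular set $\{a_i:d_i\ge1\}$, prescribed degrees $d_i$ there, and boundary datum $g$ on $\p\O$; it is $\S^1$-valued and smooth away from these points, independent of $\v$ and $\dl$, and near $a_i$ it behaves like $\bigl(\tfrac{x-a_i}{|x-a_i|}\bigr)^{d_i}{\rm e}^{i\psi_i}$ with $\psi_i$ smooth. Set $v:=v^*$ on $\O\setminus\bigcup_i B(a_i,r_0\dl)$. Inside each $B(a_i,r_0\dl)$ with $d_i\ge1$ I would place $d_i$ points at the vertices of a regular $d_i$-gon of radius $r_0\dl/2$ centered at $a_i$ (pairwise at distance $\asymp\dl$), and let $v$ be a standard model map there: $\S^1$-valued off the $\v$-cores around these points, of local degree $1$ around each, equal to $v^*$ on $\p B(a_i,r_0\dl)$, and decreasing linearly to $0$ inside each core.

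The energy then splits into three kinds of pieces. On $\O\setminus\bigcup_i B(a_i,r_0\dl)$ we have $|v|=1$, so the potential vanishes, and using $U_\v\le1$ together with the classical expansion of the Dirichlet energy of $v^*$ outside shrinking disks (see \cite{BBH}), $\tfrac12\int_{\O\setminus\bigcup_iB(a_i,r_0\dl)}U_\v^2|\n v|^2\le\tfrac12\int|\n v^*|^2=\pi\sum_i d_i^2|\ln\dl|+W_g+o(1)=\pi\sum_i d_i^2|\ln\dl|+O(1)$, since $g$ and the $a_i$'s are fixed. On each $B(a_i,r_0\dl)$ minus its cores, again $|v|=1$; rescaling $x=a_i+r_0\dl y$, using conformal invariance of the $2$D Dirichlet integral and the test-map estimate of \cite{BBH} for $d_i$ unit zeros in the unit disk (the rescaled trace of $v^*$ stays $C^1$-bounded as $\dl\to0$), together with $U_\v\le b+o(1)$, this piece costs $\le\tfrac12\bigl(b^2+o(1)\bigr)\bigl(2\pi d_i|\ln\xi|+O(1)\bigr)=\pi b^2 d_i|\ln\xi|+O(1)$. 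Each $\v$-core contributes $O(1)$ by the usual scaling of the linear profile. Summing over the $O(1)$ pieces yields $\inf_{H^1_g}F_\v\le F_\v(v)\le\pi b^2 d|\ln\xi|+\pi\sum_i d_i^2|\ln\dl|+C$, which is \eqref{7.UpperboundAuxPbCaseII}; in Case~I, $d_i\in\{0,1\}$ forces $\sum_i d_i^2=\sum_i d_i=d$, giving \eqref{7.UpperboundAuxPb}.

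The step I expect to require the most care is the model map inside $B(a_i,r_0\dl)$: matching the trace of $v^*$ on a circle that shrinks with $\dl$, keeping the gluing in $H^1$, and certifying the sharp count $\pi b^2 d_i|\ln\xi|+O(1)$. The underlying mechanism is (i) to split a degree-$d_i$ vortex into $d_i$ unit ones, so the expensive logarithm scales like $d_i$ rather than $d_i^2$, and (ii) to place the costly annulus $B(a_i,r_0\dl)\setminus B(a_i,\v)$ well inside the inclusion, so that its Dirichlet energy is weighted by $b^2$ (via Proposition~\ref{P7.UepsCloseToaeps}) rather than by $1$. Both points are routine, if somewhat lengthy, adaptations of the constructions in \cite{BBH} and \cite{LM1}.
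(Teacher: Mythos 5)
Your construction is correct and follows essentially the same route as the paper: glue an outer $\S^1$-valued map carrying degrees $d_i$ down to scale $\asymp\dl$ (weight $U_\v^2\le1$, cost $\pi\sum_i d_i^2|\ln\dl|+O(1)$) with inner maps inside each inclusion that split the degree into $d_i$ unit vortices with $\v$-cores, where Proposition \ref{P7.UepsCloseToaeps} makes the weight $b^2+O({\rm e}^{-c/\xi})$ and yields $\pi b^2 d_i|\ln\xi|+O(1)$. The only differences are cosmetic (the paper uses a simple radial profile in Case I and the explicit maps of \cite{BBH}, Remark I.5, in Case II, rather than the canonical harmonic map and a polygonal vortex placement), so no gap.
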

The proof of Proposition \ref{P7.UpperboundAuxPb} is given in Appendix \ref{S.ProofUpperBound}.

\subsection{Identifying bad discs}
\begin{lem}\label{L7.BadDiscsLemma}
Let $g_\v,g_0\in C^\infty(\p\O,\C)$ be s.t. $0\leq1-|g_\v|\leq \v$ and $g_\v\to g_0$ in $C^1(\p\O)$. Let also $\alpha_\v,\beta_\v\in L^\infty(\O,[b,1])$.

Consider the weighted Ginzburg-Landau functional
\[
F^w_\v(v)=\frac{1}{2}\int_\O\left\{\alpha_\v|\n v|^2+\frac{\beta_\v}{\v^2}(1-|v|^2)^2\right\}.
\]

Denote $v_\v$ a minimizer of $F^w_\v$ in $H^1_{g_\v}$. Then the following results hold:
\begin{enumerate}
\item Let  $\chi=\chi_\v\in(0,1)$ be s.t. $\chi\to0$. There are $\v_0>0$, $C>0$ and $C_1>0$ depending only on $b,\chi,\O,\|g_0\|_{C^1(\p\O)}$ s.t for $\v<\v_0$, if
\[
F^w_\v(v_\v,B(x,\v^{1/4})\cap\O)\leq \chi^2|\ln\v|-C_1,
\]
then
\[
|v_\v|\geq1-C\chi\text{ in }B(x,\v^{1/2})\cap\O.
\]
\item Let $\mu\in(0,1)$. Then there are $\v_0,C>0$ depending only on $b,\mu,\O,\|g_0\|_{C^1(\p\O)}$  s.t. for $\v<\v_0$, if
\[
F^w_\v(v_\v,B(x,\v^{1/4})\cap\O)\leq C|\ln\v|,
\]
then
\[
|v_\v|\geq\mu\text{ in }B(x,\v^{1/2})\cap\O.
\]
\end{enumerate}
\end{lem}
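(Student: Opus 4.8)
The plan is to follow the now-classical "clearing-out" / $\eta$-ellipticity argument of Bethuel--Brezis--Hélein (Theorem III.3 in \cite{BBH}) and its weighted refinements, with the twist that here the weights $\alpha_\v,\beta_\v$ are merely $L^\infty$ functions bounded below by $b$. The key point is that the lower bound $\alpha_\v,\beta_\v\geq b>0$ lets us absorb the weights into universal constants: $F^w_\v(v,\cdot)\geq \tfrac b2\int\{|\n v|^2+\tfrac1{\v^2}(1-|v|^2)^2\}$, so up to the factor $b$ the localized energy controls the \emph{unweighted} Ginzburg--Landau energy, and the standard clearing-out estimates apply on that unweighted functional. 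First I would record the Euler--Lagrange equation $-\Div(\alpha_\v\n v_\v)=\tfrac{\beta_\v}{\v^2}v_\v(1-|v_\v|^2)$ only to extract, via De Giorgi--Nash--Moser (the equation is uniformly elliptic since $b\le\alpha_\v\le1$) together with the a priori bound $|v_\v|\le 1+o(1)$ coming from the maximum principle / truncation, a uniform gradient bound $\|\n v_\v\|_{L^\infty(B(x,\v^{1/4})\cap\O)}\leq C/\v$ on interior discs, and the analogous bound near $\p\O$ using $g_\v\to g_0$ in $C^1$ and $0\le 1-|g_\v|\le\v$ to control the boundary contribution. This is the standard "$\|\n v_\v\|_\infty\le C/\v$" estimate and I would cite \cite{BBH} Theorem III.1 adapted to the weighted equation.

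Granting the gradient bound, the core of both assertions is a pointwise-to-energy comparison: if $|v_\v(x_0)|\le 1-\lambda$ for some $x_0$, then by the Lipschitz bound $|v_\v|\le 1-\lambda/2$ on the disc $B(x_0,c\lambda\v)$, whence
\[
F^w_\v(v_\v,B(x_0,c\lambda\v)\cap\O)\ \geq\ \frac{b}{2}\cdot\frac{1}{2\v^2}\Big(1-(1-\lambda/2)^2\Big)^2\cdot |B(x_0,c\lambda\v)\cap\O|\ \geq\ c'\lambda^6,
\]
a fixed positive amount of energy concentrated at scale $\v$. One then iterates/covers: as long as the modulus drops below the prescribed threshold somewhere in $B(x,\v^{1/2})\cap\O$, one can stack $\sim|\ln\v|$ disjoint such "bubbles" along a path connecting that point to where $|v_\v|$ is close to $1$ (such a point exists on $\p\O$ or can be produced by a mean-value/energy argument), forcing $F^w_\v(v_\v,B(x,\v^{1/4})\cap\O)\gtrsim \lambda^6|\ln\v|$ --- contradicting the hypothesis for a suitable choice of constants. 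For part (1) one takes $\lambda=C\chi$ and tracks the dependence of the additive constant $C_1$; for part (2) the threshold $\mu$ is fixed and $\lambda=1-\mu$, so the energy budget $C|\ln\v|$ with $C=C(\mu)$ small suffices. I would phrase the covering step as a lemma on "bad discs": extract from a Besicovitch/Vitali covering of $\{|v_\v|\le 1-\lambda/2\}\cap B(x,\v^{1/2})$ at scale $\v$ a disjoint subfamily, bound the number of discs below by a Lipschitz/connectedness argument, and sum.

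The main obstacle --- and the only place where this differs from a verbatim citation of \cite{BBH} --- is obtaining the $C/\v$ gradient bound and the clearing-out with \emph{only} $L^\infty$ weights and with the boundary data $g_\v$ varying (and not $\S^1$-valued, only $1-|g_\v|\le\v$). The interior part is genuinely fine: uniform ellipticity plus the scaling $v_\v(x)=w(\v x)$ reduce matters to elliptic estimates for $-\Div(A\n w)=Bw(1-|w|^2)$ with $\|A\|_\infty,\|B\|_\infty\le 1$, $A\ge b$, on a unit disc, where standard $W^{2,p}$/Schauder-free $L^\infty$-gradient bounds à la \cite{BBH} Theorem III.1 hold. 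The boundary part requires a little care: near $\p\O$ one flattens the boundary, uses that $g_\v$ is uniformly bounded in $C^1$ and $1-|g_\v|\le\v$, and invokes the boundary version of the gradient estimate (as in \cite{BBH}, or \cite{LM1}); the smallness $1-|g_\v|\le\v$ guarantees the boundary term in the energy is $O(\v^2\cdot|\p\O|)=o(1)$ and does not interfere with the clearing-out count. I would isolate this boundary gradient estimate as a preliminary step and otherwise keep the exposition parallel to the unweighted case, emphasizing that all constants depend on the data only through $b$, $\chi$ (resp.\ $\mu$), $\O$ and $\|g_0\|_{C^1(\p\O)}$, as claimed.
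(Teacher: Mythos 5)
Your proposal has two genuine gaps, one technical and one structural. The technical gap: your scheme rests on the gradient bound $\|\n v_\v\|_{L^\infty}\leq C/\v$, which you propose to extract from the Euler--Lagrange equation by De Giorgi--Nash--Moser. But the weights $\alpha_\v,\beta_\v$ are only assumed to lie in $L^\infty(\O,[b,1])$: for $-\Div(\alpha_\v\n v_\v)=\frac{\beta_\v}{\v^2}v_\v(1-|v_\v|^2)$ with merely measurable $\alpha_\v$, De Giorgi--Nash--Moser gives interior $C^{0,\alpha}$ bounds but no Lipschitz bound (by Meyers-type examples the gradient of such solutions need only be in $L^{2+\epsilon}$), so the rescaled estimate $|\n v_\v|\leq C/\v$ is simply not available; and even in the intended application $\alpha_\v=\hat U_\v^2$ has gradient of size $1/\v$ near $\p\o_\delta$, so one cannot restore it by pretending the coefficients are uniformly smooth. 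The structural gap: your clearing-out step does not yield a contradiction in this energy regime. If $|v_\v(x_0)|\leq1-\lambda$, the bubble argument produces one disc of radius $\sim\lambda\v$ carrying a fixed amount of energy $c\lambda^k=\mathcal{O}(1)$, which is perfectly compatible with an energy budget $\chi^2|\ln\v|-C_1$ (note $\chi^2|\ln\v|\to\infty$ in the applications, e.g.\ $\chi=|\ln\v|^{-1/3}$), or with $C(\mu)|\ln\v|$ in part 2. The claim that one can stack $\sim|\ln\v|$ disjoint bubbles along a path joining $x_0$ to a point where $|v_\v|$ is close to $1$ is unjustified: at a vortex core the set $\{|v_\v|\leq1-\lambda/2\}$ is a single disc of radius $\mathcal{O}(\v)$, so only one bubble is available, and $|\ln\v|$-growth of the energy never follows from pointwise smallness of the modulus alone (in the unweighted theory it comes from a nonzero degree on circles, which you never exhibit). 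Note also that bounding $F^w_\v$ below by $\frac b2$ times the unweighted energy only transfers lower bounds; $v_\v$ neither minimizes nor solves the unweighted problem, so the standard clearing-out theorems cannot be quoted for it directly.

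The paper's proof avoids both issues and uses only the minimality of $v_\v$, never its equation. Writing the energy of the annulus $B(x,\v^{1/4})\setminus\overline{B(x,\v^{1/2})}$ in polar coordinates and using $\alpha_\v,\beta_\v\geq b$, a mean-value argument over $r\in(\v^{1/2},\v^{1/4})$ (a range of logarithmic length $\frac14|\ln\v|$) produces a circle $\p B(x,r)$ with $r\int_{\p B(x,r)}\{|\p_\tau v_\v|^2+\v^{-2}(1-|v_\v|^2)^2\}\leq\frac{8}{b}\chi^2$; Cauchy--Schwarz gives oscillation $\mathcal{O}(\chi)$ on that circle, and the potential term forces $|v_\v|\geq1-C\chi$ there (otherwise the term $\frac{r^2}{\v^2}\chi^2$ would blow up, since $r\geq\v^{1/2}$). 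Hence $v_\v(\p B(x,r))$ lies, up to a rotation, in a cap $S=\{z\in\overline{B_1}\,:\,\Re z\geq1-C\chi\}$ cut off by a chord, and the Convexity Lemma \ref{L7.ConvexityLemma} --- valid for minimizers of weighted functionals with $L^\infty$ weights bounded below --- yields $v_\v(B(x,r))\subset S$, i.e.\ $|v_\v|\geq1-C\chi$ on $B(x,r)\supset B(x,\v^{1/2})$; near $\p\O$ the same argument runs on $B(x,r)\cap\O$, with the hypotheses $g_\v\to g_0$ in $C^1$ and $0\leq1-|g_\v|\leq\v$ controlling the boundary portion of the good circle, and part 2 is identical with $\chi$ replaced by the fixed tolerance $1-\mu$. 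To salvage your outline you would need to replace the bubble-stacking step by such a good-circle-plus-propagation argument (or a genuine degree-based lower bound) and to replace the gradient estimate by a device, like the Convexity Lemma, that tolerates $L^\infty$ weights.
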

Lemma \ref{L7.BadDiscsLemma} is proved in Appendix \ref{S.ProofEtaEllipticityLemma}.

\section{A model problem: one inclusion}\label{S7.ProofTheorem3}
By combining the results of Section \ref{S7.Maintools}, the proofs of both Theorem \ref{T7.THM1} and Theorem \ref{T7.THM2} are based on the analysis of two distinct problems:
\begin{enumerate}
\item A minimization problem of the Dirichlet functional among $\S^1$-valued map defined on a perforated domain.
\item The study of the minimizers $v_\v$ around an inclusion.
\end{enumerate}
This section focuses on the second problem. More precisely, we fix $\rho>0$ and study the minimization problem of $F_\v(\cdot,B(a_i,\rho))$ with variable boundary conditions.

Fix $\rho > 0$ and let $f_\v$, $f_0 \in C^{\infty}(\d B(0,\rho))$ be s.t. $f_0$ is $\S^1$-valued and s.t.
\begin{equation}\label{A1}
\|f_\v - f_0\|_{C^1(\d B(0, \rho))} \to0
\end{equation}
and
\begin{equation}\label{A2}
\||f_\v|-1\|_{L^2(\p B(0,\rho))} \leq C \v^2.
\end{equation}
Assume also that $\deg_{ \d B(0,\rho)}(f_\v) = \deg_{ \d B(0,\rho)}(f_0) = d_0 > 0$.


For $i\in\{1,...,M\}$ consider the minimization problem
\begin{equation}\label{modelGL}
F_\ve(v, B(a_i, \rho)) := \frac{1}{2} \int_{B(a_i, \rho)} \left\{U_{\ve}^2 |\n v|^2 + \frac{1}{2 \ve^2} U_{\ve}^4(1-|v|^2)^2 \right\}\, {\rm d}x
\end{equation}
in the class
\begin{equation}\label{modelclass}
H_{f_\v,i}^1 := \{v \in H^1(B(a_i, \rho),\C)\,|\,\tr_{\p B(a_i, \rho)} v(x)= f_\ve(x-a_i)\}.
\end{equation}

Without loss of generality assume $a_i=0$. Let $v_\ve$ be a minimizer of (\ref{modelGL}) in (\ref{modelclass}). Performing the change of variables $\hat x  = \frac{x}{\dl}$ in
(\ref{modelGL}), we have
\begin{equation}\label{variable_change}
F_\ve(v_\ve, B(0, \rho)) = \hat F_{\xi}(\hat v_\v, B(0, \frac{\rho}{\dl})) := \frac{1}{2} \int_{B(0, \frac{\rho}{\dl})} \left\{{\hat U}_{\ve}^2 |\n \hat v|^2 + \frac{1}{2 \xi^2} {\hat U}_{\ve}^4(1-|\hat v|^2)^2 \right\}\, {\rm d} \hat x.
\end{equation}
Here, for a map $w\in H^1(B(0,\rho))$, we denote $\hat w (\hat x) := w( \delta \hat x)$ and $\xi =\displaystyle \frac{\ve}{\dl}$.
The class (\ref{modelclass}) under this change of variables becomes
\begin{equation}\label{modelclassblownup}
\hat H_{f_\v}^1 := \left\{\hat v \in H^1(B(0, \frac{\rho}{\dl}),\C)\,|\,\tr_{\p B(0, \frac{\rho}{\dl})} \hat v( \cdot) = f_\ve(\dl \cdot)\right\}.
\end{equation}
Note that the above rescaling enables us to fix the pinning domain independently of $\v$.

{The asymptotic behavior of $\hat{v}_\v$ will be obtained in several steps:
\begin{enumerate}[$\bullet$]
\item We first establish a bound for $|\hat{v}_\v|$ (Proposition \ref{P7.ConvOfBadDiscsTo0}). This bound will allow us to localize (roughly) the vortices of $v_\v$ near the inclusion.
\item We next establish sharp energy estimates (Proposition \ref{P7.LocalizationOfTheEnergy}) and use them to obtain the uniform convergence of solutions away from the inclusion (Proposition \ref{P7.SmoothBound} and Corollary \ref{C7.SmoothConv}). We establish the strong $H^1$ convergence of solutions away from the "vortices" (Proposition \ref{P7.AsymptoticResultatNonZeroDeg}) and derive the equation satisfied by the limiting map (Proposition \ref{P7.equationforphistar}).
\item The last step is the location and quantization of the vorticity: for small $\v$, the minimizers admits exactly $d_0$ zeros, and all the zeros lie in the inclusion and have a degree equal to $1$ (Propositions \ref{P7.AsymptoticResultatNonZeroDeg} and \ref{P7.uniq0}).
\end{enumerate}
}
Following the same lines as for Proposition \ref{P7.UpperboundAuxPb}, one may prove
\begin{prop}\label{P7.UpperBndTh3} Let $\hat v_\ve$ be a minimizer of $\hat F_\xi$ in (\ref{modelclass}). Then there is a constant $C$ independent of $\v$ s.t. we have
\begin{equation}\label{Uppperbndfor3}
 F_\v(v_\v, B(0, \rho))=\hat F_\xi (\hat v_\ve, B(0, \frac{\rho}{\delta})) \leq \pi d_0 b^2|\ln \xi|+\pi d_0^2 |\ln\delta|+C.
\end{equation}
\end{prop}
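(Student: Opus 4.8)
The plan is to construct an explicit test map $\hat w$ in the class $\hat H^1_{f_\v}$ whose energy $\hat F_\xi(\hat w, B(0,\rho/\delta))$ is bounded by the right-hand side of \eqref{Uppperbndfor3}; since $\hat v_\v$ is a minimizer, the bound follows. The construction mirrors the one behind Proposition \ref{P7.UpperboundAuxPb}: we place $d_0$ vortices of degree $1$ inside the inclusion $\omega$ (say at fixed distinct points $p_1,\dots,p_{d_0}\in\omega$, well separated and at fixed distance from $\partial\omega$), glue a standard degree-one profile near each, interpolate across the annular region $B(0,\rho/\delta)\setminus\omega$ in the $\delta$-scaled picture, and match the prescribed boundary data $f_\v(\delta\,\cdot)$ on $\partial B(0,\rho/\delta)$.

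First I would recall, after the change of variables, that $\hat U_\v$ equals (up to exponentially small errors, by Proposition \ref{P7.UepsCloseToaeps}) the piecewise constant function equal to $b$ on $\omega$ and $1$ outside. So in the blown-up picture the weight in $\hat F_\xi$ is essentially $a_\delta$ rescaled, and the parameter governing the core scale is $\xi=\v/\delta$. On each small disc $B(p_m,r_0)\subset\omega$ I would use the usual Ginzburg-Landau radial profile for a single degree-one vortex with coefficient $b$ and parameter $\xi$, which contributes $\pi b^2|\ln\xi| + O(1)$ per vortex, for a total of $\pi d_0 b^2|\ln\xi| + O(1)$; this accounts for the first term. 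Then, away from the vortex cores, I take $\hat w$ to be $\S^1$-valued: inside $\omega$ away from the cores the Dirichlet energy of an $\S^1$-valued map with $d_0$ vortices of degree $1$ contributes a bounded amount plus the weight $b^2$, while across the annulus $B(0,\rho/\delta)\setminus\omega$ — whose conformal modulus is comparable to $|\ln\delta|$ — an $\S^1$-valued map of total degree $d_0$ must carry Dirichlet energy at least, and can be chosen to carry energy at most, $\pi d_0^2|\ln\delta| + O(1)$ (this is the standard $\frac{\pi}{2}\int|\nabla\theta|^2$ estimate on a long thin annulus for a phase winding $d_0$ times). Near $\partial B(0,\rho/\delta)$ I would correct $\hat w$ to match $f_\v(\delta\,\cdot)$ exactly, using \eqref{A1}–\eqref{A2}: since $f_\v$ is close to the $\S^1$-valued $f_0$ in $C^1$ and $\||f_\v|-1\|_{L^2}\le C\v^2$, this correction costs only $O(1)$ (in fact $o(1)$) in energy, and all of it is supported in a fixed-width collar where the weight $\hat U_\v$ is essentially $1$.

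Collecting the three contributions gives $\hat F_\xi(\hat w, B(0,\rho/\delta)) \le \pi d_0 b^2|\ln\xi| + \pi d_0^2|\ln\delta| + C$, which by minimality of $\hat v_\v$ yields \eqref{Uppperbndfor3}; undoing the change of variables via \eqref{variable_change} gives the first equality. The main technical point — the "hard part," though it is routine given the machinery already set up — is the bookkeeping of the interpolation across the three regions (cores, bulk of $\omega$, the long annulus, and the boundary collar) so that all the gluing errors stay $O(1)$ and the exponentially small discrepancy between $\hat U_\v$ and the piecewise constant profile is absorbed; here I would invoke Proposition \ref{P7.UepsCloseToaeps} to control $|\hat U_\v - a|$ and $|\nabla \hat U_\v|$ in the region $V_R$ away from $\partial\omega$, exactly as in the proof of Proposition \ref{P7.UpperboundAuxPb}, so the present statement follows "following the same lines."
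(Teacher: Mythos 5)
Your construction is correct and is essentially the paper's intended argument: the paper proves Proposition \ref{P7.UpperBndTh3} by saying it follows the same lines as Proposition \ref{P7.UpperboundAuxPb}, i.e.\ exactly the test map you build ($d_0$ degree-one cores at scale $\xi$ inside $\o$ weighted by $b^2$, an $\S^1$-valued map of degree $d_0$ across the annulus of modulus $|\ln\delta|$, boundary matching via \eqref{A1}--\eqref{A2}, and Proposition \ref{P7.UepsCloseToaeps} to replace $\hat U_\v$ by $a$). Only make sure the boundary correction collar has width comparable to $\rho/\delta$ in the rescaled variables (as in \eqref{7.ExtensionPropertyAucPron}), not width $O(1)$, so that its cost stays $O(1)$.
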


\subsection{Uniform convergence of $|\hat v_\v|$ to $1$ away from inclusions}

\begin{prop}\label{P7.ConvOfBadDiscsTo0}
Let  $K\subset\R^2$ be a compact set such that $\o \subset K$ and $\dist(\d K,\o)>0$. Then there is $C>0$ independent of $\v$ s.t. for sufficiently small $\v$ we have
\[
|\hat v_\v|\geq1-C{|\ln\v|}^{-1/3} \text{ in }B_{\frac{\rho}{\dl}}\setminus K.
\]
\end{prop}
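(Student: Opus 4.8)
The plan is to establish the lower bound $|\hat v_\v| \geq 1 - C|\ln\v|^{-1/3}$ on $B_{\rho/\delta}\setminus K$ by a covering/bad-disc argument, combining the upper bound of Proposition \ref{P7.UpperBndTh3} with the $\eta$-ellipticity estimate of Lemma \ref{L7.BadDiscsLemma}. First I would observe that on the region $B_{\rho/\delta}\setminus K$ the weight $\hat U_\v$ is close to $1$: indeed, by Proposition \ref{P7.UepsCloseToaeps} (transported through the change of variables $\hat x = x/\delta$, so that distances of order $1$ in the $\hat x$ variable correspond to distances of order $\delta$ in $x$, still $\gg \v$ by hypothesis (H)), one has $|\hat U_\v - 1| \leq C\mathrm{e}^{-c\delta/\v}$ and $|\n\hat U_\v| \leq C\delta\v^{-1}\mathrm{e}^{-c\delta/\v}$ uniformly away from $\omega$. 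Hence on such a region $\hat F_\xi(\hat v_\v, \cdot)$ is, up to negligible multiplicative and additive errors, the standard Ginzburg-Landau functional with parameter $\xi = \v/\delta$, and $\hat v_\v$ is (essentially) a minimizer of a weighted functional of the type considered in Lemma \ref{L7.BadDiscsLemma} with $\alpha_\v = \beta_\v = \hat U_\v^2 \in [b^2,1]$ and small parameter $\xi$.

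Next I would cover $B_{\rho/\delta}\setminus K'$ (where $K'$ is a slightly enlarged compact neighborhood of $\omega$, still compactly contained in $K$) by a finite collection of discs $B(x_j, \xi^{1/4})$ with bounded overlap, $x_j \in B_{\rho/\delta}\setminus K'$. The number of such discs is $O((\rho/\delta)^2 \xi^{-1/2}) = O(\delta^{-2}(\v/\delta)^{-1/2})$, which is polynomial in $1/\v$ and hence $\mathrm{e}^{o(|\ln\v|)}$; more usefully, $\ln(\#\text{discs}) = O(|\ln\v|)$. Call a disc \emph{good} if $\hat F_\xi(\hat v_\v, B(x_j,\xi^{1/4})) \leq \chi^2|\ln\xi| - C_1$ with $\chi = \chi_\v := |\ln\v|^{-1/3}$ (so $\chi \to 0$ and $\chi^2|\ln\xi| \to \infty$ since $|\ln\xi| = |\ln\v| - |\ln\delta| \sim |\ln\v|$ by (H)), and \emph{bad} otherwise. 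On every good disc, part (1) of Lemma \ref{L7.BadDiscsLemma} gives $|\hat v_\v| \geq 1 - C\chi$ on the concentric disc of radius $\xi^{1/2}$; covering $B_{\rho/\delta}\setminus K$ by such smaller discs then yields the desired pointwise bound \emph{provided no bad disc meets $B_{\rho/\delta}\setminus K$}. The key point is therefore to rule out bad discs in the region away from the inclusion.

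To exclude bad discs, I would argue by energy accounting. Each bad disc carries energy at least $\chi^2|\ln\xi| - C_1 = |\ln\v|^{-2/3}|\ln\xi| - C_1$. If there were at least one bad disc entirely contained in $B_{\rho/\delta}\setminus K$ — which in the original scale lies at distance $\gtrsim \mathrm{dist}(\d K,\omega)$ from the inclusion, a region where $\hat U_\v \equiv 1 + o(1)$ — one can do better: standard lower-bound techniques (clearing-out / the lower bound on the energy of a disc where $|\hat v_\v|$ drops below, say, $1/2$) force such a disc to carry energy at least of order $\pi\beta_{\min}|\ln\xi|$, but we do not even need the sharp constant. The contradiction I would exploit is the following comparison: away from the inclusion the boundary data $f_\v$ and the topological constraints do not compel $\hat v_\v$ to vanish, so any energy concentration there is wasteful. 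More precisely, by part (2) of Lemma \ref{L7.BadDiscsLemma} any disc with $\hat F_\xi \leq C|\ln\xi|$ already has $|\hat v_\v| \geq \mu$; iterating, the only discs where $|\hat v_\v|$ can be genuinely small are those carrying at least a fixed fraction of $|\ln\xi|$ worth of energy, and since the total energy is $\leq \pi d_0 b^2|\ln\xi| + \pi d_0^2|\ln\delta| + C = O(|\ln\xi|)$, there are only $O(1)$ such discs; these must be "pinned" near the inclusion because placing a vortex-carrying core in the region $\hat U_\v \equiv 1$ costs strictly more than placing it where $\hat U_\v = b < 1$ (the weight $b^2$ in front of $|\n\hat v|^2$ cheapens gradient energy inside $\omega$) — comparing with a competitor that pushes any such core into $\omega$ lowers the energy below the established upper bound, a contradiction. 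Hence for small $\v$ there are no bad discs meeting $B_{\rho/\delta}\setminus K$, and the good-disc bound $|\hat v_\v| \geq 1 - C|\ln\v|^{-1/3}$ holds throughout that region.

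The main obstacle is the last paragraph: making precise that energy concentration cannot occur in the region $\hat U_\v \equiv 1$ away from $\omega$. This requires a genuine comparison argument (not merely the upper and lower bounds in isolation) showing that relocating any near-vortex from the unit-weight region into the $b$-weighted inclusion strictly decreases $\hat F_\xi$ by an amount that beats the $O(|\ln\delta|)$ and $O(1)$ error terms — this is exactly the "pinning" mechanism and is where the gap $b<1$ and hypothesis (H) (ensuring $|\ln\xi|\sim|\ln\v|$ dominates $|\ln\delta|$) are both essential. Everything else is a routine Vitali-covering plus bookkeeping with the two parts of Lemma \ref{L7.BadDiscsLemma}.
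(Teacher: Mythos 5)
Your covering set-up and the use of Lemma \ref{L7.BadDiscsLemma} with $\chi=|\ln\v|^{-1/3}$ are in the right spirit, but the heart of the proposition --- excluding ``bad'' discs in the region where $\hat U_\v\approx 1$ --- is exactly the step you leave as an announced ``comparison argument'', so the proposal has a genuine gap at the only non-routine point. Moreover, the mechanism you sketch (relocating a vortex core from the unit-weight region into $\o$) does not quite fit the problem: a disc that is bad at the $|\ln\v|^{-1/3}$ level carries only about $\chi^2|\ln\v|=|\ln\v|^{1/3}$ of energy and need not contain any zero or nonzero degree, so there is in general no ``core'' to relocate, and your counting step (``only $O(1)$ discs carry a fixed fraction of $|\ln\xi|$'') controls dips of $|\hat v_\v|$ below a fixed $\mu$, not below $1-C|\ln\v|^{-1/3}$, which is the bound actually claimed.

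The paper closes this step without any relocation. It argues by contradiction at a single point $x_\v$ far from $\o_\delta$ where $|v_\v|<1-C|\ln\v|^{-1/3}$; the contrapositive of Lemma \ref{L7.BadDiscsLemma} (at scale $\v$, in the original variables, where $U_\v\approx1$ by Proposition \ref{P7.UepsCloseToaeps}) gives the free energy $\geq|\ln\v|^{1/3}-\mathcal{O}(1)$ on $B(x_\v,\v^{1/4})$ as in \eqref{7.ContradictionForUnifConv}. After an explicit $\S^1$-valued extension, Theorem 4.1 of \cite{SS1} produces vortex balls of total radius $\ll\delta\,\dist(\o,\p K)$ with the lower bound \eqref{7.LowerBoundOnSandierSerfatyCovering1}, and the identity \eqref{identity} splits $F_\v$ into a $b^2$-weighted part, bounded below by $\pi d_0 b^2\ln\frac{\delta}{\v}-C$ via these balls (this accounts for the full topological cost wherever the vortices sit), plus an excess part with weight $U_\v^2-b^2\approx 1-b^2$ on the bad disc, bounded below by $(1-b^2)|\ln\v|^{1/3}-C'$. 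Comparing with the upper bound \eqref{7.UpperboundAuxPb}, whose slack over $\pi d_0 b^2|\ln\xi|$ is only $\pi d_0^2|\ln\delta|+C$, yields a contradiction precisely because hypothesis \eqref{7.MainHyp} says $|\ln\delta|=o(|\ln\v|^{1/3})$. In other words, the extra cost of a modulus dip away from the inclusion is only $(1-b^2)|\ln\v|^{1/3}$, not $\pi(1-b^2)|\ln\xi|$ as your pinning heuristic suggests, and it is the cube in \eqref{7.MainHyp} that makes this small gain sufficient; to complete your argument you would need to supply an estimate of exactly this type rather than the vortex-relocation comparison you describe.
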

\begin{proof}
Using Lemma \ref{L7.BadDiscsLemma} with $\displaystyle\chi=|\ln \ve|^{-1/3}$, we find that there exist $C,C_1>0$ s.t. for $\v>0$ small, if $\displaystyle F_\v(v_\v,B(x,\v^{{1/4}}))< |\ln \ve|^{\frac{1}{3}}-C_1$ then $|v_\v|\geq 1-C\chi$ in $B(x,\v^{1/2})$.

We argue by contradiction. Assume that there exists a compact $K$ containing $\w$ s.t. $\dist(\p K, \o)>0$ and s.t., up to a subsequence, there is a sequence of points $\hat x_{\ve} \in B(0, \frac{\rho}{\delta}) \setminus K$ s.t. $|\hat v_{\ve}(\hat x_{\ve})| < 1 - C |\ln \ve|^{-1/3}$ with $C$ given by Lemma \ref{L7.BadDiscsLemma}. Note that $\hat x_{\ve} \in B(0, \frac{\rho}{\delta}) \setminus K$ corresponds to $x_{\ve} \in B(0, \rho) \setminus (\dl\cdot K)$.
From Lemma \ref{L7.BadDiscsLemma}  and Proposition \ref{P7.UepsCloseToaeps}
\begin{equation}\label{7.ContradictionForUnifConv}
\frac{1}{2}\int_{B(x_\v, {\v^{1/4}})} \left\{|\n v_{\v}|^2+\frac{1}{2\v^2} (1-|v_{\v}|^2)^2\right\} \geq |\ln \v|^{1/3} - \mathcal{O}(1).
\end{equation}
We claim that due to the conditions (\ref{A1}), (\ref{A2}), we may extend $v_{\v}$ (keeping the same notation for the extension) to a smooth map, still denoted $v_{\v}$, s.t.

\begin{equation}\label{7.ExtensionPropertyAucPron}
\begin{cases}
v_{\v}(x)=x^{d_0}/|x|^{d_0}\text{ in }B(0, 3\rho) \setminus \overline{B(0, 2\rho)}
\\
\displaystyle\int_{B(0, 3\rho) \setminus \overline{B(0, \rho)}}(1-|v_{\v}|^2)^2 \leq C \v^2
\\
|\n v_{\v}|\leq C\text{ with }C>0\text{ is }\text{independent of $\v$}
\end{cases}.
\end{equation}
To make the above extension explicit, choose $\zeta\in C^\infty(\R^+,[0,1])$ s.t. $\zeta\equiv0$ in $[0,\rho]$ and $\zeta\equiv1$ in $[2\rho,3\rho]$ and take
\[
v_{\v}(s{\rm e}^{\imath\theta})=\left[\zeta(s)+(1-\zeta(s))|f_\v(\rho{\rm e}^{\imath\theta})|\right]{\rm e}^{\imath\left[d_0\theta+(1-\zeta(s))\phi_{\v}(\rho{\rm e}^{\imath\theta})\right]}.
\]
Here $x=s{\rm e}^{\imath\theta},\,s>0\text{ and }\phi_{\v}\in C^\infty(\p B(0,\rho),\R)\text{ is s.t. }f_\v (\rho{\rm e}^{\imath\theta}) =|f_\v|{\rm e}^{\imath\left(d_0\theta+\phi_{\v}\right)}$.
Consequently, as follows from \eqref{7.UpperboundAuxPb} and \eqref{7.ExtensionPropertyAucPron}, this map satisfies
\[
\frac{1}{2}\int_{B(0, 3\rho)}{\left\{|\n v_{\v}|^2+\frac{1}{2\v^2}(1-|v_{\v}|^2)^2\right\}}\leq  C |\ln\v|.
\]
Therefore, the map $v_{\v}$ in $B(0, 3\rho)$ fulfills the conditions of Theorem 4.1 in \cite{SS1}. This theorem guarantees that:
\begin{enumerate}[$\bullet$]
\item we may cover the set $\{x\in B(0,3\rho-\v/b) \, | \, |v_{\v}(x)| < 1 - ({\v}/{b})^{1/8}\}$ with a finite collection of disjoint balls $\mathcal{B}^\v := \{B_j^\v\}$;
\item the radius of $\mathcal{B}^\v$, ${\rm rad}(\mathcal{B}^\v)$, which is defined as the sum of the radii of the balls $B_j^\v$, ${\rm rad}(\mathcal{B}^\v):=\sum_j{\rm rad}(B_j^\v)$, satisfies ${\rm rad}(\mathcal{B}^\v) \leq  10^{-2}\dl \cdot{\rm dist}(\w, \d K)$;
\item denoting $d_j=\deg_{\p  B^\v_j}(v_{\v})$ if $B^\v_j\subset B(0,3\rho-{\v}/{b})$ and $d_j=0$ otherwise;
\end{enumerate}
we have
\begin{equation}\label{7.LowerBoundOnSandierSerfatyCovering1}
\frac{1}{2}\int_{\mathcal{B}^\v}{\left\{|\n v_{\v}|^2+\frac{b^2}{2\v^2}(1-|v_{\v}|^2)^2\right\}} \geq \pi \sum_j|d_j| \ln\frac{\delta}{\v}-C.
\end{equation}
Note that, by the construction of $v_{\v}$ in $B(0,3\rho)\setminus\overline{B(0,\rho)}$, if we have $\deg_{\p  B^\v_j}(v_{\v})\neq0$ then $B_j^\v\subset B(0,5\rho/2)$. Thus $d_j=\deg_{\p  B^\v_j}(v_{\v})$ for all $j$.

In order to obtain a lower bound for $F_{\v}$ we use the identity
\begin{eqnarray}\label{identity}
F_{\v}(v_{\v}, B(x_{\v}, \v^{1/4}) \cup \mathcal{B}^\v)  &=& \frac{b^2}{2}\int_{B(x_{\v}, \v^{1/4}) \cup \mathcal{B}^\v}\left\{ |\n v_{\v}|^2+\frac{b^2}{2\v^2}(1-|v_{\v}|^2)^2\right\} \\
\nonumber &&+\frac{1}{2}\int_{B(x_{\v}, \v^{1/4}) \cup \mathcal{B}^\v}\Big\{ (U_{\v}^2-b^2) |\n v_{\v}|^2
\\\nonumber&&\phantom{aaaassssssssss}+\frac{1}{2\v^2}(U_{\v}^4-b^4)(1-|v_{\v}|^2)^2\Big\}.
\end{eqnarray}
The first integral in \eqref{identity} is estimate via \eqref{7.LowerBoundOnSandierSerfatyCovering1}:
\begin{eqnarray}\nonumber
\frac{b^2}{2}\int_{B(x_{\v}, \v^{1/4}) \cup \mathcal{B}^\v}\left\{ |\n v_{\v}|^2+\frac{b^2}{2\v^2}(1-|v_{\v}|^2)^2\right\}& \geq&
\pi b^2 \sum_j|\deg_{\p B_j}(v_{\v})| \ln \frac{\delta}{\v}-C
\\\label{lb1}
& \geq& \pi b^2 d_0 \ln\frac{\delta}{\v} - C_0.
\end{eqnarray}
By combining \eqref{7.ContradictionForUnifConv} and Proposition \ref{P7.UepsCloseToaeps}, we have for small $\v$
\begin{eqnarray}\nonumber
\frac{1}{2}\int_{B(x_{\v}, \v^{1/4}) \cup \mathcal{B}^\v} \left\{(U_{\v}^2-b^2) |\n v_{\v}|^2+\frac{1}{2\v^2}(U_{\v}^4-b^4)(1-|v_{\v}|^2)^2\right\} \\
\label{lb2} \geq \frac{1-b^2}{2} \int_{B(x_{\v}, \v^{1/4})}\left\{|\n v_{\v}|^2+\frac{1}{2\v^2}(1-|v_{\v}|^2)^2\right\} -C\geq (1-b^2) |\ln \v|^{1/3} - C';
\end{eqnarray}
here we rely on the assumption \eqref{7.MainHyp} on the behavior of $\delta(\v)$ as $\v\to0$.

Substituting the bounds (\ref{lb1}) and (\ref{lb2}) in (\ref{identity}) we obtain a contradiction with (\ref{7.UpperboundAuxPb}). This completes the proof of Proposition \ref{P7.ConvOfBadDiscsTo0}.
\end{proof}

\subsection{Distribution of Energy in $B(0, \frac{\rho}{\dl})$}

\begin{prop}\label{P7.LocalizationOfTheEnergy}
The following estimates hold:
\begin{equation}\label{7.EnergyOutsideTheHole}
\frac{1}{2}\int_{B(0, \rho/\dl)\setminus \overline{B(0,1)}}{\hat U_\v^2|\n\hat v_\v|^2}=\pi d_0^2|\ln\delta|+\mathcal{O}(1),
\end{equation}
and (recall that $\xi=\displaystyle\frac{\v}{\delta}$)
\begin{equation}\label{7.EnergyOfTheHole}
\hat F_\xi(\hat v_\v,B(0,1))=\pi d_0 b^2|\ln\xi|+\mathcal{O}(1).
\end{equation}


\end{prop}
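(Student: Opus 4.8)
The plan is to establish \eqref{7.EnergyOutsideTheHole} and \eqref{7.EnergyOfTheHole} by combining a sharp lower bound (via a Sandier--Serfaty type ball construction) with the matching upper bound already recorded in Proposition \ref{P7.UpperBndTh3}. First, I would work in the rescaled picture, splitting the energy as $\hat F_\xi(\hat v_\v, B(0,\rho/\delta)) = \hat F_\xi(\hat v_\v, B(0,1)) + \frac12\int_{B(0,\rho/\delta)\setminus\overline{B(0,1)}}\{\hat U_\v^2|\nabla\hat v_\v|^2 + \frac{1}{2\xi^2}\hat U_\v^4(1-|\hat v_\v|^2)^2\}$. On the annular region $B(0,\rho/\delta)\setminus\overline{B(0,1)}$, Proposition \ref{P7.ConvOfBadDiscsTo0} (applied with, say, $K=\overline{B(0,1)}$, or a slightly larger compact if needed to keep $\dist(\partial K,\omega)>0$) gives $|\hat v_\v|\geq 1 - C|\ln\v|^{-1/3}$, so $\hat v_\v$ is essentially $\S^1$-valued there; moreover by Proposition \ref{P7.UepsCloseToaeps} we have $\hat U_\v = 1 + \text{(exponentially small)}$ away from $\partial\omega$, so the weight is negligible. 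Writing $\hat v_\v = \rho_\v e^{\imath\psi_\v}$ locally, the potential term on this annulus contributes at most $C$ (using $\int(1-|\hat v_\v|^2)^2\leq C\v^2$ type control propagated from the boundary via \eqref{A2} and the extension, together with the fact that $|\hat v_\v|$ is close to $1$), and the Dirichlet term is controlled below by the topological lower bound $\pi d_0^2 |\ln(\rho/\delta)| - C = \pi d_0^2|\ln\delta| - C$ coming from the degree $d_0$ on the inner circle and the fixed outer radius $\rho/\delta$.

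The key step is the lower bound for $\hat F_\xi(\hat v_\v, B(0,1))$. Here I would invoke the Sandier--Serfaty vortex-ball construction (Theorem 4.1 in \cite{SS1}), exactly as in the proof of Proposition \ref{P7.ConvOfBadDiscsTo0}: cover the ``bad set'' $\{|\hat v_\v| < 1 - \xi^{1/8}\}$ inside $B(0,1)$ (after the usual extension to a slightly larger ball to handle the boundary) by disjoint balls $\mathcal B^\xi$ with total radius $\to 0$, assign degrees $d_j$, and obtain
\[
\frac12\int_{\mathcal B^\xi}\Big\{|\nabla\hat v_\v|^2 + \frac{b^2}{2\xi^2}(1-|\hat v_\v|^2)^2\Big\} \geq \pi\Big(\sum_j |d_j|\Big)|\ln\xi| - C.
\]
Since $\hat U_\v \geq b$, this yields $\hat F_\xi(\hat v_\v, B(0,1)) \geq \pi b^2 (\sum_j|d_j|)|\ln\xi| - C$. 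The point is that $\sum_j |d_j| \geq d_0$: the total degree of $\hat v_\v$ on $\partial B(0,1)$ is $d_0 > 0$ (this is where Proposition \ref{P7.ConvOfBadDiscsTo0} is crucial — it forces all vorticity into $B(0,1)$, since $|\hat v_\v|$ is bounded away from $0$ outside, so the degree on $\partial B(0,1)$ equals $d_0$ rather than being split with the far region), and $\sum_j d_j$ over balls inside $B(0,1)$ equals that degree, so $\sum_j|d_j|\geq|\sum_j d_j| = d_0$. Combining this lower bound $\pi b^2 d_0|\ln\xi| - C$ for $\hat F_\xi(\hat v_\v,B(0,1))$ with the lower bound $\pi d_0^2|\ln\delta| - C$ for the annular Dirichlet energy, and comparing against the upper bound $\pi d_0 b^2|\ln\xi| + \pi d_0^2|\ln\delta| + C$ of Proposition \ref{P7.UpperBndTh3}, squeezes both quantities to their claimed values up to $\mathcal O(1)$: each lower bound must in fact be sharp because their sum already matches the upper bound. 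In particular this also forces $\sum_j|d_j| = d_0$ (no extra vortex--antivortex pairs), which is a useful byproduct.

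I expect the main obstacle to be the careful bookkeeping at the interface circle $\partial B(0,1)$ and near $\partial\omega$, where the weight $\hat U_\v$ is not close to $1$: one must ensure that the ball construction in $B(0,1)$ and the ``almost-$\S^1$'' analysis on the annulus patch together without double-counting or losing a $|\ln\delta|$ or $|\ln\xi|$ factor, and that the error terms genuinely stay $\mathcal O(1)$ rather than growing. Concretely, I would (i) fix a clean splitting radius (using $B(0,1)$ as written, with $\omega\Subset B(0,1)$), (ii) on the annulus, use the lifting $\hat v_\v = \rho_\v e^{\imath\psi_\v}$ and the lower bound $\int_{B(0,\rho/\delta)\setminus\overline{B(0,1)}}|\nabla\psi_\v|^2 \geq 2\pi d_0^2 \ln(\rho/\delta) - C$ for $\S^1$-valued-type maps of degree $d_0$ with $C^1$-bounded outer data, absorbing the $\rho_\v$-corrections via $|\rho_\v - 1|\leq C|\ln\v|^{-1/3}$ and the exponential smallness of $\hat U_\v - 1$ from Proposition \ref{P7.UepsCloseToaeps}, (iii) on $B(0,1)$, run Theorem 4.1 of \cite{SS1} after the standard extension, and (iv) close the argument by the upper-bound/lower-bound squeeze. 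The one subtlety worth flagging is that $\xi = \v/\delta \to 0$ still (since $|\ln\delta|^3/|\ln\v|\to 0$ forces $\delta \gg \v$ on the log scale but $\xi \to 0$), so $|\ln\xi| = |\ln\v| - |\ln\delta| \to \infty$ and the ball-construction hypotheses on the small parameter $\xi$ are legitimately satisfied.
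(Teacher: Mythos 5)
Your proposal is correct and follows essentially the same route as the paper: a Sandier--Serfaty ball-construction lower bound $\pi d_0 b^2|\ln\xi|-C$ in $B(0,1)$ (using $\hat U_\v\geq b$ and the confinement of vorticity from Proposition \ref{P7.ConvOfBadDiscsTo0}), a degree-$d_0$ lower bound $\pi d_0^2|\ln\delta|-C$ for the weighted Dirichlet energy on the annulus with the modulus correction absorbed via $|\hat v_\v|\geq 1-C|\ln\v|^{-1/3}$ and hypothesis \eqref{7.MainHyp}, and a squeeze against the upper bound of Proposition \ref{P7.UpperBndTh3}. The only superfluous point is your claim that the annulus potential term is bounded by propagating \eqref{A2} from the boundary: this is neither needed nor available a priori (it is in fact a byproduct of the squeeze, since that term is nonnegative), but omitting it does not affect the argument.
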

\begin{proof}We start by proving that
\begin{equation}\label{7.EnergyOfTheHoleBis00}
\hat F_\xi(\hat v_\v,B(0,1)) \geq \pi d_0 b^2|\ln\xi|-\mathcal{O}(1).
\end{equation}
As before, we use Theorem 4.1 in \cite{SS1}: for $0<r<r_0:=10^{-2}\cdot\dist(\o,\p B(0,1))$, there are $C>0$ and a finite covering by disjoint balls $B^\v_1,...,B^\v_N$ (with the sum of radii at most $r$) of the set $\{\hat x\in B(0,1-\xi/b) \,|\,1-|\hat v_{\v}(\hat x)|\geq({\xi}/{b})^{1/8}\}$ s.t.
\begin{equation}\label{7.EnergyOfTheHoleAux00}
\frac{1}{2}\int_{\cup_jB^\v_j}{\left\{|\n \hat v_{\v}|^2+\frac{b^2}{2\xi^2}(1-|\hat v_{\v}|^2)^2\right\}}\geq\pi D|\ln\xi|-C,
\end{equation}
with $D=\sum_{j}|d_j|$ and
\[
d_j=\begin{cases}\deg_{\p B^\v_j}(\hat v_{\v})&\text{if }B^\v_j\subset B(0, 1- {\xi}/{b}) \\0&\text{otherwise}\end{cases}.
\]
From Proposition \ref{P7.ConvOfBadDiscsTo0}, for $\v$ small, if $\deg_{\p B^\v_j}(\hat v_{\v})\neq0$ then $B^\v_j\subset B(0,1-r_0) \subset B(0, 1 - \xi/b)$. It
follows that $D\geq d_0$ and then \eqref{7.EnergyOfTheHoleBis00} is a direct consequence of \eqref{7.EnergyOfTheHoleAux00} and the bound $\hat{U}_\v\geq b$.

We next prove that there is $C > 0$ s.t.
\begin{equation}\label{7.EnergyOutsideTheHoleBis0}
\frac{1}{2}\int_{B(0, \frac{\rho}{\dl})\setminus \overline{B(0,1)}}{\hat U_\v^2|\n\hat v_\v|^2}\geq\pi d_0^2|\ln\delta| - C.
\end{equation}
By Proposition \ref{P7.ConvOfBadDiscsTo0}, $|v_{\v}|\geq1/2$  in $B(0, \frac{\rho}{\delta}) \setminus \overline{B(0, 1)}$, therefore, $\hat w_{\v} :=  \frac{\hat v_{\v}}{|\hat v_{\v}|}$ is well-defined in this domain.
Observe that
\begin{equation}\label{for_S10}
\frac{1}{2} \int_{B(0, \frac{\rho}{\delta}) \setminus \overline{B(0, 1)}} |\nabla \hat w_{\v}|^2 \geq \frac{1}{2} \int_{B(0, \frac{\rho}{\delta}) \setminus \overline{B(0, 1)}} \left|\nabla \frac{z^{d_0}}{|z|^{d_0}}\right|^2=
 \pi d_0^2 \ln \frac{\rho}{\delta} .
\end{equation}
We claim that (\ref{7.EnergyOutsideTheHoleBis0}) holds with $C = \pi d_0^2 |\ln {\rho}| + 1$ (for small $\v$). By contradiction, assume (\ref{7.EnergyOutsideTheHoleBis0}) does not hold.
Then, up to a subsequence, we have
\begin{equation}\label{assumption0}
\frac{1}{2}\int_{B(0,\frac{\rho}{\delta})\setminus \overline{B(0,1)}}{\hat U_{\v}^2|\n\hat v_{\v}|^2} < \pi d_0^2\ln\frac{\rho}{\delta}- 1.
\end{equation}
On the other hand, we have
\[
|\nabla \hat v_{\v}|^2 = |\hat v_{\v}|^2 |\nabla \hat w_{\v}|^2 + |\nabla |\hat v_{\v}||^2
\]
and therefore
\begin{equation}\label{aux0}
\int_{B(0, \frac{\rho}{\delta}) \setminus \overline{B(0, 1)}} |\nabla \hat v_{\v}|^2 \geq \int_{B(0, \frac{\rho}{\delta}) \setminus \overline{B(0, 1)}} |\nabla \hat w_{\v}|^2  - \int_{B(0, \frac{\rho}{\delta}) \setminus \overline{B(0, 1)}} (1-|\hat v_{\v}|^2) |\nabla \hat w_{\v}|^2.
\end{equation}
Since $|\hat v_{\v}| \geq \frac{1}{2}$ in $B(0, \frac{\rho}{\delta}) \setminus \overline{B(0, 1)}$ we have $|\nabla \hat w_{\v}| \leq 2 |\nabla \hat v_{\v}|$.
Therefore, by (\ref{assumption0}), Proposition \ref{P7.ConvOfBadDiscsTo0} and \eqref{7.MainHyp} we estimate the last term in (\ref{aux0}):
\begin{equation}\label{last_term0}
\int_{B(0, \frac{\rho}{\delta}) \setminus \overline{B(0, 1)}} (1-|\hat v_{\v}|^2) |\nabla \hat w_{\v}|^2 \leq
C_2 |\ln {\v}|^{-\frac{1}{3}} \int_{B(0, \frac{\rho}{\delta})\setminus \overline{B(0, 1)}}|\nabla \hat v_{\v}|^2 \leq C_3 \frac{|\ln \delta|}{|\ln {\v}|^{\frac{1}{3}}} \to 0.
\end{equation}
Combining (\ref{for_S10}), (\ref{aux0}) and (\ref{last_term0}), we find that
\[
\int_{B(0, \frac{\rho}{\delta}) \setminus \overline{B(0, 1)}} |\nabla \hat v_{\v}|^2 \geq \pi d_0^2 \ln \frac{\rho}{\delta} - o_{\v }(1).
\]
Since $|\hat U_{\v}-1|\leq C\xi^4$ in $B_{\frac{\rho}{\delta}}\setminus \overline{B(0,1)}$ (see Proposition \ref{P7.UepsCloseToaeps}), we obtain a
contradiction with (\ref{assumption0}), and (\ref{7.EnergyOutsideTheHoleBis0}) follows. Comparing the lower bounds (\ref{7.EnergyOfTheHoleBis00}) and (\ref{7.EnergyOutsideTheHoleBis0}) with the upper bound in Proposition \ref{P7.UpperBndTh3}, the Proposition \ref{P7.LocalizationOfTheEnergy} follows.
\end{proof}

Using exactly the same techniques as in the proof of Proposition \ref{P7.LocalizationOfTheEnergy}, one may easily prove the following estimate.
\begin{cor}\label{C7.BoundEnergyU}
For any $R_2 > R_1 \geq 1$
\begin{equation*}
F_\xi(\hat v_\v, B(0, R_2) \setminus \overline{B(0, R_1)}) = \mathcal{O}(1).
\end{equation*}
\end{cor}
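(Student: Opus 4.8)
The plan is to reduce the statement, by monotonicity of the nonnegative integrand with respect to the domain, to the single estimate $\hat F_\xi(\hat v_\v,B(0,R_2)\setminus\overline{B(0,1)})=\mathcal O(1)$ for a fixed $R_2\geq1$, and to obtain it by subtracting sharp lower bounds on the \emph{outer} region $A:=B(0,\rho/\dl)\setminus\overline{B(0,R_2)}$ from the global estimates already available: Proposition \ref{P7.UpperBndTh3}, \eqref{7.EnergyOutsideTheHole} and \eqref{7.EnergyOfTheHole}. I would split the annular energy into its Dirichlet part $\tfrac12\int\hat U_\v^2|\n\hat v_\v|^2$ and its potential part $\tfrac1{4\xi^2}\int\hat U_\v^4(1-|\hat v_\v|^2)^2$ and show that each is $\mathcal O(1)$.

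The Dirichlet part is the only part that needs real work, and there it is essentially a transcription of the computation \eqref{for_S10}--\eqref{last_term0} in the proof of Proposition \ref{P7.LocalizationOfTheEnergy}, carried out on $A$ rather than on $B(0,\rho/\dl)\setminus\overline{B(0,1)}$. First, Proposition \ref{P7.ConvOfBadDiscsTo0} applied with $K=\overline{B(0,1)}$ (admissible because $\overline\o\subset B(0,1)$) gives $|\hat v_\v|\geq1-C|\ln\v|^{-1/3}\geq\tfrac12$ throughout $B(0,\rho/\dl)\setminus\overline{B(0,1)}$ for small $\v$, so $\hat w_\v:=\hat v_\v/|\hat v_\v|$ is a well-defined $\S^1$-valued map there; since $|\hat v_\v|$ does not vanish on this annulus and the trace on $\p B(0,\rho/\dl)$ has degree $d_0$, one has $\deg_{\p B(0,r)}(\hat w_\v)=d_0$ for every $r\in(1,\rho/\dl)$. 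Then, using $|\n\hat v_\v|^2\geq|\hat v_\v|^2|\n\hat w_\v|^2$, the elementary lower bound $\tfrac12\int_A|\n\hat w_\v|^2\geq\pi d_0^2\ln\tfrac{\rho/\dl}{R_2}=\pi d_0^2|\ln\dl|+\mathcal O(1)$ for $\S^1$-valued maps of degree $d_0$ on the annulus $A$, the estimate $|\hat U_\v-1|\leq C\xi^4$ on $A$ (as in the last lines of the proof of Proposition \ref{P7.LocalizationOfTheEnergy}, via Proposition \ref{P7.UepsCloseToaeps}), and the bound $\int_A|\n\hat w_\v|^2\leq4\int_A|\n\hat v_\v|^2=\mathcal O(|\ln\dl|)$ obtained from $\hat U_\v\geq b$ and \eqref{7.EnergyOutsideTheHole}, I would conclude
\[
\tfrac12\int_A\hat U_\v^2|\n\hat v_\v|^2\geq\bigl(1-C\xi^4\bigr)\bigl(1-C'|\ln\v|^{-1/3}\bigr)\,\pi d_0^2\ln\tfrac{\rho/\dl}{R_2}\geq\pi d_0^2|\ln\dl|-\mathcal O(1),
\]
the error contributions $\xi^4|\ln\dl|$ and $|\ln\dl|\,|\ln\v|^{-1/3}$ both tending to $0$ thanks to \eqref{7.MainHyp}. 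Subtracting this lower bound on $A$ from \eqref{7.EnergyOutsideTheHole} yields $\tfrac12\int_{B(0,R_2)\setminus\overline{B(0,1)}}\hat U_\v^2|\n\hat v_\v|^2=\mathcal O(1)$.

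For the potential part I would combine Proposition \ref{P7.UpperBndTh3} with \eqref{7.EnergyOfTheHole} to get $\hat F_\xi(\hat v_\v,B(0,\rho/\dl)\setminus\overline{B(0,1)})=\hat F_\xi(\hat v_\v,B(0,\rho/\dl))-\hat F_\xi(\hat v_\v,B(0,1))\leq\pi d_0^2|\ln\dl|+\mathcal O(1)$, and then subtract the Dirichlet lower bound $\tfrac12\int_{B(0,\rho/\dl)\setminus\overline{B(0,1)}}\hat U_\v^2|\n\hat v_\v|^2\geq\pi d_0^2|\ln\dl|-\mathcal O(1)$ from \eqref{7.EnergyOutsideTheHole}; this forces $\tfrac1{4\xi^2}\int_{B(0,\rho/\dl)\setminus\overline{B(0,1)}}\hat U_\v^4(1-|\hat v_\v|^2)^2=\mathcal O(1)$, and a fortiori the potential energy on $B(0,R_2)\setminus\overline{B(0,1)}$ is $\mathcal O(1)$. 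Adding the two parts gives $\hat F_\xi(\hat v_\v,B(0,R_2)\setminus\overline{B(0,1)})=\mathcal O(1)$, and restricting to the smaller annulus $B(0,R_2)\setminus\overline{B(0,R_1)}$ finishes the argument. I do not expect a genuine obstacle: the proof is a routine variant of that of Proposition \ref{P7.LocalizationOfTheEnergy}, and the only delicate point is the same one, namely absorbing the $(1-|\hat v_\v|^2)$- and $(\hat U_\v-1)$-errors against the $\mathcal O(|\ln\dl|)$ Dirichlet energy — which is exactly the role of hypothesis \eqref{7.MainHyp}.
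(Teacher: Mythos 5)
Your proof is correct and is essentially the argument the paper intends: the corollary is stated as following from "the same techniques as Proposition \ref{P7.LocalizationOfTheEnergy}", and you implement exactly those — the degree-based Dirichlet lower bound on the outer annulus (transcribing \eqref{for_S10}--\eqref{last_term0}, with the errors absorbed via \eqref{7.MainHyp} and Proposition \ref{P7.UepsCloseToaeps}), the lower bound \eqref{7.EnergyOfTheHole} in the hole, and the matching global upper bound of Proposition \ref{P7.UpperBndTh3}, so that the intermediate annulus can only carry $\mathcal{O}(1)$ energy.
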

\subsection{Convergence in $C^\infty(K)$ for a compact $K$ s.t. $K\cap\overline{\o}=\emptyset$}
\begin{prop}\label{P7.SmoothBound}
Let $K\subset\R^2\setminus\overline{\o}$ be a smooth compact set. Then we have
\begin{equation}\label{7.BoundInCKaKa}
\hat v_\v\text{ is bounded in }C^k(K)\text{ for all }k\geq0
\end{equation}
and there is $C_K>0$ s.t.
\begin{equation}\label{5.EstMofUsiMirArg}
|\hat{v}_\v|\geq1-C_K \xi^2\text{ in }K.
\end{equation}
\end{prop}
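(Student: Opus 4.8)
The plan is to show that, on a fixed neighbourhood of $K$, $\hat v_\v$ is a (local) minimizer of a harmless perturbation of the standard Ginzburg--Landau functional which does not vanish and carries only $\mathcal O(1)$ energy; then \eqref{7.BoundInCKaKa}--\eqref{5.EstMofUsiMirArg} follow from the classical interior estimates for Ginzburg--Landau away from its vortices \cite{BBH}.

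\smallskip
\noindent\emph{Step 1 (set-up and ingredients).} Fix smooth compact sets $K\Subset K_1\Subset K_2\subset\R^2\setminus\overline{\o}$; since $\rho/\delta\to\infty$ we have $K_2\subset B(0,\rho/\delta)$ for small $\v$, so $\hat v_\v$ satisfies $-\Div(\hat U_\v^2\n\hat v_\v)=\xi^{-2}\hat U_\v^4\hat v_\v(1-|\hat v_\v|^2)$ throughout $K_2$. I will use three facts.
\begin{enumerate}[$\bullet$]
\item By Proposition~\ref{P7.ConvOfBadDiscsTo0}, $|\hat v_\v|\ge 1-C|\ln\v|^{-1/3}\ge\tfrac12$ in $K_2$ for $\v$ small; also $|\hat v_\v|$ is uniformly bounded by the maximum principle. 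Hence $\hat v_\v$ has no zero in $K_2$.
\item Since $\hat U_\v(\hat x)=U_\v(\delta\hat x)$ and $\delta\hat x$ stays at distance $\gtrsim\delta\,\dist(K_2,\o)$ from $\p\o_\delta$, where $a_\v\equiv1$, Proposition~\ref{P7.UepsCloseToaeps} gives $\|\hat U_\v-1\|_{C^1(K_2)}\le C\mathrm e^{-c/\xi}$. Thus the equation above is an exponentially small perturbation, in $C^0(K_2)$, of $-\Delta\hat v_\v=\xi^{-2}\hat v_\v(1-|\hat v_\v|^2)$.
\item Running Proposition~\ref{P7.ConvOfBadDiscsTo0} with $K$ replaced by an arbitrarily small neighbourhood of $\o$, the degree-carrying Sandier--Serfaty balls in the proof of Proposition~\ref{P7.LocalizationOfTheEnergy} lie in that neighbourhood; hence the $|\ln\xi|$-part of the energy concentrates there, and together with Corollary~\ref{C7.BoundEnergyU} (for the part of $K_2$ outside $\overline{B(0,1)}$) this yields the local bound $\hat F_\xi(\hat v_\v,K_2)=\mathcal O(1)$. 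In particular $\int_{K_2}|\n\hat v_\v|^2\le C$ and $\int_{K_2}(1-|\hat v_\v|^2)^2\le C\xi^2$.
\end{enumerate}

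\smallskip
\noindent\emph{Step 2 (reduction to the classical estimates).} Put $\ell:=\tfrac12\dist(K_1,\p K_2)>0$. For every $x_0\in K_1$, $\hat v_\v$ is, on $B_\ell(x_0)\subset K_2$, a solution of (a perturbation of) the Ginzburg--Landau equation with small parameter $\xi$, with $\mathcal O(1)$ energy on $B_\ell(x_0)$ and with $|\hat v_\v|\ge\tfrac12$ there; rescaling $B_\ell(x_0)$ to the unit ball, this is a Ginzburg--Landau solution with effective parameter $\xi/\ell\to0$, $\mathcal O(1)$ energy and no vortex. The interior analysis of \cite[Ch.~III]{BBH} (such configurations converge in $C^k_{\rm loc}$ to a regular $\S^1$-valued harmonic map, which forces $\xi^{-2}(1-|\hat v_\v|^2)$ to stay bounded) then gives, uniformly in $x_0$ and $\v$,
\[
\|1-|\hat v_\v|^2\|_{L^\infty(B_{\ell/2}(x_0))}\le C\xi^2,\qquad \|\hat v_\v\|_{C^k(B_{\ell/2}(x_0))}\le C_k\ \ (k\ge0).
\]
Covering $K$ by finitely many balls $B_{\ell/2}(x_0)$, $x_0\in K_1$, gives \eqref{7.BoundInCKaKa}, and since $|\hat v_\v|\ge\sqrt{1-C\xi^2}\ge 1-C_K\xi^2$ also \eqref{5.EstMofUsiMirArg} holds. (Once the first estimate above is known, the $C^k$ bounds can also be obtained directly by a Schauder bootstrap on $-\Div(\hat U_\v^2\n\hat v_\v)=\xi^{-2}\hat U_\v^4\hat v_\v(1-|\hat v_\v|^2)$, whose right-hand side is then bounded in $L^\infty(K_1)$.)

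\smallskip
\noindent Apart from the $\mathcal O(1)$ local energy bound of Step~1, which re-uses the proof of Proposition~\ref{P7.LocalizationOfTheEnergy}, the substance is entirely in Step~2, i.e.\ in the Bethuel--Brezis--H\'elein interior analysis: the main obstacle is that the generic gradient bound $|\n\hat v_\v|\le C/\xi$ is far too weak, so one must turn the absence of vortices in $K_2$, together with the energy bound, into a $\xi$-independent bound for $\n\hat v_\v$. The pinning weight $\hat U_\v$ is irrelevant here, being smooth, $\ge b$, and exponentially close to $1$ on $K_2$.
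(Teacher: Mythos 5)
Your proposal is correct in substance, but it follows a genuinely different route from the paper's. The paper never works with the weighted equation for $\hat v_\v$ directly: it first notes $E_\xi(\hat U_\v,K)=\mathcal O(1)$ by Proposition~\ref{P7.UepsCloseToaeps}, then uses the substitution-lemma identity \eqref{7.ExpandingLocalEnergy}, whose boundary term is killed by the gradient estimate \eqref{P7.GradUepsCloseToaeps}, to convert the local bound $\hat F_\xi(\hat v_\v,K)=\mathcal O(1)$ into $E_\xi(\hat U_\v\hat v_\v,K)=\mathcal O(1)$; since $a_\delta\equiv1$ on the region corresponding to $K$, both $\hat U_\v$ and $\hat U_\v\hat v_\v$ are \emph{exact} solutions of the unweighted Ginzburg--Landau equation with modulus at most $1$ and $\mathcal O(1)$ energy, so Theorem~1 of \cite{M1} gives uniform $C^k(K)$ bounds for both, and dividing by $\hat U_\v\geq b$ yields \eqref{7.BoundInCKaKa}; finally \eqref{5.EstMofUsiMirArg} is deduced from the equation once the $C^k$ bound is known (your maximum-principle computation for $1-|\hat v_\v|^2$ is exactly this step, done in the opposite order). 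You instead keep the weighted equation, use the absence of zeros ($|\hat v_\v|\geq\tfrac12$ from Proposition~\ref{P7.ConvOfBadDiscsTo0}) together with local minimality, and import the Bethuel--Brezis--H\'elein ``bounded energy, no vortex'' interior package. What each buys: the paper's multiplication trick needs no lower bound on $|\hat v_\v|$ and reduces everything to a quotable theorem about genuine GL solutions, with no re-proving of interior estimates for a perturbed equation; your route avoids \cite{M1} and the energy decomposition, but you must justify that the BBH analysis (stated for minimizers of the unweighted functional) survives the exponentially small weight and applies to local minimizers on balls with their own boundary data -- believable, and in the spirit of the adaptations the paper itself performs in Propositions~\ref{P7.AsymptoticResultatNonZeroDeg} and \ref{P7.equationforphistar}, but not free; also your parenthetical Schauder bootstrap only yields $C^{1,\alpha}$ directly, since pushing to all $k$ requires control of derivatives of $\xi^{-2}(1-|\hat v_\v|^2)$ (i.e.\ the polar-decomposition/BBH machinery), so it cannot replace Step~2. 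Note finally that the localized bound $\hat F_\xi(\hat v_\v,\cdot)=\mathcal O(1)$ near $K$, including the part of $K$ inside $B(0,1)$, is needed by both arguments; your Step~1 makes explicit (via rerunning Propositions~\ref{P7.ConvOfBadDiscsTo0}--\ref{P7.LocalizationOfTheEnergy} and Corollary~\ref{C7.BoundEnergyU}) a point the paper uses only implicitly, which is a small plus of your write-up.
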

\begin{proof}
From Proposition \ref{P7.UepsCloseToaeps}
\begin{equation}\label{7.BoundEnergyU}
E_\xi(\hat U_\v,K)=\frac{1}{2}\int_{K}{|\n \hat U_\v|^2+\frac{1}{2\xi^2}(1-\hat U_\v^2)^2}=\mathcal{O}(1).
\end{equation}
As in \cite{LM1}, the following expansion holds
\begin{equation}\label{7.ExpandingLocalEnergy}
E_\xi(\hat U_\v \hat v_\v,K)=E_\xi(\hat U_\v ,K)+\hat F_\xi(\hat v_\v,K)+\int_{\p K}{(|\hat v_\v|^2-1)\hat U_\v\p_\nu \hat U_\v}.
\end{equation}
Using \eqref{P7.GradUepsCloseToaeps}, we have
\[
\int_{\p K}{(|\hat v_\v|^2-1)\hat U_\v\p_\nu \hat U_\v}=o_\v(1).
\]
With \eqref{7.BoundEnergyU} and \eqref{7.ExpandingLocalEnergy}, we conclude that $E_\xi(\hat U_\v \hat v_\v,K)=\mathcal{O}(1)$.
Since $\hat U_\v$ and $\hat U_\v \hat v_\v$ satisfy the Ginzburg-Landau equation $-\Delta u = \frac{1}{\xi^2}u(1-|u|^2)$ in $K$, as well as $|\hat U_\v| \leq 1$
and $|\hat U_\v \hat v_\v| \leq 1$. Theorem 1 in \cite{M1} implies that
\[
\hat U_\v\text{ and }\hat U_\v\hat v_\v\text{ are bounded in }C^k(K)\text{ for all }k\geq 0.
\]
It follows that $\hat v_\v\text{ is bounded in }C^k(K)\text{ for each }k\geq0$. On the other hand, using the fact that $\hat{v}_\v$ is bounded in $C^k(K)$ together with the equation of $\hat{v}_\v$, we find that $1 - |\hat v_\v|^2 \leq C_K \xi^2$ in $K$.
\end{proof}
\begin{cor}\label{C7.SmoothConv}
For $K\subset\R^2\setminus\overline{\o}$, up to a subsequence, there is some $v_0\in C^\infty(K,\S^1)$ s.t. $\hat v_{\v}\to v_0$ in $C^\infty(K)$.
\end{cor}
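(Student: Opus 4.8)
The plan is to obtain the corollary directly from the uniform bounds of Proposition \ref{P7.SmoothBound} by a compactness and diagonal argument, and then to identify the limit as $\S^1$-valued using the modulus estimate \eqref{5.EstMofUsiMirArg}.

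As a preliminary remark I would note that $\xi=\v/\dl\to0$: hypothesis \eqref{7.MainHyp} forces $|\ln\dl|=o(|\ln\v|)$, so that $\ln\xi=\ln\v-\ln\dl\to-\infty$, hence $\xi\to0$. Now fix the smooth compact set $K\subset\R^2\setminus\overline{\o}$. By \eqref{7.BoundInCKaKa}, for every $k\geq0$ the sequence $(\hat v_\v)$ is bounded in $C^{k+1}(K)$; in particular the $k$-th order derivatives of $\hat v_\v$ are uniformly bounded and uniformly Lipschitz on $K$, so by the Arzel\`a--Ascoli theorem $(\hat v_\v)$ is precompact in $C^k(K)$. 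A standard diagonal extraction over $k\in\N$ then yields a subsequence $\v_n\downarrow0$ and a limit map $v_0$ with $\hat v_{\v_n}\to v_0$ in $C^k(K)$ for every $k\geq0$; consequently $v_0\in C^\infty(K)$ and $\hat v_{\v_n}\to v_0$ in $C^\infty(K)$.

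It remains to check that $v_0$ takes values in $\S^1$. Passing to the limit in \eqref{5.EstMofUsiMirArg}, i.e. in $|\hat v_{\v_n}|\geq1-C_K\xi_n^2$ with $\xi_n\to0$, together with the bound $|\hat v_\v|\leq1$ (equivalently, using the $C^0$ convergence just established), gives $|v_0|\equiv1$ on $K$. Hence $v_0\in C^\infty(K,\S^1)$, which is the assertion.

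There is essentially no obstacle in this argument; the only point deserving care is that the $C^{k+1}$ bounds be genuinely uniform in $\v$ up to $\p K$ and not merely on interior subsets — which is exactly what Proposition \ref{P7.SmoothBound} supplies — and that $K$ be regular enough for Arzel\`a--Ascoli to apply on $K$ itself, guaranteed here by the assumed smoothness of $K$.
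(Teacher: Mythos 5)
Your argument is correct and is precisely the (implicit) proof behind Corollary \ref{C7.SmoothConv} in the paper: Arzel\`a--Ascoli plus a diagonal extraction from the uniform $C^k(K)$ bounds of Proposition \ref{P7.SmoothBound}, and then \eqref{5.EstMofUsiMirArg} with $\xi\to0$ to force $|v_0|\geq1$. The one imprecision is your appeal to ``$|\hat v_\v|\leq1$'': what is actually available is $|\hat U_\v\hat v_\v|\leq1$ (maximum principle), which on $K$ gives only $|\hat v_\v|\leq \hat U_\v^{-1}\leq 1+C\xi^4$ via Proposition \ref{P7.UepsCloseToaeps} (the $C^0$ convergence alone does not supply an upper bound); this still yields $|v_0|\leq1$, so the conclusion stands.
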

We are now in position to bound the potential part of the energy.
\begin{cor}\label{C7.Pohozaev_bound}
There exists $C>0$ independent of $\v$ s.t.
\begin{equation}\label{Pohozaev_bound}
\frac{1}{\v^2} \int_{B(0,\rho)} (1-| v_{\v}|^2)^2=\frac{1}{\xi^2} \int_{B(0,\rho/\delta)} (1-|\hat v_{\v}|^2)^2 \leq C.
\end{equation}
\end{cor}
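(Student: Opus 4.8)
The plan is to establish the bound $\xi^{-2}\int_{B(0,\rho/\delta)}(1-|\hat v_\v|^2)^2\leq C$ by splitting the domain $B(0,\rho/\delta)$ into three pieces: a neighborhood $K$ of the rescaled inclusion $\overline\o$ (say $K=B(0,2)$), an intermediate annulus, and the far region $B(0,\rho/\delta)\setminus\overline{B(0,1)}$ where $\hat v_\v$ is close to an $\S^1$-valued map. The identity \eqref{variable_change} for the rescaled functional $\hat F_\xi$ already gives, for \emph{any} piece $A$, the bound $\frac{1}{2\xi^2}\int_A\hat U_\v^4(1-|\hat v_\v|^2)^2\leq \hat F_\xi(\hat v_\v,A)$, so since $\hat U_\v\geq b$, it suffices to control $\hat F_\xi(\hat v_\v,A)$ on each piece and, in the far region, to improve the bound because there the energy density is only $\mathcal{O}(1)$ but the potential term still needs a genuine $\xi^{-2}$ estimate.

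First I would handle the compact region $K=B(0,2)\supset\overline\o$: directly from \eqref{variable_change} and the upper bound \eqref{Uppperbndfor3} one does \emph{not} get $\mathcal{O}(1)$ (the energy there is $\sim\pi d_0b^2|\ln\xi|$), so instead I would invoke a Pohozaev-type identity. Applying the Pohozaev identity on $B(0,R)$ for a fixed $R$ (e.g. $R$ slightly larger than $2$, in the region where $K\cap\overline\o=\emptyset$ already partially applies) to the equation satisfied by $\hat U_\v\hat v_\v$ (or working directly with the rescaled Ginzburg--Landau equation for $\hat v_\v$ with weight $\hat U_\v^2$), one obtains
\[
\frac{1}{2\xi^2}\int_{B(0,R)}\hat U_\v^4(1-|\hat v_\v|^2)^2
= \text{(boundary terms on }\p B(0,R)\text{)}.
\]
The boundary terms involve $|\n\hat v_\v|^2$, $\hat U_\v$, $\p_\nu\hat U_\v$ and $(1-|\hat v_\v|^2)^2$ on $\p B(0,R)$; since $\p B(0,R)\subset\R^2\setminus\overline\o$, Proposition \ref{P7.SmoothBound} (with its companion estimate \eqref{5.EstMofUsiMirArg}) together with Proposition \ref{P7.UepsCloseToaeps} show all of these are $\mathcal{O}(1)$. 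This yields the bound on $B(0,R)$, in particular on $K$. For the far region $B(0,\rho/\delta)\setminus\overline{B(0,1)}$, Proposition \ref{P7.SmoothBound} gives the \emph{pointwise} estimate $1-|\hat v_\v|^2\leq C_K\xi^2$ only on fixed compact sets, which is not enough globally; instead I would use that on this annulus $\hat U_\v\hat v_\v$ solves the Ginzburg--Landau equation with $|\hat U_\v\hat v_\v|\leq 1$, apply the Pohozaev identity on the annulus $B(0,\rho/\delta)\setminus\overline{B(0,1)}$, and use Corollary \ref{C7.BoundEnergyU} to bound the resulting boundary terms on both $\p B(0,1)$ and $\p B(0,\rho/\delta)$ by $\mathcal{O}(1)$ — note on $\p B(0,\rho/\delta)$ the trace is controlled by \eqref{A1}--\eqref{A2}, and on $\p B(0,1)$ by the $C^k$ bounds of Proposition \ref{P7.SmoothBound}. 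Adding the two contributions and using $\hat U_\v\geq b$ gives the claim, and then \eqref{Pohozaev_bound} follows after undoing the change of variables $\hat x=x/\delta$ (which scales $\xi^{-2}\int_{B(0,\rho/\delta)}=\v^{-2}\int_{B(0,\rho)}$ exactly, since $(1-|v|^2)^2$ has no derivatives and the area factor $\delta^2$ cancels $\delta^{-2}$ from $\xi^{-2}=\delta^2/\v^2$... more precisely one checks the scaling directly as in \eqref{variable_change}).

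The main obstacle is the derivation and bookkeeping of the Pohozaev identity with the variable coefficient $\hat U_\v^2$ (equivalently, applying it to $u=\hat U_\v\hat v_\v$ and then subtracting the corresponding identity for $\hat U_\v$ alone, controlling the cross terms via \eqref{P7.GradUepsCloseToaeps} and \eqref{7.UepsCloseToaeps}), and ensuring the boundary terms are genuinely $\mathcal{O}(1)$ uniformly in $\v$ — this is where Propositions \ref{P7.UepsCloseToaeps}, \ref{P7.SmoothBound} and Corollary \ref{C7.BoundEnergyU} do the work. Once those ingredients are in place the rest is routine. (Alternatively, if the paper's convention is that "Pohozaev bound" just means combining the energy upper bound \eqref{Uppperbndfor3} with the lower bounds of Proposition \ref{P7.LocalizationOfTheEnergy} to squeeze the potential term — but that only bounds $\frac{1}{\xi^2}\int\hat U_\v^4(1-|\hat v_\v|^2)^2$ up to an $\mathcal{O}(1)$ \emph{plus} the leading $|\ln\xi|,|\ln\delta|$ terms cancel — the cleaner route is the Pohozaev identity above, which I would adopt.)
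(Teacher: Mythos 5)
Your treatment of the region containing the inclusion has a genuine gap. On a ball $B(0,R)\supset\overline\o$ the map $\hat U_\v\hat v_\v$ does \emph{not} solve the homogeneous Ginzburg--Landau equation: it solves $-\Delta u=\xi^{-2}u(a^2-|u|^2)$ with the discontinuous coefficient $a$ (equivalently, $\hat v_\v$ solves the equation with weight $\hat U_\v^2$). A Pohozaev identity for this equation carries, besides the boundary terms you list, \emph{interior} terms supported in the transition layer of $\hat U_\v$ around $\p\o$, of the form $\xi^{-2}\int (x\cdot\n\hat U_\v^4)(1-|\hat v_\v|^2)^2$ and $\int(x\cdot\n\hat U_\v^2)|\n\hat v_\v|^2$ (in the $a$-formulation, surface terms on $\p\o$ weighted by $\xi^{-2}$). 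In that layer $|\n\hat U_\v|\sim\xi^{-1}$, and neither $1-|\hat v_\v|^2$ nor $\int_{\rm layer}|\n\hat v_\v|^2$ is known to be $\mathcal{O}(\xi)$ at this stage --- that smallness is essentially what the corollary is meant to provide. The estimates you invoke to control the cross terms, \eqref{7.UepsCloseToaeps} and \eqref{P7.GradUepsCloseToaeps}, hold only at distance $\geq R$ from $\p\omega_\delta$ and say nothing in the layer; there is no sign argument either ($\o$ is not assumed star-shaped, so $x\cdot\n\hat U_\v^2$ has no sign), and subtracting the Pohozaev identity for $\hat U_\v$ alone cannot cancel terms that are quadratic in $\hat v_\v$. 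So the key step of your plan --- a Pohozaev identity on $B(0,R)$ producing only $\mathcal{O}(1)$ boundary terms --- is unjustified precisely where the interface and the vortices sit.

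The paper avoids this entirely. For the exterior $B(0,\rho/\delta)\setminus\overline{B(0,1)}$ it uses no identity at all: subtracting the localized lower bounds of Proposition \ref{P7.LocalizationOfTheEnergy} from the upper bound \eqref{Uppperbndfor3} squeezes the potential term there to $\mathcal{O}(1)$ (this is the bookkeeping you dismissed in your final parenthesis; it does work in the exterior because the gradient part alone already accounts for $\pi d_0^2|\ln\delta|$). For $B(0,1)$ it uses the $C^1$ bound and $1-|\hat v_\v|^2\leq C\xi^2$ on $\p B(0,1)$ from Proposition \ref{P7.SmoothBound} to extend $\hat v_\v$ to $B(0,2)$ with fixed $\S^1$-valued boundary data and energy at most $\pi d_0|\ln\xi|+C$, and then cites Proposition 0.1 of \cite{DelFel}, which bounds $\xi^{-2}\int(1-|w|^2)^2$ for \emph{any} map with such an almost-minimal energy --- no equation, hence no Pohozaev identity with discontinuous coefficients, is needed. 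To salvage a Pohozaev route near the inclusion you would first need smallness of $1-|\hat v_\v|$ (or of the energy) in the transition layer, which is not available here; as written, your proof does not close.
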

\begin{proof}
Note that from Propositions \ref{P7.UpperBndTh3} and \ref{P7.LocalizationOfTheEnergy}, we find that there is $C>0$ s.t.

\[
\frac{1}{\xi^2} \int_{B(0,\rho/\delta)\setminus\overline{B(0,1)}} (1-|\hat v_{\v}|^2)^2\leq C.
\]
Thus it remains to prove the estimate in $B(0,1)$ for small $\v$. Using \eqref{7.BoundInCKaKa}, ${\rm tr}_{\d B(0,1)} \hat v_\v$ is bounded in $C^{1}(\d B(0,1))$ and $1 - |\hat v_\v|^2 \leq C \xi^2$ on
$\d B(0,1)$ (for small $\v$). These properties, allow us to construct a smooth extension $\tilde v_\v$ of ${\rm tr}_{\d B(0,1)} \hat v_\v$ into $B(0,2) \setminus \overline{B(0,1)}$, s.t. $h=\tr_{\p B(0,2)}\tilde v_\v $ is $\S^1$-valued and independent of $\v$, $1 - |\tilde v_\v|^2 \leq C \xi^2$ in $B(0,2) \setminus \overline{B(0,1)}$ and
\begin{equation}\label{bound_in_ring}
\int_{B(0,2) \setminus \overline{B(0,1)}}\left\{|\nabla \tilde v_\v|^2 + \frac{1}{2 \xi^2} (1 - |\tilde v_\v|^2)^2 \right\}\leq C_0.
\end{equation}
(For example, this construction is performed by mimicking \eqref{7.ExtensionPropertyAucPron})

Define $w_\v$ as $w_\v = \hat v_\v$ in $B(0,1)$ and $w_\v = \tilde v_\v$ in $B(0,2) \setminus \overline{B(0,1)}$. Clearly, $w_\v \in H^1_h(B(0,2))$, $w_\v$ is bounded in
$L^2(B(0,2))$ and, thanks to Proposition \ref{P7.LocalizationOfTheEnergy} and (\ref{bound_in_ring}),
\begin{equation*}
\frac{1}{2} \int_{B(0,2)}\left\{|\nabla w_\v|^2 + \frac{b^2}{2 \xi^2} (1 - |w_\v|^2)^2\right\} \leq \pi d_0 |\ln \xi| + C_0.
\end{equation*}
We may now apply Proposition 0.1 in \cite{DelFel} to $w_\v$ in $B(0,2)$ to conclude that $\displaystyle\frac{1}{\xi^2} \int_{B(0,2)} (1-|w_\v|^2)^2 \leq C_1$.
Therefore the bound (\ref{Pohozaev_bound}) holds.
\end{proof}

\subsection{The bad discs}
Consider a family of discs $(B(x_i,\v^{{1/4}}))_{i\in I}$ such that (here $I$ depends on $\v$)
\[
\text{for all }i\in I\text{ we have }x_i\in\O,
\]
\[
B(x_i,\v^{{1/4}}/4)\cap B(x_i,\v^{{1/4}}/4)=\emptyset\text{ if }i\neq j,
\]
\[
\cup_{i\in I}B(x_i,\v^{{1/4}})\supset\O.
\]
For $\mu\in(1/2,1)$, let $C=C(\mu),\,\v_0=\v_0(\mu)$ be defined as in the second part of Lemma \ref{L7.BadDiscsLemma}. For $\v<\v_0$, we say that
$B(x_i,\v^{{1/4}})$ is {\it $\mu$-good disc} if
\[
F_\v(v_\v,B({x_i,\v^{{1/4}}})\cap\O)\leq C(\mu)|\ln\v|
\]
and $B(x_i,\v^{{1/4}})$ is {\it $\mu$-bad disc} if
\begin{equation}\label{WhenThereisAMuBadDisc}
F_\v(v_\v,B({x_i,\v^{{1/4}}})\cap\O)> C(\mu)|\ln\v|.
\end{equation}
Let $J_\v=J:=\{i\in I\,|\,B({x_i,\v^{{1/4}}})\text{ is a $\mu$-bad disc}\}$.

\begin{lem}\label{L7.finitenumberbaddiscs}
There is an integer $N$, which depends only on $g$ and $\mu$, s.t.
\[
{\rm Card}\,J\leq N.
\]
\end{lem}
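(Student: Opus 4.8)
The plan is to bound the total number of $\mu$-bad discs by a global upper bound on the energy divided by the energy cost of a single bad disc. First I would recall that the discs $B(x_i,\v^{1/4}/4)$, $i\in I$, are pairwise disjoint, so each point of $\O$ is covered by at most a fixed number $\kappa$ of the discs $B(x_i,\v^{1/4})$ (a bounded-overlap property, with $\kappa$ universal); hence $\sum_{i\in I}F_\v(v_\v,B(x_i,\v^{1/4})\cap\O)\leq \kappa\,F_\v(v_\v,\O)$. By definition \eqref{WhenThereisAMuBadDisc}, every $i\in J$ contributes more than $C(\mu)|\ln\v|$ to the left-hand side, so
\[
{\rm Card}\,J\cdot C(\mu)|\ln\v|< \kappa\, F_\v(v_\v,\O).
\]

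Next I would supply the global upper bound $F_\v(v_\v,\O)\leq C'|\ln\v|$. In Case I this follows directly from Proposition \ref{P7.UpperboundAuxPb}: writing $\xi=\v/\dl$ and using hypothesis \eqref{7.MainHyp} (which in particular gives $|\ln\dl|=o(|\ln\v|)$, so $|\ln\xi|\leq|\ln\v|+|\ln\dl|\leq 2|\ln\v|$ for small $\v$), the bound $\pi d b^2|\ln\xi|+\pi d|\ln\dl|+C$ is $\leq C'|\ln\v|$; Case II is identical using \eqref{7.UpperboundAuxPbCaseII}. Combining the two displays,
\[
{\rm Card}\,J\leq \frac{\kappa\,C'|\ln\v|}{C(\mu)|\ln\v|}=\frac{\kappa\,C'}{C(\mu)}=:N,
\]
and $N$ depends only on $g$ (through $C'$) and $\mu$ (through $C(\mu)$), as claimed. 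One caveat: strictly speaking the constant $C(\mu)$ from Lemma \ref{L7.BadDiscsLemma} also depends on $b$, $\O$ and $\|g_0\|_{C^1}$; since these are fixed throughout, it is harmless to absorb them into the dependence on $g$.

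The only genuinely delicate point — and the step I would be most careful about — is the bounded-overlap constant $\kappa$. It comes from a standard Vitali/packing argument: if a point $y$ lies in $B(x_i,\v^{1/4})$ for indices $i$ in some set $I_y$, then all the disjoint discs $B(x_i,\v^{1/4}/4)$, $i\in I_y$, are contained in $B(y,2\v^{1/4})$, whose area is $4\pi\v^{1/2}$, while each has area $\pi\v^{1/2}/16$; hence $|I_y|\leq 64$, so $\kappa\leq 64$ works (the exact value is irrelevant). Integrating the bound $\sum_i \mathbf 1_{B(x_i,\v^{1/4})}(y)\leq\kappa$ against the energy density of $v_\v$ on $\O$ then yields $\sum_i F_\v(v_\v,B(x_i,\v^{1/4})\cap\O)\leq\kappa F_\v(v_\v,\O)$, completing the argument. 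Everything else is bookkeeping; no compactness or PDE input is needed here beyond the already-established upper bounds.
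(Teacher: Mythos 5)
Your argument is correct and follows essentially the same route as the paper: a bounded-overlap (packing) bound for the discs $B(x_i,\v^{1/4})$, the global upper bound $F_\v(v_\v,\O)\leq C|\ln\v|$ from Proposition \ref{P7.UpperboundAuxPb}, and the defining lower bound \eqref{WhenThereisAMuBadDisc} for each $\mu$-bad disc, giving ${\rm Card}\,J\leq \kappa C'/C(\mu)$. The paper simply quotes the overlap constant ($16$) without the Vitali computation you spell out; the difference is immaterial.
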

\begin{proof}
Since each point of $\O$ is covered by at most $16$ discs $B({x_i,\v^{{1/4}}})$, we have
\[
\sum_{i\in I}F_\v(v_\v,B({x_i,\v^{{1/4}}})\cap\O)\leq 16F_\v(v_\v,\O).
\]
The previous assertion implies that $\text{Card }J\leq\frac{16C_0}{C_1(\mu)}$.
\end{proof}
The next result is a straightforward variant of Theorem IV.1 in \cite{BBH}.
\begin{lem}\label{L7.SeparationBadDiscs}
Possibly after passing to a subsequence and relabeling $I$, we may choose $J'\subset J$ and a constant $\lambda\geq 1$ (independently of $\v$) s.t.
\[
J'=\{1,...,N'\},\,N'={\rm Cst},
\]
\[
|x_i-x_j|\geq8\lambda\v^{1/4}\text{ for }i,j\in J',\,i\neq j
\]
and
\[
\cup_{i\in J}{B(x_i,\v^{1/4})}\subset \cup_{i\in J'}{B(x_i, \lambda\v^{1/4})}.
\]
\end{lem}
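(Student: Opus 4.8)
The plan is to reproduce the classical disjointification argument of Bethuel--Brezis--Hélein (Theorem IV.1 in \cite{BBH}); the only extra ingredient is the uniform bound ${\rm Card}\,J\leq N$ provided by Lemma \ref{L7.finitenumberbaddiscs}, which is what forces all constants to be independent of $\v$.

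First I would run a finite \emph{merging procedure} on the finite family of centers $\{x_i\}_{i\in J}$. Set $\lambda_0=1$ and $J_0=J$. At stage $k$ we are given $J_k\subset J$ and $\lambda_k\geq1$ with
\[
\bigcup_{i\in J}B(x_i,\v^{1/4})\subset\bigcup_{i\in J_k}B(x_i,\lambda_k\v^{1/4}).
\]
If $|x_i-x_j|\geq8\lambda_k\v^{1/4}$ for all $i\neq j$ in $J_k$, the procedure stops. Otherwise, form the graph on $J_k$ whose edges join pairs at distance $<8\lambda_k\v^{1/4}$, keep one representative vertex per connected component, let $J_{k+1}$ be the set of representatives, and set $\lambda_{k+1}=8N\lambda_k$. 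Since a connected component has at most $N$ vertices, its diameter is $<8\lambda_k\v^{1/4}(N-1)$, hence for any $i$ in a component with representative $i_0$ one has $B(x_i,\lambda_k\v^{1/4})\subset B(x_{i_0},(8N-7)\lambda_k\v^{1/4})\subset B(x_{i_0},\lambda_{k+1}\v^{1/4})$; thus the displayed covering property is preserved. Because at least one component has $\geq2$ vertices whenever we do not stop, ${\rm Card}\,J_{k+1}<{\rm Card}\,J_k$, so the procedure terminates after at most $N$ stages, producing $J'\subset J$ and $\lambda\leq(8N)^N$.

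At termination, $\lambda$ depends only on $N$, hence (by Lemma \ref{L7.finitenumberbaddiscs}) only on $g$ and $\mu$; the centers in $J'$ are pairwise at distance $\geq8\lambda\v^{1/4}$; and $\cup_{i\in J}B(x_i,\v^{1/4})\subset\cup_{i\in J'}B(x_i,\lambda\v^{1/4})$. Finally, along the sequence $\v_n$ the integer $N':={\rm Card}\,J'$ and the scale $\lambda$ each take only finitely many values ($N'\leq N$ and $\lambda\in\{(8N)^k\,:\,0\leq k\leq N\}$), so after passing to a subsequence we may assume both are constant, and relabeling $I$ so that $J'=\{1,\dots,N'\}$ yields the statement.

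The argument is essentially bookkeeping; the single point that must not be overlooked is that one clustering step does not suffice---merged centers may still be close on the new scale---so it is the multiscale iteration, together with the a priori bound ${\rm Card}\,J\leq N$ that terminates it in finitely many steps, which guarantees that $\lambda$ (and $N'$) can be taken independent of $\v$.
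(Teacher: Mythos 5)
Your argument is correct, and it is precisely the clustering/merging iteration of Theorem IV.1 in \cite{BBH}, which is exactly what the paper invokes (it gives no proof beyond that citation); your use of Lemma \ref{L7.finitenumberbaddiscs} to bound the number of merging stages and then extracting a subsequence to freeze $N'$ and $\lambda$ matches the intended reasoning. Nothing further is needed.
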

We will say that, for $i\in J'$, $B(x_i,\lambda\v^{1/4})$ are {\it separated $\mu$-bad discs}. From now on, we work with separated $\mu$-bad discs.
Denote $\displaystyle\hat x_i=\frac{x_i}{\delta}$. By Proposition \ref{P7.ConvOfBadDiscsTo0} we know that for small $\v$, we have $\hat x_i\in\overline{B_1}$. Clearly, up to a subsequence,
\begin{equation}\label{7.DefintionOfTheLimitingSite}\left\{\begin{array}{c}\text{there are $\a_1,...,\a_\kappa$, $\kappa$ distinct points in $\overline{B_1}$ }
\\
\text{$\{\Lambda_1,...,\Lambda_\kappa\}$ a partition (in non empty sets) of $J'$ s.t.}
\\
\text{for }i\in J',\text{ if }i\in\Lambda_k\text{ then }\hat x_i\to \a_k.
\end{array}\right.
\end{equation}
Note that for $i\in J'$,  we have
\begin{equation}\label{7.EquivDefOfA}
y\in\{\a_1,...,\a_\kappa\}\Longleftrightarrow \begin{cases}\forall\,\eta>0\text{, for small $\v$,}\\\text{there is a $\mu$-bad disc inside $B(y,\eta)$.}\end{cases}
\end{equation}
\subsection{Convergence in $H^1_{\rm loc}(\R^2\setminus\{\a_1,...,\alpha_\kappa\})$}
We have the following theorem.
\begin{prop}\label{P7.AsymptoticResultatNonZeroDeg}
Let $\alpha_1,...,\alpha_\kappa$ be defined by \eqref{7.DefintionOfTheLimitingSite}. Then we have:
\begin{enumerate}
\item The points $\a_1,...,\a_\kappa$ belong to $\omega$.
\item There exists $v_0 \in H^1_{\rm loc}(\R^2\setminus\{\a_1,...,\a_\kappa\}, \mathbb{S}^1)$ s.t. (possibly after extraction)
\begin{equation}\label{7.H1LocConvInLittleOmega}
\text{$\hat v_{\v} \to v_0$ in  $H^1_{\rm loc}(\R^2\setminus\{\a_1,...,\a_\kappa\})$}
\end{equation}

\begin{equation}\label{7.LInftyLocConvInLittleOmega}
\text{$\hat v_{\v} \to v_0$ in  $C^0_{\rm loc}(\R^2\setminus\{\a_1,...,\a_\kappa\})$}.
\end{equation}
\item There exists $\eta_0>0$ s.t. for all $0<\eta<\eta_0$ and for sufficiently small $\v$ we have
\[
\deg_{\p B_\eta(\a_k)}(\hat v_{\v}/|\hat v_{\v}|)=\deg_{\p B_{\eta_0}(\a_k)}(v_0)=1.
\]
\item $\kappa=d_0$.
\end{enumerate}
\end{prop}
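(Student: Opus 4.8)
The plan is to prove the four assertions of Proposition~\ref{P7.AsymptoticResultatNonZeroDeg} in order, using the energy estimates of Proposition~\ref{P7.LocalizationOfTheEnergy} and Corollary~\ref{C7.BoundEnergyU} together with the $C^\infty$-convergence away from $\overline\o$ (Corollary~\ref{C7.SmoothConv}) and the bad-disc machinery of Lemmas~\ref{L7.BadDiscsLemma}--\ref{L7.SeparationBadDiscs}.

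First, for assertion (1): if some $\a_k\notin\overline\o$, then by Proposition~\ref{P7.SmoothBound} (applied to a small closed disc $K$ around $\a_k$ disjoint from $\overline\o$) the family $\hat v_\v$ is bounded in $C^1(K)$ with $|\hat v_\v|\geq 1-C_K\xi^2$ on $K$; in particular $F_\v(v_\v,B(x_i,\v^{1/4}))=\mathcal O(1)\ll C(\mu)|\ln\v|$ for bad discs $B(x_i,\lambda\v^{1/4})\subset\dl\cdot K$, contradicting \eqref{WhenThereisAMuBadDisc}. Hence every $\a_k\in\overline\o$; to upgrade to $\a_k\in\o$ one notes $\p\o$ is the only remaining possibility, and near a boundary point the estimate \eqref{7.UepsCloseToaeps} again forces $\hat U_\v$ (hence the local energy) to behave well on one side, while the degree being nonzero near $\a_k$ combined with the sharp bound \eqref{7.EnergyOfTheHole} leaves no room for extra energy concentration on $\p\o$; alternatively one invokes the uniform modulus bound of Proposition~\ref{P7.ConvOfBadDiscsTo0} which already confines all bad discs to $\overline{B_1}$, and a finer version on a neighbourhood of $\p\o$ (where $a_\dl$ is still constant $=b$ on the inner side) gives the same $|\ln\v|^{-1/3}$ bound, ruling out $\p\o$.

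For assertion (2), the strong $H^1_{\rm loc}$ and $C^0_{\rm loc}$ convergence on $\R^2\setminus\{\a_1,\dots,\a_\kappa\}$: away from the $\a_k$'s every point is covered only by $\mu$-good discs for small $\v$, so by Lemma~\ref{L7.BadDiscsLemma}(2) and a standard covering argument $|\hat v_\v|\geq\mu$ on any compact $K\Subset\R^2\setminus\{\a_k\}$; writing $\hat v_\v=\rho_\v\hat w_\v$ with $\hat w_\v=\hat v_\v/|\hat v_\v|$, the Pohozaev-type bound of Corollary~\ref{C7.Pohozaev_bound} gives $\rho_\v\to 1$ in $L^2$, and the elliptic equation for $\hat v_\v$ together with the energy bound (Corollary~\ref{C7.BoundEnergyU} inside $B_{\rho/\dl}$, Proposition~\ref{P7.SmoothBound} outside) yields $C^k_{\rm loc}$ bounds on $\R^2\setminus\overline\o$ and weak $H^1_{\rm loc}$ bounds near $\o$; extracting a subsequence gives a limit $v_0$, the $L^2$ control of the potential forces $|v_0|=1$, and the equation passes to the limit (so $v_0$ is $\S^1$-valued harmonic). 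The strong $H^1_{\rm loc}$ convergence is then obtained, as usual in Bethuel--Brezis--Hélein theory, by combining weak convergence with convergence of the Dirichlet energies, which follows from the matching of the sharp upper bound (Proposition~\ref{P7.UpperBndTh3}) and the lower bounds of Proposition~\ref{P7.LocalizationOfTheEnergy} applied on annuli $B(0,R_2)\setminus\overline{B(0,R_1)}$ with $R_1,R_2\to\infty,\,\to 0$ respectively; $C^0_{\rm loc}$ convergence is upgraded from $H^1_{\rm loc}$ plus the elliptic equation via $C^\infty$ interior estimates away from $\overline\o$ and a Sobolev embedding argument near $\o$.

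For assertions (3) and (4), the degree quantization: around each $\a_k$, the limit $v_0$ is an $\S^1$-valued harmonic map on a punctured disc, so $d_k:=\deg_{\p B_\eta(\a_k)}(v_0)$ is well-defined and, by the $C^0$-convergence on $\p B_\eta(\a_k)$, equals $\deg_{\p B_\eta(\a_k)}(\hat v_\v/|\hat v_\v|)$ for small $\v$. The total degree is conserved: by \eqref{7.ExtensionPropertyAucPron}-type behaviour of the boundary data, $\sum_k d_k=\deg_{\p B_{\rho/\dl}}(\hat v_\v)=d_0$. Each $d_k$ is positive: a classical lower bound (as in \cite{BBH}, using Theorem~4.1 of \cite{SS1} localized near $\a_k$) gives $\hat F_\xi(\hat v_\v,B(\a_k,\eta)\setminus B(\a_k,r))\geq \pi b^2|d_k|\,|\ln(\xi)|-C$ hence, summing and comparing with the sharp upper bound $\hat F_\xi(\hat v_\v,B(0,1))=\pi d_0 b^2|\ln\xi|+\mathcal O(1)$ of \eqref{7.EnergyOfTheHole}, one gets $\sum_k|d_k|\leq d_0=\sum_k d_k$, forcing $d_k\geq 0$; combined with $d_k\neq 0$ (each $\a_k$ is a genuine concentration point, otherwise no bad disc would sit there — this is the content of \eqref{7.EquivDefOfA} run backwards: a point with zero limiting degree and $|v_0|=1$ nearby carries only $\mathcal O(1)$ energy) we get $d_k\geq 1$, and then $\sum_k d_k=d_0$ with $\kappa$ terms each $\geq 1$ and total $d_0$ already gives $\kappa\leq d_0$; the reverse inequality $\kappa\geq d_0$, i.e. that no $d_k$ exceeds $1$, is the delicate point: it follows from the strict superadditivity of the renormalized-energy cost, namely that a vortex of degree $d_k\geq 2$ at $\a_k$ would cost $\pi d_k^2|\ln\dl|$ rather than $\pi d_k|\ln\dl|$ in the outer region (compare \eqref{7.EnergyOutsideTheHole} with $d_0^2$), so splitting into $d_k$ simple vortices strictly lowers the energy, contradicting minimality via the upper bound of Proposition~\ref{P7.UpperBndTh3} which only allows $\pi d_0^2|\ln\dl|$ — matching exactly $d_k=1$ for all $k$ and $\kappa=d_0$. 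This last quantization step (ruling out multiple vortices at a single site) is the main obstacle and requires carefully localizing the $|\ln\dl|$-scale energy in the annulus $B(0,1)\setminus\overline{B(0,\text{something})}$ around $\a_k$ and exploiting that $\sum d_k^2>\sum d_k$ unless all $d_k=1$.
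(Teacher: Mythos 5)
Your overall skeleton (localize bad discs, extract a limit, count degrees by comparing $\sum_k d_k^2$ with $\sum_k d_k$) is the right one, but two of your key steps rest on arguments that do not work as stated. First, the strong $H^1_{\rm loc}$ and $C^0_{\rm loc}$ convergence: you derive it from ``matching of the sharp upper bound (Proposition \ref{P7.UpperBndTh3}) and the lower bounds of Proposition \ref{P7.LocalizationOfTheEnergy}''. Those bounds agree only up to additive $\mathcal{O}(1)$ errors, and an $\mathcal{O}(1)$ slack is exactly the room needed to hide an energy defect on a compact set, so weak convergence plus these global estimates does not yield strong convergence. The paper instead uses \emph{local} minimality on a well-chosen ball: a good circle is selected by Fubini/Fatou, a competitor $\eta_\v{\rm e}^{\imath\psi_\v}$ with the same boundary trace is built (with $-\Div(a^2\n\psi_\v)=0$), and comparison with the lower-semicontinuity bound forces energy convergence on that ball, in the spirit of \cite{BBH1}. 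Likewise your ``Sobolev embedding argument near $\o$'' for the $C^0$ convergence is empty in dimension two: $H^1\not\hookrightarrow C^0$, and because the weight $a^2$ is discontinuous across $\p\o$ one needs Meyers' theorem \cite{NGMeyers1} to get $W^{1,q}$, $q>2$, convergence of the phase before embedding. A similar hand-wave occurs in assertion 1: Proposition \ref{P7.ConvOfBadDiscsTo0} is proved precisely because the compact $K$ keeps a positive distance from $\o$ (the extra cost $(1-b^2)|\ln\v|^{1/3}$ comes from $U_\v^2-b^2\approx1-b^2$ there), so your claim that ``the same $|\ln\v|^{-1/3}$ bound'' holds on a neighbourhood of $\p\o$ is unjustified; excluding $\p\o$ is a genuinely separate argument (the paper follows \cite{LM1}, Theorem C, Step 7).

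Second, the quantization $d_k\leq1$ is attributed to the wrong region and the wrong scale. The outer annulus $B(0,\rho/\delta)\setminus\overline{B(0,1)}$ sees only the \emph{total} degree $d_0$ through every circle, so its cost $\pi d_0^2|\ln\delta|$ in \eqref{7.EnergyOutsideTheHole} is completely insensitive to how the vorticity is distributed among the $\a_k\in\o$; comparing ``$\pi d_k^2|\ln\delta|$ versus $\pi d_k|\ln\delta|$ in the outer region'' therefore yields nothing, and there is no contradiction with Proposition \ref{P7.UpperBndTh3} to be had there. The correct mechanism lives inside $B(0,1)$ at the fixed scales $\eta<\eta_0$: the limit $v_0$ gives $\frac12\int_{B(\a_k,\eta_0)\setminus\overline{B(\a_k,\eta)}}|\n v_0|^2\geq\pi d_k^2\ln(\eta_0/\eta)$, which combined with the vortex-ball bound \eqref{7.ConcentrationOfEnergyNearBadPoints} produces $\pi d_0\ln(\eta_0/\xi)+\pi(\sum_k d_k^2-d_0)\ln(\eta_0/\eta)-C$; since \eqref{7.EnergyOfTheHole} is sharp to $\mathcal{O}(1)$ \emph{uniformly in} $\eta$, letting $\eta\to0$ forces $\sum_k d_k^2\leq\sum_k d_k$, i.e.\ $d_k\in\{0,1\}$. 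Finally, your justification that $d_k\neq0$ (``a point with zero limiting degree carries only $\mathcal{O}(1)$ energy'') asserts the conclusion: a zero net degree does not by itself cap the energy (a dipole would carry $\sim|\ln\xi|$). The paper's argument runs the budget the other way: if $d_{k_0}=0$, the bad disc at $\a_{k_0}$ costs at least $C(\mu)|\ln\v|$ by \eqref{WhenThereisAMuBadDisc}, while the remaining points must still pay $\pi b^2 d_0|\ln\xi|$ by \cite{SS1}; together this violates the upper bound \eqref{7.UpperboundAuxPb}, whose slack is only $\pi d_0|\ln\delta|+C=o(|\ln\v|)$ by hypothesis \eqref{7.MainHyp}. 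You should rebuild assertions 2--4 along these lines.
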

\begin{proof}{\bf Step 1:} $\hat{v}_{\v} \weak v_0$ in $H^1_{\rm loc}(\R^2\setminus\{\a_1,...,\a_\kappa\})$, $v_0\in H^1_{\rm loc}(\R^2\setminus\{\a_1,...,\a_\kappa\},\S^1)$ and $\alpha_k\in\o$

Proposition \ref{P7.ConvOfBadDiscsTo0} guarantees that $\a_1,...,\a_\kappa\in \overline{\omega}$. Let
 \begin{equation}\label{7.DefinitionOfEta0}
 \eta_0=\begin{cases}10^{-2}\cdot\min_{k \neq k'}|\a_k-\a_{k'}|&\text{if }\kappa>1\\1&\text{if }\kappa=1\end{cases}.
 \end{equation}

Applying Theorem 4.1 in \cite{SS1} we have for all $0<\eta<\eta_0$ and for small $\v$
\begin{equation}\label{7.ConcentrationOfEnergyNearBadPoints}
\frac{1}{2}\int_{\cup_{k\in \{1,...,\kappa\}} B(\a_k, \eta)}{\left\{|\n \hat v_{\v}|^2+\frac{b^2}{2\xi^2}(1-|\hat v_{\v}|^2)^2\right\}}\geq\pi d_0 \ln\frac{\eta}{\xi}-C
\end{equation}
with $C$ independent of $\v$ and $\eta$. Combining \eqref{7.ConcentrationOfEnergyNearBadPoints} with \eqref{7.EnergyOfTheHole} and Corollary \ref{C7.BoundEnergyU} we obtain that $\hat{v}_{\v}$ is bounded in $H^1(K)$; here $K\subset\R^2\setminus\{\a_1,...,\a_\kappa\}$ is an arbitrary compact set. Therefore, there exists $v_0 \in H^1_{\rm loc}(\R^2\setminus\{\a_1,...,\a_\kappa\})$ s.t. we have  $\hat{v}_{\v} \weak v_0$ in $H^1_{\rm loc}(\R^2\setminus\{\a_1,...,\a_\kappa\})$  (possibly passing to a subsequence). Since $\|1-|\hat{v}_{\v}|\|_{L^2(K)}\to0$ for all compact sets $K\subset \R^2\setminus\{\a_1,...,\a_\kappa\}$, we find that $v_0$ is $\mathbb{S}^1$- valued.

%
%
%
Following the proof of Step 7 in Theorem C in \cite{LM1}, we can prove that $\a_1, ..., \a_\kappa \notin \d \w$, thus  $\a_1, ..., \a_\kappa \in \w$, and the first assertion follows. \\

{\bf Step 2:} Proof of {\it 2.}

Adapting the techniques of \cite{BBH1} (Theorem 2, Step 1), we establish \eqref{7.H1LocConvInLittleOmega} and \eqref{7.LInftyLocConvInLittleOmega} in a ball $B=B(y,R_0)$ s.t. $\overline{B}\subset \R^2\setminus\{\a_1,...,\a_{\kappa}\}$.

Let $y\in\R^2$ and let $R'>R>0$ be s.t. $B(y,R')\subset \R^2\setminus\{\a_1,...,\a_{\kappa}\}$. Since $\hat F_{\xi}(\hat v_{\v},B(y,R'))$ is bounded independently on $\v$, there is $R_0\in(R,R')$ (independent of $\v$) s.t., passing to a further subsequence
if necessary we have
\begin{equation}\label{7.BouondOnCircleToProveStrogConvInH1}
\int_{\p B(y,R_0)}{\left\{|\p_\tau \hat v_{\v}|^2+\frac{1}{\xi^2}(1-|\hat v_{\v}|^2)^2\right\}}\leq C\text{ with $C$ independent of $\v$}.
\end{equation}
Indeed, for $r\in(R,R')$ denote
\[
I_\v(r)=\int_{\p B(y,r)}{\left\{|\n \hat v_{\v}|^2+\frac{1}{\xi^2}(1-|\hat v_{\v}|^2)^2\right\}}.
\]

Using the Fubini theorem and the Fatou Lemma we have
\[
0\leq\int_{R}^{R'}{\liminf_\v I_\v(r)\,{\rm d}r}\leq\liminf_\v\int_{R}^{R'}{I_\v(r)\,{\rm d}r}\leq C'.
\]
Consequently, $\liminf_\v I_\v(r)<\infty$ for almost all $r\in(R,R')$, so that \eqref{7.BouondOnCircleToProveStrogConvInH1} holds with $C=\displaystyle\frac{C'}{R'-R}$.

Let $g_\v=\tr_{\p B} \hat v_{\v }$. Since $|\hat{v}_{\v }|\geq 1/2$ in $B=B(y,R_0)$, we have $\deg_{\p B}(g_\v)=0$. The bound \eqref{7.BouondOnCircleToProveStrogConvInH1} implies that,
up to choose a subsequence, $g_\v$ is weakly convergent in $H^1(\p B)$. Consequently there is $h\in H^1(\p B, \S^1)$, $h={\rm e}^{\imath\f},\,\f\in H^1(\p B,\R)$ s.t.
\begin{equation}\label{7.LinftyConvData}
g_\v\to h\text{ uniformly on $\p B$},
\end{equation}
\begin{equation}\label{7.LinftyH1/2}
g_\v\to h\text{ in $H^{1/2}(\p B)$}.
\end{equation}
Let $\eta_\v:B\to\R^+$ be the minimizer of $\displaystyle\int_{B}{\left\{|\n \eta|^2+\frac{1}{\xi^2}(1-\eta)^2\right\}}$ in $H^1_{|g_\v  |}(B,\R)$. Then $\eta_\v  $ satisfies
\begin{equation*}
\begin{cases}-\xi^2\Delta\eta_{\v }+\eta_{\v }=1&\text{in }B\\\eta_{\v }=|g_\v  |&\text{on }\p B\end{cases}.
\end{equation*}
It follows from \cite{BBH1} that
\begin{equation}\label{7.Fundamentalestimateontesteta}
\int_B{\left\{|\n \eta_\v  |^2+\frac{1}{\xi^2}(1-\eta_\v )^2\right\}}\leq C\xi.
\end{equation}
Using \eqref{7.LinftyConvData}, there is $\f_\v \in H^1(\p B,\R)$, s.t. $g_\v =|g_\v |{\rm e}^{\imath\f_\v }$ and $\f_\v \to\f$ uniformly on $\p B$.
Following \cite{BBH1}, denote by $\psi_\v \in H^1_{\f_\v  }(B,\R)$ the unique solution of $-\Div(a^2\n\psi_\v )=0$. (Here $a=b$ in $\o$ and $a=1$ in $\R^2\setminus\o$.) From \eqref{7.LinftyH1/2}, $\psi_\v \to\psi$ in $H^1(B)$ where $\psi\in H^1_\f(B,\R)$ is the unique solution of $-\Div(a^2\n\psi)=0$. Since $\eta_\v {\rm e}^{\imath\psi_\v }\in H^1_{g_\v }(B)$, we have
\begin{equation}\label{7.UpperBoundOnCompactToGetH1Conv}
\hat F_{\xi}(\hat v_{\v },B)\leq\hat  F_{\xi}(\eta_\v {\rm e}^{\imath\psi_\v },B) \leq\frac{1}{2}\int_B{\hat U_{\v}^2|\n \psi_\v |^2}+C\xi\underset{\v\to0}{\to}\frac{1}{2}\int_B{a^2|\n \psi|^2}.
\end{equation}
On the other hand, since $\hat v_{\v}\weak v_0$ in $H^1(B)$, we have $v_0={\rm e}^{\imath\phi}$ with $\phi\in H^1_{\f}(B,\R)$ and
\begin{equation}\label{7.ToGetLowerBoundH1Conv}
\liminf_\v \hat F_{\xi}(\hat v_{\v},B)\geq\liminf_\v \frac{1}{2}\int_B{\hat U_{\v}^2|\n \hat v_{\v}|^2}\geq\frac{1}{2}\int_{B}{a^2|\n v_0|^2}=\frac{1}{2}\int_{B}{a^2|\n \phi|^2}.
\end{equation}
(The last inequality follows from $\hat{U}_{\v}\to a$ in $L^2$, $|U_{\v}|\leq1$ and $\hat{v}_{\v}\weak v_0$ in $H^1$.)

By combining \eqref{7.UpperBoundOnCompactToGetH1Conv}, \eqref{7.ToGetLowerBoundH1Conv}  and the fact that $\psi$ minimizes $\displaystyle\int_B{a^2|\n \cdot|^2}$ in $H^1_\f(B,\R)$,  we find that (\ref{7.H1LocConvInLittleOmega}) holds. Furthermore, the map $\psi$ in \eqref{7.UpperBoundOnCompactToGetH1Conv} is the same as $\phi$ in \eqref{7.ToGetLowerBoundH1Conv}.

Note that since
\[
\displaystyle\frac{1}{2}\int_B\hat{U}_{\v}^2\left|\n \frac{\hat{v}_{\v}}{|\hat{v}_{\v}|}\right|^2-o_\v(1)\leq\hat F_{\xi}(\hat v_{\v},B),
\]
by comparing \eqref{7.UpperBoundOnCompactToGetH1Conv} with \eqref{7.ToGetLowerBoundH1Conv}, we also have
\begin{equation}\label{7.potentialBoundincompact}
\int_{K}{|\n|\hat{v}_{\v}||^2+\frac{1}{\xi^2}(1-|\hat v_{\v}|^2)^2}\to0.
\end{equation}
In order to prove \eqref{7.LInftyLocConvInLittleOmega},  it suffices to establish the convergence
\begin{equation}\label{7.LInftyLocConvInLittleOmegaPhase}
\phi_\v \to\phi\text{ in }L^\infty(B)\text{ with }\phi_\v \in H^1_{\f_\v }(B,\R)\text{ and }\hat v_{\v }=|\hat v_{\v }|{\rm e}^{\imath\phi_\v },
\end{equation}
and to use the fact that $|\hat{v}_{\v }|\to1$ uniformly.

Proof of \eqref{7.LInftyLocConvInLittleOmegaPhase}. If $\p\o\cap B=\emptyset$, then the argument is the same as in \cite{BBH1}. Assume next that $\p\o\cap B\neq\emptyset$, and let $\tilde\psi\in H^{3/2}(B,\R)$ be the harmonic extension of $\f$. Since $\zeta:= \phi-\tilde\psi\in H^1_0(B,\R)$ satisfies $-\Div(a^2\n\zeta)=\Div(a^2\n\tilde\psi)$, Theorem 1 in \cite{NGMeyers1} implies that $\phi\in W^{1,p}(B,\R)$ for some $p>2$.

We next prove that, for some $q>2$ and $\tilde{B}=B(y',\tilde{R})$ s.t. $B(y',2\tilde{R})\subset B$, we have $\|\phi_\v -\phi\|_{W^{1,q}(\tilde{B})}\to 0$. (Once proved, this assertion will imply, via Sobolev embedding  that \eqref{7.LInftyLocConvInLittleOmegaPhase} holds.)

Note that (up to a subsequence) $\phi_\v \to\phi$ in $L^2(B,\R)$. Thus we have
\[
\begin{cases}-\Div\left[\hat U_\v^2|\hat v_{\v }|^2\n(\phi_\v -\phi)\right]=\Div\left[(\hat U_{\v }^2|\hat v_{\v }|^2-a^2)\n\phi\right]&\text{in }B
\\
\|\phi_\v -\phi\|_{L^2(B)}\to0\end{cases}.
\]
From Theorem 2 in \cite{NGMeyers1}, there is $2<q\leq p$ and $C>0$ s.t.
\[
\|\n(\phi_\v -\phi)\|_{L^q(\tilde{B})}\leq C\left(\tilde{R}^{-2+2/q}\|\phi_\v -\phi\|_{L^2(B)}+\|(\hat U_{\v }^2|\hat v_{\v }|^2-a^2)\n\phi\|_{L^q(B)}\right)\underset{\v\to0}{\to}0.
\]
Consequently, $\|\phi_\v -\phi\|_{W^{1,q}(\tilde{B})}\to 0$.


%
%

{\bf Step 3:} We prove the third assertion

Let $\eta_0>\eta>0$, with $\eta_0$ defined by \eqref{7.DefinitionOfEta0}. Denote $d_{k}=\deg_{\p B(\a_k, r)}(v_0)$. These integers  do not depend on $r \in(\eta,\eta_0)$. Moreover, we have $\sum_k d_k=d_0$. For $r\in(\eta,\eta_0)$, we obtain that
\[
2\pi |d_k|\leq\int_{\p B(\a_k, r)}|\p_\tau v_0|\leq\sqrt{2\pi r}\left(\int_{\p B(\a_k, r)}\left|\p_\tau v_0\right|^2\right)^{1/2},
\] and therefore
\begin{equation}\nonumber
\frac{1}{2}\int_{B(\a_k, \eta_0)\setminus \overline{B(\a_k,\eta)}}{\left|\n v_0\right|^2}\geq\pi d_k^2\ln\frac{\eta_0}{\eta}.
\end{equation}
Consequently, we have
\begin{eqnarray}\nonumber
\liminf \frac{1}{2}\int_{\cup_k B(\a_k, \eta_0)\setminus \overline{B(\a_k,\eta)}}{\left|\n \hat v_{\v }\right|^2}&\geq&\frac{1}{2}\int_{\cup_k B(\a_k, \eta_0)\setminus \overline{B(\a_k,\eta)}}{\left|\n v_0\right|^2}
\\\label{7.EstimationDegWithSquare}
&\geq&\pi \sum_k d_k^2\ln\frac{\eta_0}{\eta}.
\end{eqnarray}
By combining \eqref{7.ConcentrationOfEnergyNearBadPoints} and \eqref{7.EstimationDegWithSquare}, we obtain the existence of $C$ independent of $\v$ and $\eta$ s.t. 
\begin{eqnarray*}
\frac{1}{2}\int_{\cup_k B(\a_k,\eta_0)}{\left|\n \hat v_{\v }\right|^2}&\geq& \pi \sum_k d_k^2\ln\frac{\eta_0}{\eta}+\pi d_0 \ln\frac{\eta}{\xi}-C\\&=&\pi d_0 \ln\frac{\eta_0}{\xi}+\pi (\sum_k d_k^2-d_0)\ln\frac{\eta_0}{\eta}-C.
\end{eqnarray*}
Therefore, $d_k$ must be either $0$ or $1$. Otherwise, \eqref{7.EnergyOfTheHole} cannot hold for small $\eta$. Applying the strong convergence result from Step 2 with $K=B(\a_k, \eta) \setminus \overline{B(\a_k, \frac{\eta}{2})}$, we have that for
small $\v$,
\[
d_k=\deg_{\p B(\a_k, \eta)}\left(\frac{\hat v_{\v }}{|\hat v_{\v }|}\right).
\]
We next prove that $d_k=1$ for each $k$. By contradiction, assume that there is $k_0$ s.t. $d_{k_0}=0$. We may assume that $k_0=1$. From \eqref{7.EquivDefOfA}, there is a (separated) $\mu$-bad disc $B(\hat{x}_0,\lambda\v ^{{1/4}}/\delta)$ in $B(\alpha_1,\eta_0)$. Thus by  \eqref{WhenThereisAMuBadDisc}, we have
$$\hat{F}_{\xi}(\hat{v}_{\v },B(\hat{x}_0,\lambda\v ^{{1/4}}/\delta))>C(\mu)|\ln\v|.$$
On the other hand, since in $|\hat{v}_{\v }|\geq1/2$ in $B(\alpha_k,\eta_0)\setminus\overline{B(\alpha_k,\eta_0/2)}$, applying Theorem 4.1 in \cite{SS1} in $B(\alpha_k,\eta_0)$, $k\in\{2,...,\kappa\}$, with $r=10^{-4}\cdot\eta_0$ we find that
\[
\hat{F}_\xi(\hat{v}_{\v },B(\alpha_k,\eta_0))\geq b^2|\deg_{\p B(\alpha_k,\eta_0)}(\hat{v}_{\v })||\ln\xi|-C, \ k = 2,...,\kappa.
\]
Since $\sum_{k=2}^\kappa\deg_{\p B(\alpha_k,\eta_0)}(\hat{v}_{\v })=d_0$, the above estimates yield
\[
\hat{F}_\xi(\hat{v}_{\v },B(0,\frac{\rho}{\delta}))\geq b^2d_0|\ln\xi|+C(\mu)|\ln\v|-C
\]
which is in contradiction with \eqref{7.MainHyp} and \eqref{7.UpperboundAuxPb}. Thus $d_k=1$ for $k\in\{1,...,\kappa\}$ and consequently, $\kappa=d_0$.
\end{proof}


We are now in position to estimate the rate of uniform convergence of $|\hat{v}_{\v }|$ in a compact set $K\subset\R^2\setminus\{\alpha_1,...,\alpha_{d_0}\}$.
\begin{cor}\label{C7.UnifCongLocNonZeroDeg}
There is $C>0$ s.t. for $\eta_0>\eta > 0$ and small $\v$ we have
\begin{equation*}
|\hat v_{\v }|\geq1- C |\ln\v |^{-1/3}\text{ in }B(0,\frac{\rho}{\delta})\setminus \overline{B(\a_i, \eta)}.
\end{equation*}
\end{cor}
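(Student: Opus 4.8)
The goal is to upgrade the qualitative uniform convergence $|\hat v_\v|\to 1$ on compacts away from the $\alpha_i$'s (which follows from \eqref{7.LInftyLocConvInLittleOmega}) to the quantitative rate $|\hat v_\v|\ge 1-C|\ln\v|^{-1/3}$ on all of $B(0,\rho/\delta)\setminus\overline{B(\alpha_i,\eta)}$, uniformly in $\v$. The plan is to mimic the contradiction argument of Proposition \ref{P7.ConvOfBadDiscsTo0}, but now only points in $B(0,\rho/\delta)\setminus\overline{B(\alpha_i,\eta)}$ (rather than outside a fixed compact $K\supset\overline\omega$) need to be controlled, so that the vortices $\alpha_1,\dots,\alpha_{d_0}$, whose existence and location inside $\omega$ we now \emph{know} from Proposition \ref{P7.AsymptoticResultatNonZeroDeg}, must be excluded by hand.

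First I would fix $\eta>0$ and apply the first part of Lemma \ref{L7.BadDiscsLemma} with $\chi=|\ln\v|^{-1/3}$ (after undoing the blow-up, working with $v_\v$ on $B(0,\rho)$ and the functional $F_\v$, exactly as in Proposition \ref{P7.ConvOfBadDiscsTo0}): there are $C,C_1>0$ so that if $F_\v(v_\v,B(x,\v^{1/4}))\le |\ln\v|^{1/3}-C_1$ for a point $x$ with $B(x,\v^{1/4})$ staying away (on scale $\delta$) from the $\alpha_i$'s, then $|v_\v|\ge 1-C|\ln\v|^{-1/3}$ on $B(x,\v^{1/2})$. Arguing by contradiction, suppose there is a sequence $\hat x_\v\in B(0,\rho/\delta)\setminus\overline{B(\alpha_i,\eta)}$ (for some fixed index $i$, say after relabeling all of them, i.e. $\hat x_\v\in B(0,\rho/\delta)\setminus\cup_i\overline{B(\alpha_i,\eta)}$) with $|\hat v_\v(\hat x_\v)|<1-C|\ln\v|^{-1/3}$. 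Two cases arise: either $\hat x_\v$ stays (along a subsequence) in a fixed compact subset of $\R^2\setminus\{\alpha_1,\dots,\alpha_{d_0}\}$, in which case \eqref{7.LInftyLocConvInLittleOmega} directly contradicts $|\hat v_\v(\hat x_\v)|<1-C|\ln\v|^{-1/3}\to 1$ along that subsequence; or $|\hat x_\v|\to\infty$ (equivalently, $x_\v$ stays away from $\omega$ on scale $\delta$), which is exactly the situation already handled by Proposition \ref{P7.ConvOfBadDiscsTo0} with $K$ a suitable fixed compact neighborhood of $\overline\omega$ — note $\hat x_\v\notin\cup_i\overline{B(\alpha_i,\eta)}$ together with $\alpha_i\in\omega$ forces $\hat x_\v$ out of a neighborhood of $\overline\omega$ once $\eta$ is small. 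In this second case Proposition \ref{P7.ConvOfBadDiscsTo0} gives $|\hat v_\v(\hat x_\v)|\ge 1-C|\ln\v|^{-1/3}$ directly, contradiction.

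Alternatively — and this is the cleaner route, avoiding the two-case split — one runs the energy-displacement argument of Proposition \ref{P7.ConvOfBadDiscsTo0} verbatim with $K$ replaced by $\cup_i\overline{B(\alpha_i,\eta)}$: the bad disc $B(x_\v,\v^{1/4})$ (equivalently $B(\hat x_\v,\v^{1/4}/\delta)$) carries energy $\ge|\ln\v|^{1/3}-\mathcal O(1)$ by the choice of $\chi$; it is disjoint from $\cup_i\overline{B(\alpha_i,\eta)}$ and, since $\v^{1/4}/\delta\to 0$ under \eqref{7.MainHyp}, also disjoint from the Sandier–Serfaty vortex balls $\mathcal B^\v$ concentrated near the $\alpha_i$'s; hence by \eqref{7.ConcentrationOfEnergyNearBadPoints} the balls near the $\alpha_i$'s already account for $\pi b^2 d_0|\ln\xi|-\mathcal O(1)$ of $\hat F_\xi(\hat v_\v,B(0,1))$, while the part outside $\overline{B(0,1)}$ accounts for $\pi d_0^2|\ln\delta|-\mathcal O(1)$ by Proposition \ref{P7.LocalizationOfTheEnergy}, and the extra bad disc contributes a further $\ge(1-b^2)|\ln\v|^{1/3}-\mathcal O(1)$ after the weight-splitting identity \eqref{identity} and Proposition \ref{P7.UepsCloseToaeps} — here \eqref{7.MainHyp} guarantees $|\ln\v|^{1/3}\gg|\ln\delta|$, so this term dominates the $\mathcal O(1)$'s — contradicting the upper bound of Proposition \ref{P7.UpperBndTh3}.

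The only genuine point requiring care — the expected main obstacle — is bookkeeping the scales: one must check that $\v^{1/4}$ (the radius of a bad disc, in the $x$-variable $\delta\cdot(\v^{1/4}/\delta)$) is small compared both to $\delta\eta$ (so the bad disc lies strictly outside $\cup_i\overline{B(\alpha_i,\eta)}$ in the original variable, for $\v$ small) and to the radius of the Sandier–Serfaty covering $\mathcal B^\v$, so that the energies of the bad disc, of $\mathcal B^\v$, and of $B(0,\rho/\delta)\setminus\overline{B(0,1)}$ genuinely add up without double counting; all of these follow from \eqref{7.MainHyp}, just as in Proposition \ref{P7.ConvOfBadDiscsTo0}. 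Once this is in place, the contradiction with \eqref{Uppperbndfor3} is immediate and the corollary follows. Since this is a routine repetition of the proof of Proposition \ref{P7.ConvOfBadDiscsTo0} with $K=\cup_i\overline{B(\alpha_i,\eta)}$, I would state it as such and refer the reader back, indicating only the modifications above.
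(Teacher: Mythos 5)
Your first route breaks at case (a). The convergence \eqref{7.LInftyLocConvInLittleOmega} is purely qualitative: on a fixed compact away from the $\a_k$'s it gives $1-|\hat v_{\v}(\hat x_\v)|=o_\v(1)$, with no rate. Since the threshold $C|\ln\v|^{-1/3}$ itself tends to $0$, the contradiction hypothesis $|\hat v_{\v}(\hat x_\v)|<1-C|\ln\v|^{-1/3}$ is perfectly compatible with uniform convergence (think $1-|\hat v_{\v}(\hat x_\v)|=|\ln\v|^{-1/6}$), so no contradiction arises. Upgrading the qualitative convergence to this specific rate is exactly what the corollary asserts, so it cannot be disposed of by citing the convergence itself.

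Your ``cleaner'' route also has a genuine gap in how the bad disc's extra energy is counted. First, the claim that the Sandier--Serfaty balls are ``concentrated near the $\a_i$'s'' and hence disjoint from the bad disc is unjustified and in fact circular: those balls cover every point where $1-|\hat v_\v|\geq(\xi/b)^{1/8}$, and the hypothetical bad point $\hat x_\v$ is such a point, so some ball of the collection contains it; ruling out low-modulus points away from the $\a_i$'s is precisely what is being proved. Second, the splitting identity \eqref{identity} together with Proposition \ref{P7.UepsCloseToaeps} produces the extra $(1-b^2)|\ln\v|^{1/3}$ only when $U_\v\approx1$ on the bad disc, i.e. when $x_\v$ is far from $\o_\delta$ on the scale $\v$; but the region $B(0,\rho/\delta)\setminus\cup_i\overline{B(\a_i,\eta)}$ contains points of $\o$ itself (away from the vortices), where $U_\v^2-b^2$ is exponentially small and the excess term in \eqref{identity} gives nothing. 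The repair is to drop the splitting and the SS construction altogether: \eqref{7.ConcentrationOfEnergyNearBadPoints} is a bound over the fixed balls $B(\a_k,\eta/2)$, which are disjoint from the bad disc, so one may simply add the weighted energies --- $\pi d_0b^2|\ln\xi|-C$ near the $\a_k$'s, $\pi d_0^2|\ln\delta|-C$ outside $\overline{B(0,2)}$ (the exterior region being covered by Proposition \ref{P7.ConvOfBadDiscsTo0} in any case), and $|\ln\v|^{1/3}-C_1$ on the bad disc from Lemma \ref{L7.BadDiscsLemma} applied directly to $F_\v$ --- and contradict \eqref{Uppperbndfor3}. Note finally that the paper argues differently and more simply: by \eqref{5.EstMofUsiMirArg} it suffices to treat $B(0,1)\setminus\cup_i\overline{B(\a_i,\eta)}$, where Corollary \ref{C7.Pohozaev_bound} combined with \eqref{7.H1LocConvInLittleOmega} yields $\hat F_\xi(\hat v_\v,B(0,2)\setminus\cup_i\overline{B(\a_i,\eta/2)})\leq C(\eta)$; hence every disc $B(x,\v^{1/4})$ in that region has energy far below $|\ln\v|^{1/3}$ and the first part of Lemma \ref{L7.BadDiscsLemma} applies directly, with no contradiction argument or ball construction at all.
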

\begin{proof}
Due to \eqref{5.EstMofUsiMirArg}, it is sufficient to establish this result in $B(0,1)\setminus \overline{B(\a_i, \eta)}$. Combining Corollary \ref{C7.Pohozaev_bound} with \eqref{7.H1LocConvInLittleOmega}, we obtain that
\[
\hat{F}_\xi(\hat{v}_{\v },B(0,2)\setminus \overline{B(\a_i, \eta/2)})\leq C(\eta).
\]
Thus for all $x\in B(0,\rho)$ s.t. $B(\hat{x},\v ^{1/4}/\delta)\subset B(0,2)\setminus \overline{B(\a_i, \eta/2)}$, for small $\v$ we have
\[
F_\v(v_{\v },B(x,\v ^{1/4}))\leq\hat{F}_\xi(\hat{v}_{\v },B(0,2)\setminus \overline{B(\a_i, \eta/2)})<|\ln\v |^{1/3}.
\]
From Lemma \ref{L7.BadDiscsLemma} (first assertion), we obtain the existence of $C>0$ (independent of $\v$ and $\eta$) s.t. $|v_{\v }(x)|=|\hat{v}_{\v }(\hat{x})|\geq 1- C |\ln\v |^{-1/3}$. Finally, since for all $\hat{x}\in B(0,1)\setminus \overline{B(\a_i, \eta)}$ we have $B(\hat{x},\v ^{1/4}/\delta)\subset B(0,2)\setminus \overline{B(\a_i, \eta/2)}$, Corollary \ref{C7.UnifCongLocNonZeroDeg} follows.
\end{proof}

\subsection{Information about the limit $v_0$ }
Following \cite{BBH} (Appendix IV, page 152) we have
\begin{prop}\label{P7.LpBoundNonZeroDeg}
For all $1\leq p<2$ and for any compact $K \subset \R^2$, $\hat v_{\v }$ is bounded in $W^{1,p}(K)$.
\end{prop}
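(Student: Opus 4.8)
The plan is to follow the classical argument from \cite{BBH}, Appendix IV, adapted to the rescaled functional $\hat F_\xi$. The point is that although $\hat v_\v$ is \emph{not} bounded in $H^1(K)$ when $K$ contains one of the points $\a_1,\dots,\a_{d_0}$ (the energy diverges logarithmically near the vortices), the gradient is nevertheless controlled in $L^p$ for every $p<2$. First I would fix a compact $K\subset\R^2$, and enlarge it to a ball $B_R=B(0,R)$ with $R$ large; by Corollary \ref{C7.BoundEnergyU} and Proposition \ref{P7.LocalizationOfTheEnergy}, the total energy $\hat F_\xi(\hat v_\v,B_R)$ is bounded by $\pi d_0 b^2|\ln\xi|+\pi d_0^2|\ln\delta|+\mathcal O(1)$, i.e.\ it grows at most like $C|\ln\xi|$. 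This is the only quantitative input we need.

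The key step is the level-set (coarea / bad-ball) estimate. For $t\in(0,1)$ consider the superlevel sets $\{x\in B_R:|\n\hat v_\v(x)|>t\}$. One writes, for $1\le p<2$,
\[
\int_{B_R}|\n\hat v_\v|^p=p\int_0^\infty t^{p-1}\,\bigl|\{|\n\hat v_\v|>t\}\bigr|\,{\rm d}t,
\]
splits the integral at $t=t_0$ (a threshold of order a power of $\xi$), uses the trivial bound $|\{|\n\hat v_\v|>t\}|\le|B_R|$ for $t\le t_0$, and for $t>t_0$ uses Chebyshev together with the energy bound: $|\{|\n\hat v_\v|>t\}|\le t^{-2}\int_{B_R}|\n\hat v_\v|^2\le Ct^{-2}|\ln\xi|$. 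The contribution of the large-$t$ range is then $\le C|\ln\xi|\int_{t_0}^\infty t^{p-3}\,{\rm d}t = C|\ln\xi|\,t_0^{p-2}/(2-p)$. Choosing $t_0$ a suitable negative power of $\xi$ — concretely $t_0\sim\xi^{-\sigma}$ with $\sigma>0$ small — makes $|\ln\xi|\,t_0^{p-2}=|\ln\xi|\,\xi^{\sigma(2-p)}\to0$, so this term stays bounded (in fact tends to $0$). Since $p<2$ strictly, the exponent $p-2$ is negative and the integral $\int_{t_0}^\infty t^{p-3}{\rm d}t$ converges; this is exactly where the restriction $p<2$ is used, and it is unavoidable because near a vortex $|\n\hat v_\v|\sim 1/|x-\a_k|\notin L^2_{\rm loc}$. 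Combined with the boundedness of $\hat v_\v$ itself in $L^\infty$ (hence in $L^p(K)$, using e.g.\ $|\hat v_\v|\le C$ from the maximum principle / Corollary \ref{C7.SmoothConv} away from $\bar\w$ and the uniform $L^\infty$ bound on $\hat U_\v\hat v_\v$ inside), we conclude $\|\hat v_\v\|_{W^{1,p}(K)}\le C_p$ uniformly in $\v$.

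I expect the main (minor) obstacle to be bookkeeping of the two small parameters: one must verify that with hypothesis \eqref{7.MainHyp}, $|\ln\xi|=|\ln\v|+|\ln\delta|\sim|\ln\v|$ is the right logarithmic scale, and that the threshold $t_0$ can be chosen uniformly so that $|\ln\xi|\,t_0^{p-2}$ is bounded while $|B_R|\,t_0^{p}$-type terms from the low-$t$ range stay bounded as well — but since the low-$t$ contribution is simply $\le p\,t_0^{p}|B_R|/p$, any $t_0$ with $t_0\to0$ or even $t_0$ a fixed power of $\xi$ works there, so there is genuine room. No compactness or PDE regularity beyond what is already established is needed; the estimate is purely a consequence of the energy upper bound and the layer-cake formula. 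Passing to a weak limit, one also recovers $v_0\in W^{1,p}_{\rm loc}(\R^2)$, consistent with \eqref{7.H1LocConvInLittleOmega} away from the $\a_k$'s.
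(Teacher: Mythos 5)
There is a genuine gap, and it sits at the center of your argument: the only quantitative input you use, namely the global bound $\hat F_\xi(\hat v_\v,B_R)\leq \pi d_0 b^2|\ln\xi|+\pi d_0^2|\ln\delta|+C\leq C|\ln\xi|$, cannot by itself give a uniform $W^{1,p}$ bound. In your layer-cake estimate the two requirements on the threshold $t_0$ are incompatible: boundedness of the tail term $C|\ln\xi|\,t_0^{p-2}/(2-p)$ forces $t_0\geq c\,|\ln\xi|^{1/(2-p)}\to\infty$ (your choice $t_0\sim\xi^{-\sigma}$ indeed does this), whereas boundedness of the bulk term $t_0^{p}|B_R|$ forces $t_0=\mathcal{O}(1)$; with $t_0\sim\xi^{-\sigma}$ the bulk term blows up like $\xi^{-\sigma p}$, and the alternative you mention ($t_0\to0$, or $t_0$ a positive power of $\xi$) makes the tail term diverge instead, so there is no room at all. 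Optimizing over $t_0$ gives $t_0\sim|\ln\xi|^{1/2}$ and the bound $\int_{B_R}|\n\hat v_\v|^p\leq C|\ln\xi|^{p/2}$, which is nothing but the H\"older interpolation bound and diverges. No purely measure-theoretic argument can do better: a map whose gradient has modulus $|\ln\xi|^{1/2}$ on a set of fixed measure satisfies the same energy bound yet its $L^p$ gradient norm is of order $|\ln\xi|^{1/2}$. Hence your closing claim that the estimate is \emph{purely} a consequence of the energy upper bound and the layer-cake formula is false.

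What is missing is exactly the finer structure the paper invokes by citing \cite{BBH}, Appendix IV: one must use that $\hat v_\v$ is a minimizer whose excess energy is localized, logarithmically, at the $d_0$ vortices. In the present setting the needed ingredients are already available: the matching bounds of Propositions \ref{P7.UpperBndTh3} and \ref{P7.LocalizationOfTheEnergy}, Corollary \ref{C7.BoundEnergyU}, the concentration estimate \eqref{7.ConcentrationOfEnergyNearBadPoints} applied at every scale, the fact that each $\a_k$ carries degree one with $|\hat v_\v|\geq 1/2$ away from the $\a_k$'s (Proposition \ref{P7.AsymptoticResultatNonZeroDeg} and Corollary \ref{C7.UnifCongLocNonZeroDeg}), and the potential bound of Corollary \ref{C7.Pohozaev_bound}. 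From these one gets, for each $k$ and each $r\in(C\xi,\eta_0)$, the bound $\hat F_\xi(\hat v_\v,B(\a_k,r))\leq\pi b^2\ln(r/\xi)+C$ with $C$ uniform in $\v$ and $r$, hence an $\mathcal{O}(1)$ Dirichlet energy on every dyadic annulus $A_j=B(\a_k,2^{-j}\eta_0)\setminus B(\a_k,2^{-j-1}\eta_0)$ with $2^{-j}\eta_0\geq C\xi$. H\"older on each annulus gives $\int_{A_j}|\n\hat v_\v|^p\leq C\,2^{-j(2-p)}$, summable precisely because $p<2$, while on the core $B(\a_k,C\xi)$ H\"older and the global bound give $C\xi^{2-p}|\ln\xi|^{p/2}\to0$; away from the vortices the uniform local bounds suffice, and $\|\hat v_\v\|_{L^\infty}\leq 1/b$ handles the zero-order term. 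Some argument of this type (or the original one of \cite{BBH}) is unavoidable; this is where the restriction $p<2$ genuinely enters, not merely through the convergence of $\int_{t_0}^{\infty}t^{p-3}\,{\rm d}t$.
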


Let $\theta_i$ be the main argument of $\frac{\hat{x}-\alpha_i}{|\hat{x}-\alpha_i|}$ and set $\theta=\theta_1+...+\theta_{d_0}$. Note that $\n\theta$ is smooth away from $\{\a_1,...,\a_\kappa\}$ and $\Pi_i\frac{\hat{x}-\a_i}{|\hat{x}-\a_i|}={\rm e}^{\imath\theta}$. Let $\tilde g := \tr_{\d B_1} v_0$ and $\f_0\in C^\infty(\p B_1,\R)$ be s.t.
$\tilde{g}=\Pi_i\frac{\hat{x}-\a_i}{|\hat{x}-\a_i|}{\rm e}^{\imath\f_0}={\rm e}^{\imath(\theta+\f_0)}$ (see \cite{glcoursep6} for the existence of $\f_0$).
\begin{prop}\label{P7.equationforphistar} The limit $v_0$ satisfies $-\Div\left(a^2\,v_0\times\n v_0\right)=0$ in $\mathcal{D}'(\R^2)$.
Moreover we may write $v_0={\rm e}^{\imath(\theta+\f_\star)}$. Here $\f_\star$ is the solution of
\begin{equation}\label{7.EquationForPhiStar}
\begin{cases}-\Div\left[a^2\n(\theta+\f_\star)\right]=0&\text{in }B_1
\\
\f_\star=\f_0&\text{on }\p B_1
\end{cases}.
\end{equation}
\end{prop}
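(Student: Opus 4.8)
The plan is to show that $v_0$ is a weakly harmonic $\S^1$-valued map with a prescribed degree-one singularity at each $\alpha_i$, which forces the stated divergence-form equation and the decomposition $v_0 = \mathrm{e}^{\imath(\theta+\f_\star)}$.

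\textbf{Step 1: Derive the equation for $\hat v_\v$ and pass to the limit away from the $\alpha_i$.} The minimizer $\hat v_\v$ of $\hat F_\xi$ satisfies the Euler--Lagrange equation $-\Div(\hat U_\v^2 \n \hat v_\v) = \frac{1}{\xi^2}\hat U_\v^4 \hat v_\v(1-|\hat v_\v|^2)$ in $B(0,\rho/\delta)$. Testing against $\hat v_\v$ (more precisely, against $\hat v_\v \psi$ for $\psi \in C_c^\infty$) and using that $z \times z = 0$, one gets $-\Div(\hat U_\v^2\, \hat v_\v \times \n \hat v_\v) = 0$ in $\mathcal D'$. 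On any compact $K \subset \R^2 \setminus \{\alpha_1,\dots,\alpha_{d_0}\}$, Proposition \ref{P7.AsymptoticResultatNonZeroDeg} gives $\hat v_\v \to v_0$ strongly in $H^1_{\mathrm{loc}}$ and $\hat U_\v \to a$ in $L^2_{\mathrm{loc}}$ with $|\hat U_\v|\le 1$, so $\hat U_\v^2\,\hat v_\v \times \n\hat v_\v \to a^2\, v_0 \times \n v_0$ in $L^1_{\mathrm{loc}}(\R^2\setminus\{\alpha_i\})$; hence $-\Div(a^2\, v_0 \times \n v_0)=0$ in $\mathcal D'(\R^2 \setminus \{\alpha_1,\dots,\alpha_{d_0}\})$.

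\textbf{Step 2: Remove the singularities.} To upgrade the equation to all of $\R^2$, I would use Proposition \ref{P7.LpBoundNonZeroDeg}: $v_0 \in W^{1,p}_{\mathrm{loc}}(\R^2)$ for every $p<2$, so $a^2\, v_0 \times \n v_0 \in L^p_{\mathrm{loc}}$ for $p<2$. A distribution supported on the finite set $\{\alpha_1,\dots,\alpha_{d_0}\}$ that is a first-order derivative of an $L^p_{\mathrm{loc}}$ ($p<2$) vector field must vanish (a Dirac mass is not in $W^{-1,p}$ for $p<2$ in the plane, by the usual capacity/test-function argument: test against a logarithmic cutoff). Therefore $-\Div(a^2\, v_0 \times \n v_0)=0$ holds in $\mathcal D'(\R^2)$, which is the first claim.

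\textbf{Step 3: Lifting and the phase equation.} Since $v_0$ is $\S^1$-valued with $\deg_{\p B_\eta(\alpha_i)}(v_0)=1$ for each $i$ (Proposition \ref{P7.AsymptoticResultatNonZeroDeg}(3)), the map $v_0 \,\Pi_i \frac{\hat x - \alpha_i}{|\hat x - \alpha_i|}^{-1} = v_0\,\mathrm{e}^{-\imath\theta}$ has zero degree on every small circle around each $\alpha_i$ and hence extends to an $\S^1$-valued $W^{1,p}_{\mathrm{loc}}$ map on $B_1$ with no topological obstruction, so it admits a lifting $\mathrm{e}^{\imath\f_\star}$ with $\f_\star \in W^{1,p}(B_1,\R)$ (and $\f_\star \in H^1_{\mathrm{loc}}(B_1\setminus\{\alpha_i\})$); on $\p B_1$ this gives $\f_\star = \f_0$ by the definition of $\f_0$. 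Writing $v_0 = \mathrm{e}^{\imath(\theta+\f_\star)}$ one computes $v_0 \times \n v_0 = \n(\theta+\f_\star)$, so the equation of Step 2 becomes $-\Div[a^2\,\n(\theta+\f_\star)] = 0$ in $\mathcal D'(B_1)$, and with the boundary condition $\f_\star = \f_0$ this is exactly \eqref{7.EquationForPhiStar}. Uniqueness of $\f_\star$ follows because the associated quadratic form $\int_{B_1} a^2 |\n \cdot|^2$ is strictly convex on $H^1_{\f_0}$ (with $a$ bounded below by $b>0$), noting $\theta + \f_\star$ has the fixed singular part $\theta$ so the difference of two solutions lies in $H^1_0(B_1)$.

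\textbf{Main obstacle.} The delicate point is Step 2 --- justifying that the point singularities carry no distributional mass for the current $a^2 v_0\times\n v_0$; this is where the sub-$L^2$ integrability from Proposition \ref{P7.LpBoundNonZeroDeg} is essential, and one must be slightly careful because $a$ is only piecewise constant (so $v_0\times\n v_0$, not $a^2 v_0\times\n v_0$, is the gradient of the phase, but the removability argument applies to the divergence-free vector field directly). A secondary technical care is the lifting across $\p\omega$, which is handled exactly as in Step 2 of Proposition \ref{P7.AsymptoticResultatNonZeroDeg} via the Meyers-type regularity already invoked there.
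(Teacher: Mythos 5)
Your Step 1 and Step 3 are essentially sound, but Step 2 --- the only place where the equation is extended across the points $\a_1,\dots,\a_{d_0}$ --- rests on a false principle, and this is exactly the crux of the proposition. In the plane a Dirac mass \emph{does} belong to $W^{-1,p}$ for every $p<2$: test functions in $W^{1,p'}$ with $p'>2$ are continuous, and explicitly $F=\frac{1}{2\pi}\,\frac{x}{|x|^2}=\frac{1}{2\pi}\n\ln|x|$ lies in $L^p_{\rm loc}(\R^2)$ for all $p<2$ while $\Div F=\delta_0$. The logarithmic-cutoff/capacity argument you invoke works only when the vector field is in $L^2$ (the cutoffs are small in $H^1$ but blow up in $W^{1,p'}$ for $p'>2$), and here $v_0\times\n v_0\sim |x-\a_k|^{-1}$ near each vortex, so it is not in $L^2$. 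Worse, the counterexample is precisely the scenario you must rule out: a priori the phase of $v_0$ could contain terms $c_k\ln|x-\a_k|$ (perfectly compatible with degree one and with $W^{1,p}$, $p<2$, regularity), and for such a $v_0$ the current $a^2\,v_0\times\n v_0$ is in $L^p_{\rm loc}$, divergence-free away from the points, yet $-\Div\left(a^2\,v_0\times\n v_0\right)=2\pi b^2\sum_k c_k\,\delta_{\a_k}\neq0$. So no removability statement can yield the first claim; something must actually kill these logarithmic coefficients, and your argument contains nothing that does.

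The paper handles this differently. For the first claim it never localizes away from the points: the phase equation $-\Div\left[\hat U_\v^2\,\hat v_\v\times\n\hat v_\v\right]=0$ holds in the whole ball for every $\v$, and by the $W^{1,p}$ bound of Proposition \ref{P7.LpBoundNonZeroDeg} (valid on compacts \emph{containing} the $\a_k$'s) the currents converge weakly in $L^p(K)$, $p<2$, to $a^2\,v_0\times\n v_0$; the equation therefore passes to the limit in $\mathcal{D}'(\R^2)$ directly, across the singularities. For the structure near the $\a_k$'s the paper then writes the local limit, via Remark I.1 of \cite{BBH}, as a canonical harmonic map times $\exp\left(\imath\sum_k c_k\ln|x-\a_k|+\imath\chi\right)$ and proves $c_k\equiv0$ by the argument of Theorem VII.1, Step 2 in \cite{BBH}, whose inputs are the local $C^\infty$ convergences (\ref{C^k conv})--(\ref{C^k conv_grad}) and the Pohozaev-type bound of Corollary \ref{C7.Pohozaev_bound}. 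If you repair your Step 2 by passing to the limit across the points as the paper does, your lifting argument in Step 3 does then give \eqref{7.EquationForPhiStar}, since any nonzero $c_k$ would reintroduce a Dirac mass; but as written the proof has a genuine gap at its central step.
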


\begin{proof}
Let $\phi\in\mathcal{D}(\R^2)$, and set $K = {\rm supp}(\phi)$. By Proposition \ref{P7.LpBoundNonZeroDeg}, we have $\hat U_{\v }^2\,\hat v_{\v }\times\n\hat v_{\v }\weak a^2\,v_0\times\n v_0$ in $L^p(K)$ for $p<2$. Multiplying the equation
$-\Div\left[\hat U_{\v }^2\,\hat v_{\v }\times\n\hat v_{\v }\right]=0$ by $\phi$ and integrating by parts, we obtain
\begin{eqnarray*}
0=\int_{K}{-\Div\left[\hat U_{\v }^2\,\hat v_{\v }\times\n\hat v_{\v }\right]\phi}&=&\int_{K}{\hat U_\v^2\,\hat v_{\v }\times\n\hat v_{\v }\cdot\n\phi}\\&\to&
\int_{K}{a^2\,v_0\times\n v_0\cdot\n\phi}=\int_{K}{-\Div\left(a^2\,v_0\times\n v_0\right)\phi}.
\end{eqnarray*}
Consequently $-\Div\left(a^2\,v_0\times\n v_0\right)=0$ in $\mathcal{D}'(\R^2)$.


In order to prove that $-\Div\left[a^2\n(\theta+\f_\star)\right]=0$ in $B_1\setminus \{\a_{1},...,\alpha_{d_0}\}$, we follow \cite{LM1}, Step 12 of Theorem C.

Next, we prove that $\f_{\star}$ is harmonic in a neighborhood of $\a_{k}$.
Fix $\l>0$ and $x_0 \in \w$ s.t. $B(x_0, 2\l) \subset \w \setminus \{\a_1,...,\alpha_{d_0}\}$. As we established in Proposition \ref{P7.AsymptoticResultatNonZeroDeg}, Step 2,
$F_{\xi}(\hat v_{\v }, B(x_0, 2 \l))$ is uniformly bounded in $\v$. Proceeding as in Step 2, we conclude that exists $\l_0 \in (\l, 2\l)$ s.t., after
passing to a further subsequence, we have
\begin{equation}\label{boundaryest}
\int_{\d B(x_0, \l_0)}\left\{|\nabla \hat v_{\v }|^2 + \frac{1}{2 \xi^2}(1-|\hat v_{\v }|^2)^2\right\} \leq C
\end{equation}
with $C$ and $\l_0$ independent of $\v$. Now, if $\hat u_\v $ minimizes
\[
\hat E_{\xi}(\hat u) = \frac{1}{2}\int_{B(0,\frac{\rho}{\delta})}\left\{|\nabla \hat u|^2 +\frac{1}{2 \xi^2}  (a^2 - |\hat u|^2)^2\right\}
\]
subject to $\hat u(x) = f_{\v }(\delta x)$ on $\d B(0,\frac{\rho}{\delta})$, then $\hat{u}_\v $ minimizes $\hat E_{\xi}(\hat u, B(x_0, \l_0))$ with
respect to its own boundary conditions. In other words, $\hat w_\v : = \frac{\hat u_{\v }}{b}$ minimizes the classical energy
\[
\frac{1}{2}\int_{B(x_0, \l_0)}\left\{|\nabla \hat w_\v |^2 + \frac{b^2}{2 \xi^2} (1 - |\hat w_\v |^2)^2\right\}
\]
among $w \in H^1(B(x_0, \l_0))$ such that $w = h_\v :=  \frac{\hat{u}_{\v }}{b}$ on $\d B(x_0, \l_0)$. It follows from (\ref{boundaryest})
and Proposition \ref{P7.UepsCloseToaeps} that $h_\v $ also satisfies
\begin{equation}\label{boundaryest1}
\int_{\d B(x_0, \l_0)}\left\{|\p_\tau h_\v |^2 + \frac{1}{2 \xi^2}(1-|h_\v |^2)^2\right\} \leq C + 1.
\end{equation}
Note that by Proposition \ref{P7.UepsCloseToaeps} we have
\begin{equation}\label{max_princ}
\|\hat w_\v \|_{L^{\infty}(B(x_0, \lambda_0))} \leq 1 + c e^{-c_0 \xi}.
\end{equation}
Using (\ref{max_princ}) and the uniform bound from Corollary \ref{C7.UnifCongLocNonZeroDeg}, we may repeat the arguments of Theorem 2 in \cite{BBH1} and conclude that, up to a subsequence, there exists an $\S^1$-valued map $w_0$ s.t. for every compact
$K \subset (\w \setminus \{\a_1,...\alpha_{d_0}\})$ we have
\begin{equation}\label{C^k conv}
\hat w_\v  \to w_0 \text{ in } C^{\infty}(K)
\end{equation}
and
\begin{equation}\label{C^k conv_grad}
\frac{b^2 (1 - |\hat w_\v |^2)}{\xi^2} \to |\nabla w_0|^2 \text{ in } C^{\infty}(K).
\end{equation}
Fix now $r <\min\left\{\dfrac{\min|\a_k - \a_j|}{8}, \dfrac{\dist(\a_k, \d \w)}{8}\right\}$ and denote
$\w_r:= \{x \in \w, {\rm dist}(x, \d \w) > r\}$. It follows from (\ref{C^k conv}) that $\hat w_\v  \to q_0 := {\rm tr}_{\p \w_r} w_0$ in
$C^{\infty}(\p \w_r)$. In view of Proposition \ref{P7.LpBoundNonZeroDeg}, we have $w_0 \in W^{1,p}(\w_r)$, $p<2$. By Remark I.1 in \cite{BBH}, this implies that
\[
w_0 = \tilde w \, {\rm exp}\left(i \sum_{k} c_k \ln |x-\a_k| + i \chi \right).
\]
Here:
\begin{enumerate}[$\bullet$]
\item $\tilde w$ is the {\it canonical harmonic map} (see \cite{BBH}, Sec. I.3.) having singularities $\{\a_k, k = 1, ..., d_0\}$
and equal to $q_0$ on $\p \w_r$;
\item the $c_k$'s are real coefficients;
\item $\chi$ is the solution of
\[
\begin{cases}
\Delta \chi = 0  &\text{in} \ \w_r\\
\chi(x) + \sum_{k} c_{k} \ln|x - \a_{k}| = 0  &\text{on} \ \p \w_r
\end{cases}.
\]
\end{enumerate}
Repeating the argument of \cite{BBH}, Theorem VII.1, Step 2 (the key ingredients of this proof are (\ref{C^k conv}), (\ref{C^k conv_grad}) and
Corollary \ref{C7.Pohozaev_bound}),
we find that $c_k \equiv 0, k = 1, ..., d_0$, and, consequently, $w_0 \equiv \tilde w$ in $\w_r$. Finally, by \cite{BBH}, Corollary I.2.,
we know that the canonical harmonic map $\tilde w$ is of the form $\tilde w = e^{i(\theta + \f_{\star})}$ with $\f_{\star}$ harmonic in $\w_r$.

\end{proof}

\subsection{Uniqueness of zeros}
\begin{prop}\label{P7.uniq0}
For $\v$ sufficiently small, the minimizer $\hat v_\v$ has exactly $d_0$ zeros.
\end{prop}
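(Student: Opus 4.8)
The plan is to localize the zeros of $\hat v_\v$ near the points $\alpha_1,\dots,\alpha_{d_0}$ produced by Proposition~\ref{P7.AsymptoticResultatNonZeroDeg}, then to confine each cluster of zeros to the vortex scale $\xi$, and finally to identify the blown-up profile with the standard radial vortex, which carries a single zero. \textbf{Step 1 (reduction).} By Corollary~\ref{C7.UnifCongLocNonZeroDeg} and Proposition~\ref{P7.SmoothBound}, for every fixed $\eta\in(0,\eta_0)$ and all small $\v$ the zero set of $\hat v_\v$ is contained in $\bigcup_{k=1}^{d_0}B(\alpha_k,\eta)$; since $\deg_{\p B(\alpha_k,\eta)}(\hat v_\v/|\hat v_\v|)=1$, the map $\hat v_\v$ vanishes at least once in each $B(\alpha_k,\eta)$. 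It therefore suffices to prove that, for $\v$ small, $\hat v_\v$ has \emph{at most} one zero in each $B(\alpha_k,\eta)$ (the zeros being isolated, as in \cite{BBH}); summing over $k$ then yields exactly $d_0$ zeros.

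\textbf{Step 2 (sharp localized energy).} Combining the estimate~\eqref{7.EnergyOfTheHole} (so that the total energy in $\bigcup_k B(\alpha_k,\eta)\subset B(0,1)$ is $\pi d_0 b^2|\ln\xi|+\mathcal O(1)$) with the lower bound furnished by Theorem~4.1 in \cite{SS1} applied separately on each $B(\alpha_k,\eta)$, where $\hat v_\v/|\hat v_\v|$ has degree $1$, one obtains $\hat F_\xi(\hat v_\v,B(\alpha_k,\eta))=\pi b^2|\ln\xi|+\mathcal O(1)$ for each $k$. \textbf{Step 3 (clustering at the vortex scale).} Fix $k$ and suppose $\hat v_\v$ has $N\ge2$ zeros in $B(\alpha_k,\eta)$ (their number is bounded uniformly in $\v$, the total energy being $\mathcal O(|\ln\xi|)$); let $s_\v$ be their diameter. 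A Sandier--Serfaty ball construction at an intermediate radius that separates the individual zeros, together with the Dirichlet lower bound on the complementary degree-$1$ annular region and, for any charge-zero zero, the clearing-out estimate that is the contrapositive of the second part of Lemma~\ref{L7.BadDiscsLemma}, produces a constant $c>0$ with $\hat F_\xi(\hat v_\v,B(\alpha_k,\eta))\ge\pi b^2|\ln\xi|+c\,\ln(s_\v/\xi)-C$. Comparison with Step~2 forces $\ln(s_\v/\xi)=\mathcal O(1)$, i.e.\ $s_\v=\mathcal O(\xi)$.

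\textbf{Step 4 (the blown-up profile).} Let $c_\v$ be the barycenter of the zeros of $\hat v_\v$ lying near $\alpha_k$; then $c_\v\to\alpha_k$ and, by Step~3, these zeros all lie in $B(c_\v,R_0\xi)$ for a fixed $R_0$. Put $\tilde v_\v(y):=\hat v_\v(c_\v+\xi y)$. By Corollary~\ref{C7.Pohozaev_bound} and the elliptic estimates of Theorem~1 in \cite{M1}, along a subsequence $\tilde v_\v\to U_\infty$ in $C^1_{\rm loc}(\R^2)$, where $-\Delta U_\infty=U_\infty(1-|U_\infty|^2)$ in $\R^2$, $|U_\infty|\le1$, $\int_{\R^2}(1-|U_\infty|^2)^2<\infty$, and $\deg_\infty(U_\infty)=1$ (the degree on $\p B(0,R)$ for $R\ge R_0$ equals $\deg_{\p B(\alpha_k,\eta)}(\hat v_\v/|\hat v_\v|)=1$, there being no zeros in between). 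Since $U_\infty$ inherits the minimality of $\hat v_\v$, the classification of entire Ginzburg--Landau solutions with finite potential energy (Brezis--Merle--Rivi{\`e}re; see also \cite{BBH}) together with this degree and minimality constraint forces $U_\infty$, up to translation and a constant unimodular factor, to be the radially symmetric degree-one vortex $f(|y|)\,y/|y|$, which has a single, non-degenerate zero. By $C^1_{\rm loc}$ convergence and a degree argument at that non-degenerate zero, $\tilde v_\v$, hence $\hat v_\v$, has exactly one zero in $B(c_\v,R_0\xi)$ for $\v$ small; this completes the reduction of Step~1, and $\hat v_\v$ has exactly $d_0$ zeros.

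The main obstacle is Step~3: controlling the zeros at \emph{all} intermediate scales between $\xi$ and $\eta$, and in particular ruling out charge-zero zeros away from the vortex cores (these are invisible to the purely topological lower bound), which requires playing the sharp upper bound of Step~2 against both the Sandier--Serfaty lower bound and the clearing-out estimate. A secondary technical point is upgrading the convergence of the rescaled minimizers to $C^1_{\rm loc}$ (indeed $C^2_{\rm loc}$) so that the non-degeneracy of the radial vortex can be exploited in the final degree/implicit-function argument.
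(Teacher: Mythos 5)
Your overall strategy (confine all zeros near each $\a_k$ to the core scale $\xi$, blow up, and identify the limiting profile) is genuinely different from the paper's proof, but Step~3 has a real gap that the later steps do not repair. The lower bound $\hat F_\xi(\hat v_\v,B(\a_k,\eta))\ge \pi b^2|\ln\xi|+c\ln(s_\v/\xi)-C$ is only available when the zeros separated by the distance $s_\v$ carry nonzero degrees: the Sandier--Serfaty ball construction and Dirichlet annulus bounds see topological charge, not zeros. A configuration with one degree-one zero and a second, degree-zero zero at distance $s_\v\gg\xi$ costs only an $\mathcal O(1)$ excess (a zero forces $|\hat v_\v|\le 1/2$ on a ball of radius $c\,\xi$, hence an extra potential energy of order a fixed constant), which is absorbed by the $\mathcal O(1)$ slack in your Step~2 upper bound. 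The clearing-out estimate you invoke (the contrapositive of the second part of Lemma~\ref{L7.BadDiscsLemma}) does not fill this hole either: it operates on balls of radius $\v^{1/4}$, and since $\v\ll\v^{1/4}\ll\delta$, any spurious zero at distance between $C\v$ and $\v^{1/4}$ (original scale) from the genuine vortex has its $\v^{1/4}$-ball overlapping the core, so the energy $C(\mu)|\ln\v|$ it detects is already accounted for by the vortex itself; moreover $C(\mu)$ is not quantified against $\pi b^2$. Consequently you can only confine the zeros to a mesoscale of order $\v^{1/4}/\delta$ in the blown-up variable, which is $\v^{-3/4}\xi\gg\xi$; then in Step~4 such intermediate-scale zeros escape to infinity under the $\xi$-blow-up, are invisible to $U_\infty$, and are never excluded. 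This is exactly the phenomenon you flag yourself as ``the main obstacle'', and the proposal does not overcome it.

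For comparison, the paper's proof avoids any core-scale energy bookkeeping: using the local $C^\infty$ convergence of $\hat w_\v=\hat U_\v\hat v_\v/b$ (which solves the unweighted Ginzburg--Landau equation with parameter $\xi/b$ near $\a_k$) to the canonical harmonic map $e^{\imath(\theta_k+H_k)}$, it first derives $\n H_k(\a_k)=0$ as in the alternative proof of Theorem VII.4 in \cite{BBH}, and then applies the zero-counting theorem of \cite{BCP1} exactly as in Theorem IX.1 of \cite{BBH}, which yields a \emph{unique} zero in each $B(\a_k,r)$ directly from the boundary behavior on a fixed small circle -- no confinement to the scale $\xi$ is needed. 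A secondary weakness in your Step~4: Brezis--Merle--Rivi\`ere gives quantization of the potential energy of entire solutions, not radial symmetry; to force the blown-up limit to be the radial degree-one vortex you would need Mironescu's classification of nonconstant local minimizers of the planar Ginzburg--Landau energy, together with a proof (comparison argument on large balls) that the blow-up limit of $\hat v_\v$ is indeed a local minimizer. That part could be made rigorous, but without a correct Step~3 the argument as a whole does not prove the proposition.
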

\begin{proof}

It suffices to prove that for small $\v$ there is a unique zero of $\hat v_{\v}$ in $B(\a_k, r)$, $k = 1,...,d_0$, with $r$ defined in the proof of Proposition \ref{P7.equationforphistar}.

Since $\hat w_\v  = \frac{\hat v_{\v } \hat U_{\v }}{b}$, from Proposition \ref{P7.UepsCloseToaeps} and Proposition \ref{P7.equationforphistar} we see that $w_0 = v_0 = e^{i(\theta_k + H_k)}$ in $B(\a_k, r)$, where $\theta_k$ is the phase of $\frac{x-\a_k}{|x-\a_k|}$ and $H_k = \f_{\star} + \psi_k$ is harmonic in $B(\a_k, r)$. Using (\ref{C^k conv}) and (\ref{C^k conv_grad}) and arguing as  in the alternative proof of  Theorem VII.4 in \cite{BBH} (page 74) we obtain that $\nabla H_k(\a_k) = 0$.

Finally, we are now in position to obtain, as in Theorem IX.1 \cite{BBH} (using the main result of \cite{BCP1}), that there is a unique zero of $\hat w_\v $ (and, therefore, of $\hat v_{\v }$) in $B(\a_k, r)$.

\end{proof}

\subsection{Summary}
We have thus proved
\begin{thm}\label{T7.THM3}
Let $\v_n\downarrow0$ and $\hat v_{\v_n}$ be a minimizer of \eqref{variable_change} in \eqref{modelclassblownup} for $\v=\v_n$. Then there exist $d_0$ distinct points $\a_1,...,\a_{d_0} \in\o$ and a function $v_0 \in H^1_{\rm loc}(\R^2\setminus\{\a_1,...,\a_{d_0}\},\S^1)\cap W^{1,p}_{\rm loc}(\R^2,\S^1)$ ($p<2$) s.t., up to a subsequence
\begin{enumerate}
\item $\hat v_{\v_n} \to v_0$ in $H^1_{\rm loc}(\R^2\setminus\{\a_1,...,\a_{d_0}\})$ and $C^0_{\rm loc}(\R^2\setminus\{\a_1,...,\a_{d_0}\})$,
\item $\hat v_{\v_n} \weak v_0$ in $W^{1,p}_{\rm loc}(\R^2)$ ($p<2$),
\item for $K\Subset\R^2\setminus\{\a_1,...,\a_{d_0}\}$, $|\hat v_{\v_n}|\geq1-|\ln\v_n|^{-1/3}\text{ in }K$ and $\displaystyle\int_{K}{\left|\n|\hat{v}_{\v_n}|\right|^2+\frac{1}{\xi^2}(1-|\hat v_{\v_n}|^2)^2}\to0$,
\item for $K\Subset\R^2\setminus\overline{\o}$, $\hat v_{\v_n}\to v_0$ in $C^\infty(K)$ and $1-|\hat v_{\v_n}|\leq C_K\xi^2$,
\item $\hat v_{\v_n}$ has exactly $d_0$ zeros $x_1^n, ..., x_{d_0}^n$ and $x_i^n \to \a_i$,
\item $v_0$ satisfies $-\Div\left(a^2\,v_0\times\n v_0\right)=0$ in $\mathcal{D}'(\R^2)$.
\end{enumerate}
\end{thm}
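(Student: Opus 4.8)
The statement is a summary collecting the results proved throughout Section~\ref{S7.ProofTheorem3}, so the proof I would give is essentially an assembly of the preceding propositions; let me describe the order in which I would invoke them and where the genuine difficulty lies.

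\textbf{Step 1: rough localization and energy splitting.} I would start from the energy upper bound of Proposition~\ref{P7.UpperBndTh3}. The first real input is Proposition~\ref{P7.ConvOfBadDiscsTo0}: away from any fixed compact neighborhood of $\o$ the modulus $|\hat v_\v|$ is within $C|\ln\v|^{-1/3}$ of $1$. Feeding this into Proposition~\ref{P7.LocalizationOfTheEnergy} pins the leading-order energy $\pi d_0 b^2|\ln\xi|$ inside $B(0,1)$ and the term $\pi d_0^2|\ln\delta|$ in the exterior, while Corollary~\ref{C7.BoundEnergyU} gives $\mathcal O(1)$ energy on each fixed annulus $1\leq|x|\leq R$. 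These estimates are what make all subsequent compactness arguments quantitative. The Pohozaev-type bound $\xi^{-2}\int_{B(0,\rho/\delta)}(1-|\hat v_\v|^2)^2\leq C$ of Corollary~\ref{C7.Pohozaev_bound} (via \cite{DelFel}) is also recorded here, since the classical estimate $\|1-|v_\v|^2\|_{L^2}^2\leq C\v^2$ is unavailable.

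\textbf{Step 2: the singular set and the quantization.} Next I would run the bad-disc machinery of Lemmas~\ref{L7.finitenumberbaddiscs} and~\ref{L7.SeparationBadDiscs}, extract a subsequence, and obtain the finite limiting set $\{\a_1,\dots,\a_\kappa\}\subset\overline{B_1}$ as in~\eqref{7.DefintionOfTheLimitingSite}. Then Proposition~\ref{P7.AsymptoticResultatNonZeroDeg} does the decisive work: via the Sandier--Serfaty ball construction and a careful degree/energy accounting it yields $\a_k\in\o$, the weak $H^1_{\rm loc}(\R^2\setminus\{\a_1,\dots,\a_\kappa\})$ convergence $\hat v_\v\weak v_0$ to an $\S^1$-valued map, the fact that the local degree around each $\a_k$ equals $1$, and hence $\kappa=d_0$; the squeeze between the $\liminf$ of the energy and the upper bound upgrades the convergence to strong $H^1_{\rm loc}$ and to $C^0_{\rm loc}$ (item~1) and forces $\int_K|\n|\hat v_\v||^2+\xi^{-2}(1-|\hat v_\v|^2)^2\to0$, which together with Corollary~\ref{C7.UnifCongLocNonZeroDeg} gives item~3. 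Proposition~\ref{P7.SmoothBound} and Corollary~\ref{C7.SmoothConv} then supply item~4 (the $C^\infty$ convergence and the bound $1-|\hat v_\v|\leq C_K\xi^2$ away from $\overline\o$, via the interior estimates of \cite{M1}).

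\textbf{Step 3: the limit map and the zeros.} Item~2 is Proposition~\ref{P7.LpBoundNonZeroDeg} (the uniform $W^{1,p}$ bound, $p<2$, near the singularities, which also gives $v_0\in W^{1,p}_{\rm loc}(\R^2,\S^1)$). For item~6 I would pass to the limit in the rescaled Euler--Lagrange equation $-\Div(\hat U_\v^2\,\hat v_\v\times\n\hat v_\v)=0$, using the weak $L^p$ convergence of $\hat U_\v^2\,\hat v_\v\times\n\hat v_\v$ to $a^2\,v_0\times\n v_0$; this is the first half of Proposition~\ref{P7.equationforphistar}. Finally item~5 is Proposition~\ref{P7.uniq0}, and the labeling $x_i^n\to\a_i$ follows because $\hat v_\v\to v_0$ in $C^0_{\rm loc}(\R^2\setminus\{\a_1,\dots,\a_{d_0}\})$ with $|v_0|\equiv1$ there forces, for small $\v$, every zero of $\hat v_\v$ into an arbitrarily small neighborhood of the $\a_k$, while Propositions~\ref{P7.AsymptoticResultatNonZeroDeg} and~\ref{P7.uniq0} together place exactly one zero near each $\a_k$. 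The only delicate point, were one to prove this from scratch rather than cite the propositions, is Step~2 combined with the substitute for the missing Pohozaev estimate: that replacement bound (Corollary~\ref{C7.Pohozaev_bound}) is exactly what makes the degree quantization $d_k\in\{0,1\}$, the identity $\kappa=d_0$, the $C^\infty$ convergence and the uniqueness of zeros all go through, so I would regard it as the crux of the whole section.
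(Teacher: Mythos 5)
Your proposal is correct and follows essentially the same route as the paper: Theorem \ref{T7.THM3} is proved there precisely by assembling Propositions \ref{P7.AsymptoticResultatNonZeroDeg}, \ref{P7.LpBoundNonZeroDeg}, \ref{P7.equationforphistar}, \ref{P7.uniq0}, Corollaries \ref{C7.SmoothConv} and \ref{C7.UnifCongLocNonZeroDeg} together with \eqref{7.potentialBoundincompact}, exactly as you do, with the preliminary localization and energy-splitting steps (Propositions \ref{P7.ConvOfBadDiscsTo0}, \ref{P7.LocalizationOfTheEnergy}, Corollary \ref{C7.Pohozaev_bound}) feeding in as you describe. Your identification of Corollary \ref{C7.Pohozaev_bound} as the substitute for the missing Pohozhaev estimate matches the paper's own emphasis.
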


Let us summarize the proof of Theorem \ref{T7.THM3}:
\begin{enumerate}[$\bullet$]
\item Statement {\it 1.} is established in Proposition \ref{P7.AsymptoticResultatNonZeroDeg},
\item Statement {\it 2.} follows from Propositions \ref{P7.LpBoundNonZeroDeg} and \ref{P7.equationforphistar},
\item  Statement {\it 3.} is a consequence of Corollary  \ref{C7.UnifCongLocNonZeroDeg} and (\ref{7.potentialBoundincompact}),
\item  Statement {\it 4.} is Corollary \ref{C7.SmoothConv},
\item  Statement {\it 5.} is proved in Proposition \ref{P7.uniq0},
\item  Statement {\it 6.} is established in Proposition \ref{P7.equationforphistar}.
\end{enumerate}
The proof of Theorem \ref{T7.THM3} is complete.


\section{Renormalized energy for the model problem}

In this section, we establish the expansion for $F_\v(v_\v,B(0,\rho))=\hat F_{\xi}(\hat v_\v,B(0, \frac{\rho}{\dl}))$; specifically, we derive the expression for
\begin{equation}\label{7.FULLRENENALTERNATIVEDEFINITION}
\lim_\v\left\{\hat F_{\xi}(\hat v_\v,B(0, \frac{\rho}{\dl}))-\pi d_0^2 |\ln \delta| -\pi d_0 b^2|\ln{\xi}|\right\}.
\end{equation}
(We are going to prove that this limit exists).

In order to find an expression for \eqref{7.FULLRENENALTERNATIVEDEFINITION}, our strategy is the following:
\begin{enumerate}[$\bullet$]
\item (Section \ref{S7.AwayTheInclusEst}) We first study the minimization of the Dirichlet functional among $\S^1$-valued maps in
annulars $B(0,\rho/\delta)\setminus\overline{B(0,1)}$ with the Dirichlet boundary conditions: $f^\delta(\delta\cdot)$ on
$\p B(0,\rho/\delta)$ and $g^\delta$ on $\p B(0,1)$. Here $f^\delta=\frac{f_{\v}}{|f_{\v}|}$ where $f_{\v}$ is given in the model problem and
$g^\delta,g^0\in C^\infty(\p B_1,\S^1)$ are s.t. $g^\delta\to g^0$ in $C^1$. We get that the Dirichlet energy has the form $\pi d_0^2\ln(\rho/\delta)+\tilde W_0(f_0)+\tilde W_1(g^0)+o_\delta(1)$.
\item (Sections \ref{S7.ArroudTheInclusEst}, \ref{S7.UpBounAuxPBBBB} and \ref{S7.LowBounAuxPBBBB}) In $B(0,1)$, we study the weighted Ginzburg-Landau functional with the Dirichlet boundary condition $g^\delta$ on $\p B(0,1)$. Making use of the previous bullet point, one may obtain the matching upper and lower bounds and use them to derive the third term of renormalized energy, which depends on the limiting locations of the zeros ${\bm\beta}=(\beta_1,...,\beta_{d_0})\in\o^{d_0}$ and on $g^0$. We establish that
\[
\inf_{ H^1_{g^\delta}}\hat F_\xi(\cdot,B(0,1))=\pi d_0 b^2\ln\frac{b}{\xi}+d_0 b^2 \gamma + \tilde{W}_2({\bm \beta},g^0)+o_\v(1).
\]
\item (Section \ref{S7?MinAuxEnsdkfjh}) Finally, we make a fundamental observation: the limiting function $g_0=\lim \tr_{\p B_1}\hat v_\v$ and the points ${\bm \alpha}$ obtained form Theorem \ref{T7.THM3} form a minimal configuration for $W_1(\tilde{g})+W_2({\bm\beta},\tilde{g})$.
Thus, introducing
\[
\tilde{W}({\bm\beta})=\inf_{\substack{\tilde{g}\in C^\infty(\p B_1,\S^1)\\\text{with }\deg_{\p B_1}(\tilde{g})=d_0}}\left\{\tilde{W}_1(\tilde{g})+\tilde{W}_2({\bm\beta},\tilde{g})\right\}
\]
we conclude that ${\bm\alpha}$ minimizes $\tilde{W}$.
\end{enumerate}

In this section we prove the following theorem.
\begin{thm}\label{T7.THM4}
The following energy expansion holds when $\v\to0$
\begin{equation}\label{Ren_en_Th4}
\hat F_{\xi}(\hat v_\v,B(0, \frac{\rho}{\dl}))=\pi d_0 b^2\ln\frac{b}{\xi} +\pi d_0^2 \ln \frac{\rho}{\delta} +\tilde{W}_0(f_0) + \tilde W ({\bm \a})+d_0 b^2 \gamma + o_\v(1).
\end{equation}
Here the points ${\bm \a}=(\a_1,..., \a_{d_0})$ are obtained from Theorem \ref{T7.THM3}, $\g>0$ is an absolute constant and $\tilde{W}_0(f_0), \tilde{W}({\bm\a})$ are renormalized energies:
\begin{enumerate}[$\bullet$]
\item $\tilde{W}_0$ is independent of the points $\alpha_1,...,\alpha_{d_0}$ and given by (\ref{decoupled_energy}),
\item $\tilde{W}$ is given by \eqref{Ren_en_loc}, it is independent of $f_0$ and the limiting points $(\alpha_1,..,\alpha_{d_0})$ minimize $\tilde{W}$.
\end{enumerate}


\end{thm}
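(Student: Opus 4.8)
The plan is to follow the three-step strategy already announced after the statement, splitting the blown-up domain $B(0,\rho/\delta)$ into the annulus $A_\delta := B(0,\rho/\delta)\setminus\overline{B(0,1)}$ and the unit disc $B(0,1)$, and gluing matching upper and lower bounds along $\partial B(0,1)$. First I would work in the annulus (Section \ref{S7.AwayTheInclusEst}): here $\hat v_\v\to v_0$ in $C^\infty_{\rm loc}$ by Corollary \ref{C7.SmoothConv}, $\hat U_\v\to1$ with exponential rate by Proposition \ref{P7.UepsCloseToaeps}, and $|\hat v_\v|\to1$ with rate $C_K\xi^2$ by Proposition \ref{P7.SmoothBound}; so the weighted energy on $A_\delta$ is, up to $o_\v(1)$, the Dirichlet energy of an $\S^1$-valued map. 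One then shows that minimizing this Dirichlet energy over $\S^1$-maps on $A_\delta$ with boundary data $f^\delta(\delta\,\cdot\,)$ on the outer circle and $g^\delta$ on $\partial B(0,1)$ produces the expansion $\pi d_0^2\ln(\rho/\delta)+\tilde W_0(f_0)+\tilde W_1(g^0)+o_\delta(1)$: the $\pi d_0^2\ln(\rho/\delta)$ comes from the degree-$d_0$ topology of the annulus (lifting $v=e^{i(d_0\theta+\psi)}$ and solving $-\Delta\psi=0$), while the two $o_\delta(1)$-stable correction terms decouple because the outer data oscillate on scale $\delta$ near a pure power $z^{d_0}/|z|^{d_0}$ (use \eqref{A1}, \eqref{A2} and the explicit extension \eqref{7.ExtensionPropertyAucPron}) and therefore interact with the inner boundary only through the degree. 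This yields formula \eqref{decoupled_energy} for $\tilde W_0$.

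Second, I would treat the unit disc $B(0,1)$ (Sections \ref{S7.ArroudTheInclusEst}, \ref{S7.UpBounAuxPBBBB}, \ref{S7.LowBounAuxPBBBB}). Here the zeros live, at limiting positions $\bm\beta=(\beta_1,\dots,\beta_{d_0})\in\omega^{d_0}$ which by Theorem \ref{T7.THM3} coincide with $\bm\alpha$. The claim is
\[
\inf_{H^1_{g^\delta}}\hat F_\xi(\cdot,B(0,1))=\pi d_0 b^2\ln\frac{b}{\xi}+d_0 b^2\gamma+\tilde W_2(\bm\beta,g^0)+o_\v(1).
\]
For the upper bound I would build a test map: near each $\beta_k$ insert a rescaled copy of the radial Ginzburg-Landau profile on scale $\xi/b$ (this contributes $\pi b^2\ln(b/\xi)+b^2\gamma$ per vortex, $\gamma$ being the usual core constant from \cite{BBH}), away from the vortices take the canonical harmonic map with singularities $\bm\beta$ and boundary value $g^\delta$ matched through a harmonic phase; the interaction of these pieces gives $\tilde W_2(\bm\beta,g^0)$, defined via the Bethuel-Brezis-Hélein renormalized-energy-type quantity for the disc with the weight $a^2$ ($=b^2$ inside $\omega$, $1$ on the thin layer $B(0,1)\setminus\omega$, but since the vortices are strictly inside $\omega$ this is controlled). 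For the lower bound I would use the bad-disc/ball-construction machinery already invoked (Theorem 4.1 in \cite{SS1}), combined with the lower bound \eqref{7.EnergyOfTheHoleBis00}, the strong $H^1_{\rm loc}$ convergence away from $\bm\alpha$ from Proposition \ref{P7.AsymptoticResultatNonZeroDeg}, the degree-one quantization, the Pohozaev-type bound Corollary \ref{C7.Pohozaev_bound}, and the fine lower bound near each isolated degree-one vortex (à la \cite{BBH} Chapter~VIII, or the refined estimates à la Sandier); \eqref{C^k conv} and \eqref{C^k conv_grad} pin down the core energy precisely. Passing to the limit and using that the lower bound is attained (the minimizers are nearly optimal by Proposition \ref{P7.UpperBndTh3}) gives the equality.

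Third, I would perform the matching (Section \ref{S7?MinAuxEnsdkfjh}). Adding the two expansions gives
\[
\hat F_\xi(\hat v_\v,B(0,\rho/\delta))=\pi d_0 b^2\ln\frac{b}{\xi}+\pi d_0^2\ln\frac{\rho}{\delta}+d_0 b^2\gamma+\tilde W_0(f_0)+\tilde W_1(\tilde g)+\tilde W_2(\bm\alpha,\tilde g)+o_\v(1),
\]
where $\tilde g=\lim\tr_{\partial B_1}\hat v_\v$. The key observation is that, since $\hat v_\v$ is a genuine minimizer of $\hat F_\xi$ on the whole $B(0,\rho/\delta)$, the trace $\tilde g$ and the points $\bm\alpha$ must minimize $\tilde W_1(\tilde g)+\tilde W_2(\bm\beta,\tilde g)$ over admissible $\tilde g\in C^\infty(\partial B_1,\S^1)$ with $\deg_{\partial B_1}\tilde g=d_0$ and over $\bm\beta\in\omega^{d_0}$: any competitor configuration would, after matching a boundary layer (again via \eqref{7.ExtensionPropertyAucPron}-type interpolation costing only $o_\v(1)$), produce a lower value of $\hat F_\xi$, contradicting minimality. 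Defining $\tilde W(\bm\beta):=\inf_{\tilde g}\{\tilde W_1(\tilde g)+\tilde W_2(\bm\beta,\tilde g)\}$ and \eqref{Ren_en_loc}, we obtain \eqref{Ren_en_Th4} and the fact that $\bm\alpha$ minimizes $\tilde W$.

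The main obstacle I anticipate is the sharp lower bound in the unit disc, i.e. proving that the energy near each degree-one vortex is $\pi b^2\ln(b/\xi)+b^2\gamma+o(1)$ \emph{with the correct constant} and that the leftover energy away from the cores converges exactly to $\tilde W_2(\bm\alpha,\tilde g)$ — this requires carefully combining the ball-construction lower bound (which is only accurate to $O(1)$) with the upper bound to squeeze the $O(1)$ term, much as in \cite{BBH} Chapter VIII–IX; the presence of the discontinuous weight $a^2$ on the thin shell $B(0,1)\setminus\omega$ and the fact that $\delta\to0$ only logarithmically slower than $\v$ (hypothesis \eqref{7.MainHyp}) must be tracked throughout to ensure all the error terms genuinely are $o_\v(1)$.
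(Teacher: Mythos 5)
Your plan follows the same global architecture as the paper: the splitting of $B(0,\rho/\delta)$ into the annulus $A_\delta$ and the unit disc, the decoupled expansion $\pi d_0^2\ln(\rho/\delta)+\tilde W_0(f_0)+\tilde W_1(g^0)+o_\delta(1)$ on the annulus, the expansion $\pi d_0 b^2\ln(b/\xi)+d_0b^2\gamma+\tilde W_2({\bm\beta},g^0)+o_\v(1)$ in $B(0,1)$, and the final minimality argument identifying $\tilde W({\bm\beta})=\inf_{\tilde g}\{\tilde W_1(\tilde g)+\tilde W_2({\bm\beta},\tilde g)\}$ via competitors and the minimality of $\hat v_\v$ (this is exactly Proposition \ref{P7.MinRenEner}). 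Two remarks on the details. First, the decoupling on $A_\delta$ is not really because the outer data ``oscillate on scale $\delta$''; in the paper it is an explicit Fourier computation on the annulus (Proposition \ref{P7.decoupling}), where the cross terms between inner and outer Fourier modes decay like powers of $\delta/\rho$, so the two $H^{1/2}$ seminorms in \eqref{decoupled_energy} separate; your heuristic points in the right direction but the quantitative statement \eqref{7.FundEstimateAuxProbFarInclBis} is what is actually used, together with the stability estimate $I_\delta=J_\delta+o_\delta(1)$ needed because the minimizer's own traces (not $f_0$, $g_0$) appear in the lower bound.

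Second, and more substantively, the step you flag as the main obstacle — the sharp lower bound $\hat F_\xi(\hat v_\v,B(\a_i,r))\geq \pi b^2\ln(b r/\xi)+b^2\gamma+o_r(1)+o_\v(1)$ with the exact core constant — is precisely where the paper does something different from what you propose. You suggest re-running BBH Chapter VIII--IX type fine lower bounds and squeezing the $O(1)$ term against the upper bound; the paper instead uses a comparison (Lefter--R\u{a}dulescu) trick that sidesteps any new sharp lower bound: writing $\hat v_\v=\lambda_\v e^{\imath(\theta_i+\sigma^i_\v)}$ near $\a_i$ and using \eqref{7.ConvPhaseArgumentrenrm}--\eqref{7.ConvModulusArgumentrenrm}, one extends $\hat v_\v$ from $B(\a_i,r)$ across the thin annulus $B_{r+r^2}\setminus \overline{B_r}$ by interpolating modulus and phase to the radial data $x/|x|$; the extension costs only $\mathcal{O}(r)+o_\v(1)$, so the extended map is admissible for the classical Ginzburg--Landau problem defining $I(\xi/b,r+r^2)$, and minimality of that problem plus Lemma IX.1 of \cite{BBH} (and $I(\xi/b,r+r^2)-I(\xi/b,r)=o_r(1)$) give \eqref{7.EstimateInThirdRegion}. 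Since near the core $U_\v^2=b^2$ up to exponentially small errors (Proposition \ref{P7.UepsCloseToaeps}), this reduces everything to known facts about the classical functional. As written, your plan leaves this step unresolved (an appeal to ``BBH VIII--IX plus squeezing'' is not an argument, and the discontinuous weight and the regime \eqref{7.MainHyp} would have to be re-examined along that route), so to complete the proof you should either carry out that heavier analysis or, more simply, adopt the extension--comparison argument above.
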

\begin{remark}
The renormalized energy in the expansion (\ref{Ren_en_Th4}) decouples into the part that depends only on the external boundary conditions
$\tilde{W_0}(f_0)$ and the part that depends only on the location of the vortices $\tilde{W}({\bm \a})$. Since ${\bm \a}$ minimizes $\tilde{W}$, the external boundary data has no effect on the location of vortices inside the inclusion. This is a drastic difference with the results of \cite{BBH} and \cite{LM1}, where the Dirichlet boundary data on the external boundary influences the location of the vortices.
\end{remark}
\subsection{Minimization among $\S^1$-valued maps away from the inclusion}\label{S7.AwayTheInclusEst}
Denote $B_\rho := B(0,\rho)$. Let
\[\text{$(f^\delta)_{0<\delta<1}\subset C^\infty(\p B_\rho, \S^1)$, $f^0 \in C^\infty(\p B_\rho, \S^1)$ be s.t. }\begin{cases}\text{$f^\delta\underset{\delta\to0}{\to}  f^0$ in $C^1(\p B_\rho)$}\\ \deg_{\p B_\rho}(f^\delta)=d_0\end{cases},
\]
and
\[
 \text{$(g^\delta)_{0<\delta<1}\subset C^\infty(\p B_1,\S^1)$, $g^0 \in C^\infty(\p B_1, \S^1)$ be s.t. }\begin{cases}g^\delta\underset{\delta\to0}{\to}  g^0\text{ in }C^1(\p B_1)\\ \deg_{\p B_1}(g^\delta)=d_0\end{cases}.
 \]
For $\delta\in(0,1)$, we denote $A_\delta=B_{\rho/\delta}\setminus \overline{B_1}$ and
\[
W_\delta=\{u\in H^{1}(A_\delta,\S^1)\,|\,\tr_{\p B_{\rho/\delta}}u(\cdot)=f^\delta (\delta\cdot)\text{ and }\tr_{\p B_1}u= g^\delta\}, 
\]
\[
Y_\delta=\{u\in H^{1}(A_\delta,\S^1)\,|\,\tr_{\p B_{\rho/\delta}}u(\cdot)=f^0(\delta\cdot)\text{ and }\tr_{\p B_1}u= g^0\}.
\]
Consider the following minimization problems:
\begin{equation}\tag{$P_\delta$}\label{7.AuxProblem}
I_\delta(f^\delta,g^\delta)=I_\delta=\inf_{u\in W_\delta}\frac{1}{2}\int_{A_\delta}{|\n u|^2}.
\end{equation}
\begin{equation}\tag{$Q_\delta$}\label{7.AuxProblemFixData}
J_\delta(f^0,g^0)=J_\delta=\inf_{u\in Y_\delta}\frac{1}{2}\int_{A_\delta}{|\n u|^2}.
\end{equation}
\begin{prop}
For small $\v$, $I_{\dl}$ is close to $J_{\dl}$, namely
\begin{equation}\label{equivIandJ}
I_{\dl} = J_{\dl} + o_{\delta}(1).
\end{equation}
\end{prop}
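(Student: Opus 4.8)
The plan is to prove the two inequalities $I_\delta \le J_\delta + o_\delta(1)$ and $J_\delta \le I_\delta + o_\delta(1)$ separately by exhibiting competitors. The essential point is that both minimization problems are on the same annulus $A_\delta = B_{\rho/\delta}\setminus\overline{B_1}$, whose outer radius $\rho/\delta$ grows as $\delta \to 0$; the only difference between the admissible classes $W_\delta$ and $Y_\delta$ is a perturbation of the boundary data by $o_\delta(1)$ in $C^1$. First I would record that the minimizers of both problems exist (standard, since the classes are nonempty and the Dirichlet energy is weakly lower semicontinuous) and that both $I_\delta$ and $J_\delta$ equal $\pi d_0^2 \ln(\rho/\delta) + O(1)$; this uniform control is what will let the correction terms be absorbed.

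For the direction $I_\delta \le J_\delta + o_\delta(1)$: let $u_\delta$ be a minimizer of $(Q_\delta)$, so $u_\delta \in Y_\delta$ with $\tr_{\p B_{\rho/\delta}} u_\delta = f^0(\delta\cdot)$ and $\tr_{\p B_1} u_\delta = g^0$. I want to modify $u_\delta$ near the two boundary components so that it matches $f^\delta(\delta\cdot)$ and $g^\delta$ instead, without increasing the energy by more than $o_\delta(1)$. The modification is done in two thin collars: the inner collar $B_2 \setminus \overline{B_1}$ (fixed size) and an outer collar $B_{\rho/\delta}\setminus \overline{B_{\rho/\delta - 1}}$ (width $1$, so after rescaling by $\delta$ it is a collar of width $\delta$ inside $B_\rho$). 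On the inner collar, write $g^\delta = g^0 e^{i\psi_\delta}$ with $\psi_\delta \to 0$ in $C^1(\p B_1)$ — possible since $\deg(g^\delta) = \deg(g^0)$ — and interpolate the phase $\psi_\delta$ linearly to $0$; the added energy is $O(\|\psi_\delta\|_{C^1}^2) = o_\delta(1)$, using that $u_\delta$ restricted to $\p B_1$ is bounded in $H^{1/2}$ (which follows from the energy bound via a mean-value/Fubini argument choosing a good radius, exactly as in Step 2 of Proposition \ref{P7.AsymptoticResultatNonZeroDeg}). On the outer collar, similarly write $f^\delta = f^0 e^{i\eta_\delta}$ with $\eta_\delta \to 0$ in $C^1$ and interpolate; here one must be careful that the collar after unscaling has width $\delta$, but the energy contribution of the interpolation is $\int (|\eta_\delta'|^2 + (\text{width})^{-2}|\eta_\delta|^2) \times (\text{area})$, and with area $\sim \delta \cdot \rho/\delta = O(1)$ and $\|\eta_\delta\|_{C^1}\to 0$ this is again $o_\delta(1)$. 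The resulting map lies in $W_\delta$, giving $I_\delta \le J_\delta + o_\delta(1)$. The reverse inequality $J_\delta \le I_\delta + o_\delta(1)$ is obtained in exactly the same way, starting from a minimizer of $(P_\delta)$ and interpolating its traces back to $f^0(\delta\cdot)$ and $g^0$.

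The main obstacle I anticipate is the outer collar: because the outer boundary $\p B_{\rho/\delta}$ has circumference $\sim 2\pi\rho/\delta \to \infty$, one cannot simply quote a fixed interpolation estimate, and one must track how the energy of the linear phase interpolation scales with $\delta$. The clean way around this is to do the interpolation \emph{after} rescaling back to $B_\rho$: in the fixed domain $B_\rho$, the traces $f^\delta$ and $f^0$ differ by $o(1)$ in $C^1(\p B_\rho)$ on a \emph{fixed} circle, so the standard collar construction in $B_\rho \setminus \overline{B_{\rho-\delta}}$ (or even just in a fixed annulus $B_\rho\setminus\overline{B_{\rho/2}}$, at the cost of also modifying $u_\delta$ in a fixed region where the energy density is $O(1)$ anyway — but then one changes $I_\delta$ by an $O(1)$, not $o_\delta(1)$, amount, so the thin collar is genuinely needed) is controlled by $\|\eta_\delta\|_{C^1(\p B_\rho)}^2 \to 0$. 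A secondary technical point is justifying that the minimizer of each problem has a trace bounded in $H^{1/2}$ on the circle where we cut, so that the interpolation's cross term $\int \nabla u_\delta \cdot \nabla(\text{correction})$ is also $o_\delta(1)$; this is handled by the Fubini/good-radius argument already used repeatedly in the paper, choosing the collar to sit on a radius where the boundary integral of $|\nabla u_\delta|^2$ is $O(1)$.
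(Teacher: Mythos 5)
Your two-sided competitor strategy is a genuinely different route from the paper's: the paper never builds competitors, but lifts everything explicitly, writing $u={\rm e}^{\imath(d_0\theta+\varphi)}$, using that $\Delta\theta=0$ in $A_\delta$ and $\partial_\nu\theta=0$ on $\partial A_\delta$ to decouple the energy, so that the minimizers of $(P_\delta)$ and $(Q_\delta)$ are explicit harmonic phases $\varphi_\delta,\psi_\delta$; the proposition then follows from the Fourier computation on the annulus (the same one as in Proposition \ref{P7.decoupling}) showing $\|\nabla\psi_\delta\|_{L^2(A_\delta)}=O(1)$ and $\|\nabla(\varphi_\delta-\psi_\delta)\|_{L^2(A_\delta)}\to0$, and by expanding the square. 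Your plan could be made to work, but as written it fails exactly at the step you flagged as the main obstacle, the outer boundary. A phase interpolation between $f^\delta$ and $f^0$ across a collar of width $\delta$ inside $B_\rho$ costs, in its radial part, about $\delta^{-1}\|\eta_\delta\|_{L^2(\partial B_\rho)}^2$ (and your area count is off: the width-$\delta$ collar has area $\sim\rho\delta$ in the rescaled picture, $\sim\rho/\delta$ in the blown-up one, not $O(1)$ in the sense you use it). Since the hypothesis is only $f^\delta\to f^0$ in $C^1$, with no rate, this radial term need not vanish, so the conclusion ``the thin collar is genuinely needed'' is backwards: it is the thin collar that fails. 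The construction that works is the one you discarded: keep the minimizer everywhere and multiply it by ${\rm e}^{\imath\chi\eta_\delta}$, with $\chi$ a cutoff supported in a collar of \emph{fixed} size after rescaling (say $B_{\rho/\delta}\setminus \overline{B_{\rho/(2\delta)}}$). This does not change the energy by $O(1)$; it adds $\frac12\int|\nabla(\chi\eta_\delta)|^2+\int(u_\delta\times\nabla u_\delta)\cdot\nabla(\chi\eta_\delta)$, i.e. $o_\delta(1)$ plus a cross term.

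The cross term (on both collars) is also where your justification is not the right tool: an $H^{1/2}$ or good-radius bound for the trace of $u_\delta$ on one circle does not control the two-dimensional integral $\int_{\rm collar}(u_\delta\times\nabla u_\delta)\cdot\nabla(\chi\psi_\delta)$. What is needed is that the minimizer carries only $O(1)$ Dirichlet energy in each fixed collar; this does follow from the matching bounds you record, namely the upper bound $\pi d_0^2\ln(\rho/\delta)+O(1)$ for the infimum together with the degree lower bound $\frac12\int_{B_{\rho/(2\delta)}\setminus \overline{B_2}}|\nabla u_\delta|^2\geq\pi d_0^2\ln\frac{\rho}{4\delta}$ on the complementary annulus, after which Cauchy--Schwarz makes both cross terms $o_\delta(1)$ (alternatively, integrate by parts using ${\rm div}(u_\delta\times\nabla u_\delta)=0$, but then you need the normal derivative on the circle, which a trace bound does not give either). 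With these two repairs -- fixed-width multiplicative phase collars at both boundary components, and the collar-energy bound in place of the trace bound -- your scheme gives $I_\delta\leq J_\delta+o_\delta(1)$ and, symmetrically, the reverse inequality. Note finally that the paper's explicit computation yields more than the comparison of infima: it also produces the limits $\tilde W_0$, $\tilde W_1$ in \eqref{decoupled_energy} via \eqref{7.TheBadPartGoesToZero}, which your argument would not.
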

\begin{proof}
In this subsection $\theta$ stands for the main argument of $z$ \emph{i.e.} $\frac{z}{|z|}={\rm e}^{\imath\theta}$. For $\delta \geq 0$, let
 $\phi_\delta\in C^{\infty}(\p B_1,\R)$ be s.t. ${g}^\delta={\rm e}^{\imath (d_0\theta+\phi_\delta)}$ and $\zeta_\dl \in C^{\infty}(\p B_\rho,\R)$ be s.t.
 $f^\dl={\rm e}^{\imath(d_0\theta+\zeta_\dl)}$.
We may assume that $\phi_\delta\to\phi_0$ in $C^1(\p B_1)$ and $\zeta_\delta\to\zeta_0$ in $C^1(\p B_\rho)$. Note that
\begin{equation}\label{7.DecompositionEnergyAuxProb}
u\in W_\delta\Longleftrightarrow \,u={\rm e}^{\imath(\f+d_0\theta)}\text{ with }\f\in w_\delta.
\end{equation}
Here $w_\delta:=\{\f\in H^1(A_\delta,\R)\,|\,\tr_{\p B_{\frac{\rho}{\dl}}}\f(\cdot)=\zeta_\dl(\delta\cdot)\text{ and }\tr_{\p B_1}\f=\phi_\delta \}$.

Since $\Delta \theta = 0$ in $A_\delta$ and $\p_\nu \theta = 0$ on $\d A_\delta$, for $u\in W_\delta$ we have

\[
\int_{A_\delta}{|\n u|^2}=\int_{A_{\delta}}{|\n (\f+ d_0 \theta)|^2}=d_0^2\int_{A_\delta}{|\n \theta|^2}+\int_{A_\delta}{|\n \f|^2}.
\]
Consequently, the problem \eqref{7.AuxProblem} has a unique solution $u_\delta={\rm e}^{\imath(d_0 \theta+\varphi_\delta)}$, with $\f_\delta$ being the unique solution of
\[
\begin{cases}-\Delta\f_\delta=0&\text{in }A_\delta\\ \f_\delta(\cdot)=\zeta_\dl\left({\delta}\cdot\right)&\text{on }\p B_{\frac{\rho}{\delta}}
\\
\f_\delta=\phi_\delta&\text{on }\p B_{1}\end{cases}.
\]
With the same argument, the problem \eqref{7.AuxProblemFixData} admits a unique solution $v_\delta={\rm e}^{\imath(d_0\theta+\psi_\delta)}$ with $\psi_\delta$ being the unique solution of
\[
\begin{cases}-\Delta\psi_\delta=0&\text{in }A_\delta\\ \psi_\delta(\cdot)=\zeta_0\left(\delta\cdot\right)&\text{on }\p B_{\frac{\rho}{\delta}}
\\\psi_\delta=\phi_0&\text{on }\p B_1\end{cases}.
\]
Denote $\eta_\delta=\varphi_\delta-\psi_\delta$. Then $\eta_\delta$ is the unique solution of
\[
\begin{cases}
\Delta \eta_\delta=0&\text{in }A_\delta\\\eta_\delta=\hat \zeta_\dl - \hat \zeta_0 &\text{on }\p B_{\frac{\rho}{\dl}}\\\eta_\delta=\phi_\delta-\phi_0&\text{on }\p B_1
\end{cases}.
\]
(Here $\hat \zeta (x) := \zeta (\dl x)$). 

One may prove that $\|\psi_\delta\|_{L^2(A_\delta)}$ is bounded and more precisely we have the following result.
\begin{prop}\label{P7.decoupling}
\begin{equation}\label{7.TheBadPartGoesToZero}
\frac{1}{2}\int_{A_\delta}{|\n\psi_\delta|^2}\to|\phi_0|_{H^{1/2}(\S^1)}^2+|\zeta_0|_{H^{1/2}(\p B_\rho)}^2, \text{ as } \dl \to 0.
\end{equation}
\end{prop}

\begin{proof}
Let $(a_n)_{n\in\Z},(b_n)_{n\in\Z}\subset\C$ be s.t.
\[
\phi_0({\rm e}^{\imath\theta})=\sum_{n\in\Z}{a_n}{\rm e}^{\imath n\theta}\text{ and }\zeta_0(\rho {\rm e}^{\imath\theta})=\sum_{n\in\Z}{b_n}{\rm e}^{\imath n\theta}.
\]
We have
\begin{equation}\nonumber
|\phi_0|_{H^{1/2}(\S^1)}^2=\sum_\Z |n||a_n|^2\text{ and }|\zeta_0|^2_{H^{1/2}(\p B_{\rho})} = |\hat \zeta_0|^2_{H^{1/2}(\p B_{\frac{\rho}{\dl}})}=\sum_\Z |n||b_n|^2.
\end{equation}
From \cite{BeMi1} (Appendix D.), denoting $R(\dl) = \frac{\rho}{\dl}$, we have
\begin{eqnarray}\nonumber
\frac{1}{2\pi}\int_{A_\delta}{|\n\psi_\delta|^2}&=&\frac{|b_0-a_0|^2}{\ln R(\delta)}+\sum_{n\neq0}\frac{|n|}{R(\delta)^{2|n|}-1}\Big[(|a_n|^2+|b_n|^2)(R(\delta)^{2|n|}+1)
\\\nonumber&&\phantom{aaaaaaaaqqqqqqqqqqaaaaaassqqaaaa}-2(\overline{a_n}b_n + a_n\overline{b_n})R(\delta)^{|n|}\Big]
\\\nonumber
&=&|\phi_0|_{H^{1/2}(\d B_1)}^2+|\psi|_{H^{1/2}(\p B_\rho)}^2+\frac{|b_0 -a_0|^2}{\ln R(\delta)}
\\\nonumber
&&\phantom{aazz}+\sum_{n\neq0}\frac{2}{R(\delta)^{2|n|}-1}\left[(|a_n|^2+|b_n|^2)-(\overline{a_n}b_n +a_n\overline{b_n})R(\delta)^{|n|}\right]
\\\nonumber
&=&|\phi_0|_{H^{1/2}(\d B_1)}^2+|\zeta_0|_{H^{1/2}(\p B_\rho)}^2+o_\delta(1).
\end{eqnarray}
Consequently, as $\delta\to0$, we obtain \eqref{7.TheBadPartGoesToZero}.
\end{proof}

Following the same lines as Proposition \ref{P7.decoupling} we obtain
\begin{equation}\label{7.boundForPhaseIndepedentondelta}
\|\n\f_\delta\|_{L^2(A_\delta)}\leq C\text{ with $C$ independent of $\delta$},
\end{equation}
and
\begin{equation}\label{7.boundForPhaseIndepedentondeltaConcverf0}
\|\n\eta_\delta\|_{L^2(A_\delta)}\to0.
\end{equation}
It follows from \eqref{7.boundForPhaseIndepedentondelta} and \eqref{7.boundForPhaseIndepedentondeltaConcverf0} that
\begin{eqnarray}\nonumber\label{7.FundEstimateAuxProbFarIncl}
I_\delta&=&\frac{d_0^2}{2}\int_{A_\delta}{|\n\theta|^2}+\frac{1}{2}\int_{A_\delta}{|\n\f_\delta|^2}
\\\nonumber
&=&\frac{d_0^2}{2}\int_{A_\delta}{|\n\theta|^2}+\frac{1}{2}\int_{A_\delta}{|\n\psi_\delta|^2}+\int_{A_\delta}{\n\psi_\delta\cdot\n\eta_\delta}+\frac{1}{2}\int_{A_\delta}{|\n\eta_\delta|^2}
\\&=&J_\delta+o_\delta(1).
\end{eqnarray}
\end{proof}
From \eqref{7.FundEstimateAuxProbFarIncl} and \eqref{7.TheBadPartGoesToZero}, we deduce that 
\begin{equation}\label{7.FundEstimateAuxProbFarInclBis}
I_{\delta}=J_{\delta}+o_\dl(1)=\pi d_0^2 \ln\frac{\rho}{\delta}+\tilde W_0(f^0) + \tilde W_1(g^0)+o_\delta(1)
\end{equation}
with
\begin{equation}\label{decoupled_energy}
\tilde W_0(f_0)=|\zeta_0|_{H^{1/2}(\d B_\rho)}^2 \text{ and } \tilde W_1(g^0) = |\phi_0|_{H^{1/2}(\p B_1)}^2.
\end{equation}
One of the main ingredients in the study of the renormalized energy is that the Dirichlet condition $f_{\rm min}(x)=\gamma_0\frac{x^{d_0}}{|x|^{d_0}}$, $\gamma_0\in\S^1$ minimizes $W_0$. More precisely, for all $f_0\in C^1(\p B_1,\S^1)$ s.t. $\deg_{\p B_1}(f_0)=d_0$, we have
\begin{equation}\label{7.RadialMinimizesFirst}
W_0(f_{\rm min})=0\leq W_0(f_0).
\end{equation}

\subsection{Energy estimates for $\S^1$-valued maps around the inclusion}\label{S7.ArroudTheInclusEst}

Let $g^0 \in C^{\infty}(\p B_1, \S^1)$ be s.t. $\deg_{\p B_1}(g^0)=d_0>0$, $ \beta_1,..., \beta_{d_0}$ are $d_0$ distinct points of $\o$,
\[
\eta_0:=\frac{1}{4}\min_i\left\{\dist( \beta_i,\p \o),\min_{j\neq i}{| \beta_i- \beta_j|}\right\}.
\]
For $r \in(0,\eta_0)$, we define
\[
\O_r:= B_1\setminus\cup_k \overline{B( \beta_k,r)},
\]
\[
\mathcal{E}_r:=\{u\in H^1(\O_r,\S^1)\,|\,\tr_{\p B_1}u=g^0\text{ and }\deg_{\p B( \beta_i,r)}(u)=1\}
\]
and
\[
\mathcal{F}_r:=\left\{u\in H^1(\O_r,\S^1)\,|\,\tr_{\p B_1}u=g^0\text{ and there are $\gamma_i\in\S^1$ s.t. }\tr_{\p B( \beta_i,r)}u(x)=\gamma_i\frac{x- \beta_i}{|x- \beta_i|}\right\}.
\]
Consider two minimization problems
\begin{equation}\tag{$R_{r}$}\label{AuxProbR}
K(r,g^0,{\bm  \beta})=K(r)=\inf_{u\in\mathcal{E}_r}{\frac{1}{2}\int_{\O_r}{a^2|\n u|^2}}
\end{equation}
and
\begin{equation}\tag{$S_{r}$}\label{AuxProbS}
L(r,g^0,{\bm  \beta})=L(r)=\inf_{u\in\mathcal{F}_r}{\frac{1}{2}\int_{\O_r}{a^2|\n u|^2}},\,{\bm  \beta}=\{ \beta_1,..., \beta_{d_0}\}.
\end{equation}
We denote $\theta=\theta_1+...+\theta_{d_0}$ where $\theta_i$ is the main argument of $\displaystyle \frac{x- \beta_i}{|x- \beta_i|}$, \emph{i.e.}, $\displaystyle\frac{x- \beta_i}{|x- \beta_i|}={\rm e}^{\imath\theta_i}$.

Let $\psi_0$ be the unique (up to an additive constant in $2\pi\Z$) solution of

\begin{equation}\label{7.EqForLimitingPhaseInsideOmegaRho}
\begin{cases}
-{\rm div}\left[a^2(\n\psi_0+\n\theta)\right]=0&\text{in }B_1
\\
{\rm e}^{\imath(\theta+\psi_0)}=g^0&\text{on }\p B_1
\end{cases}.
\end{equation}
\begin{lem}\label{L7.Ren_en_LM}{\rm (\cite{LM1}, Appendix A.)}
\[
K(r)=\frac{1}{2}\int_{\O_r}{a^2|\n\theta+\n\psi_0|^2}+\mathcal{O}(r|\ln r|),
\]
\[
L(r)=\frac{1}{2}\int_{\O_r}{a^2|\n\theta+\n\psi_0|^2}+\mathcal{O}(r|\ln r|),
\]
with
\begin{equation}\label{7.FirstPartRenEnergy}
\frac{1}{2}\int_{\O_r}{a^2|\n\theta+\n\psi_0|^2}=\pi d_0 b^2|\ln r|+\tilde W_2({\bm  \beta},g^0)+\mathcal{O}(r^2).
\end{equation}
\end{lem}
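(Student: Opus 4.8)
\textbf{Proof proposal for Lemma \ref{L7.Ren_en_LM}.}

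The plan is to follow the computation of the renormalized energy on a perforated domain, which is by now standard (it is precisely the content of Appendix A of \cite{LM1}, adapted here to the weight $a$ taking the value $b$ inside $\o$ and $1$ outside, and to the fixed configuration $\bm\beta\subset\o$). I will treat $K(r)$ and $L(r)$ in parallel since the only difference is the boundary constraint on the inner circles (prescribed degree $1$ versus prescribed boundary trace of the form $\gamma_i(x-\beta_i)/|x-\beta_i|$), and both will be seen to differ from the ``canonical'' value $\tfrac12\int_{\O_r}a^2|\n\theta+\n\psi_0|^2$ only by $\mathcal O(r|\ln r|)$. First I would record that any $u\in\mathcal E_r$ (resp. $\mathcal F_r$) can be written locally as $u=e^{\imath(\theta+\f)}$ with $\f$ single-valued on $\O_r$ once we subtract the multivalued phase $\theta=\sum_i\theta_i$; this uses that each inner degree equals $1$, so that $u\,e^{-\imath\theta}$ has zero degree around each $B(\beta_k,r)$ and around $\p B_1$, hence admits a single-valued lifting $\f\in H^1(\O_r,\R)$. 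Then $\tfrac12\int_{\O_r}a^2|\n u|^2=\tfrac12\int_{\O_r}a^2|\n\theta+\n\f|^2$, and minimizing over admissible $\f$ (with $\f=\psi_0$-type data on $\p B_1$ coming from $g^0$, and essentially free data, up to a constant, on each $\p B(\beta_k,r)$) leads to the Euler--Lagrange equation $-\Div[a^2(\n\theta+\n\f)]=0$. The function $\psi_0$ defined by \eqref{7.EqForLimitingPhaseInsideOmegaRho} is the solution of this equation \emph{on all of $B_1$} (with no holes), so the discrepancy between the true minimizer on $\O_r$ and $\theta+\psi_0$ is governed by how $\psi_0$ fails to satisfy the inner boundary conditions on the small circles $\p B(\beta_k,r)$.

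The quantitative step is then to estimate this discrepancy. Near each $\beta_k$, since $\beta_k\in\o$ and $r<\eta_0$, the weight $a$ is constant ($a\equiv b$) on $B(\beta_k,4\eta_0)$, so $\psi_0$ is harmonic there and $\n\psi_0(\beta_k)$ is finite; writing $\psi_0(x)=\psi_0(\beta_k)+\n\psi_0(\beta_k)\cdot(x-\beta_k)+O(|x-\beta_k|^2)$, the trace of $e^{\imath(\theta+\psi_0)}$ on $\p B(\beta_k,r)$ is within $O(r)$ in $C^1$ of a map of the admissible form. A cut-off/comparison argument on the annulus $B(\beta_k,2r)\setminus\overline{B(\beta_k,r)}$ (interpolating between $\theta+\psi_0$ on the outer circle and an admissible phase on the inner one) produces a competitor whose energy exceeds $\tfrac12\int_{\O_r}a^2|\n\theta+\n\psi_0|^2$ by at most $\mathcal O(r|\ln r|)$ — the $|\ln r|$ coming from the Dirichlet energy of the $\theta_k$-part across an annulus of fixed ratio but with the boundary phase mismatch of size $O(r)$, and more precisely from terms like $r\cdot|\ln r|$ when one balances the correction. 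This gives the upper bounds $K(r),L(r)\le\tfrac12\int_{\O_r}a^2|\n\theta+\n\psi_0|^2+\mathcal O(r|\ln r|)$. For the matching lower bound, I would use that $\theta+\psi_0$ restricted to $\O_r$ is the unconstrained minimizer (it solves the PDE with the correct data on $\p B_1$), so for any admissible $u=e^{\imath(\theta+\f)}$ one has, by the variational characterization and integration by parts (the boundary terms on $\p B(\beta_k,r)$ being $\mathcal O(r|\ln r|)$ by the same trace estimate on $\psi_0$ and the a priori bound $\|\n\f\|_{L^2}\le C$ that follows from the energy upper bound),
\[
\frac12\int_{\O_r}a^2|\n\theta+\n\f|^2\ge\frac12\int_{\O_r}a^2|\n\theta+\n\psi_0|^2-\mathcal O(r|\ln r|).
\]

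Finally, for the expansion \eqref{7.FirstPartRenEnergy}, I would expand $\tfrac12\int_{\O_r}a^2|\n\theta+\n\psi_0|^2$ by excising small discs: split $\theta=\sum_i\theta_i$ and isolate, near each $\beta_k$, the singular contribution $\tfrac{b^2}{2}\int_{B(\beta_k,\eta_0)\setminus\overline{B(\beta_k,r)}}|\n\theta_k|^2=\pi b^2\ln(\eta_0/r)$, since $a\equiv b$ there; summing over the $d_0$ points gives the leading term $\pi d_0b^2|\ln r|$. The remaining integral over $\O_{\eta_0}$ together with the cross terms and the regular parts of $\n\theta$ near each $\beta_k$ converge, as $r\to0$, to a finite limit that one \emph{defines} to be $\tilde W_2(\bm\beta,g^0)$, the $\mathcal O(r^2)$ error coming from Taylor-expanding the smooth quantities $\n\theta_j$ ($j\ne k$) and $\n\psi_0$ on $B(\beta_k,r)$. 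I expect the main obstacle to be the bookkeeping in the lower bound: one must carefully justify that subtracting the multivalued phase is legitimate on the non-simply-connected domain $\O_r$ (this is where the degree-one constraints enter decisively) and that all boundary integrals over the $d_0$ inner circles of radius $r$ are genuinely $\mathcal O(r|\ln r|)$ and not merely $\mathcal O(|\ln r|)$ — this requires the $C^1$-closeness of the trace of $\psi_0$ to a constant on $\p B(\beta_k,r)$, which in turn relies crucially on $a$ being \emph{constant} (hence $\psi_0$ smooth) in a fixed neighborhood of each $\beta_k\in\o$.
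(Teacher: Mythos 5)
Your proposal is essentially correct and follows the same route as the paper's source for this statement (the paper itself gives no proof, deferring to \cite{LM1}, Appendix A): write admissible maps as $e^{\imath(\theta+\f)}$ using the degree-one constraints, compare with $\theta+\psi_0$ via the equation $-\Div[a^2(\n\theta+\n\psi_0)]=0$ and the fact that $a\equiv b$ (so $\psi_0$ is smooth) near each $\beta_k$, and absorb the mismatch on the small circles into $\mathcal O(r|\ln r|)$ boundary/interpolation terms. Two small refinements are needed to make it airtight: the a priori bound is only $\|\n(\f-\psi_0)\|_{L^2(\O_r)}=\mathcal O(|\ln r|^{1/2})$ (not $\mathcal O(1)$), which still gives $\int_{\p B(\beta_k,r)}|\f-\psi_0|=\mathcal O(r|\ln r|)$ since $\p_\nu\theta_k=0$ on $\p B(\beta_k,r)$ and $\n(\sum_{j\neq k}\theta_j+\psi_0)$ is bounded there; and the $\mathcal O(r^2)$ in \eqref{7.FirstPartRenEnergy} uses the cancellation $\int_{B(\beta_k,r)}\n\theta_k\,{\rm d}x=0$ (odd symmetry) so that the cross term with the regular part is $\mathcal O(r^2)$ rather than the naive $\mathcal O(r)$.
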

In \eqref{7.FirstPartRenEnergy}, $\tilde W_2({\bm  \beta},g^0)$, whose explicit expression is given in \cite{LM1}, formula (106), depends only on ${\bm  \beta}$ and $g^0$.

\subsection{Upper bound for the energy}\label{S7.UpBounAuxPBBBB}
\begin{lem}\label{L7;BorneStupeflip}
Fix $\rho>0$ and let $f_\v\in C^\infty(\p B_\rho)$, $f_0\in C^\infty(\p B_\rho,\S^1)$ be s.t. $f_\v\to f_0$ in $C^1(\p B_\rho)$. Let ${\bm \b}=(\b_1,...,\b_{d_0})\in \o^{d_0}$ be s.t. $\b_i\neq \b_j$ for $i\neq j$. Then, for each $g^0\in C^\infty(\p B_1,\S^1)$, the following upper bound holds:
\begin{eqnarray}\label{7.UpperBoundWithRenormalizedEnergy}
\inf_{H^1_{f_\v}(B_\rho)} F_\v\leq \pi d_0 b^2\ln\frac{b}{\xi} +\pi d_0^2 \ln\frac{\rho}{\delta}+\tilde W_0(f_0)+ \tilde W_1(g^0) + \tilde W_2({\bm  \beta},g^0) +d_0 b^2\gamma+o_\v(1).
\end{eqnarray}
Here $\tilde W_0$, $\tilde W_1$ are defined by \eqref{decoupled_energy} and $\tilde W_2$ by \eqref{7.FirstPartRenEnergy}.
\end{lem}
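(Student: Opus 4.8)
The plan is to construct an explicit competitor in $H^1_{f_\v}(B_\rho)$ whose energy matches the right-hand side, by gluing together the three model pieces identified in the previous subsections. Fix the distinct points ${\bm\beta}=(\beta_1,\dots,\beta_{d_0})\in\o^{d_0}$ and $g^0\in C^\infty(\p B_1,\S^1)$. First I would choose a test map on the annulus $A_\delta=B_{\rho/\delta}\setminus\overline{B_1}$: take $\hat u_\v$ to be (the rescaling of) the harmonic $\S^1$-valued minimizer realizing $I_\delta(f^\delta,g^\delta)$ from \eqref{7.AuxProblem}, where $f^\delta=f_\v/|f_\v|$ and $g^\delta\equiv g^0$. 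By \eqref{7.FundEstimateAuxProbFarInclBis} its Dirichlet energy is $\pi d_0^2\ln(\rho/\delta)+\tilde W_0(f_0)+\tilde W_1(g^0)+o_\delta(1)$; since $\hat U_\v\to1$ uniformly on compacts away from $\overline\o$ with exponential rate (Proposition \ref{P7.UepsCloseToaeps}) and the potential term vanishes identically for an $\S^1$-valued map, the contribution of $\hat F_\xi$ on this annulus (after reinserting the weight $\hat U_\v^2$) is the same quantity up to $o_\v(1)$. One must also correct the boundary value: $f^\delta(\delta\cdot)$ differs from $f_\v(\delta\cdot)$ only by the modulus factor $|f_\v|$, which by \eqref{A2} costs $o_\v(1)$ in energy after a thin-annulus interpolation near $\p B_{\rho/\delta}$, exactly as in the extension \eqref{7.ExtensionPropertyAucPron}.

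Next, inside $B_1$, I would further decompose into the perforated region $\O_r=B_1\setminus\cup_k\overline{B(\beta_k,r)}$ and the small discs $B(\beta_k,r)$, for a fixed small $r\in(0,\eta_0)$. On $\O_r$ use the $\S^1$-valued minimizer of the weighted problem \eqref{AuxProbR}, whose energy is $\frac12\int_{\O_r}a^2|\n\theta+\n\psi_0|^2+\mathcal O(r|\ln r|)=\pi d_0 b^2|\ln r|+\tilde W_2({\bm\beta},g^0)+\mathcal O(r|\ln r|)$ by Lemma \ref{L7.Ren_en_LM} and \eqref{7.FirstPartRenEnergy}; again the potential term is zero and $\hat U_\v^2\to b^2=a^2$ there, costing $o_\v(1)$. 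On each disc $B(\beta_k,r)$ — which lies inside $\o$, so $a\equiv b$ and $\hat U_\v\equiv$ the one-inclusion special solution — paste in the standard Ginzburg-Landau radial profile $\rho_\xi(|x-\beta_k|)\,\gamma_k\frac{x-\beta_k}{|x-\beta_k|}$ matching the boundary datum $\gamma_k\frac{x-\beta_k}{|x-\beta_k|}$ of the $\O_r$-piece on $\p B(\beta_k,r)$. The classical computation (Bethuel–Brezis–Hélein, and the version with a constant weight $b^2$ used in \cite{LM1}) gives that each such disc contributes $\pi b^2\ln(r/\xi)+b^2\gamma+o_\v(1)$, where $\gamma$ is the universal constant from the profile energy; summing over the $d_0$ discs yields $\pi d_0 b^2\ln(r/\xi)+d_0 b^2\gamma+o_\v(1)$.

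Adding the three contributions, the $|\ln r|$ terms cancel: $\pi d_0 b^2|\ln r|+\pi d_0 b^2\ln(r/\xi)=\pi d_0 b^2\ln(1/\xi)$, and combined with the $\gamma$ term this is $\pi d_0 b^2\ln(b/\xi)+d_0 b^2\gamma$ (absorbing $\ln b$ into the profile normalization, consistent with the statement). Together with $\pi d_0^2\ln(\rho/\delta)+\tilde W_0(f_0)+\tilde W_1(g^0)$ from the annulus and $\tilde W_2({\bm\beta},g^0)$ from $\O_r$, this is exactly the right-hand side of \eqref{7.UpperBoundWithRenormalizedEnergy}, up to an error $\mathcal O(r|\ln r|)+o_\v(1)$; letting $r\to0$ after $\v\to0$ (or taking $r=r(\v)\to0$ slowly) disposes of the $r$-dependent remainder. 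Since the glued map is an admissible competitor, $\inf_{H^1_{f_\v}(B_\rho)}F_\v$ is bounded above by this, which is the claim.

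The main obstacle is the bookkeeping at the two gluing interfaces $\p B_{\rho/\delta}$ and $\p B(\beta_k,r)$: one needs that the interpolation layers (which reconcile $f_\v$ with $f^\delta$, and the $\S^1$-valued outer solution with the vanishing-modulus inner profile) carry only $o_\v(1)$ energy. For the outer interface this follows from \eqref{A1}–\eqref{A2} and a straightforward one-dimensional interpolation; for the inner interfaces it is the standard fact that matching a fixed-modulus profile to an $\S^1$ map across a circle of fixed radius $r$ costs $\mathcal O(r)$ plus the profile's own (already counted) energy — the delicate point being that the phase of the $\O_r$-minimizer is $C^1$ up to $\p B(\beta_k,r)$ with bounds independent of $\delta$, which is where Lemma \ref{L7.Ren_en_LM} and the smoothness of $\psi_0$ solving \eqref{7.EqForLimitingPhaseInsideOmegaRho} are used. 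A secondary point is to make sure the weight $\hat U_\v$ is genuinely $\equiv b$ (not merely close to $b$) on the discs $B(\beta_k,r)$, which holds because these discs sit well inside a single pinning component; away from them the exponential estimates of Proposition \ref{P7.UepsCloseToaeps} absorb the discrepancy between $\hat U_\v$ and $a$.
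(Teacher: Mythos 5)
Your construction is essentially the paper's own proof: the paper glues (i) the $\S^1$-valued minimizer of \eqref{7.AuxProblem} on $B_{\rho/\delta}\setminus\overline{B_1}$ (with $g^\delta\equiv g^0$ and $f^\delta=f_\v/|f_\v|$), multiplied by a modulus correction $\chi_\v$ near $\p B_{\rho/\delta}$ built from \eqref{A2} so as to attain the true boundary datum; (ii) an $\S^1$-valued map on $B_1\setminus\cup_i\overline{B(\beta_i,r)}$; (iii) classical Ginzburg--Landau minimizers on the discs $B(\beta_i,r)$; it then invokes \eqref{7.FundEstimateAuxProbFarInclBis}, Lemma \ref{L7.Ren_en_LM}, Lemma IX.1 of \cite{BBH} and Proposition \ref{P7.UepsCloseToaeps}, and lets $r\to0$ after $\v\to0$, exactly as you propose.

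Two points in your write-up need tightening, both fixable with the paper's own lemmas. First, on the perforated disc you use the minimizer of \eqref{AuxProbR} (class $\mathcal{E}_r$, where only the degrees on the inner circles are prescribed); its trace on $\p B(\beta_i,r)$ need not be of the form $\gamma_i\frac{x-\beta_i}{|x-\beta_i|}$, so the radial-type map you paste inside does not match it and the glued competitor is not in $H^1$ as written. You should take instead the minimizer of \eqref{AuxProbS} (class $\mathcal{F}_r$), as the paper does; by Lemma \ref{L7.Ren_en_LM}, $L(r)$ has the same expansion as $K(r)$ up to $\mathcal{O}(r|\ln r|)$, so nothing is lost. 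Second, on each disc the weight satisfies $\hat U_\v^2\approx b^2$, $\hat U_\v^4\approx b^4$ (Proposition \ref{P7.UepsCloseToaeps}), so the relevant comparison problem is $b^2 I(\xi/b,r)$ with $I$ as in \eqref{definition_of_I}, and by \eqref{7.EstimateExactClassicalProblemBall} each disc contributes $\pi b^2\ln\frac{br}{\xi}+b^2\gamma+o_\v(1)$, not $\pi b^2\ln\frac{r}{\xi}+b^2\gamma$: the $\ln b$ in $\pi d_0 b^2\ln\frac{b}{\xi}$ is produced by the rescaled coherence length $\xi/b$, it is not ``absorbed into the profile normalization''. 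Relatedly, to reach the sharp constant $\gamma$ you should insert the actual minimizer on each disc (as the paper does via $I(\xi/b,r)$), or else justify that your radial profile attains the same constant. With these corrections your argument coincides with the paper's.
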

\begin{proof}
We construct a test function $w_\v\in H^1_{\hat{f}_{\v}}(B_{\rho/\delta},\C)$ which gives \eqref{7.UpperBoundWithRenormalizedEnergy}. Fix $0<r<\eta_0$. Let
\[
\text{$u_\delta$ be the minimizer of \eqref{7.AuxProblem} with $g^\delta\equiv g^0$ and $f^\delta=\frac{f_{\v}}{|f_{\v}|}$}
\]
and
\[
\text{$u_{r}$ be the minimizer of \eqref{AuxProbS}.}
\]
Note that $f^\delta\to f_0=\lim_\v f_\v$ in $C^1(\p B_\rho)$. For each $i=1,...,d_0$ let $u_i^{\xi,r}$ be the global minimizer of the classic Ginzburg-Landau energy in $B( \beta_i,r)$ with the parameter $\xi/b$ and the boundary condition
$u_i^{\xi,r}(x)=h_i^r(x):=\gamma_i\frac{x- \beta_i}{r}$ on $\d B( \beta_i,r)$, $\gamma_i\in\S^1$ is defined through $u_r$.
Denote
\begin{eqnarray}\label{definition_of_I}
I(\xi/b, r)&:=& \inf_{H^1_{h^r_i}(B( \beta_i, r))}\frac{1}{2}\int_{B( \beta_i,r)}{\left\{|\n u|^2+\frac{b^2}{2\xi^2}(1-|u|^2)^2\right\}}
\\\nonumber&=& \frac {1}{2}\int_{B( \beta_i,r)}{\left\{|\n u_i^{\xi,r}|^2+\frac{b^2}{2\xi^2}(1-| u_i^{\xi,r}|^2)^2\right\}}.
\end{eqnarray}
Lemma IX.1 in \cite{BBH} implies that
\begin{eqnarray}\label{7.EstimateExactClassicalProblemBall}
I(\xi/b, r) =\pi\ln\frac{b r}{\xi}+\gamma+o_\xi(1).
\end{eqnarray}
We next extend the $u_i$'s to $B_{\rho/\delta}$. For this purpose, we consider $\zeta\in C^\infty(\R,[0,1])$ s.t. $\zeta=0$ in $\R^-$ and $\zeta=1$ in $[1,\infty)$ and set
\[
\chi _\v(s{\rm e}^{\imath\theta})=\zeta\left(s-\frac{\rho}{\delta}+1\right)\left[|f_{\v}|(\rho{\rm e }^{\imath\theta})-1\right]+1.
\]
In view of \eqref{A2}, we have $\|\chi _\v-1\|_{L^2(B_{\rho/\delta})}\leq C\v$. Consider the following test function
\begin{equation}\label{7.TestFunctionUpperBound}
\tilde u=\begin{cases}
\chi _\v u_\delta&\text{in }B_{\rho/\delta}\setminus \overline{B_1}
\\
u_r&\text{in }B_1\setminus \cup \overline{B( \beta_i,r)}
\\
u_i^{\xi,r}&\text{in }B( \beta_i,r)
\end{cases}.
\end{equation}
Clearly, 
\[
\inf_{H^1_{f_{\v}}} F_{\v}\leq\hat{F}_\xi(w_\v)\leq\pi d_0^2 \ln\frac{\rho}{\delta} + \tilde W_1(g^0) + \tilde W_2({\bm  \beta},g^0) +\tilde W_0(f_0)+\pi d_0 b^2\ln\frac{b}{\xi}+d_0 b^2\gamma+o _\v(1)+h(r)
\]
$\text{with }h(r)=o_r(1)$. Thus, letting $r\to0$ as $\v\to0$ we obtain the desired upper bound.
\end{proof}

\subsection{Lower bound}\label{S7.LowBounAuxPBBBB}
We prove that the upper bound (\ref{7.UpperBoundWithRenormalizedEnergy}) is sharp by constructing the matching lower bound.
\begin{lem}
Let $\v_n\downarrow0$, $\hat v_{\v_n}$ be a minimizer of \eqref{variable_change} in \eqref{modelclassblownup} for $\v=\v_n$ and ${\bm \alpha}=(\alpha_1,...,\alpha_{d_0})\in\o^{d_0}$ be given by Theorem \ref{T7.THM3}. Denote $g_0:=\lim \tr_{\p B_1}\hat{v}_{\v }\in C^\infty(\p B_1,\S^1)$. Then the following lower bound holds:
\begin{equation}\label{7.GoodLowerAuxProbStupeflip}
F(\hat{v}_{\v }, B_{\frac{\rho}{\delta}}) \geq \pi d_0 b^2\ln\frac{b}{\xi}+\pi d_0^2 \ln\frac{\rho}{\delta}+\tilde W_0(f_0)+ \tilde W_1(g_0) + \tilde W_2({\bm  \a},g_0) +d_0 b^2\gamma+o_\v (1).
\end{equation}
\end{lem}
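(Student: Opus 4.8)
The plan is to prove the lower bound by localizing the energy $F(\hat v_\v, B_{\rho/\delta})$ into three regions — the far annulus $A_\delta = B_{\rho/\delta}\setminus\overline{B_1}$, the ``collar'' $\O_r = B_1\setminus\cup_k\overline{B(\alpha_k,r)}$ around the vortices, and the small vortex balls $B(\alpha_k,r)$ — and matching each piece to the corresponding term in \eqref{7.UpperBoundWithRenormalizedEnergy}. First I would split
\[
F(\hat v_\v,B_{\rho/\delta}) = F(\hat v_\v, A_\delta) + F(\hat v_\v, \O_r) + \sum_{k=1}^{d_0} F(\hat v_\v, B(\alpha_k,r)),
\]
and handle the three groups separately, eventually letting $r\to0$ after $\v\to0$ (as in Lemma \ref{L7;BorneStupeflip}, with $r=r(\v)$ chosen to go to zero slowly).

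For the far annulus, I would use that $\hat U_\v\to1$ uniformly on $A_\delta$ (with rate $\xi^4$, Proposition \ref{P7.UepsCloseToaeps}) together with the strong $H^1_{\rm loc}$ convergence $\hat v_\v\to v_0$ and the potential bound $\int_{A_\delta}(1-|\hat v_\v|^2)^2/\xi^2\le C$ (Corollary \ref{C7.Pohozaev_bound}) to reduce $F(\hat v_\v, A_\delta)$ to the Dirichlet energy $\tfrac12\int_{A_\delta}|\n(\hat v_\v/|\hat v_\v|)|^2$ up to $o_\v(1)$; I can write $|\n\hat v_\v|^2=|\hat v_\v|^2|\n w_\v|^2+|\n|\hat v_\v||^2$ with $w_\v=\hat v_\v/|\hat v_\v|$ and discard the $|\n|\hat v_\v||$ term using \eqref{7.potentialBoundincompact}-type estimates. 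The traces of $w_\v$ on $\p B_{\rho/\delta}$ equal $f^\delta(\delta\cdot)$ (up to the modulus correction, controlled by \eqref{A2}), and on $\p B_1$ converge in $C^1$ to $g_0$. Hence $\tfrac12\int_{A_\delta}|\n w_\v|^2 \ge I_\delta(f^\delta, \tr_{\p B_1}w_\v)$, and by the continuous dependence in \eqref{equivIandJ}–\eqref{7.FundEstimateAuxProbFarInclBis} this is $\pi d_0^2\ln(\rho/\delta)+\tilde W_0(f_0)+\tilde W_1(g_0)+o_\v(1)$. Some care is needed because the boundary data on $\p B_1$ is $\v$-dependent; one passes to the limit using the $C^1$-convergence $\tr_{\p B_1}w_\v\to g_0$ and lower semicontinuity, essentially re-running the argument in the proof of \eqref{equivIandJ} with the exterior data fixed and the interior data a convergent sequence.

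For the collar $\O_r$, again throwing away the potential and modulus contributions (legitimate since $|\hat v_\v|\to1$ uniformly on compacts of $B_1\setminus\{\alpha_k\}$ by Corollary \ref{C7.UnifCongLocNonZeroDeg}, and $\hat U_\v\to a$ in $L^2$ with $\hat U_\v\ge b$), I get $F(\hat v_\v,\O_r)\ge \tfrac12\int_{\O_r}\hat U_\v^2|\n w_\v|^2 + o_\v(1) \ge \tfrac12\int_{\O_r}a^2|\n w_\v|^2 - o_\v(1)$ by Fatou/weak lower semicontinuity. Since $w_\v\to v_0$ in $C^0$ on $\p\O_r$ and $\deg_{\p B(\alpha_k,r)}(w_\v)=1$ for small $\v$ (Proposition \ref{P7.AsymptoticResultatNonZeroDeg}(3)), this is bounded below, up to $o_\v(1)$ and an $O(r|\ln r|)$ error from the boundary-data perturbation on $\p B(\alpha_k,r)$, by $K(r,g_0,{\bm\alpha})$; by Lemma \ref{L7.Ren_en_LM} and \eqref{7.FirstPartRenEnergy} this equals $\pi d_0 b^2|\ln r| + \tilde W_2({\bm\alpha},g_0) + O(r|\ln r|)$. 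For each vortex ball $B(\alpha_k,r)$, on the rescaled-by-$\xi/b$ picture the minimizer restricted there is a minimizer for its own boundary data; since $\deg = 1$ and the trace converges (in a weak sense controlled by Corollary \ref{C7.UnifCongLocNonZeroDeg} and the $W^{1,p}$ bound, Proposition \ref{P7.LpBoundNonZeroDeg}) to a degree-one $\S^1$-map, a lower bound of the form $I(\xi/b,r)+o_\v(1) = \pi\ln(br/\xi)+\gamma+o_\v(1)$ holds by the clearing-out / lower-bound machinery of \cite{BBH} Chapter III-IX together with the uniform energy bound; summing over $k$ gives $\pi d_0\ln(br/\xi)+d_0\gamma+o_\v(1) = \pi d_0 b^2\ln(b/\xi) - \pi d_0 b^2|\ln r| + \ldots$ — wait, one must be careful with the $b^2$ factor: near the vortex $\hat U_\v\approx b$, so the weighted energy there is $b^2$ times the classical one, giving $b^2 I(\xi/b,r) = \pi b^2\ln(br/\xi)+b^2\gamma+o_\v(1)$; summing over $k$ and adding to the collar term $\pi d_0 b^2|\ln r|$, the $|\ln r|$'s cancel and yield $\pi d_0 b^2\ln(b/\xi) + d_0 b^2\gamma$, plus the $O(r|\ln r|)\to0$ error.

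Collecting the three pieces,
\[
F(\hat v_\v,B_{\rho/\delta}) \ge \pi d_0^2\ln\tfrac{\rho}{\delta} + \tilde W_0(f_0) + \tilde W_1(g_0) + \tilde W_2({\bm\alpha},g_0) + \pi d_0 b^2\ln\tfrac{b}{\xi} + d_0 b^2\gamma + O(r|\ln r|) + o_\v(1),
\]
and letting $r\to0$ gives \eqref{7.GoodLowerAuxProbStupeflip}. \textbf{The main obstacle} I expect is controlling the decomposition on the interface circles $\p B_1$ and $\p B(\alpha_k,r)$ simultaneously — the $\v$-dependent traces must be shown to converge strongly enough ($C^1$ on $\p B_1$, and with controlled $H^{1/2}$ or $W^{1,p}$ norm on $\p B(\alpha_k,r)$) that the renormalized-energy functionals $I_\delta$, $K(r)$ and $I(\xi/b,r)$ can be compared to their limiting data without losing a non-vanishing constant; and making the order of limits ($\v\to0$ then $r\to0$, with $r=r(\v)$) rigorous so that no $o_r(1)$ term secretly depends on $\v$. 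This is handled by the standard device of first fixing $r$, extracting the $\v\to0$ limit, and only then sending $r\to0$, exactly as in the upper bound proof.
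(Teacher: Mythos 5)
Your three-region decomposition and your treatment of the first two regions follow the paper's own proof essentially verbatim: the far annulus is reduced to the $\S^1$-valued problems $I_\delta$, $J_\delta$ with the $\v$-dependent traces and \eqref{equivIandJ}, and the collar $B_1\setminus\cup_k\overline{B(\a_k,r)}$ is reduced to $K(r)$ via the strong $H^1$ convergence to $v_0$ and Lemma \ref{L7.Ren_en_LM}. The genuine gap is in the vortex balls. You claim $\hat F_\xi(\hat v_\v,B(\a_k,r))\ge b^2 I(\xi/b,r)+o_\v(1)$ ``by the clearing-out / lower-bound machinery of \cite{BBH} together with the uniform energy bound''. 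That machinery (vortex-ball constructions as in Theorem 4.1 of \cite{SS1}, BBH Chapter III--IV estimates) only yields $\pi b^2\ln(br/\xi)-C$ with an uncontrolled additive constant; it cannot produce the sharp constant $b^2\gamma$ with an $o_\v(1)+o_r(1)$ error, and capturing that constant is precisely the point of the lemma (it is what makes the lower bound match the upper bound \eqref{7.UpperBoundWithRenormalizedEnergy} and lets Proposition \ref{P7.MinRenEner} go through). Observing that the restriction of $\hat v_\v$ to $B(\a_k,r)$ minimizes with respect to its own boundary data does not help by itself either: that boundary data is not the radial map appearing in the definition \eqref{definition_of_I} of $I(\xi/b,r)$, so minimality gives no direct comparison with the radial problem.

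What is missing is the comparison (bridging) construction that the paper borrows from Lefter and R{\u{a}}dulescu \cite{LR0}, \cite{LR}. By Proposition \ref{P7.equationforphistar}, near $\a_k$ one has $v_0={\rm e}^{\imath(\theta_k+\sigma_k)}$ with $\sigma_k$ harmonic; after a rotation one may assume $\sigma_k(\a_k)=0$, hence $|\sigma_k(x)|\le C|x-\a_k|$ with bounded derivatives. Combining this with the convergence of the phase and of the modulus on the thin annulus $B(\a_k,r+r^2)\setminus\overline{B(\a_k,r)}$ (consequences of \eqref{7.H1LocConvInLittleOmega} and \eqref{7.potentialBoundincompact}, \emph{i.e.} \eqref{7.ConvPhaseArgumentrenrm}--\eqref{7.ConvModulusArgumentrenrm}), one extends $\hat v_\v|_{B(\a_k,r)}$ across this annulus to a map equal to a rotation of $\frac{x-\a_k}{|x-\a_k|}$ on $\p B(\a_k,r+r^2)$ at energy cost $\mathcal{O}(r)+o_\v(1)$. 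The extended map is admissible for the radial problem, whence $b^2 I(\xi/b,r+r^2)\le \hat F_\xi(\hat v_\v,B(\a_k,r))+\mathcal{O}(r)+o_\v(1)$, and since $I(\xi/b,r+r^2)-I(\xi/b,r)=o_r(1)$ by \eqref{7.EstimateExactClassicalProblemBall}, the needed estimate \eqref{7.EstimateInThirdRegion} follows. Without this step (or an equivalent quantitative comparison of the $\v$-dependent trace on $\p B(\a_k,r)$ with the radial data), your argument loses an $O(1)$ amount in each vortex ball and the term $d_0 b^2\gamma$ in \eqref{7.GoodLowerAuxProbStupeflip} is not justified.
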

\begin{proof}

As in the proof of Lemma \ref{L7;BorneStupeflip}, we split $B_{\frac{\rho}{\dl}}$ into three parts: $B_{\frac{\rho}{\dl}} \setminus \overline{B_1}$, $B_1\setminus \cup \overline{B(\a_i,r)}$ and $\cup B(\a_i,r)$ with small $0<r<\eta_0$.

In $B_{\frac{\rho}{\delta}} \setminus \cup \overline{B(\a_i,r)}$ one may write $\hat{v}_{\v }=|\hat{v}_{\v }|w_\v $. Using Corollary \ref{C7.UnifCongLocNonZeroDeg} and \eqref{7.H1LocConvInLittleOmega} we have
\begin{eqnarray}\nonumber
\hat F_{\xi}(\hat{v}_{\v },B_{\frac{\rho}{\delta}}\setminus \overline{B_1})&=&\frac{1}{2}\int_{B_{\frac{\rho}{\delta}}\setminus \overline{B_1}}{\left\{\hat U_{\v }^2|\hat{v}_{\v }|^2|\n w_\v |^2+\hat U_{\v }^2|\n|\hat{v}_{\v }||^2+\frac{\hat U_{\v }^4}{2\xi^2}(1-|\hat{v}_{\v }|^2)^2\right\}}
\\\nonumber
&=&\frac{1}{2}\int_{B_{\frac{\rho}{\delta}}\setminus \overline{B_1}}{\left\{\hat U_{\v }^2|\n w_\v |^2+\hat U_{\v }^2|\n|\hat{v}_{\v }||^2+\frac{\hat U_{\v }^4}{2\xi^2}(1-|\hat{v}_{\v }|^2)^2\right\}}+o_\v (1)
\\\label{7.FondamentaleLowerBoundINTwoDomain}&\geq&\frac{1}{2}\int_{B_{\frac{\rho}{\delta}}\setminus \overline{B_1}}{\hat U_{\v }^2|\n w_\v |^2}+o_\v (1).
\end{eqnarray}
We take ${g}^{\delta}=\displaystyle\frac{\tr_{\p B_1 }\hat{v}_{\v }}{|\tr_{\p B_1 } \hat{v}_{\v }|}$ and
$f^{\delta} = \displaystyle\frac{\tr_{\p B_\rho } v_{\v }}{|\tr_{\p B_\rho } v_{\v }|}$. Note that with this choice of $f^{\delta},g^\delta$ one may
apply the results of Sections \ref{S7.AwayTheInclusEst} and \ref{S7.ArroudTheInclusEst}. From \eqref{7.FondamentaleLowerBoundINTwoDomain} we obtain the lower bound in  $B_{\frac{\rho}{\delta}} \setminus \overline{B_1}$:
\begin{equation}\label{7.estimateInFirstRegin}
\hat F_{\xi}(\hat{v}_{\v },B_{\frac{\rho}{\delta}}\setminus \overline{B_1})\geq J_{\delta}+o_\v (1)
\end{equation}
with $J_\dl$ the energy associate to the minimization problem \eqref{7.AuxProblemFixData} (see page \pageref{7.AuxProblemFixData}).

Let $v_0$ be defined by \eqref{7.H1LocConvInLittleOmega}. Since we have $v_{\v }\to v_0$ in $H^1(B_1\setminus \cup \overline{B(\a_i,r)})$  and
$\hat{U}_{\v }\to a$ in $L^2(B_1\setminus \cup \overline{B(\a_i,r)})$, from Proposition \ref{P7.equationforphistar} and Lemma \ref{L7.Ren_en_LM} we obtain 
\begin{eqnarray}\nonumber
\hat F_{\xi}(\hat{v}_{\v },B_1\setminus \cup \overline{B(\a_i,r)})&\geq&\frac{1}{2}\int_{B_1\setminus \cup \overline{B(\a_i,r)}}{a^2|\n v_0|^2}+o_\v (1)
\\\nonumber
&\geq&\frac{1}{2}\int_{B_1\setminus \cup \overline{B(\a_i,r)}}{a^2|\n \theta+\n\psi_0|^2}+o_\v (1)
\\\label{7.estimateInSecondReginBis}
&=&K(r)+\mathcal{O}(r|\ln r|)+o_\v (1),
\end{eqnarray}
where $K(r)$ is defined by \eqref{AuxProbR} (see page \pageref{AuxProbR}).

In order to complete the proof of the lemma, we need to obtain a sharp lower bound in each ball $B(\alpha_i,r)$. Actually we will prove that
\begin{equation}\label{7.EstimateInThirdRegion}
\hat F_{\xi}(\hat{v}_{\v },B(\a_i,r))\geq b^2I(\xi/b,r)+o_r(1)+o_\v (1),
\end{equation}
with $I(\xi/b,r)$ being defined in (\ref{definition_of_I}). The estimate \eqref{7.EstimateInThirdRegion} is equivalent to
\begin{equation}\label{7.EstimateInThirdRegionbis}
\hat F_{\xi}(\hat{v}_{\v },B(\a_i,r))\geq b^2I(\xi/b,r+r^2)+o_r(1)+o_\v (1).
\end{equation}
Indeed by \eqref{7.EstimateExactClassicalProblemBall} we have $I(\xi,r+r^2)-I(\xi,r)=o_r(1)$.

We now make use of the construction by Lefter and R{\u{a}}dulescu in \cite{LR0} and \cite{LR}. From Proposition \ref{P7.equationforphistar}, we know that $v_0={\rm e}^{\imath(\theta_i+\f_\star+\psi_i)}$ with $\f_\star,\,\psi_i$ harmonic, and therefore
smooth in $B(\a_i, \eta)$ ($\eta>r$ small).
Set $\sigma_i=\f_\star+\psi_i$. Without loss of generality, we can assume that $\a_i=0$ and $\sigma_i(0)=0$. Consequently, $|\sigma_i(x)|\leq C|x|$ with $C$ independent of $\eta$ and $|x|\leq\eta$. Let
\[
\hat{v}_{\v }=\lambda_\v {\rm e}^{\imath(\theta_i+\sigma_\v ^i)}\, \text{ where } \lambda_\v :=|\hat{v}_{\v }|.
\]
From Proposition \ref{P7.AsymptoticResultatNonZeroDeg} and \eqref{7.potentialBoundincompact}, we obtain that
\begin{equation}\label{7.ConvPhaseArgumentrenrm}
\sigma_\v ^i\to\sigma_i\text{ in }H^1(B_{r+r^2}\setminus \overline{B_r}),
\end{equation}
\begin{equation}\label{7.ConvModulusArgumentrenrm}
\lambda_\v \to1\text{ in }H^1(B_{r+r^2}\setminus \overline{B_r})\text{ and }\frac{1}{\xi^2}\int_{B_{r+r^2}\setminus \overline{B_r}}{(1-\lambda_\v )^2}\to0.
\end{equation}
Let
\[
\beta_\v (s{\rm e}^{\imath\theta_i})=\begin{cases}\hat{v}_{\v }(s{\rm e}^{\imath\theta_i})&\text{if }s\in[0,r)\\\displaystyle\left[\frac{1-\lambda_\v }{r^2}(s-r)+\lambda_\v \right]{\rm exp}\left\{\imath\left(\theta_i+\sigma_\v ^i\frac{-s+r^2+r}{r^2}\right)\right\}&\text{if }s\in[r,r+r^2]\end{cases}.
\]
Clearly, $\beta_\v \in H^1_{x/|x|}(B_{r+r^2})$. Consequently,
\[
b^2 I(\xi/b,r+r^2)\leq \hat F_{\xi}(\hat{v}_{\v },B_r)+ \hat F_{\xi}(\beta_\v ,B_{r+r^2}\setminus \overline{B_{r}})+o_\v (1).
\]
From \eqref{7.ConvModulusArgumentrenrm}, we easily obtain that
\[
\int_{B_{r+r^2}\setminus \overline{B_r}}{\left\{|\n|\beta_\v ||^2+\frac{1}{\xi^2}(1-|\beta_\v |)^2\right\}}=o_\v (1).
\]
It remains to estimate
\[
\int_{B_{r+r^2}\setminus \overline{B_r}}{\left|\n\left\{\theta_i+\sigma_\v ^i\frac{-s+r^2+r}{r^2}\right\}\right|^2}.
\]
From \eqref{7.ConvPhaseArgumentrenrm}
\[
\int_{B_{r+r^2}\setminus \overline{B_r}}{\left|\n\left\{\theta_i+\sigma_\v ^i\frac{-s+r^2+r}{r^2}\right\}\right|^2}=\int_{B_{r+r^2}\setminus \overline{B_r}}{\left|\n\left\{\theta_i+\sigma_i\frac{-s+r^2+r}{r^2}\right\}\right|^2}+o_\v (1).
\]
Since  $|\sigma_i(s{\rm e}^{\imath\theta})|\leq Cs$, $|\p_s \sigma_i|\leq C$ and $|\p_{\theta_i}\sigma_i|\leq C s$ we have
\begin{eqnarray}\nonumber
\left|\n\left\{\theta_i+\sigma_i\frac{-s+r^2+r}{r^2}\right\}\right|^2&=&\left|\p_s\sigma_i\frac{-s+r^2+r}{r^2}-\frac{\sigma_i}{r^2}\right|^2+\frac{1}{r^2}\left|1+\p_{\theta_i}\sigma_i\frac{-s+r^2+r}{r^2}\right|^2
\\\nonumber
&\leq&C\left[(1+r^{-2})+r^{-2}\right]=\mathcal{O}(r^{-2}).
\end{eqnarray}
Since $|B_{r+r^2}\setminus \overline{B_r}|=\mathcal{O}(r^3)$ we find that
\[
\int_{B_{r+r^2}\setminus \overline{B_r}}{\left|\n\left\{\theta_i+\sigma_\v ^i\frac{-s+r^2+r}{r^2}\right\}\right|^2}=\mathcal{O}(r).
\]
It follows that $ \hat F_{\xi}(\beta_\v ,B_{r+r^2}\setminus \overline{B_{r}})=\mathcal{O}(r)+o_\v (1)$. Consequently, \eqref{7.EstimateInThirdRegionbis} holds and thus we obtain \eqref{7.EstimateInThirdRegion}. Combining \eqref{7.estimateInFirstRegin}, \eqref{7.estimateInSecondReginBis} and \eqref{7.EstimateInThirdRegion}, together with \eqref{7.FundEstimateAuxProbFarInclBis}
and (\ref{7.FirstPartRenEnergy}), we obtain
\begin{eqnarray}\nonumber
\hat F_{\xi}(\hat{v}_{\v },B_{\frac{\rho}{\delta}})&\geq& I_{\delta}+K(r)+b^2I(\xi/b,r)+o_\v (1)+o_r(1)
\\\nonumber
&=&\pi d_0^2 \ln \frac{\rho}{\delta}+\pi d_0 b^2\ln\frac{b}{\xi}+\tilde W_0(f_0)+\tilde W({\bm \a},g_0)+
\\\label{7.LowerBOUNDTotal}&&\phantom{aaddddddaaaaaaaazzzzzzzzzzza}+d_0 b^2\gamma+o_\v (1)+o_r(1).
\end{eqnarray}
The conclusion of the Lemma follows by letting $r\to0$ as $\v\to0$.
\end{proof}
\subsection{The function $g_0$ and the points $\{\a_1,...,\a_{d_0}\}$ minimize the renormalized energy}\label{S7?MinAuxEnsdkfjh}
In the previous section, we obtained an expansion for the energy $\hat F_{\xi}(\hat{v}_{\v },B_{\frac{\rho}{\dl}})$ of the model problem.
To summarize, using \eqref{7.UpperBoundWithRenormalizedEnergy}, \eqref{7.GoodLowerAuxProbStupeflip} and Theorem \ref{T7.THM3} we get that there are $g_0=\lim \tr_{\p B_1}\hat{v}_{\v }$ and ${\bm \alpha}=(\alpha_1,...,\alpha_{d_0})\in\o^{d_0}$ s.t.
\begin{equation}\label{7.ExpressionWithRenormalizedEnergy}
\hat F_{\xi}(\hat{v}_{\v },B_{\frac{\rho}{\dl}})=\pi d_0 b^2\ln\frac{b}{\xi}+\pi d_0^2\ln\frac{\rho}{\delta}+\tilde{W}({\bm \a},g_0)+ {\tilde W}_0(f_0)+
 d_0 b^2 \g + o_\v (1),
\end{equation}
with
\begin{equation*}
\tilde{W}({\bm \a},g_0)=\tilde W_1 (g_0)+ {\tilde W_2}({\bm \a},g_0).
\end{equation*}
The goal of this section is to underline an important property of the points ${\bm \a}$, namely, that they minimize the quantity $\inf_{g^0\in C^\infty(\p B_1,\S^1)}\tilde{W}(\cdot,g^0)$.

We have the following
\begin{prop}\label{P7.MinRenEner}
Let ${\bm \b}=(\b_1,...,\b_{d_0})\in\o^{d_0}$ be a $d_0$-tuple of distinct points and let $g^0\in C^\infty(\p B_1,\S^1)$ be s.t. $\deg_{\p B_1}(g^0)=d_0$. Then
\[
\tilde W({\bm \a},g_0)\leq \tilde W({\bm \b},g^0).
\]
\end{prop}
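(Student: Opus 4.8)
The plan is to read the inequality off directly from the energy expansion already obtained for the model problem, by playing the minimality of $\hat v_\v$ against the explicit upper bound of Lemma~\ref{L7;BorneStupeflip}. The point is that $\tilde W(\bm\a,g_0)$ is, up to a collection of ``universal'' terms, exactly $\hat F_\xi(\hat v_\v,B_{\rho/\delta})$, whereas $\tilde W(\bm\b,g^0)$ is, up to the \emph{same} universal terms, an upper bound for $\inf_{H^1_{f_\v}}F_\v$; since the latter infimum equals $\hat F_\xi(\hat v_\v,B_{\rho/\delta})$, the two renormalized energies compare in the limit $\v\to0$.

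Concretely, first I would note that $\hat v_\v$ minimizes $\hat F_\xi$ in $\hat H^1_{f_\v}(B_{\rho/\delta})$, so that $\hat F_\xi(\hat v_\v,B_{\rho/\delta})=\inf_{H^1_{f_\v}(B_\rho)}F_\v$ via the change of variables \eqref{variable_change}. By Theorem~\ref{T7.THM3}, $g_0=\lim \tr_{\p B_1}\hat v_\v\in C^\infty(\p B_1,\S^1)$ has degree $d_0$ and $\bm\a=(\a_1,\dots,\a_{d_0})$ consists of $d_0$ distinct points of $\o$, so $(\bm\a,g_0)$ is admissible in Lemma~\ref{L7;BorneStupeflip}; hence the matching bounds \eqref{7.UpperBoundWithRenormalizedEnergy} and \eqref{7.GoodLowerAuxProbStupeflip} yield the \emph{exact} expansion \eqref{7.ExpressionWithRenormalizedEnergy},
\[
\hat F_\xi(\hat v_\v,B_{\rho/\delta})=\pi d_0 b^2\ln\tfrac{b}{\xi}+\pi d_0^2\ln\tfrac{\rho}{\delta}+\tilde W_0(f_0)+\tilde W(\bm\a,g_0)+d_0 b^2\gamma+o_\v(1).
\]
Next I would apply Lemma~\ref{L7;BorneStupeflip} to the given data $\bm\b\in\o^{d_0}$ (distinct points) and $g^0\in C^\infty(\p B_1,\S^1)$ with $\deg_{\p B_1}(g^0)=d_0$, producing a competitor of energy at most
\[
\pi d_0 b^2\ln\tfrac{b}{\xi}+\pi d_0^2\ln\tfrac{\rho}{\delta}+\tilde W_0(f_0)+\tilde W_1(g^0)+\tilde W_2(\bm\b,g^0)+d_0 b^2\gamma+o_\v(1).
\]
Since $\hat F_\xi(\hat v_\v,B_{\rho/\delta})=\inf F_\v$ is bounded above by this quantity and $\tilde W(\bm\b,g^0)=\tilde W_1(g^0)+\tilde W_2(\bm\b,g^0)$, subtracting the common divergent terms $\pi d_0 b^2\ln(b/\xi)$ and $\pi d_0^2\ln(\rho/\delta)$ together with the common constants $\tilde W_0(f_0)$ and $d_0 b^2\gamma$ leaves $\tilde W(\bm\a,g_0)+o_\v(1)\le \tilde W(\bm\b,g^0)+o_\v(1)$, and letting $\v\to0$ finishes the proof.

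I do not expect a genuine obstacle: all the real analysis has been carried out in Sections~\ref{S7.UpBounAuxPBBBB}--\ref{S7.LowBounAuxPBBBB} and Theorem~\ref{T7.THM3}, and what remains is purely a comparison. The only two points needing care are (i) that $(\bm\a,g_0)$ really is admissible data, so that the \emph{exact} expansion \eqref{7.ExpressionWithRenormalizedEnergy} (and not merely an inequality) is available, and (ii) that the ``universal'' terms occurring on the two sides — which depend only on $d_0$, $b$, $\xi$, $\rho$, $\delta$, $f_0$ and the absolute constant $\gamma$ — are literally the same expressions, so that they cancel exactly and only the renormalized-energy terms survive after passing to the limit.
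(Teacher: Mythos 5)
Your proposal is correct and follows essentially the same route as the paper: the paper likewise inserts the test function of Lemma \ref{L7;BorneStupeflip} (i.e.\ \eqref{7.TestFunctionUpperBound}) built from $({\bm\b},g^0)$, invokes the minimality of $\hat v_\v$ together with the exact expansion \eqref{7.ExpressionWithRenormalizedEnergy} for $({\bm\a},g_0)$, cancels the common divergent and constant terms, and lets $\v\to0$ (and $r\to0$). The only cosmetic difference is that the paper keeps the $\mathcal{O}(r)$ error explicit before passing to the limit, whereas you use the lemma's statement with $o_\v(1)$ directly, which is equivalent.
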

\begin{proof}
Let $({\bm \b},g^0)$ be as in Proposition \ref{P7.MinRenEner}. Using the test function given by \eqref{7.TestFunctionUpperBound}, we obtain that for all $\v>0$ and $r>0$ (small) there is $w_\v\in H^1_{\hat f_\ve}(B_{\frac{\rho}{\dl}},\C)$ s.t.
\[
\hat F_\xi(w_\v)=\pi d_0 b^2\ln\frac{b}{\xi}+\pi d_0^2 \ln\frac{\rho}{\delta}+\tilde W({\bm \b},g^0) + \tilde W_0(f_0) +d_0
b^2\gamma+h^1_\v+h^2_r
\]
here $h^1_\v=o_\v(1)\text{ and }h^2_r=\mathcal{O}(r)$.

On the other hand, taking into account the minimality of ${\hat v}_{\v}$ and
\eqref{7.ExpressionWithRenormalizedEnergy} we have

\[
\tilde W({\bm \b},g^0) \geq \tilde W({\bm \a},g_0)+o_\v (1)+h^2_r.
\]
The previous estimate implies (letting $\v\to0$ and $r\to0$) that $\tilde W({\bm \b},g^0) \geq \tilde W({\bm \a},g_0)$ which completes the proof.
\end{proof}
Thus, for ${\bm \beta} = (\beta_1, ..., \beta_{d_0})\in\o^{d_0}$ we define
\begin{equation}\label{Ren_en_loc}
\tilde{W}({\bm \beta})=\inf_{\substack{\tilde{g}\in C^\infty(\p B_1,\S^1)\\\deg_{\p B_1}(\tilde{g})=d_0}}\tilde{W}({\bm \beta},\tilde{g})=\inf_{\substack{\tilde{g}\in C^\infty(\p B_1,\S^1)\\\deg_{\p B_1}(\tilde{g})=d_0}}\tilde{W}_1 (\tilde{g})+ {\tilde{W}_2}({\bm \beta},\tilde{g})
\end{equation}
with $\tilde{W}_1$ and $\tilde{W}_2$ given by \eqref{decoupled_energy} and \eqref{7.FirstPartRenEnergy} respectively. It follows that for ${\bm\alpha}$ given by Theorem \ref{T7.THM3} and $g_0=\tr_{\p B_1}v_0$:
\[
\tilde{W}({\bm \a})=\tilde W({\bm \a},g_0)\leq\tilde{W}({\bm \beta})\text{ for all } {\bm \beta} = (\beta_1, ..., \beta_{d_0})\in\o^{d_0}.
\]

\section{Proofs of Theorems \ref{T7.THM1} and \ref{T7.THM2}}
In this section $v_\v$ is a minimizer of $F_\v$ in $H^1_g(\O,\C)$. We split the proofs of Theorem \ref{T7.THM1} and \ref{T7.THM2} in three steps:
\begin{enumerate}[$\bullet$]
\item(Section \ref{S7.LOCationAndEstimate}) Using estimates on $|v_\v|$, we first localize the vorticity to the neighborhoods of selected inclusions. Then we find two separate energy expansions in two sub-domains of $\O$: away from the selected inclusions and around them.
\item(Section \ref{S7.ExLimSolu}) We study the asymptotic behavior of $v_{\v}$. We prove that, for small $\v$, $v_\v$ has exactly $d$ zeros of degree $1$.
\item(Section \ref{S7;MinBBHREnENErGy}) We give an expansion of $F_\v(v_\v)$ up to $o_\v(1)$ terms and relate the choice of the inclusions with vortices to the renormalized energy of Bethuel, Brezis and Hélein.
\end{enumerate}

\subsection{Locating bad inclusions}\label{S7.LOCationAndEstimate}

The following result gives a uniform bound on the modulus of minimizers away from the inclusions.
 \begin{lem}\label{L7.Lemma1}There exists $C>0$ s.t. for small $\ve$ we have
 \begin{enumerate}
\item $|v_\ve| \geq 1 - C |\ln \ve|^{-1/3}$ in $\W \setminus \cup_{i=1}^M\overline{B(a_i,  \dl)}$, 
\item there are at most $d$ points $a_{i_1}, ..., a_{i_{d'}}$ ($1\leq d'=d'_\v\leq d$) s.t.  $\{|v_\ve|<1-C |\ln \ve|^{-1/3}\}\subset\cup_{k=1}^{d'}B(a_{i_k},  \dl)$.
\end{enumerate}
\end{lem}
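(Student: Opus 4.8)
The plan is to prove both statements by contradiction, adapting to all of $\W$ the local scheme of Proposition~\ref{P7.ConvOfBadDiscsTo0}: one combines the $\eta$-ellipticity of Lemma~\ref{L7.BadDiscsLemma}, the vortex-ball lower bounds of Theorem~4.1 in~\cite{SS1}, and the upper bound of Proposition~\ref{P7.UpperboundAuxPb}. Set once and for all $c_1:=1-\sup_{y\in\w}|y|>0$, so that $\w_\dl^i\subset B(a_i,\dl)$ with $\dist(\w_\dl^i,\p B(a_i,\dl))\geq c_1\dl$; in particular $\W\setminus\cup_iB(a_i,\dl)$ lies at distance $\geq c_1\dl$ from $\w_\dl$, whence $U_\v\to1$ exponentially there by Proposition~\ref{P7.UepsCloseToaeps}. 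Set also $\rho_0:=\tfrac12\min\{\min_{i\neq j}|a_i-a_j|,\ \min_i\dist(a_i,\p\W)\}$, so that the balls $B(a_i,\rho_0)$ are pairwise disjoint and compactly contained in $\W$. As in Proposition~\ref{P7.ConvOfBadDiscsTo0}, since $g\in C^\infty(\p\W,\S^1)$ and $|v_\v|$ is close to $1$ near $\p\W$ (second part of Lemma~\ref{L7.BadDiscsLemma}), I first extend $v_\v$, and $U_\v$ by $1$, past $\p\W$ to a fixed larger domain without creating new zeros, so that vortex-ball constructions may be run freely and their degrees still sum to $d$.

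For assertion~(1), apply Lemma~\ref{L7.BadDiscsLemma}(1) with $\chi=|\ln\v|^{-1/3}$, with resulting constant $C$, and suppose for contradiction that $|v_\v(x_\v)|<1-C|\ln\v|^{-1/3}$ for some $x_\v\in\W\setminus\cup_i\overline{B(a_i,\dl)}$. Then $F_\v(v_\v,B(x_\v,\v^{1/4}))\geq|\ln\v|^{1/3}-C_1$, and $B(x_\v,\v^{1/4})$ lies at distance $\geq\tfrac{c_1}{2}\dl\gg\v$ from $\w_\dl$, so $U_\v\to1$ uniformly on it. Running Theorem~4.1 in~\cite{SS1} on (the extension of) $v_\v$, with ball-radius parameter a small multiple of $\dl$, yields a disjoint family $\mathcal{B}^\v$ covering $\{\,|v_\v|<1-(\v/b)^{1/8}\,\}$ whose degrees sum to $d$, with
\[
\tfrac{b^2}{2}\int_{\mathcal{B}^\v}\left\{|\n v_\v|^2+\tfrac{b^2}{2\v^2}(1-|v_\v|^2)^2\right\}\ \geq\ \pi b^2 d\,|\ln\xi|-C.
\]
Writing $F_\v(v_\v,E)$, for $E=B(x_\v,\v^{1/4})\cup\mathcal{B}^\v$, as its $b^2$-weighted part plus the nonnegative correction $\tfrac12\int_E\{(U_\v^2-b^2)|\n v_\v|^2+\tfrac1{2\v^2}(U_\v^4-b^4)(1-|v_\v|^2)^2\}$, and using that both pieces are nonnegative and monotone in $E$, one bounds the first below by its value on $\mathcal{B}^\v$ and the second below by its value on $B(x_\v,\v^{1/4})$ (where $U_\v^2-b^2\to1-b^2$), obtaining
\[
\inf_{H^1_g}F_\v\ =\ F_\v(v_\v,\W)\ \geq\ \pi b^2 d\,|\ln\xi|+\tfrac{1-b^2}{2}|\ln\v|^{1/3}-C.
\]
Since Proposition~\ref{P7.UpperboundAuxPb} bounds the left side by $\pi d b^2|\ln\xi|+\mathcal{O}(|\ln\dl|)+C$ (in Case~I and Case~II alike), this forces $\tfrac{1-b^2}{2}|\ln\v|^{1/3}\leq\mathcal{O}(|\ln\dl|)+C$, contradicting~\eqref{7.MainHyp} for small~$\v$.

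For assertion~(2), by~(1) the set $S_\v:=\{\,|v_\v|<1-C|\ln\v|^{-1/3}\,\}$ lies in $\cup_iB(a_i,\dl)$; put $\mathcal{I}:=\{\,i:\ S_\v\cap B(a_i,\dl)\neq\emptyset\,\}$ and $d':={\rm Card}\,\mathcal{I}$. As $|v_\v|\geq\tfrac12$ on $\W\setminus\cup_iB(a_i,\dl)$, the map $v_\v/|v_\v|$ is smooth and $\S^1$-valued there, so $D_i:=\deg_{\p B(a_i,\rho)}(v_\v/|v_\v|)$ is well defined, independent of $\rho\in(\dl,\rho_0)$, and $\sum_{i=1}^M D_i=\deg_{\p\W}(g)=d$ by additivity. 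If $i\notin\mathcal{I}$ then $|v_\v|\geq\tfrac12$ on the whole simply connected $B(a_i,\rho_0)$, hence $D_i=0$; thus $\mathcal{I}^+:=\{\,i:\ D_i\neq0\,\}\subset\mathcal{I}$, $\sum_{\mathcal{I}^+}D_i=d$, and I split $\mathcal{I}=\mathcal{I}^+\sqcup\mathcal{I}^0$. For $i\in\mathcal{I}^+$, run Theorem~4.1 in~\cite{SS1} on $B(a_i,\rho_0)$ and pick (a pigeonhole on the small total radius of the resulting balls) $\rho_i\in(\rho_0/2,\rho_0)$ with $\p B(a_i,\rho_i)$ avoiding them; the balls inside $B(a_i,\rho_i)$ then have degrees summing to $D_i$, and the $|D_i|$ being bounded uniformly by the $\mathcal{O}(|\ln\v|)$ energy bound, $F_\v(v_\v,B(a_i,\rho_i))\geq\pi b^2|D_i|\,|\ln\v|-C$ with $C$ uniform. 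For $i\in\mathcal{I}^0$, take $\rho_i=\rho_0$ and $y_i\in S_\v\cap B(a_i,\dl)\subset B(a_i,\rho_i)$; then Lemma~\ref{L7.BadDiscsLemma}(1) gives $F_\v(v_\v,B(a_i,\rho_i))\geq F_\v(v_\v,B(y_i,\v^{1/4}))\geq|\ln\v|^{1/3}-C_1$. The $B(a_i,\rho_i)$, $i\in\mathcal{I}$, are pairwise disjoint, so summing and using $\sum_{\mathcal{I}^+}|D_i|\geq\max\{d,{\rm Card}\,\mathcal{I}^+\}$,
\[
\inf_{H^1_g}F_\v\ \geq\ \pi b^2\max\{d,{\rm Card}\,\mathcal{I}^+\}\,|\ln\v|+{\rm Card}\,\mathcal{I}^0\cdot|\ln\v|^{1/3}-C.
\]
Comparing with $\inf_{H^1_g}F_\v\leq\pi d b^2|\ln\v|+\mathcal{O}(|\ln\dl|)+C$ (Proposition~\ref{P7.UpperboundAuxPb} together with $|\ln\xi|=|\ln\v|-|\ln\dl|$) and recalling that $|\ln\dl|=o(|\ln\v|^{1/3})$ by~\eqref{7.MainHyp}: the case ${\rm Card}\,\mathcal{I}^+\geq d+1$ would give $\pi b^2|\ln\v|\leq\mathcal{O}(|\ln\dl|)+C$, impossible; hence ${\rm Card}\,\mathcal{I}^+\leq d$, and then ${\rm Card}\,\mathcal{I}^0\cdot|\ln\v|^{1/3}\leq\mathcal{O}(|\ln\dl|)+C$ forces $\mathcal{I}^0=\emptyset$. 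Therefore $d'={\rm Card}\,\mathcal{I}={\rm Card}\,\mathcal{I}^+\leq d$, while $d'\geq1$ because $\deg_{\p\W}(g)=d>0$ forces $v_\v$ to vanish in $\W$, so $S_\v\neq\emptyset$.

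The step I expect to be the main obstacle is the quantitative use of the vortex-ball construction on a domain with boundary: reconciling the two ``smallness'' scales $1-C|\ln\v|^{-1/3}$ (coming from $\eta$-ellipticity) and $1-(\v/b)^{1/8}$ (the natural Sandier--Serfaty threshold), making the per-inclusion energy lower bounds genuinely additive (disjointness of the relevant balls, exactness of the degree count around each $a_i$, uniformity of the constants in $|D_i|$), and justifying the extension of $v_\v$ past $\p\W$. Granting the corresponding consequences of Theorem~4.1 in~\cite{SS1}, the rest is a transcription of the argument of Proposition~\ref{P7.ConvOfBadDiscsTo0}.
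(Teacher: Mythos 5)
Your proposal is correct and follows essentially the same route as the paper: part (1) reproduces the paper's own argument (apply Lemma~\ref{L7.BadDiscsLemma} with $\chi=|\ln\varepsilon|^{-1/3}$, extend $v_\varepsilon$ by a fixed $\S^1$-valued map past $\partial\Omega$, invoke Theorem~4.1 of \cite{SS1}, split $F_\varepsilon$ into its $b^2$-weighted part plus the nonnegative $(U_\varepsilon^2-b^2)$ correction concentrated on the bad disc, and contradict Proposition~\ref{P7.UpperboundAuxPb} via \eqref{7.MainHyp}), while your degree-splitting argument for part (2) (inclusions with $D_i\neq0$ versus inclusions meeting the bad set with zero degree, each zero-degree one contributing an extra $|\ln\varepsilon|^{1/3}$ that exceeds the $\mathcal{O}(|\ln\delta|)$ slack) is a sound and detailed version of what the paper leaves as ``along the same lines''. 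The only cosmetic imprecision is that the Sandier--Serfaty lower bound you quote controls the energy over \emph{all} the balls produced in $B(a_i,\rho_0)$, not only those contained in $B(a_i,\rho_i)$; since the discs $B(a_i,\rho_0)$, $i\in\mathcal{I}$, are pairwise disjoint and disjoint from the bad discs attached to the zero-degree inclusions, stating the estimate on $B(a_i,\rho_0)$ instead of $B(a_i,\rho_i)$ leaves your additivity argument unchanged.
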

\begin{proof}
Using Lemma \ref{L7.BadDiscsLemma} with $\displaystyle\chi=|\ln \ve|^{-1/3}$, we obtain that there exist $C,C_1>0$ s.t. for $\v>0$ small,
\begin{center}
if $\displaystyle F_\v(v_\v,B(x,\v^{{1/4}}))< |\ln \ve|^{\frac{1}{3}}-C_1$ then $|v_\v|\geq 1-C\chi$ in $B(x,\v^{1/2})$.
\end{center}
We prove 1. by contradiction. Assume that, up to a subsequence, there is $ x_{\v } \in \W \setminus \cup_{i=1}^{M} \overline{B(a_i,  \delta)}$, s.t. $|v_{\v }(x_{\v })| < 1 - C |\ln \v |^{-1/3}$ with $C$ given by Lemma \ref{L7.BadDiscsLemma}.
From Lemma \ref{L7.BadDiscsLemma}  and Proposition \ref{P7.UepsCloseToaeps}
\begin{equation}\label{7.TOproveLocationofvorticityInMainProblems}
\frac{1}{2}\int_{B(x_\v,\v ^{1/4})}{\left\{|\n v_{\v }|^2+\frac{1}{2\v^2}(1-|v_{\v }|^2)^2\right\}}\geq |\ln \v |^{1/3} - \mathcal{O}(1).
\end{equation}
Fix a bounded, simply connected domain $\O'$ such that $\overline{\O} \subset \O'$, and extend $v_{\v }$ by a fixed smooth $\S^1$-valued map $v$ in $\O' \setminus \overline{\O}$, s.t. $v = g$ on $\d \O$.

In view of \eqref{7.UpperboundAuxPb} for Case I or \eqref{7.UpperboundAuxPbCaseII} for Case II, there exists $\tilde C > 0$ s.t. for small $\v$
\[
\frac{1}{2}\int_{\O'}{|\n v_{\v }|^2+\frac{1}{2\v ^2}(1-|v_{\v }|^2)^2}\leq \tilde C|\ln\v |.
\]
Therefore, the map $v_{\v }$ in $\O'$ satisfies the condition of Theorem 4.1 \cite{SS1}. This theorem guarantees that
\begin{enumerate}[$\bullet$]
\item there exists $\mathcal{B}^\v = \{B_j^\v\}$, a finite disjoint covering of the set
\[
\{x\in\O' \, | \,\dist(x,\p\O')>\v/b\text{ and } |v_{\v }(x)| < 1 - (\v /b)^{1/8}\},
\]
\item such that ${\rm rad} (\mathcal{B}^\v) := \sum_{j} {\rm rad} (B_j^\v)  \leq 10^{-2}\cdot\dist(\o,\p B(0,1))\cdot\delta$,
\item and, denoting $d_j=|\deg_{\p B_j}(v_{\v })|$ if $B_j^\v\subset\{\dist(x,\p\O')>\v /b\}$ and $d_j=0$ otherwise, we have
\begin{eqnarray}\nonumber
\frac{1}{2}\int_{\cup B_j^\v}{|\n v_{\v }|^2+\frac{b^2}{2\v ^2}(1-|v_{\v }|^2)^2} &\geq& \pi \sum_jd_j \ln\frac{\delta}{\v }-C
\\\label{7.LowerBoundOnSandierSerfatyCovering}&=&\pi \sum_jd_j |\ln\xi|-C,
\end{eqnarray}
with $C$ independent of $\v $.
\end{enumerate}
Note that since $|v_{\v }|\equiv1$ in $\O'\setminus\overline{\O}$, if $d_j\neq0$ then $B^\v_j\subset\{\dist(x,\p\O')>\v /b\}$. Consequently, we have $d_j=|\deg_{\p B^\v_j}(v_{\v })|$.

Assertion 1. follows as in the proof of Proposition \ref{P7.ConvOfBadDiscsTo0} (use \eqref{7.TOproveLocationofvorticityInMainProblems}, \eqref{7.LowerBoundOnSandierSerfatyCovering} instead of \eqref{7.ContradictionForUnifConv} and \eqref{7.LowerBoundOnSandierSerfatyCovering1}).

The proof of Assertion {\it 2.} of Lemma \ref{L7.Lemma1} goes along the same lines.
\end{proof}
We next obtain the following lower bounds for the energy.
\begin{lem}\label{L7.Lemma2}
For $k\in\{1,...,d'\}$, we denote $d_k=d_k^\v  = \deg_{\d B(a_{i_k},  \dl)} (v_\v )$. There exist $C,\eta_0 > 0$ s.t. for small $\v$ and $\rho\in[2\delta,\eta_0]$ we have
\begin{equation}\label{bound1}
\frac{1}{2} \int_{\O \setminus \cup_{k=1}^d \overline{B(a_{i_k}, \rho)}} |\nabla v_\v|^2  \geq \pi \sum_{k=1}^d d_k^2 |\ln \rho| - C
\end{equation}
and
\begin{equation}\label{bound3}
F_\v(v_\v, B(a_{i_k}, {2 \dl})) \geq \pi |d_k| b^2 |\ln \xi| - C.
\end{equation}
\end{lem}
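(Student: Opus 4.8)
The plan is to prove the two lower bounds of Lemma~\ref{L7.Lemma2} by combining the covering machinery of Theorem~4.1 in \cite{SS1} (already invoked several times above) with the localization of vorticity from Lemma~\ref{L7.Lemma1}, exactly in the spirit of the proof of Proposition~\ref{P7.LocalizationOfTheEnergy}. The two estimates \eqref{bound1} and \eqref{bound3} are the global analogues of the two estimates \eqref{7.EnergyOutsideTheHoleBis0} and \eqref{7.EnergyOfTheHoleBis00} established there for the model problem on one inclusion, so the strategy transfers almost verbatim; the only new feature is that we must carry along a \emph{sum} over the $d'$ selected inclusions and keep the bad discs from different inclusions from interacting, which is arranged by choosing $\eta_0$ smaller than a fixed fraction of $\min_{i\neq j}|a_i-a_j|$ and of $\dist(\{a_i\},\p\O)$.

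For \eqref{bound3}, first pass to the blown-up picture centered at $a_{i_k}$: with $\hat x=x/\delta$ one has $F_\v(v_\v,B(a_{i_k},2\delta))=\hat F_\xi(\hat v_\v,B(0,2))$, and $\hat v_\v$ restricted to $B(0,2)$ is a minimizer of $\hat F_\xi$ with respect to its own boundary data, whose degree on $\p B(0,2)$ equals $d_k$ by definition. Apply Theorem~4.1 of \cite{SS1} inside $B(0,2)$ with a covering radius $r<10^{-2}\cdot\dist(\o,\p B(0,2))$ of the set where $1-|\hat v_\v|\geq(\xi/b)^{1/8}$; by Lemma~\ref{L7.Lemma1}, part~1, the modulus of $v_\v$ is close to $1$ outside $B(a_{i_k},\delta)$, hence all balls carrying nonzero degree lie inside $B(0,1)$, so that the total degree of the covering is exactly $|d_k|$. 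The covering estimate then gives $\tfrac12\int_{\cup B_j^\v}\{|\n\hat v_\v|^2+\tfrac{b^2}{2\xi^2}(1-|\hat v_\v|^2)^2\}\geq\pi|d_k|\,|\ln\xi|-C$, and since $\hat U_\v\geq b$ on $B(0,1)$ we have $\hat F_\xi(\hat v_\v,B(0,2))\geq b^2\cdot\tfrac12\int_{\cup B_j^\v}\{|\n\hat v_\v|^2+\tfrac{b^2}{2\xi^2}(1-|\hat v_\v|^2)^2\}\geq\pi|d_k|b^2|\ln\xi|-C$, which is \eqref{bound3}.

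For \eqref{bound1}, the domain $\O\setminus\cup_{k}\overline{B(a_{i_k},\rho)}$ avoids all the bad discs (again by Lemma~\ref{L7.Lemma1}, since $\rho\geq2\delta$), so $w_\v:=v_\v/|v_\v|$ is well-defined there, $|v_\v|\geq1/2$, and $|\n v_\v|^2=|v_\v|^2|\n w_\v|^2+|\n|v_\v||^2\geq|v_\v|^2|\n w_\v|^2$. Around each removed disc $B(a_{i_k},\rho)$ the degree of $w_\v$ on $\p B(a_{i_k},r)$ equals $d_k$ for every $r\in[\rho,\eta_0]$, so the elementary Cauchy--Schwarz bound $2\pi|d_k|\leq\int_{\p B(a_{i_k},r)}|\p_\tau w_\v|\leq\sqrt{2\pi r}\left(\int_{\p B(a_{i_k},r)}|\p_\tau w_\v|^2\right)^{1/2}$, integrated in $r$ from $\rho$ to $\eta_0$, yields $\tfrac12\int_{B(a_{i_k},\eta_0)\setminus\overline{B(a_{i_k},\rho)}}|\n w_\v|^2\geq\pi d_k^2|\ln(\eta_0/\rho)|=\pi d_k^2|\ln\rho|-C$. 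Summing over $k$ (the annuli are disjoint for $\eta_0$ small), and using $|v_\v|^2=1-(1-|v_\v|^2)\geq1-C|\ln\v|^{-1/3}$ from Lemma~\ref{L7.Lemma1} together with the a priori energy bound $\int|\n v_\v|^2\leq C|\ln\v|\leq C'|\ln\delta|^3\ll|\ln\v|^{1/3}\cdot$(something) — more precisely, the error term $\int(1-|v_\v|^2)|\n w_\v|^2\leq C|\ln\v|^{-1/3}\int|\n v_\v|^2=o(1)$ exactly as in \eqref{last_term0}, invoking hypothesis~\eqref{7.MainHyp} — one absorbs the modulus defect into the constant and obtains \eqref{bound1}.

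The main obstacle, as in Proposition~\ref{P7.LocalizationOfTheEnergy}, is controlling the term $\int(1-|\hat v_\v|^2)|\n w_\v|^2$ (equivalently $\int(1-|v_\v|^2)|\n w_\v|^2$): one needs the product of the modulus defect $O(|\ln\v|^{-1/3})$ against the full Dirichlet energy $O(|\ln\delta|)$ to tend to $0$, which is precisely where the growth restriction \eqref{7.MainHyp} on $\delta(\v)$ enters and cannot be dispensed with. Everything else — the covering lemma, the degree bookkeeping, the Cauchy--Schwarz lower bound — is standard and parallels arguments already carried out in the excerpt.
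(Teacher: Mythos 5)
Your treatment of \eqref{bound3} is fine and is essentially the paper's argument (rescale to $B(0,2)$, apply Theorem 4.1 of \cite{SS1}, use Lemma \ref{L7.Lemma1} to place all nonzero-degree balls inside $B(0,1)$, and multiply by $b^2$ via $U_\v\geq b$). The gap is in \eqref{bound1}, precisely at the step you yourself identify as the main obstacle. You bound the modulus-defect term by
$\int(1-|v_\v|^2)|\n w_\v|^2\leq C|\ln\v|^{-1/3}\int|\n v_\v|^2$
and then claim this is $o(1)$ using the a priori energy bound. But the only a priori bound available at this stage of the argument is $\int_\O|\n v_\v|^2\leq \frac{2}{b^2}F_\v(v_\v)\leq C|\ln\v|$ (from Proposition \ref{P7.UpperboundAuxPb}), which makes your right-hand side of order $|\ln\v|^{2/3}$, not $o(1)$. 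Your intermediate chain ``$\int|\n v_\v|^2\leq C|\ln\v|\leq C'|\ln\delta|^3$'' is also backwards: hypothesis \eqref{7.MainHyp} says $|\ln\delta|^3=o(|\ln\v|)$, so the second inequality fails, and even $|\ln\delta|^3\cdot|\ln\v|^{-1/3}$ would not be controlled by \eqref{7.MainHyp}. The quantity that \eqref{7.MainHyp} actually kills is $|\ln\delta|\cdot|\ln\v|^{-1/3}$, so to close the argument you must pair the factor $|\ln\v|^{-1/3}$ with a Dirichlet energy of order $|\ln\rho|\leq|\ln(2\delta)|$, not of order $|\ln\v|$.

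This is exactly why the paper does not run the estimate directly: it argues by contradiction, assuming \eqref{bound1} fails as in \eqref{assumption}; the contradiction hypothesis itself then bounds the Dirichlet energy outside the balls by $\pi\sum_k d_k^2\ln(\eta_0/\rho)\leq C|\ln\rho|$, so the defect term in \eqref{last_term} is $\lesssim|\ln\rho|\,|\ln\v|^{-1/3}\lesssim|\ln\delta|\,|\ln\v|^{-1/3}\to0$, and combining with \eqref{for_S1} contradicts \eqref{assumption}. (The same structure is used in \eqref{assumption0}--\eqref{last_term0}, which you cite ``exactly as in'', but there too the smallness comes from the contradiction hypothesis, an ingredient your direct version drops.) Alternatively, your argument can be repaired without contradiction by using the multiplicative form: $|\n v_\v|^2\geq|v_\v|^2|\n w_\v|^2\geq\bigl(1-C|\ln\v|^{-1/3}\bigr)|\n w_\v|^2$, so the error is $C|\ln\v|^{-1/3}$ times the circle lower bound $\pi\sum_k d_k^2\ln(\eta_0/\rho)$ itself, which is $O(|\ln\delta|\,|\ln\v|^{-1/3})=o(1)$ by \eqref{7.MainHyp}, provided you also justify that the degrees $d_k$ are bounded independently of $\v$ (which follows from the upper bound \eqref{7.UpperboundAuxPb}--\eqref{7.UpperboundAuxPbCaseII} combined with \eqref{bound3} or with the covering estimate in the proof of Lemma \ref{L7.Lemma1}). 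As written, however, the claimed ``$=o(1)$'' is unjustified and, with the bound you invoke, false.
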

\begin{proof}Let $\eta_0=10^{-2}\min_i\left\{\dist(a_i,\p\O),\min_{j\neq i}|a_i-a_j|\right\}$ and $0<\rho<\eta_0$.

We prove \eqref{bound1}. By Lemma \ref{L7.Lemma1}, $|v_{\v}|\geq1/2$ in $\O \setminus \cup_{k=1}^d \overline{B(a_{i_k}, \rho)}$, therefore, $w_{\v} =  \frac{v_{\v}}{|v_{\v}|}$ is well-defined in this domain. From direct computations in $B(a_{i_k}, \eta_0)\setminus\overline{B(a_{i_k}, \rho)}$  we have
\begin{equation}\label{for_S1}
\frac{1}{2} \int_{\O \setminus \cup_{k=1}^d \overline{B(a_{i_k}, \rho)}} |\nabla w_{\v }|^2  \geq \pi \sum_{i=1}^d d_i^2 \ln \frac{\eta_0}{\rho}.
\end{equation}
We claim that the bound (\ref{bound1}) holds with $C = |\ln\eta_0| + 1$. Argue by contradiction: assume that up to a subsequence we have:
\begin{equation}\label{assumption}
\frac{1}{2} \int_{\O \setminus \cup_{k=1}^d \overline{B(a_{i_k}, \rho)}} |\nabla v_{\v }|^2  \leq \pi \sum_{i=1}^d d_i^2 \ln \frac{\eta_0}{\rho}  - 1.
\end{equation}
On the other hand, we have
\[
|\nabla v_{\v }|^2 = |v_{\v }|^2 |\nabla w_{\v }|^2 + |\nabla |v_{\v }||^2
\]
and therefore
\begin{equation}\label{aux}
\int_{\O \setminus \cup_{k=1}^d \overline{B(a_{i_k}, \rho)}} |\nabla v_{\v }|^2 \geq \int_{\O \setminus \cup_{k=1}^d \overline{B(a_{i_k}, \rho)}} |\nabla w_{\v }|^2  -  (1-|v_{\v }|^2) |\nabla w_{\v }|^2.
\end{equation}
Using the fact that $|v_{\v }| \geq \frac{1}{2}$ in $\O \setminus \cup_{k=1}^d \overline{B(a_{i_k}, \rho)}$ we see that $|\nabla w_{\v }| \leq 2 |\nabla v_{\v }|$.
Therefore, by (\ref{assumption}), (\ref{7.MainHyp}) and Lemma \ref{L7.Lemma1} we estimate the last term in (\ref{aux}):
\begin{equation}\label{last_term}
\int_{\O \setminus \cup_{k=1}^d \overline{B(a_{i_k}, \rho)}} (1-|v_{\v }|^2) |\nabla w_{\v }|^2 \leq
C |\ln {\v }|^{-\frac{1}{3}} \int_{\O}|\nabla v_{\v }|^2 \leq C \frac{|\ln \rho|}{|\ln {\v }|^{\frac{1}{3}}} \to 0.
\end{equation}
By combining (\ref{for_S1}), (\ref{aux}) and (\ref{last_term}), we see that \eqref{assumption} cannot hold for small $\v$; this implies (\ref{bound1}).\\
We now prove \eqref{bound3}. Performing the rescaling $\hat x = \frac{x - a_{i_k}}{\dl}$, we obtain
\[
F_\v(v, B(a_{i_k}, 2 \dl)) = \hat F_{\xi}(\hat v, B(0,2)) = \frac{1}{2} \int_{B(0,2)} \left\{\hat U_{\ve}^2 |\n \hat v|^2 +
\frac{1}{2 \xi^2}\hat U_{\ve}^4(1-|\hat v|^2)^2 \right\}\, {\rm d} \hat x,
\]
where, as in the model problem we set $\hat v (\hat x) = v( \dl \hat x)$ and $\xi =\displaystyle \frac{\v}{\delta}$.

By Theorem 4.1 \cite{SS1}, for $r= 10^{-2}$ there are $C>0$ and a finite covering by disjoint balls $B_1,...,B_N$
(with the sum of radii at most $r$) of $\{\hat x\in B(0,2-\xi/b) \,|\,1-|\hat v_{\v}(\hat x)|\geq(\xi/b)^{1/8}\}$ and
\begin{equation}\label{7.EnergyOfTheHoleAux}
\frac{1}{2}\int_{\cup_jB_j}{\left\{|\n \hat v_{\v}|^2+\frac{b^2}{2\xi^2}(1-|\hat v_{\v}|^2)^2\right\}}\geq\pi D_k|\ln\xi|-C,
\end{equation}
$D_k=\sum_{j}|m_j|$ and
\[
m_j=\begin{cases}\deg_{\p B_j}(\hat v_{\v})&\text{if } {\rm dist}(B_j, \d B(0,2)) \geq \xi/b \\0&\text{otherwise}\end{cases}.
\]
Since, by Lemma \ref{L7.Lemma1}, $|\hat v_\v|\geq1/2$ in $B(0,2) \setminus \overline{B(0,1)}$, $D_k \geq d_k$, and (\ref{bound3}) follows from \eqref{7.EnergyOfTheHoleAux} and the estimate $U_\v\geq b$.

\end{proof}

\begin{cor}\label{C7.cor1}
Assume that $M\geq d$. Then $d'=d$ and $d_k=1$ for each $k$.
\end{cor}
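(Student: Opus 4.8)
The plan is to combine the two logarithmic lower bounds for $F_\v(v_\v)$ furnished by Lemma~\ref{L7.Lemma2} with the upper bound of Proposition~\ref{P7.UpperboundAuxPb} (Case~I), and then to read off $d'=d$ and $d_k=1$ from an elementary argument on the integers $d_k$. First I record the degree relation: by Lemma~\ref{L7.Lemma1}, $|v_\v|\geq 1-C|\ln\v|^{-1/3}\geq\tfrac12$ on $\O\setminus\bigcup_{k=1}^{d'}B(a_{i_k},\delta)$, so $w_\v:=v_\v/|v_\v|$ is a well-defined smooth $\S^1$-valued map there, the integer $d_k:=\deg_{\p B(a_{i_k},\delta)}(v_\v)$ does not depend on the radius taken in $[\delta,\eta_0)$, and since $\tr_{\p\O}v_\v=g$, additivity of the degree gives $\sum_{k=1}^{d'}d_k=\deg_{\p\O}(g)=d$. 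In particular $\sum_k|d_k|\geq|\sum_k d_k|=d$, and since the $d_k$ are integers, $\sum_k d_k^2\geq\sum_k|d_k|\geq d$.

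For $\v$ small, the balls $B(a_{i_k},2\delta)$ and the annuli $A_k:=B(a_{i_k},\eta_0)\setminus\overline{B(a_{i_k},2\delta)}$ ($k=1,\dots,d'$) form a pairwise disjoint family of subsets of $\O$. From \eqref{bound3} of Lemma~\ref{L7.Lemma2} (applied with $\rho=2\delta$) we get $\sum_k F_\v(v_\v,B(a_{i_k},2\delta))\geq\pi b^2\bigl(\sum_k|d_k|\bigr)|\ln\xi|-C$. Comparing this with $\inf_{H^1_g}F_\v\leq\pi d b^2|\ln\xi|+\pi d|\ln\delta|+C$ and using $|\ln\xi|=|\ln\v|-|\ln\delta|$ together with \eqref{7.MainHyp} — which forces $|\ln\delta|=o(|\ln\v|^{1/3})=o(|\ln\v|)$ — one sees that $\sum_k|d_k|\geq d+1$ would be contradictory; hence $\sum_k|d_k|=d$, so $|d_k|\leq d$ for every $k$ and $\sum_k d_k^2\leq d^2$.

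Next I estimate the energy on the annuli. On each $A_k$ one has $a_\delta\equiv1$ and $\dist(A_k,\p\w_\delta)\geq\delta$, so Proposition~\ref{P7.UepsCloseToaeps} gives $U_\v\geq1-C\mathrm{e}^{-c/\xi}$ there; together with $|v_\v|\geq1-C|\ln\v|^{-1/3}$ and the standard lower bound $\tfrac12\int_{A_k}|\n w_\v|^2\geq\pi d_k^2\ln(\eta_0/2\delta)$, and using that the Dirichlet energy on $\bigcup_k A_k$ is $O(|\ln\delta|)$ (subtract the ball contribution from $\inf F_\v$, now that $\sum_k|d_k|=d$), all the error factors $1-o_\v(1)$ multiply an $O(|\ln\delta|)$ integral and the losses are $o_\v(1)$ by \eqref{7.MainHyp}; hence $\sum_k F_\v(v_\v,A_k)\geq\pi\bigl(\sum_k d_k^2\bigr)|\ln\delta|-C-o_\v(1)$. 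Adding the two lower bounds and subtracting the upper bound gives
\[
\pi\Bigl(\sum_k d_k^2-d\Bigr)|\ln\delta|+\pi b^2\Bigl(\sum_k|d_k|-d\Bigr)|\ln\xi|\leq C+o_\v(1).
\]
The second coefficient vanishes and the first is $\geq0$; since $|\ln\delta|\to\infty$ this forces $\sum_k d_k^2=d$. Then $\sum_k d_k^2=\sum_k|d_k|$ forces $d_k\in\{-1,0,1\}$, $\sum_k d_k=\sum_k|d_k|$ then forces $d_k\in\{0,1\}$, and since at most $d'\leq d$ of the $d_k$ are nonzero and they sum to $d$, we conclude $d'=d$ and $d_k=1$ for all $k$.

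The main obstacle is quantitative rather than conceptual: each passage from $|\n v_\v|^2$ to the weighted integrand $U_\v^2|\n v_\v|^2$, and from $|\n v_\v|$ to $|\n w_\v|$, costs a multiplicative factor $1-o_\v(1)$ in front of a Dirichlet integral, and the argument only closes if that integral is $O(|\ln\delta|)$ rather than $O(|\ln\v|)$, so that its product with the $|\ln\v|^{-1/3}$ coming from Lemma~\ref{L7.Lemma1} still tends to $0$; this is exactly what hypothesis \eqref{7.MainHyp} guarantees. A secondary point is the order of the argument: one must first extract $\sum_k|d_k|=d$ from \eqref{bound3} alone, which a priori bounds $\sum_k d_k^2$, before the annulus estimate can be exploited to upgrade this to $\sum_k d_k^2=d$.
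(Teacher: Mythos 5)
Your proof is correct and follows essentially the same route as the paper: the paper also combines the two lower bounds of Lemma \ref{L7.Lemma2} (balls of radius $2\delta$ plus the region outside them) with the Case I upper bound \eqref{7.UpperboundAuxPb} and then resolves the resulting discrete minimization over integer configurations with $\sum_k d_k=d$. Your two-stage bootstrap (first $\sum_k|d_k|=d$ from \eqref{bound3} alone, then the weighted annulus estimate giving $\sum_k d_k^2=d$) merely spells out, with the weight $U_\v^2$ and the $o_\v(1)$ losses handled explicitly via \eqref{7.MainHyp}, the integer arithmetic the paper compresses into ``solving the discrete minimization problem'' for \eqref{lowerbnd_in_OmStupeflip}.
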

\begin{cor}\label{C7.cor2}
Assume that $M< d$. Then $d'=M$ and $d_k\in\left\{\left[\dfrac{d}{M}\right],\left[\dfrac{d}{M}\right]+1\right\}$ for each $k$.
\end{cor}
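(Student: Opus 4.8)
Write $q=\left[\frac{d}{M}\right]$; since $M<d$ we have $q\ge1$. Set $Q=\min\{\sum_{i=1}^M e_i^2\,:\,e_1,\dots,e_M\in\N,\ \sum_i e_i=d\}$. A one‑line exchange argument — if two entries differ by at least $2$, shifting one unit from the larger to the smaller strictly lowers $\sum_i e_i^2$ — shows that $Q$ is attained \emph{exactly} at the balanced configurations, namely those with every $e_i\in\{q,q+1\}$; the same argument shows that for any $n<M$ the analogous minimum over $n$ slots is \emph{strictly} larger than $Q$, since the $M$‑slot minimizer has $M$ nonzero entries and cannot be realized with fewer slots without merging some. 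Throughout $d_k=d_k^\v$ is as in Lemma~\ref{L7.Lemma2}. The plan is to squeeze $F_\v(v_\v)=\inf_{H^1_g}F_\v$ between the upper bound of Proposition~\ref{P7.UpperboundAuxPb}(2) and the lower bounds of Lemma~\ref{L7.Lemma2}, and then to read off the combinatorics.

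First I would note that $\sum_{k=1}^{d'}d_k=d$: by Lemma~\ref{L7.Lemma1}, $v_\v$ vanishes nowhere on $\O\setminus\bigcup_k\overline{B(a_{i_k},2\dl)}$ nor on $\p\O$, so $v_\v/|v_\v|$ is a well‑defined $\S^1$‑valued $H^1$‑map there, and the sum of its degrees over all boundary circles (the outer one contributing $\deg_{\p\O}(g)=d$) is $0$; hence $\sum_k d_k=d$, and in particular $\sum_k|d_k|\ge d$, with equality only if every $d_k\ge0$. Next, applying Proposition~\ref{P7.UpperboundAuxPb}(2) to a balanced configuration gives $F_\v(v_\v)\le\pi d b^2|\ln\xi|+\pi Q|\ln\dl|+C$, while summing \eqref{bound3} over the disjoint balls $B(a_{i_k},2\dl)$ gives the crude bound $F_\v(v_\v)\ge\pi b^2\big(\sum_k|d_k|\big)|\ln\xi|-C$. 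Since $|\ln\xi|=|\ln\v|-|\ln\dl|$ with $|\ln\dl|=o(|\ln\v|)$ by \eqref{7.MainHyp} (so $|\ln\xi|\to\infty$ and $|\ln\dl|=o(|\ln\xi|)$), comparing these two bounds and letting $\v\to0$ forces $\sum_k|d_k|\le d$; with the previous inequality, $\sum_k|d_k|=\sum_k d_k=d$, so every $d_k\ge0$ and $\sum_k d_k^2\le d^2$.

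For the sharp lower bound I would split $\O$ into $A:=\O\setminus\bigcup_{i=1}^M\overline{B(a_i,2\dl)}$, the selected balls $B(a_{i_k},2\dl)$, and the remaining (non‑selected) inclusion balls. Around each non‑selected inclusion $v_\v/|v_\v|$ has degree $0$ (Lemma~\ref{L7.Lemma1}), so running the argument that proves \eqref{bound1} on the domain $A$ (all inclusion balls removed), and using that $U_\v\ge1-\mathcal{O}(e^{-c/\xi})$ on $A$ by Proposition~\ref{P7.UepsCloseToaeps}, yields $F_\v(v_\v,A)\ge\tfrac12\int_A U_\v^2|\n v_\v|^2\ge(1-o_\v(1))\pi\sum_k d_k^2|\ln\dl|-C$. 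Adding \eqref{bound3} over the selected balls, discarding the nonnegative energy on the non‑selected ones, and using $\sum_k|d_k|=d$ together with $\sum_k d_k^2\le d^2$ to absorb the factor $o_\v(1)\sum_k d_k^2|\ln\dl|$ into $o(|\ln\dl|)$, I get $F_\v(v_\v)\ge\pi d b^2|\ln\xi|+\pi\big(\sum_k d_k^2\big)|\ln\dl|-o(|\ln\dl|)-C$. Comparing with the upper bound, the $|\ln\xi|$‑terms cancel, and dividing the remainder by $|\ln\dl|$ gives $\sum_{k=1}^{d'}d_k^2\le Q$ for small $\v$ (both sides being integers). Since there are only $M$ inclusions, $d'\le M$; if $d'<M$, then $\sum_{k=1}^{d'}d_k^2$ would be at least the $d'$‑slot minimum, which is strictly bigger than $Q$ — impossible. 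Hence $d'=M$, and then $\sum_{k=1}^M d_k^2=Q$, so $(d_1,\dots,d_M)$ is balanced, i.e.\ $d_k\in\{[d/M],\,[d/M]+1\}$ for every $k$.

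The step I expect to be the real obstacle is this sharp lower bound: one must be sure that deleting the non‑selected inclusion balls — inside which $U_\v\equiv b$ — is free of charge in the degree estimate \eqref{bound1}, which is exactly what the vanishing of $\deg(v_\v/|v_\v|)$ around them provides, and that on the remaining region $A$ the weight $U_\v^2$ is $1-o_\v(1)$ rather than merely $\ge b^2$; only with coefficient essentially $1$ in front of $\pi\sum_k d_k^2|\ln\dl|$ are the two coefficients of $|\ln\dl|$ comparable to $Q$, which is what makes the final combinatorial step go through. (For $M\ge d$, Corollary~\ref{C7.cor1}, the same argument applies with the balanced configuration $e_i=1$ on $d$ of the $M$ inclusions, for which $Q=d$ and one concludes $d'=d$, $d_k=1$.)
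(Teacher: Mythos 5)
Your proposal is correct and follows essentially the same route as the paper: the paper combines the lower bounds \eqref{bound1} and \eqref{bound3} into the estimate \eqref{lowerbnd_in_OmStupeflip}, compares it with the upper bound of Proposition \ref{P7.UpperboundAuxPb} (Case II), and concludes by solving the resulting discrete minimization problem, which is exactly your scheme. The only difference is that you spell out the details the paper leaves implicit — the degree identity $\sum_k d_k=d$, the fact that $U_\v^2=1-o_\v(1)$ on the annuli so the coefficient of $\pi\sum_k d_k^2|\ln\dl|$ is essentially $1$, and the exchange/splitting argument identifying the balanced configurations and forcing $d'=M$ — all of which are consistent with the paper's intended argument.
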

\begin{proof}[Proof of Corollaries \ref{C7.cor1} and \ref{C7.cor2}]

By combining (\ref{bound1}) and (\ref{bound3}) we obtain the lower bound for $F_\ve$ in $\O$:
\begin{equation}\label{lowerbnd_in_OmStupeflip}
F_\v(v_\ve)+C_1 \geq \pi \sum_{i=1}^{M} \left\{\deg_{\p B(a_i,\delta)}(v_\v)^2|\ln \delta| + b^2 |\deg_{\p B(a_i,\delta)}(v_\v)| |\ln\xi|\right\}  .
\end{equation}
The conclusions of the above corollaries are obtained by solving the discrete minimization problem of the RHS of \eqref{lowerbnd_in_OmStupeflip}.
\end{proof}



As a direct consequence of Proposition \ref{P7.UpperboundAuxPb} and Lemma \ref{L7.Lemma2}, we have
\begin{cor}\label{C7.Lemma3}
There is $C>0$ independent of $\v$ s.t. for $1> \rho > 2 \dl$ we have
\begin{eqnarray*}
\frac{1}{2}\int_{\Omega \setminus \cup_{k=1}^d \overline{B(a_{i_k}, \rho)}} |\nabla v_\v|^2 \, {\rm d}x &=&\pi\sum_{k=1}^{d'}d_k^2|\ln\rho|+\mathcal{O}(1)
\\&=&\begin{cases}  \pi d |\ln \rho| + \mathcal{O}(1)&\text{in Case I}\\\pi\displaystyle\min_{\substack{\tilde{d}_1,...,\tilde{d}_M\in\Z\\\tilde{d}_1+...+\tilde{d}_M=d}}\sum_{i=1}^{M}\tilde{d}_i^2|\ln\rho|+\mathcal{O}(1)&\text{in Case II}\end{cases}
\end{eqnarray*}.
\end{cor}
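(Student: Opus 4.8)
The statement combines an upper bound and a lower bound for the Dirichlet energy of $v_\v$ in the exterior region $\Omega \setminus \cup_{k=1}^{d'} \overline{B(a_{i_k},\rho)}$, and the lower bound is exactly \eqref{bound1} from Lemma \ref{L7.Lemma2}, which already gives $\frac12\int_{\Omega\setminus\cup\overline{B(a_{i_k},\rho)}}|\nabla v_\v|^2 \geq \pi\sum_k d_k^2|\ln\rho| - C$. So the only thing left is the matching upper bound. The plan is to extract it by subtracting the energy carried near the inclusions from the global energy. First I would invoke the global upper bound on $\inf_{H^1_g}F_\v$: in Case I this is \eqref{7.UpperboundAuxPb}, $\inf F_\v \leq \pi d b^2|\ln\xi| + \pi d|\ln\delta| + C$, and in Case II it is \eqref{7.UpperboundAuxPbCaseII} with the $d_i$ chosen to be the minimizing configuration, giving $\pi d b^2|\ln\xi| + \pi \big(\min_{\sum \tilde d_i = d}\sum \tilde d_i^2\big)|\ln\delta| + C$. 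Since $\hat U_\v \geq b$ and $\xi = \v/\delta$, this bounds $\frac12\int_\Omega |\nabla v_\v|^2$ above by $\frac{1}{b^2}F_\v(v_\v,\Omega)$ plus lower order, which is the starting point.

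Next I would split $\Omega = \big(\Omega\setminus\cup_k\overline{B(a_{i_k},\rho)}\big) \cup \big(\cup_k B(a_{i_k},\rho)\big)$ and bound the near-inclusion contribution $\sum_k F_\v(v_\v, B(a_{i_k},\rho))$ from below. For this, on the annulus $B(a_{i_k},\rho)\setminus\overline{B(a_{i_k},2\delta)}$ one uses the same degree argument as in \eqref{bound1} (with $|v_\v|\geq 1/2$ from Lemma \ref{L7.Lemma1} and condition \eqref{7.MainHyp} to absorb the $|\nabla|v_\v||$ and potential terms) to get $\frac12\int_{B(a_{i_k},\rho)\setminus\overline{B(a_{i_k},2\delta)}}U_\v^2|\nabla v_\v|^2 \geq \pi d_k^2 b^2 \ln\frac{\rho}{2\delta} - C = \pi d_k^2|\ln\rho| - \pi d_k^2|\ln\delta| - C$ (using $U_\v\geq b$, but this only loses a bounded amount of information because of the next term); and on $B(a_{i_k},2\delta)$ one uses \eqref{bound3}, $F_\v(v_\v,B(a_{i_k},2\delta)) \geq \pi|d_k|b^2|\ln\xi| - C = \pi|d_k|b^2|\ln\delta| - \pi|d_k|b^2|\ln\v| - C$ — wait, more cleanly, $\geq \pi|d_k|b^2(|\ln\delta| + |\ln\v|) - C$ since $|\ln\xi| = |\ln\v| - |\ln\delta| + O(1)$... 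I would be careful here: the point is that summing these two lower bounds over $k$ recovers, up to $O(1)$, exactly the quantities $\pi\sum_k d_k^2|\ln\delta|$ and $\pi b^2\sum_k|d_k||\ln\xi|$ appearing in the global upper bound, because by Corollary \ref{C7.cor1} (resp. Corollary \ref{C7.cor2}) the $d_k$ are precisely the configuration realizing the minimum in that upper bound, so $\sum_k d_k^2 = d$ in Case I and $= \min_{\sum\tilde d_i = d}\sum\tilde d_i^2$ in Case II.

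Putting it together: $\frac{1}{b^2}F_\v(v_\v,\Omega\setminus\cup_k\overline{B(a_{i_k},\rho)}) = \frac{1}{b^2}F_\v(v_\v,\Omega) - \frac{1}{b^2}\sum_k F_\v(v_\v,B(a_{i_k},\rho)) \leq \big[\pi d|\ln\xi| + \pi\sum d_k^2 \frac{|\ln\delta|}{b^2} + C\big] - \big[\pi d|\ln\xi| + \pi\sum d_k^2\frac{|\ln\delta|}{b^2} - C\big] = O(1)$, and since $U_\v = 1 + O(e^{-c\rho/\v})$ away from the inclusions (Proposition \ref{P7.UepsCloseToaeps}) we may replace $F_\v$ by $\frac12\int|\nabla\cdot|^2$ at the cost of $o_\v(1)$, yielding $\frac12\int_{\Omega\setminus\cup\overline{B(a_{i_k},\rho)}}|\nabla v_\v|^2 \leq \pi\sum_k d_k^2|\ln\rho| + C$. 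Combined with \eqref{bound1} this gives the claimed equality, and substituting $\sum d_k^2 = d$ (Case I) or $\sum d_k^2 = \min_{\sum\tilde d_i = d}\sum\tilde d_i^2$ (Case II) from Corollaries \ref{C7.cor1}–\ref{C7.cor2} completes the proof. The main obstacle is bookkeeping: one must track the $|\ln\delta|$, $|\ln\v|$, $|\ln\xi|$ terms carefully so that the near-inclusion lower bound and the global upper bound cancel to leave only $O(1)$, and one must be sure that the weight $U_\v^2$ (which is only $\geq b^2$ inside the inclusions but $\approx 1$ outside) is handled consistently — the cleanest route is to do all estimates for the weighted functional $F_\v$ and only pass to the unweighted Dirichlet integral on the exterior region at the very end via Proposition \ref{P7.UepsCloseToaeps}.
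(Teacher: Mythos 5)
Your strategy is the paper's own: take the lower bound from \eqref{bound1}, and get the matching upper bound by subtracting the localized lower bounds of Lemma \ref{L7.Lemma2} from the global upper bound of Proposition \ref{P7.UpperboundAuxPb}, using Corollaries \ref{C7.cor1}--\ref{C7.cor2} to identify $\sum_k d_k^2$ (indeed the paper presents the corollary as a direct consequence of exactly these two results). However, as written your bookkeeping does not close. On the annuli $B(a_{i_k},\rho)\setminus\overline{B(a_{i_k},2\dl)}$ you must use $U_\v=1+O({\rm e}^{-c\dl/\v})$ from Proposition \ref{P7.UepsCloseToaeps} (these annuli lie at distance $\geq \dl\gg\v$ from $\o_\dl$), \emph{not} $U_\v\geq b$: with $U_\v\geq b$ the annulus lower bound is only $\pi d_k^2 b^2\ln\frac{\rho}{2\dl}-C$, and after subtraction from the global upper bound an excess of order $(1-b^2)\pi\sum_k d_k^2\,(|\ln\dl|-|\ln\rho|)$ survives, which is unbounded, so the cancellation "up to $O(1)$" fails. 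Your displayed identity $\pi d_k^2b^2\ln\frac{\rho}{2\dl}-C=\pi d_k^2|\ln\rho|-\pi d_k^2|\ln\dl|-C$ is false (note also $\ln\frac{\rho}{2\dl}=|\ln\dl|-|\ln\rho|+O(1)$, not the reverse), and your final chain, which divides by $b^2$ and concludes $\frac{1}{b^2}F_\v(v_\v,\O\setminus\cup_k\overline{B(a_{i_k},\rho)})=O(1)$, is both incorrect (the right conclusion is $F_\v(v_\v,\O\setminus\cup_k\overline{B(a_{i_k},\rho)})\leq\pi\sum_k d_k^2|\ln\rho|+C$, the $|\ln\rho|$ main term must survive) and inconsistent with the bound you claim in the next sentence. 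Done with $U_\v\simeq1$ on the annuli and with $\sum_k|d_k|=d$, $\sum_k d_k^2$ as in Corollaries \ref{C7.cor1}--\ref{C7.cor2}, the subtraction does yield the weighted upper bound with the correct main term.

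There is a second gap: passing from $F_\v$ to the unweighted Dirichlet integral on the exterior region "via Proposition \ref{P7.UepsCloseToaeps}" is not justified in Case I when $M>d$, because $\O\setminus\cup_{k=1}^{d}\overline{B(a_{i_k},\rho)}$ still contains the $M-d$ vortex-free inclusions $\o_\dl^j$, on which $U_\v\simeq b$ and $1-U_\v^2$ is of order $1-b^2$. To drop the weight at $O(1)$ cost you additionally need $\int_{B(a_j,2\dl)}|\n v_\v|^2=O(1)$ for every vortex-free $a_j$; this does follow from your own saturation argument (the global upper bound matches the sum of all the localized lower bounds up to $O(1)$, so each leftover piece, in particular $F_\v(v_\v,B(a_j,2\dl))$, is $O(1)$, whence $\int_{B(a_j,2\dl)}|\n v_\v|^2\leq b^{-2}\,O(1)$), but it must be said; Proposition \ref{P7.UepsCloseToaeps} alone does not cover these sets. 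In Case II one has $d'=M$ and the issue is vacuous.
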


\subsection{Existence of the limiting solution}\label{S7.ExLimSolu}

We now return to the proof of Theorems \ref{T7.THM1} and \ref{T7.THM2}. 

Recall that $\{i_1^\v,...,i_{d'}^\v\}$ is a set of distinct elements of $\{1,...,M\}$. We choose $\v$ small enough so that $i_j$'s are independent of $\v$, thus we may simply denote this set by $\{i_1,...,i_{d'}\}$. In Case I, we have $d'=d$ and we may assume that $\{i_1,...,i_{d'}\}=\{1,...,d\}$. In Case II, we have $d'=M$.

Lemma \ref{L7.Lemma1} and Corollary \ref{C7.Lemma3} imply that for an appropriate extraction $\v=\v_n\downarrow0$ and for a compact $K \subset \Omega \setminus \{a_{i_1},...,a_{i_{d'}}\}$, there is $C_K>0$ s.t. for small $\v$ we have
\begin{equation*}
F_{\v }(v_{\v }, K) \leq C_K
\end{equation*}
and
\[
|v_{\v }(x)| \geq 1-C|\ln\v |^{-1/3}\, \text{ for all } x \in  K.
\]

 Therefore, when  $\v \to0$, up to a subsequence, there exists $ v^* \in H^1(\O \setminus \{a_{i_1},...,a_{i_{d'}}\}, \S^1)$ s.t.
 $v_{\v } \weak v^* \in H^1_{\rm loc}(\Omega \setminus \{a_{i_1},...,a_{i_{d'}}\})$. \\

We now fix such sequence and a compact $K \subset \O \setminus \{a_{i_1},...,a_{i_{d'}}\}$. If $K \subset \Omega \setminus \{a_i, 1 \leq i \leq M\}$, then we have $K \cap \o_\dl = \emptyset$ for small $\v$. By exactly the same argument as in Proposition \ref{P7.SmoothBound} we deduce that $v_{\v }$ is bounded in $C^k(K)$ for all $k \geq 0$ and $1-|v_{\v }|^2 \leq C_{K} \v ^2$ in $K$.

Consequently, up to subsequence we have for a compact set $K \subset \Omega \setminus \{a_1,...,a_M\}$
\begin{equation}\label{unif_conv}
v_{\v } \to v^* \text{ in } C^k(K) \text{ and } 1-|v_{\v }|^2 \leq C_{K} \v ^2.
\end{equation}
Now, assume that $K$ is s.t. $K \subset \O \setminus \{a_{i_1},...,a_{i_{d'}}\}$ but $K \cap \o_\dl \neq \emptyset$ (then we are in Case I). Without loss of generality, assume
$K = \overline{B(a_{k_0}, R)}$, where $a_{k_0} \in \{a_{d+1},...,a_M\}$ and $R>0$ is sufficiently small in order to have $K \cap \{a_1,...,a_M\} =\{a_{k_0}\}$.\\

Let $h_\v  : = {\rm tr}_{\d K} v_{\v }$. Since $\d K \subset \O \setminus \{a_1,...,a_M\}$, we have $h_\v  \to h_0$ in $C^{\infty}(\d K)$ (possibly after passing to a subsequence). Since ${\rm deg}(h_\v , \d K) = 0$ we have ${\rm deg}(h_0, \d K) = 0$ and consequently there is some $\f_0\in C^\infty(\p K,\R)$ s.t. $h_0= e^{i \f_0}$.\\

Let $\tilde v$ be a minimizer of $\displaystyle\int_{K} |\nabla v|^2$ in the class $ H_{h_0}^1(K, \S^1)$. Clearly,
\[
\int_{K}|\nabla \tilde v|^2 \leq \int_{K}|\nabla v^*|^2.
\]
On the other hand, since $U_\v \leq 1$, we may construct (in the spirit of \cite{BBH1}) a test function and find that (see formula (93) in \cite{BBH1})
\begin{equation}\label{93}
F_{\v }(v_{\v }, K)  \leq  \frac{1}{2} \int_{K} |\nabla \psi_{\v }|^2 + C \v ,
\end{equation}
where $\psi_{\v }$ is the solution of
\[
\begin{cases}
\Delta \psi_\v  = 0 & \text{in } K\\
\psi_\v  = \f_\v  &\text{on }  \d K
\end{cases}.
\]
Here, $\f_\v $ is defined by
\[
e^{i \f_\v } = \frac{h_\v }{|h_\v |} \ \text{on} \ \d K.
\]
As $\v\to0$, we have
\begin{equation}\label{str_conv}
\psi_\v  \to \psi_0 \text{ strongly in } H^1(K), \text{ where }
\begin{cases}
\Delta \psi_0 = 0& \text{in} \ K
\\
\psi_0 = \f_0 & \text{on }  \d K
\end{cases}.
\end{equation}
From the fact that $v_{\v }\weak v_*$ in $L^2(K)$, $U_{\v }\to1$ in $L^2(K)$ and $|U_{\v }|\leq1$ we have $U_{\v }^2\n v_{\v }\weak v^*$ in $L^2(K)$. Consequently, we obtain
\begin{eqnarray}\label{semicont}
\frac{1}{2} \int_{K}|\nabla v^*|^2 &\leq& \liminf_{\v\to0} \frac{1}{2}\int_{K}U_{\v }^2|\nabla v_{\v }|^2 
 \leq \liminf_{\v\to0} F_{\v }(v_{\v }, K).
\end{eqnarray}
Combining (\ref{93}), \eqref{str_conv} and (\ref{semicont}) we deduce that
\[
\int_{K}|\nabla v^*|^2 \leq \int_{K} |\nabla \psi_0|^2 = \int_{K} |\nabla \tilde v|^2.
\]
It follows that $v^*$ minimizes the Dirichlet functional in
\[
H^1_{h_0}(K, \S^1) := \{v \in H^1(K, \S^1), v = h_0 \text{ on } \d K\}.
\]
We find that  hence $\tilde v = v^*$ in $K$. By a classic result of Morrey \cite{Mor} (see also \cite{BBH1}), $v^*$ satisfies (\ref{harmonic_map}). Moreover, as follows from weak lower semicontinuity of Dirichlet integral, (\ref{93}), (\ref{str_conv}) and \eqref{semicont}
\[
\frac{1}{2} \int_{K}|\nabla v^*|^2 \leq \liminf_{\v\to0} \frac{1}{2}\int_{K}|\nabla v_{\v }|^2 \leq \limsup_{\v\to0} F_{\v }(v_{\v },K) \leq  \frac{1}{2} \int_{K}|\nabla v^*|^2.
\]
Therefore,
\begin{equation}\label{H1StrongConvCompaStup}
\text{ $v_{\v }$ converges to $v^*$ strongly in $H^1(K)$.}
\end{equation}

From \eqref{unif_conv} and \eqref{H1StrongConvCompaStup} we obtain that $v_{\v }\to v^*$ in $H^1_{\rm loc}(\O\setminus\{a_1,...,a_{d'}\})$. The convergence up to $\p\O$ will be established in the next section.

In order to prove Assertion {\it 3.} of Theorem \ref{T7.THM1} and  Assertion {\it 2.} of Theorem \ref{T7.THM2}, note that, for small $\rho > 0$, the estimate (\ref{unif_conv}) implies that $f_{\v } :={\rm tr}_{\d B(a_{i_k}, \rho)} v_{\v }$  satisfies the conditions (\ref{A1}) and (\ref{A2}) of Theorem \ref{T7.THM3}. This gives us {\it 3.} of Theorem \ref{T7.THM1} and  {\it 2.} of Theorem \ref{T7.THM2}.

Assertion {\it 3.} of Theorem \ref{T7.THM2} is is a consequence of Corollary \ref{C7.cor2}.

\subsection{The macroscopic position of vortices minimizes the Bethuel-Brezis-Hélein renormalized energy}\label{S7;MinBBHREnENErGy}
Let us recall briefly the concept of the renormalized energy $W_g((b_1,d_1),...,(b_k,d_k))$ with
\[
\begin{cases}g\in C^\infty(\p\O,\S^1)\text{ s.t. }\deg_{\p\O}(g)=d\\b_1,...,b_k\in\O,\,b_i\neq b_j\text{ for }i\neq j\\d_i\in\Z\text{ and }\sum_i d_i=d\end{cases}.
\]
For small $\rho>0$, consider $\O_\rho=\O\setminus\cup_i\overline{ B(b_i,\rho)}$ and the minimization problem
\[
I_\rho((b_1,...,b_k),(d_1,...,d_k))=\inf_{\substack{w\in H^1(\O_\rho,\S^1)\text{ s.t.}\\w=g\text{ on }\p\O\\w(b_i+\rho{\rm e}^{\imath\theta})=\alpha_i{\rm e}^{\imath d_i\theta},\,\alpha_i\in\S^1}}\frac{1}{2}\int_{\O_\rho}|\n w|^2.
\]
Such problem is studied in detail in \cite{BBH} (Chapter 1). In particular Bethuel, Brezis and Hélein proved that for small $\rho$, we have
\[
I_\rho((b_1,...,b_k),(d_1,...,d_k))=\pi d|\ln\rho|+W_g((b_1,d_1),...,(b_k,d_k))+o_\rho(1).
\]
This equality plays an important role in the study done in \cite{BBH}. In the minimization problem of the classical Ginzburg-Landau functional
\[
\frac{1}{2}\int_\O\left\{|\n u|^2+\frac{1}{2\v^2}(1-|u|^2)^2\right\}\text{, $u\in H^1_g$,}
\]
 the vortices (with their degrees) of a minimizer tend to form (up to a subsequence) a minimal configuration for $W_g$.

 We prove in this section that the (macroscopic) location of the vorticity of minimizers of $F_\v$ is related to the minimization problem of $W_g((b_1,...,b_k),(d_1,...,d_k))$ with $b_1,...,b_k\in\{a_1,...,a_M\}$.

We present here the argument for Case I (Theorem \ref{T7.THM1}). The argument in Case II is analogous.

The proof of Assertion {\it 4.} relies on two lemmas, providing sharp upper and lower bounds.
 \begin{lem}\label{L7.Lemma4}
There exists $\rho_0 > 0$ s.t., for every $\rho < \rho_0$ and every $\v > 0$, we have
\begin{equation}\label{uprbndlemma}
F_\v(v_\v) \leq \pi d |\ln\rho| + d J(\v, \rho) + W_g((a_{i_1},1), ..., (a_{i_d},1)) + o_\rho(1),
\end{equation}
where $J(\ve, \rho) = \inf_{u \in H^1_{g_\rho}(B_\rho(0))} F_\ve(u)$ with $g_\rho = \frac{z}{\rho}$ on $\d B(0, \rho)$.
\end{lem}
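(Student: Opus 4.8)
The plan is to construct an explicit test function on $\O$ by patching together three ingredients: the Bethuel--Brezis--H\'elein near-optimal $\S^1$-valued map on the perforated domain $\O_\rho = \O \setminus \cup_k \overline{B(a_{i_k},\rho)}$, and, on each small ball $B(a_{i_k},\rho)$, the local minimizer realizing $J(\v,\rho)$. First I would fix $\rho < \rho_0$ with $\rho_0$ small enough that the balls $B(a_i,\rho)$, $1\le i\le M$, are pairwise disjoint and compactly contained in $\O$; by the definition of the BBH renormalized energy recalled in Section \ref{S7;MinBBHREnENErGy}, there is a map $w_\rho \in H^1(\O_\rho,\S^1)$ with $w_\rho = g$ on $\p\O$, $w_\rho(a_{i_k}+\rho {\rm e}^{\imath\theta}) = \alpha_k {\rm e}^{\imath\theta}$ for suitable $\alpha_k\in\S^1$, and $\tfrac12\int_{\O_\rho}|\n w_\rho|^2 = \pi d|\ln\rho| + W_g((a_{i_1},1),\dots,(a_{i_d},1)) + o_\rho(1)$. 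Then on each $B(a_{i_k},\rho)$ I would take $u_k$ a minimizer of $F_\v(\cdot,B(a_{i_k},\rho))$ subject to the boundary datum $z\mapsto \alpha_k (z-a_{i_k})/\rho$; after the rotation absorbing $\alpha_k$ this has energy exactly $J(\v,\rho)$ by definition (the constant $\alpha_k$ does not change the energy since $F_\v$ is rotation-invariant).

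The test function is $\tilde u = w_\rho$ on $\O_\rho$ and $\tilde u = u_k$ on $B(a_{i_k},\rho)$; the traces match on each $\p B(a_{i_k},\rho)$ by construction, so $\tilde u \in H^1_g(\O)$. Then
\[
F_\v(v_\v) \le F_\v(\tilde u) = F_\v(w_\rho,\O_\rho) + \sum_{k=1}^d F_\v(u_k,B(a_{i_k},\rho)) = F_\v(w_\rho,\O_\rho) + d\,J(\v,\rho).
\]
The remaining point is to show $F_\v(w_\rho,\O_\rho) \le \pi d|\ln\rho| + W_g(\dots) + o_\rho(1)$. Since $w_\rho$ is $\S^1$-valued, the potential term vanishes and $F_\v(w_\rho,\O_\rho) = \tfrac12\int_{\O_\rho} U_\v^2|\n w_\rho|^2 \le \tfrac12\int_{\O_\rho}|\n w_\rho|^2$ because $U_\v\le1$. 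This is already $\le \pi d|\ln\rho| + W_g(\dots) + o_\rho(1)$, which closes the estimate.

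The one subtlety I expect to be the main (mild) obstacle is the bookkeeping of the $o_\rho(1)$ versus $o_\v(1)$ dependence: $w_\rho$ is a map depending only on $\rho$ (not $\v$), so $\tfrac12\int_{\O_\rho}|\n w_\rho|^2$ genuinely has a $\rho$-expansion uniform in $\v$, hence the displayed bound \eqref{uprbndlemma} holds for every $\v>0$ with error $o_\rho(1)$ as claimed; the $\v$-dependence lives entirely inside $J(\v,\rho)$. One must also check that the $w_\rho$ furnished by \cite{BBH} can be taken with the precise boundary behavior $\alpha_k{\rm e}^{\imath\theta}$ on each inner circle (it can — this is how the renormalized energy is defined as an infimum, and the infimum is attained for fixed $\rho$), and that choosing the phases $\alpha_k$ to agree between $w_\rho$ and the $u_k$'s is harmless, which it is by rotation invariance of $F_\v$. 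No compactness or lower-semicontinuity argument is needed here; this is a pure construction lemma, and the content is that the near-optimal BBH configuration on the perforated domain, refilled by the genuine local minimizers, is an admissible competitor whose energy splits as the sum of the macroscopic ($\pi d|\ln\rho| + W_g$) and microscopic ($dJ(\v,\rho)$) parts.
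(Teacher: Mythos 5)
Your construction is correct and is essentially the paper's own argument: the paper simply invokes Lemma VIII.1 of \cite{BBH}, whose proof is exactly this patching of a near-optimal $\S^1$-valued map on the perforated domain (giving $\pi d|\ln\rho|+W_g+o_\rho(1)$, using $U_\v\leq1$ to drop the weight) with the local minimizers realizing $J(\v,\rho)$ on each ball, matched via rotation invariance. No discrepancy with the paper's route.
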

\begin{proof}
The proof, via construction of a test function, is the same as proof of Lemma VIII.1 in \cite{BBH}.
\end{proof}
\begin{lem}\label{L7.Lemma5}
Let $\rho > 0$, $\rho < \rho_0$. Then for small $\v$ we have
\begin{equation}\label{lwbndIrho}
F_\ve(v_\ve) \geq \pi d |\ln\rho| + d J(\ve, \rho) + W_g((a_{i_1},1), ..., (a_{i_d},1)) + o_\rho(1).
\end{equation}
\end{lem}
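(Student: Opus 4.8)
The plan is to establish the matching lower bound by combining the energy concentrated near the inclusions $a_{i_1},\dots,a_{i_d}$ with the Dirichlet energy of the $\S^1$-valued limit $v^*$ on the perforated domain $\O_\rho=\O\setminus\cup_k\overline{B(a_{i_k},\rho)}$. First I would fix $\rho<\rho_0$ small and split $F_\v(v_\v)$ as $F_\v(v_\v,\O_\rho)+\sum_{k=1}^d F_\v(v_\v,B(a_{i_k},\rho))$. For the outer part, by Corollary \ref{C7.cor1} each $d_k=1$, and by the strong convergence $v_\v\to v^*$ in $H^1_{\rm loc}(\O\setminus\{a_{i_1},\dots,a_{i_d}\})$ established in Section \ref{S7.ExLimSolu} (together with $U_\v\to1$ uniformly away from $\w_\dl$ from Proposition \ref{P7.UepsCloseToaeps}), we get $F_\v(v_\v,\O_\rho)\to \tfrac12\int_{\O_\rho}|\n v^*|^2$. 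Since $v^*$ is the canonical harmonic map with singularities at $a_{i_1},\dots,a_{i_d}$ of degree $1$ and boundary data $g$ (it minimizes the Dirichlet functional among $\S^1$-maps with these constraints, as shown in Section \ref{S7.ExLimSolu}), the Bethuel--Brezis--Hélein expansion from \cite{BBH}, Chapter I gives $\tfrac12\int_{\O_\rho}|\n v^*|^2=\pi d|\ln\rho|+W_g((a_{i_1},1),\dots,(a_{i_d},1))+o_\rho(1)$.

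For the inner part, I would show $F_\v(v_\v,B(a_{i_k},\rho))\geq J(\v,\rho)+o_\v(1)$ for each $k$. This follows because $v_\v$ restricted to $B(a_{i_k},\rho)$ is an admissible competitor for the minimization problem defining $J(\v,\rho)$, \emph{provided} its boundary trace on $\p B(a_{i_k},\rho)$ is close to the rotationally symmetric datum $z/\rho$. This is not automatic: the natural trace is $f_\v:={\rm tr}_{\p B(a_{i_k},\rho)}v_\v$, which converges to ${\rm tr}_{\p B(a_{i_k},\rho)}v^*$, a degree-one $\S^1$-map but generally not $\gamma_0 z^{}/|z|$. The standard device from \cite{BBH} (Lemma VIII.1 and its proof of the lower bound) is to interpolate in a thin annulus $B(a_{i_k},\rho)\setminus B(a_{i_k},\rho(1-t))$ between $f_\v$ and a rotationally symmetric map, paying only $o_\rho(1)$ in energy because the $H^{1/2}$ norm of the phase of $f_\v$ on $\p B(a_{i_k},\rho)$ is bounded and the annulus is thin; this is absorbed into the $o_\rho(1)$ term. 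Here one must also invoke $U_\v\le1$ so that the weighted functional $F_\v$ is bounded below by the unweighted one on the relevant region, keeping the constant $J(\v,\rho)$ exact.

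The main obstacle is making the splitting rigorous near the inclusions at the scale $\rho$ while $\rho$ is held fixed and $\v\to0$: the energy in $B(a_{i_k},\rho)$ is of order $|\ln\v|$, so one cannot simply pass to the limit there; instead one compares directly against the definition of $J(\v,\rho)$, which requires that $v_\v|_{B(a_{i_k},\rho)}$ (after the interpolation adjustment on a thin outer ring) be genuinely admissible with the right boundary data and degree. The degree count $d_k=1$ from Corollary \ref{C7.cor1} is what guarantees the right topological sector. Combining the two estimates and letting $\v\to0$ first, then $\rho\to0$, yields \eqref{lwbndIrho}. Together with Lemma \ref{L7.Lemma4} this pins down $F_\v(v_\v)=\pi d|\ln\rho|+dJ(\v,\rho)+W_g((a_{i_1},1),\dots,(a_{i_d},1))+o_\rho(1)$, and since by \eqref{7.UpperboundAuxPb}--type arguments and Theorem \ref{T7.THM4} one has $J(\v,\rho)=\pi b^2|\ln\xi|+\pi|\ln\delta|+\dots$, the minimality of $v_\v$ forces $\{a_{i_1},\dots,a_{i_d}\}$ to minimize $W_g$ among $d$-subsets of $\{a_1,\dots,a_M\}$, completing Assertion \textit{4.} of Theorem \ref{T7.THM1}.
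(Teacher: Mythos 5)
Your decomposition of $\O$ into $\O_\rho=\O\setminus\cup_k\overline{B(a_{i_k},\rho)}$ and the balls $B(a_{i_k},\rho)$, and your treatment of the outer region, follow the paper; but two points need attention. First, a small one: at this stage of the paper only the interior convergence $v_\v\to v^*$ in $H^1_{\rm loc}(\O\setminus\{a_{i_1},\dots,a_{i_d}\})$ is available (convergence up to $\p\O$ is deduced \emph{after}, and by means of, Lemmas \ref{L7.Lemma4}--\ref{L7.Lemma5}), so you cannot assert $F_\v(v_\v,\O_\rho)\to\tfrac12\int_{\O_\rho}|\n v^*|^2$; you only need, and only have, the lower bound coming from weak convergence, $U_\v\to1$, $|U_\v|\le1$ and weak lower semicontinuity, as in \eqref{x}--\eqref{y}. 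Second, and this is the genuine gap, your inner estimate $F_\v(v_\v,B(a_{i_k},\rho))\ge J(\v,\rho)+o_\v(1)+o_\rho(1)$ is justified by saying that interpolating in a thin annulus between the trace $f_\v$ and the symmetric datum costs only $o_\rho(1)$ ``because the $H^{1/2}$ norm of the phase of $f_\v$ is bounded and the annulus is thin.'' That reasoning is wrong as stated: if the phase mismatch is of order one, its interpolation across an annulus of relative thickness $t$ costs roughly $\text{(mismatch)}^2/t$ plus the winding energy $\pi\ln\frac{1}{1-t}$, so a thinner annulus makes the first term \emph{larger}, not smaller; boundedness in $H^{1/2}$ gives an $O(1)$ cost, which destroys the expansion at the $o_\rho(1)$ level. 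To repair the argument you must use smallness of the mismatch itself: modulo a constant rotation (harmless, since $F_\v(\gamma_0 u)=F_\v(u)$ for $\gamma_0\in\S^1$), the trace of $v^*$ on $\p B(a_{i_k},\rho')$ is $e^{\imath(\theta+H_k)}$ with $H_k$ harmonic near the degree-one singularity, hence has oscillation $O(\rho)$, and $f_\v\to{\rm tr}\,v^*$ in $C^1$ on such circles by \eqref{unif_conv}; with this the annulus cost becomes $O(\rho)+o_\v(1)$ and your comparison map is admissible for $J(\v,\rho)$ (after also lifting $|v_\v|$ to $1$ on the outer circle, which costs $o_\v(1)$ thanks to $1-|v_\v|^2\le C\v^2$ there).

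It is worth noting that the paper avoids this boundary-layer construction altogether: it applies the expansion of Theorem \ref{T7.THM4} twice, to $F_\v(v_\v,B(a_{i_k},\rho))$ with data $f_\v$ (giving \eqref{c}) and to $J(\v,\rho)$ with data $z/|z|$ (giving \eqref{z}), and observes that the two expansions share the \emph{same} local term $\tilde W({\bm\a})=\min\tilde W$ --- precisely because, by Proposition \ref{P7.MinRenEner} and \eqref{Ren_en_loc}, the limiting vortex positions minimize $\tilde W$ independently of the outer boundary data --- while they differ by $\tilde W_0(f_0)-\tilde W_0(z/|z|)\ge0$ by \eqref{7.RadialMinimizesFirst}. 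This yields $F_\v(v_\v,B(a_{i_k},\rho))-J(\v,\rho)\ge o_\v(1)$ directly. Your route, once corrected as above, is a legitimate BBH-style alternative, but it does not exploit (and therefore does not exhibit) the decoupling of the local renormalized energy from the external data, which is the structural point the paper's proof is built on.
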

\begin{proof}
Split the domain $\O$ into two sub-domains: $\O \setminus \cup_{i} \overline{B(a_{k_i}, \rho)}$ and $\cup_{i} B(a_{k_i}, \rho)$.
We start with the lower bound in the first sub-domain. By the previous estimate, $v_{\v }$ weakly converges to $v^*$ in
$H^1(\O \setminus \cup_{i} \overline{B(a_{k_i}, \rho)})$. This implies that
\[
\liminf\frac{1}{2} \int_{\O \setminus \cup_{k} \overline{B(a_{i_k}, \rho)}} U_{\v }^2|\nabla v_{\v }|^2 \geq \frac{1}{2} \int_{\O \setminus \cup_{k} \overline{B(a_{i_k}, \rho)}} |\nabla v^*|^2.
\]
Here, we used the fact that, since $U_{\v }\to1$ in $L^2(\O)$, $|U_{\v }|\leq1$ and $\n v_{\v }\weak \n v^*$ in $L^2(\O \setminus \cup_{k} \overline{B(a_{i_k}, \rho)})$, we have $U_{\v }\n v_{\v }\weak \n v^*$ in $L^2(\O \setminus \cup_{k} \overline{B(a_{i_k}, \rho)})$.

Thus we deduce that, for small $\v$,
\begin{equation}\label{x}
\frac{1}{2} \int_{\O \setminus \cup_{k} \overline{B(a_{i_k}, \rho)}} U_{\v }^2|\nabla v_{\v }|^2 \geq \frac{1}{2} \int_{\O \setminus \cup_{k} \overline{B(a_{i_k}, \rho)}} |\nabla v^*|^2 -
\rho^2.
\end{equation}
On the other hand, as proved in \cite{BBH},
\begin{equation}\label{y}
\frac{1}{2} \int_{\O \setminus \cup_{k} \overline{B(a_{i_k}, \rho)}} |\nabla v^*|^2  \geq \pi d \ln{\frac{1}{\rho}} + W_g((a_{i_1},1),...,(a_{i_d},1)) + o_\rho(1).
\end{equation}
Thus, combining \eqref{x}, \eqref{y} and using Proposition \ref{P7.UepsCloseToaeps}, for $\v$ sufficiently small, we have
\begin{equation}\label{LBoutside}
F_{\v }(v_{\v }, \O \setminus \cup_{k} \overline{B(a_{i_k}, \rho)}) \geq \pi d \ln{\frac{1}{\rho}} +  W_g((a_{i_1},1),...,(a_{i_d},1)) + o_\rho(1).
\end{equation}
By Theorem \ref{T7.THM4} and Corollary \ref{C7.cor1} we have the following energy expansion:
\begin{equation}\label{c}
F_\ve(v_\v, B(a_{i_k}, \rho)) = \pi \ln \rho + \pi b^2 |\ln \v| + \pi (1-b^2) |\ln \dl| + \tilde W({\bm\alpha}) + \tilde W_0(f_0) + b^2 \gamma + o_{\v}(1).
\end{equation}
Similarly, applying Theorem \ref{T7.THM4} to $J(\v,\rho)$ we obtain
\begin{equation}\label{z}
J(\v, \rho) = \pi \ln \rho + \pi b^2 |\ln \v| + \pi (1-b^2) |\ln \dl| + \tilde W({\bm\alpha}) + \tilde W_0(z/|z|) + b^2 \gamma + o_{\v}(1).
\end{equation}
Here, the local renormalized energy $ \tilde W({\bm \a})$ is given by \eqref{Ren_en_loc} and is the same in \eqref{c} and \eqref{z}.

From \eqref{7.RadialMinimizesFirst},  $\tilde W_0(f_0) \geq 0$ while $\tilde W_0(\frac{z}{|z|}) = 0$. Consequently, we have $F_\ve(v_\v, B(a_{i_k}, \rho)) - J(\v, \rho) \geq o_\v(1)$. Hence $\forall \rho > 0$ there exists $\v_\rho > 0$ s.t. for $\v < \v_\rho$ we have
\begin{equation*}
F_\v(v_\v, B(a_{i_k}, \rho)) \geq J(\v,\rho) - \rho^2
\end{equation*}
and thus
\begin{equation}\label{a}
F_\v(v_\v, \cup_k B(a_{i_k}, \rho)) \geq d J(\v,\rho) -d\rho^2,
\end{equation}
which gives the lower bound in the second sub-domain. From \eqref{LBoutside} and \eqref{a} the bound (\ref{lwbndIrho}) follows.
\end{proof}
Combining Lemma \ref{L7.Lemma4} and Lemma \ref{L7.Lemma5}, we see that the points $\{a_{i_k}, 1 \leq k \leq d\}$ minimize $W_g$ among $a_1, ..., a_M$.
The expansion (\ref{expansion}) follows from (\ref{uprbndlemma}), (\ref{lwbndIrho}) and (\ref{z}).

We next turn to convergence of $v_\v$ up to the boundary. It suffices to prove the $H^1$-convergence of  $v_{\v }$ in $\O_\rho=\O\setminus\cup_m \overline{B(a_{i_m},\rho)}$ (for small $\rho>0$). We argue by contradiction and we assume that there are some $\rho_1>0$ and $\eta>0$ s.t.
\begin{equation}\label{TOGETSTRONGCONVUNTILTHEBOUNDARYSTUPSTUP}
\liminf\frac{1}{2}\int_{\O_{\rho_1}}|\n v_{\v }|^2\geq\frac{1}{2}\int_{\O_{\rho_1}}|\n v^*|^2+\eta.
\end{equation}
Note that for all $\rho\leq\rho_1$, \eqref{TOGETSTRONGCONVUNTILTHEBOUNDARYSTUPSTUP} still holds in $\O_\rho$.

If, in the proof of Lemma  \ref{L7.Lemma5}, we replace \eqref{x} by \eqref{TOGETSTRONGCONVUNTILTHEBOUNDARYSTUPSTUP} (with $\rho_1$ replaced by $\rho$), then we obtain for small $\rho$ a contradiction with Lemma \ref{L7.Lemma4}. The proof of Theorem \ref{T7.THM1} is complete. The last assertion of Theorem \ref{T7.THM2} is obtained along the same lines.



\appendix
\section{Proof of Proposition \ref{P7.UepsCloseToaeps}}\label{S.AppendixEstimateSpecialSol}

Let $x_0\in V_{R}$ be s.t. $B_R = B(x_0, R) \subset \O \setminus \overline{\w_\delta}$ and assume that $x_0=0$. 

We follow the proof of Lemma 2 in \cite{BBH1}.

In $B_R$, $\eta = 1 - U_\v$ satisfies
\begin{equation*}
\left\{\begin{array}{cl}-\v^2\Delta\eta+ t \eta=-\eta(\eta^2-3\eta+2-t)&\text{in }B_R\\\eta\leq1&\text{on }\p B_R\end{array}\right.,
\end{equation*}
here, $t$ will be chosed later.

Since $\eta\in(0,1-b)$, if we take $t=b(1+b)$, then we have
\[
-\v^2\Delta\eta+t\eta\leq0 \text{ in }B_R.
\]
On the other hand, the function $w(x)= e^{\gamma(|x|^2-R^2)}$ satisfies
\[
\left\{\begin{array}{cl}-\v^2\Delta w+ t w=\left[-4\v^2\gamma(1+\gamma|x|^2)+t \right]w&\text{in }B_R\\w=1&\text{on }\p B_R\end{array}\right..
\]
A simple computation gives that
\[
\left\{\begin{array}{c}-\v^2\Delta w+t w\geq0\text{ in }B_R\\\gamma>0\end{array}\right.\Leftrightarrow0<\gamma\leq\frac{-\v+\sqrt{\v^2+t R^2}}{2R^2\v}.
\]
Take
\[
\gamma=\frac{-\v+\sqrt{\v^2+t R^2}}{2R^2\v}>0.
\]
Setting $v=\eta-w$, we have
\[
\left\{\begin{array}{cl}-\v^2\Delta v+t v\leq0&\text{in }B_R\\v\leq0&\text{on }\p B_R.\end{array}\right.
\]
By the maximum principle, we have $v\leq0$ in $B_R$. Therefore,
\[
\eta(0)\leq \text{exp}\left\{-\frac{-\v+\sqrt{\v^2+t R^2}}{2\v}\right\}\leq C e^{-\frac{\sqrt{t}R}{4\v}}.
\]
Consequently, (\ref{7.UepsCloseToaeps}) holds in $\{x \in \O\,|\, {\rm dist}(x, \d \O) \geq R, {\rm dist}(x, \w_\delta) > R\}$. The estimate close to the $\p\O$ is a direct consequence of $0\leq U_\v\leq1$, (\ref{7.UepsCloseToaeps}) holds in $\{x \in\O\,|\, {\rm dist}(x, \d \O) \geq R, {\rm dist}(x, \w_\delta) > R\}$ and the equation $-\Delta U_\v=\dfrac{1}{\v^2}U_\v(1-|U_\v|^2)$ in $\{x \in \O\,|\,  {\rm dist}(x,  \w_\delta) > R\}$.
Using a similar argument, we establish (\ref{7.UepsCloseToaeps}) in the case  $V_R \cap \w_\delta$. The proof of (\ref{7.UepsCloseToaeps}) is complete.\\

In order to prove (\ref{P7.GradUepsCloseToaeps}), note that in $W_R := \{ x \in \O \,|\, {\rm dist}(x, \p \o_\delta) \geq R, {\rm dist}(x, \p \O) \geq R\}$
the function $\eta=a_\v-U_\v$ satisfies $\Delta \eta= \frac{U_\v}{\ve^2}(a_\v^2 - U_\v^2)$. Thus, applying Lemma A.1 \cite{BBH1} to $\eta$ in conjunction with \eqref{7.UepsCloseToaeps} and the fact that $R\geq\v$, we obtain 
\begin{equation*}
|\nabla \eta| \leq \frac{C_1 e^{-\frac{c R}{\v}}}{\v}  \text{ in }W_R.
\end{equation*}
Thus \eqref{P7.GradUepsCloseToaeps} holds far away from $\p\O$ and the inclusions.

We next prove that the bound (\ref{P7.GradUepsCloseToaeps}) holds near $\d \O$.

Indeed, fix a smooth compact $K \subset \O$ s.t. for small $\delta$ we have $\o_\delta\subset K$. Clearly, by (\ref{7.UepsCloseToaeps}), $0\leq\eta_K :={\rm tr}_{\d K} \eta \leq Ce^{-\frac{c R}{\v}}$. In $\O \setminus K$, $\eta$ satisfies
\[
\begin{cases}
\Delta \eta = \frac{1}{\ve^2}U(1 + U) \eta& \text{in }\O \setminus K\\
\eta = 0 &\text{on } \d \O\\
\eta = \eta_K &\text{on } \d K
\end{cases}.
\]
Let $\eta = \eta_1 + \eta_2$ be s.t. $\eta_1$ solves
\[
\begin{cases}
\Delta \eta_1 = \frac{1}{\ve^2}U(1 + U) \eta &\text{in }\O \setminus K\\
\eta_1 = 0 &\text{on } \d \O \cup \d K
\end{cases}
\]
and $\eta_2$ satisfies
\[
\begin{cases}
\Delta \eta_2 = 0 &\text{in }\O \setminus K\\
\eta_2 = 0 &\text{on } \d \O\\
\eta_2 = \eta_K &\text{on } \d K
\end{cases}.
\]
Note that $\|\eta_2\|_{L^\infty}\leq C e^{-\frac{c R}{\v}}$ and thus $\|\eta_1\|_{L^\infty}\leq C e^{-\frac{c R}{\v}}$.

Lemma A.2 in \cite{BBH1} implies the existence of a constant $C_{\O\setminus K}>0$ s.t.
\[
|\nabla \eta_1| \leq \frac{C_ {\O\setminus K} e^{-\frac{c R}{\v}}}{\v}  \text{ in } {\O\setminus K}.
\]
In order to estimate $\n\eta_2$ near $\p\O$, we express $\eta_2$ in terms of  Green's function $G(x,y)$ in $\O\setminus K$:
function, \emph{i.e.}
\begin{equation}\label{Greens}
\eta_2(x) = -\int_{\d K} \eta_K(y) \frac{\d G}{\d \nu}(x,y) d S(y).
\end{equation}
It follows from (\ref{Greens}) and (\ref{7.UepsCloseToaeps}) that
$|\nabla \eta_2| \leq C_0 e^{-\frac{c R}{\v}}$ away from $\d K$. The estimate \eqref{P7.GradUepsCloseToaeps} is proved.

\section{Proof of Proposition \ref{P7.UpperboundAuxPb}}\label{S.ProofUpperBound}
This appendix is devoted to the proof of Proposition \ref{P7.UpperboundAuxPb}.

We prove the first assertion: when $M\geq d$ we have
\begin{equation*}
\inf_{v \in H^1_{g}(\O)} F_\v (v, \O) \leq \pi d b^2|\ln \xi|+\pi d |\ln\delta|+\mathcal{O}(1).
\end{equation*}
Fix first $d$ distinct points-centers of inclusions $a_1, ..., a_d$. Let $\rho_0:= 10^{-2}\cdot\min(\dist(a_i,\p\O),\min_{i \neq j} |a_i - a_j| > 0)$. Consider $\tilde v$ to be a smooth fixed function in $\W \setminus \cup_{i=1}^d \overline{B(a_i, {\rho_0})}$, such that $|\tilde v| =1$ in $\W \setminus \cup_{i=1}^d \overline{B(a_i, {\rho_0})}$ and
\[
\begin{cases}
\tilde v = g & \text{on } \d \O\\
\tilde v(x) = \frac{x-a_i}{|x-a_i|} &\text{on } \d B(a_i, \rho_0)
\end{cases}.
\]
Such a function clearly exists since the compatibility condition $\deg_{\p\O}(g)=\sum_{i=1}^d\deg_{ \d B(a_i, \rho_0)}(\tilde{v})$ is satisfied. Let $c_0 =10^{-2}\cdot \dist(0,\d \o) $.
For every $1 \leq i \leq M$, consider a disc  $B(a_i, c_0 \dl) \subset \w_\dl^i$. By the choice of $c_0$, we have $\dist(\d \w_\dl, B(a_i, c_0 \dl)) \geq c_0 \dl$. Therefore, using Proposition \ref{P7.UepsCloseToaeps}
\begin{equation}\label{aux_estimate}
U_\v^2 - b^2 \leq C e^{-\frac{c \dl}{\v}} \ \text{in} \ B(a_i, c_0 \dl).
\end{equation}
Consider the test function $v_0^\v$ defined as
\[
v_0^\v (x)=
\begin{cases}
\tilde v(x)&\text{for }x\in \O \setminus \cup_{i}\overline{B(a_i, \rho_0)}\\
\displaystyle\frac{x-a_i}{|x-a_i|}&\text{for }x \in B(a_i, \rho_0) \setminus \overline{B(a_i, \v)}\\
\displaystyle\frac{x - a_i}{\v}&\text{for } x \in B(a_i, \v)
\end{cases}.
\]
Using (\ref{aux_estimate}) and (\ref{7.MainHyp}) we have
\begin{eqnarray*}
\inf_{v \in H^1_{g}(\O)} F_\v (v, \O) &\leq& F_\v(v_0^\v)
\\& \leq& \pi d b^2|\ln \v|+\pi d (1-b^2)|\ln\delta|+C=\pi d b^2|\ln \xi|+\pi d |\ln\delta|+C.
\end{eqnarray*}
Now we prove the second assertion: when $M<d$ we have
\begin{equation*}
\inf_{v \in H^1_{g}(\O)} F_\v (v, \O) \leq \pi d b^2|\ln \xi|+\pi \sum_i d_i^2 |\ln\delta|+C.
\end{equation*}

 Let $d_1,...,d_M\in\N$ be s.t. $\sum d_i=d$. Set $c_0=10^{-2 d}\cdot \dist(0,\d \o)$. For $i\in\{1,...,M\}$ s.t. $d_i>0$, fix $\alpha_{1,i},...,\alpha_{d_i,i}\in B(0,10^{d}c_0)\subset\o$ s.t.
\[
\min\left(\min_{j\neq k}|\alpha_{j,i}-\alpha_{k,i}|,\dist(\alpha_{j,i},\p\o)\right)>4c_0.
\]
Consider an $\v$-dependent map $\tilde{v}_0^\v\in H^1(\O\setminus\cup_{d_i>0}\overline{B(a_i,10^{d}c_0\delta)},\S^1)$ s.t.
\[
\begin{cases}
\tilde v_0 ^\v= g & \text{on } \d \O\\
\tilde v_0^\v(x) = \dfrac{(x-a_i)^{d_i}}{|x-a_i|^{d_i}} &\text{on } \d B(a_i, 10^{d}c_0\delta)
\end{cases}
\]
and satisfying
\[
\int_{\O\setminus\cup_{d_i>0}\overline{B(a_i,10^{d}c_0\delta)}}|\n \tilde v_0^\v|^2\leq\pi\sum d_i^2|\ln\delta|+C
\]
with $C$ depending only on $\O,\o$ and $g$.

(Such maps do exist, \emph{e.g.}, consider the map introduced in \cite{BBH}, Remark I.5.)

For $i\in\{1,...,M\}$ s.t. $d_i>0$, we consider a map $v_i^\v\in H^1(B(0,10^{d}c_0)\setminus\cup_{j=1}^{d_i}\overline{B(\alpha_{j,i},\xi)},\S^1)$ s.t.
\begin{enumerate}[$\bullet$]
\item $v_i^\v(x)=x^{d_i}/|x|^{d_i}$ on $\p B(0,10^{d}c_0)$,
\item$v_i^\v(x)=(x-\alpha_{j,i})/|x-\alpha_{j,i}|$ on $\p B(\alpha_{j,i},\xi)$,
\item $\displaystyle\int_{B(0,10^{d}c_0)\setminus\cup_{j=1}^{d_i}\overline{B(\alpha_{j,i},\xi)}}|\n v_i^\v|^2\leq\pi d_i|\ln\xi|+C$ with $C$ depending only on $\o$.
\end{enumerate}
(For example, the map considered in Remark I.5 in \cite{BBH} has these properties).

The necessary test function that satisfies the bound \eqref{7.UpperboundAuxPbCaseII} is obtained by rescaling the $v_i^\v$'s (in order to have maps defined in balls of size $\delta$) and gluing the rescaled maps with $\tilde{v_0^\v}$.

\section{Proof of the $\eta$-ellipticity Lemma}\label{S.ProofEtaEllipticityLemma}

The main argument in the proof of the $\eta$-ellipticity result is the following convexity lemma which is a generalization of Lemma 8 in \cite{BeMi2}. The proof of Lemma \ref{L7.ConvexityLemma} is given in \cite{degzero}.
\begin{lem}\label{L7.ConvexityLemma}[Convexity Lemma]

Let $C$ be a chord in the closed unit disc, $C$ different from a diameter. Let $S$ be the smallest of two regions enclosed by the chord and the boundary of the disc.

Let $O$ be a Lipschitz, bounded, connected domain and let $g \in C(\d O, S)$.

Assume that $v$ minimizes Ginzburg-Landau type energy
\[
\tilde F (v) = \int_{O}\left\{ \tilde\a (x) |\nabla v|^2 +  \tilde\b (x)(1-|v|^2)^2 \right\}\, {\rm d}x
\]
in $H^1_g(O)$, with  $\tilde\a, \tilde\b\in L^\infty(O,\R)$ satisfying ${\rm essinf} \tilde\a >0,{\rm essinf} \tilde\beta > 0$. Then $v(O) \subset S$.
\end{lem}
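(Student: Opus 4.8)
Let $C$ be a chord of the closed unit disc (not a diameter), $S$ the smaller of the two regions it cuts off, $O$ a Lipschitz bounded connected domain, $g\in C(\partial O,S)$, and $v$ a minimizer of $\tilde F(v)=\int_O\{\tilde\alpha|\nabla v|^2+\tilde\beta(1-|v|^2)^2\}$ in $H^1_g(O)$ with $\tilde\alpha,\tilde\beta\in L^\infty$ bounded below by positive constants. Then $v(O)\subset S$.

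\medskip

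The plan is to use a projection (retraction) argument onto the convex region $S$ and show that this projection does not increase the energy, strictly decreasing it unless $v$ already maps into $S$. First I would set up coordinates so that $C=\{z\in\overline{\mathbb D}:\langle z,e\rangle=t\}$ for a fixed unit vector $e$ and a constant $t\in(0,1)$, with $S=\{z\in\overline{\mathbb D}:\langle z,e\rangle\ge t\}$. Since $S$ is the intersection of the closed unit disc with a closed half-plane, it is a closed \emph{convex} set, and I would introduce the nearest-point projection $P:\mathbb C\to S$, which is well-defined, $1$-Lipschitz, and satisfies $|P(z)|\le|z|$ for all $z$ (because $0\notin S$ only if $t>0$, but in any case the radial coordinate cannot increase under projection onto a set contained in $\overline{\mathbb D}$ — more carefully, one checks directly that projecting onto $S=\overline{\mathbb D}\cap\{\langle\cdot,e\rangle\ge t\}$ never increases the modulus). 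The competitor is $\tilde v:=P\circ v$; since $P$ is Lipschitz and $v\in H^1$, we have $\tilde v\in H^1$, and since $g(\partial O)\subset S$ we get $P\circ g=g$, so $\tilde v\in H^1_g(O)$ is admissible.

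\medskip

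The two key energy estimates are then: (i) $|\nabla\tilde v|\le|\nabla v|$ a.e., which follows from the $1$-Lipschitz property of $P$ via the chain rule for Sobolev functions (on the set $\{v\in S\}$ one has $\tilde v=v$; on $\{v\notin S\}$ the projection $P$ acts, on the relevant pieces of $\partial S$, either as projection onto a line or onto the circle, both $1$-Lipschitz, and the gradient of $\tilde v$ is the composition of $DP$ with $\nabla v$); and (ii) $1-|\tilde v|^2\le 1-|v|^2$ in absolute value in the direction that lowers the potential, because $|\tilde v|=|P(v)|\le|v|\le 1$ when $|v|\le 1$, while if $|v|>1$ then $|\tilde v|\le 1<|v|$, so in all cases $(1-|\tilde v|^2)^2\le(1-|v|^2)^2$. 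Hence $\tilde F(\tilde v)\le\tilde F(v)$. By minimality we get equality, so $|\nabla\tilde v|=|\nabla v|$ and $|\tilde v|=|v|$ a.e. (using $\mathrm{essinf}\,\tilde\alpha,\tilde\beta>0$). Finally I would argue that equality forces $v=\tilde v$ a.e.: on the open set $\{x:v(x)\notin S\}$ (if it had positive measure) the projection strictly decreases either the modulus or the gradient on a subset of positive measure — e.g. where $v$ lands in the open region beyond the chord but inside the disc, $P$ is an orthogonal projection onto the line $C$, which strictly kills the normal component of $\nabla v$ wherever that component is nonzero, and the set where it vanishes identically would force $v$ to be constant (a contradiction with $g$ having image meeting both sides, handled by unique continuation / the fact that $v$ solves the Euler–Lagrange PDE and is analytic-type regular in the interior). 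A cleaner route: the Euler–Lagrange equation for $v$ together with $|v|=|\tilde v|$ and a strong-maximum-principle argument applied to the scalar quantity $\langle v,e\rangle-t$, which is a supersolution of a linear elliptic equation in $\{v\notin S\}$ and vanishes on the boundary of that set, forces that set to be empty.

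\medskip

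The main obstacle I anticipate is the final rigidity step — upgrading ``$\tilde F(\tilde v)=\tilde F(v)$'' to ``$v=\tilde v$''. Merely having $|\nabla v|=|\nabla\tilde v|$ and $|v|=|\tilde v|$ a.e. does not immediately give $v=\tilde v$, since projection can preserve the \emph{length} of a gradient vector while rotating it. One must use that $v$ actually minimizes (hence solves the Euler–Lagrange system and enjoys interior regularity) and run a maximum-principle / unique-continuation argument on the signed distance $\langle v,e\rangle - t$ to the half-plane, as in the cited generalization of \cite{BeMi2}; alternatively one uses that $P$ is a strict contraction transversally to $C$ together with the fact that $\{v\in\partial S\}$ has measure zero for a solution. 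I would reference \cite{degzero} for the detailed verification of this rigidity, since the proof of Lemma \ref{L7.ConvexityLemma} is stated there; here I would give the projection/energy-comparison skeleton and indicate that strict monotonicity plus minimality yields $v(O)\subset S$.
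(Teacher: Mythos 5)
Your projection skeleton breaks at the potential term, and the failure is essential rather than technical. Write the chord as $C=\{z:\langle z,e\rangle=t\}$ with $t\in(0,1)$ and $S=\overline{B(0,1)}\cap\{\langle z,e\rangle\ge t\}$, and let $P$ be the nearest-point projection onto $S$. Your step (i) is fine ($S$ is convex, so $P$ is $1$-Lipschitz and $|\nabla(P\circ v)|\le|\nabla v|$ a.e.), but step (ii), the claim that $(1-|P(z)|^2)^2\le(1-|z|^2)^2$ for all $z$, is false, and your two auxiliary modulus claims contradict each other and each fail separately: $P(0)=te$ has modulus $t>0$ (so $|P(z)|\le|z|$ fails), while $P(-e)=te$ has modulus $t<1$ (so the inequality $|P(v)|\ge|v|$, which is what you actually need on $\{|v|\le1\}$ since $s\mapsto(1-s^2)^2$ is decreasing there, also fails). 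Concretely, for the value $z=-e$ the potential jumps from $0$ to $(1-t^2)^2>0$ under $P$; more generally every unimodular value outside the short arc bounding $S$ is sent to a chord point of modulus $<1$, strictly increasing $(1-|v|^2)^2$. These are precisely the values a Ginzburg--Landau minimizer wants to take (the potential pushes $|v|$ to $1$ on the whole circle, while $S$ only contains a short arc), so the region where your comparison fails is exactly the region the lemma is meant to exclude; the inequality $\tilde F(P\circ v)\le\tilde F(v)$ therefore does not follow, even after a preliminary truncation to $|v|\le1$.

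A further warning sign is that your argument uses only that $S$ is a closed convex subset of $\overline{B(0,1)}$ containing the boundary values; it never uses that $C$ is not a diameter, that $S$ is the smaller region, or that $S$ contains an arc of the unit circle. If the projection comparison were valid it would equally ``prove'' that a minimizer stays in any closed convex subset of the closed disc containing $g(\partial O)$, which is false (take constant boundary data $p$ with $|p|<1$ on a large ball: the minimizer raises its modulus toward $1$ in the bulk, leaving any small convex neighborhood of $p$). This shows the gap cannot be patched by refining the rigidity step you defer to \cite{degzero}; the energy comparison itself is the problem. Note also that the paper does not prove the lemma in-text: it cites \cite{degzero}, which generalizes Lemma 8 of \cite{BeMi2}, and the argument there must couple the gradient and potential terms more delicately than a pointwise composition with a $1$-Lipschitz retraction onto $S$.
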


We prove the first part of the lemma \ref{L7.BadDiscsLemma}. Let $x\in\O$ be s.t. $\dist(x,\p\O)\geq\v^{{1/4}}$. We have
\begin{eqnarray*}
F_{\ve}(v_\ve, B(x , \v^{1/4})) \geq  \frac{b}{2} \int_{B(x , \v^{1/4}) \setminus \overline{B(x , \v^{1/2})}} \left\{|\n v_\ve|^2 + \frac{1}{ \ve^2} (1 - |v_\ve|^2)^2\right\} \\
= \frac{b}{2} \int_{\v^{1/2}}^{\v^{1/4}} \frac{1}{r} \cdot r \int_{\d B(x,r)} \left\{ |\n v_\v|^2 + \frac{1}{\v^2} (1 - |v_\v|^2)^2\right\}.
\end{eqnarray*}
By Mean Value theorem, exists $r\in(\v^{1/2},\v^{1/4})$ s.t
\[
r\int_{\p B(x,r)}{\left\{|\n v_\v|^2+\frac{1}{\v^2}(1-|v_\v|^2)^2\right\}}\leq \frac{\frac{2}{b}F_\v(v_\v,B(x,\v^{1/4}))}{\frac{1}{4}|\ln\v|}.
\]
There is $C_2=C_2(\chi,b)>0$ s.t if $F_\v(v_\v,B(x,\v^{1/4}))\leq \chi^2|\ln\v|$, we have
\begin{equation}\label{7.BoundOnTheVarBisRepetita}
{\rm Var}(v_\v, \p B(x,r))\leq C_2\chi, \text{ where } \displaystyle{\rm Var}\,( v_\ve, \d B(x,r)):=\int_{\d B(x,r)}|\p_\tau v_\v|.
\end{equation}
It follows that
\begin{equation}\label{7.LowerBoundForTheModulusOnGoodDisc}
|v_\v|^2\geq1-3C_2\chi\text{ on }\p B(x,r).
\end{equation}
Indeed, arguing by contradiction, assume that there is $\v_n\downarrow0$ and $y_n\in\p B(x,r)$ s.t. $|v_{\v_n}(y_n)|^2<1-3C_2\chi$. Using \eqref{7.BoundOnTheVarBisRepetita} we obtain that
\[
|v_{\v_n}|^2\leq1-C_2\chi\text{ on }\p B(x,r)
\]
which implies that 
\begin{eqnarray*}
2\pi C_2^2\chi^2\v_n^{2(\frac{1}{2} - 1)}&\leq&\frac{2\pi C_2^2 r^2\chi^2}{\v_n^2}
\\&\leq&\frac{r}{\v_n^2}\int_{\p B(x,r)}{(1-|v_{\v_n}|^2)^2}
\\&\leq& \frac{\frac{2}{b}F_{\v_n}(v_{\v_n},B(x,{\v_n}^{1/4}))}{\frac{1}{4}|\ln\v_n|}\leq \frac{8 \chi^2}{b}.
\end{eqnarray*}
Clearly, the previous assertion gives contradiction.

From \eqref{7.BoundOnTheVarBisRepetita} and \eqref{7.LowerBoundForTheModulusOnGoodDisc}, there is $C=C(\chi,b)>0$ and $\v_0=\v_0(\chi)>0$ s.t.
for $\v<\v_0$,
\[
v_\v:\p B_r\to\{z\in B_1\,|\,\Re z>1-C\chi\}.
\]

Using Convexity Lemma (Lemma \ref{L7.ConvexityLemma}), we find that $|v_\v|\geq1-C\chi$ in $B(x,r)\supset B(x,\v^{1/2})$.

If $\dist(x,\p\O)<\v^{{1/4}}$, we denote $S_r=\O\cap \p B(x,r)$, $r\in(\v^{1/2},\v^{1/4})$. Clearly, we have
\[
\frac{2}{b}F_\v(v_\v,B(x,\v^{{1/4}})\setminus \overline{B(x,\v^{1/2})} )\geq\int_{\v^{1/2}}^{\v^{1/4}}\frac{1}{r}\cdot r\,{\rm d}r\int_{S_r}{\left\{|\n v_\v|^2+\frac{1}{\v^2}(1-|v_\v|^2)^2\right\}}.
\]
Using mean value argument and the facts that $g_\v\to g_0$ in $C^{1}(\p\O,\S^1)$ and that $0\leq1-|g_\v|\leq\v$, there are $r\in(\v^{1/2},\v^{1/4})$ and $C_1=C_1(\|g_0\|_{C^1},\O)$ s.t
\[
r\int_{\p (B(x,r)\cap\O)}{\left\{|\p_\tau v_\v|^2+\frac{1}{\v^2}(1-|v_\v|^2)^2\right\}}\leq \frac{\frac{2}{b}F_\v(v_\v,B(x,\v^{1/4}))+C_1}{\frac{1}{4}|\ln\v|}.
\]
Using the same argument as before (taking $O=\O\cap B(x,r)$) we obtain the desired result.

We prove the second part of the lemma. Let $\mu\in(0,1)$ and $x\in\{\dist(x,\p\O)\geq\v^{{1/4}}\}$. Using mean value argument, there is $r\in(\v^{1/2},\v^{1/4})$ s.t
\[
r\int_{\p B(x,r)}{\left\{|\n v_\v|^2+\frac{1}{\v^2}(1-|v_\v|^2)^2\right\}}\leq \frac{\frac{2}{b}F_\v(v_\v,B(x,\v^{1/4}))}{\frac{1}{4}|\ln\v|}.
\]
There exists $C_1=C_1(\mu,b)>0$ s.t if $F_\v(v_\v,B(x,\v^{1/4}))\leq C_1|\ln\v|$, we have
\[
{\rm Var}(v_\v, \p B(x,r))\leq \frac{1-\mu}{10}\text{ and }1-|v_\v|\leq\frac{1-\mu}{10}\text{ on }\p B(x,r).
\]
By Convexity Lemma $|v_\v|\geq\mu$ in $B(x,r)\supset B(x,\v^{1/2})$.

If $\dist(x,\p\O)<\v^{{1/4}}$, denote $S_r=\O\cap \p B(x,r)$, $r\in(\v^{1/2},\v^{1/4})$. Since
\[
\frac{2}{b} F_\v(v_\v,B(x,\v^{{1/4}})\setminus \overline{B(x,\v^{1/2})} )\geq\int_{\v^{1/2}}^{\v^{1/4}}\frac{1}{r}\cdot r\,{\rm d}r\int_{S_r}{\left\{|\n v_\v|^2+\frac{1}{\v^2}(1-|v_\v|^2)^2\right\}}
\]
and using the conditions on $g_\v$, by mean value argument there is $r\in(\v^{1/2},\v^{1/4})$ s.t
\[
r\int_{\p (B(x,r)\cap\O)}{\left\{|\p_\tau v_\v|^2+\frac{1}{\v^2}(1-|v_\v|^2)^2\right\}}\leq \frac{\frac{2}{b}F_\v(v_\v,B(x,\v^{1/4}))+C(\|g_0\|_{C^1},\O)}{\frac{1}{4}|\ln\v|}.
\]
Using the same argument as before, the statement of the lemma follows.\vspace{5mm}
\\\noindent{\bf Acknowledgement.} The authors thank Prof. Leonid Berlyand and Prof. Petru Mironescu for useful and stimulating discussions. The work of M. Dos Santos and O. Misiats
was partially supported by NSF grant DMS-0708324. Part of this work was done when M. Dos Santos was visiting Penn State University. He is grateful
to the Math Department and to Prof. Leonid Berlyand for the hospitality and support of his visit.

\bibliography{biblio}
\end{document}